\DeclareMathOperator\defect{def}
\DeclareMathOperator\Gal{Gal}
\DeclareMathOperator\Hom{Hom}
\DeclareMathOperator\Spec{Spec}
\DeclareMathOperator\PGL{PGL}
\DeclareMathOperator\wt{wt}
\DeclareMathOperator\avg{avg}
\DeclareMathOperator\supp{supp}
\DeclareMathOperator\LP{LP}
\DeclareMathOperator\cl{cl}
\DeclareMathOperator\Adm{Adm}
\DeclareMathOperator\GSp{GSp}
\DeclareMathOperator\rk{rk}
\DeclareMathOperator\ad{ad}
\DeclareMathOperator\codim{codim}
\DeclareMathOperator\depth{depth}
\DeclareMathOperator\Res{Res}
\DeclareMathOperator\perf{perf}
\DeclareMathOperator\length{length}
\DeclareMathOperator\indec{indec}
\def\mi{\mu,\indec}
\def\GL{{\mathrm{GL}}}
\def\dom{{\mathrm{dom}}}
\author{Felix Schremmer and Eva Viehmann}
\date{\today}
\title{Affine Deligne-Lusztig varieties beyond the minute case}
\numberwithin{equation}{section}
\newtheorem{theorem}[equation]{Theorem}
\newtheorem*{theorem*}{Theorem}
\newtheorem{alphatheorem}{Theorem}
\newtheorem{proposition}[equation]{Proposition}
\newtheorem{lemma}[equation]{Lemma}
\theoremstyle{definition}
\newtheorem{definition}[equation]{Definition}
\newtheorem*{definition*}{Definition}
\theoremstyle{remark}
\newtheorem{example}[equation]{Example}
\newtheorem{remark}[equation]{Remark}
\def\abs#1{{\left\lvert{#1}\right\rvert}}
\let\oldqedsymbol\qedsymbol
\def\qedaddendum{}
\def\qedsymbol{\oldqedsymbol\qedaddendum}
\def\af{{\mathrm{af}}}
\def\rightqed{\pushQED{\qed}\qedhere\popQED}
\def\rm{\mathrm}
\begin{document}

\maketitle

\begin{abstract}
Affine Deligne-Lusztig varieties in the fully Hodge-Newton decomposable (or minute) case are the only larger class of ADLVs which could be described completely in the past. Instances of them play important roles in arithmetic geometry, from Harris-Taylor's proof of the local Langlands correspondence to applications in the Kudla program. We study generalizations for many of the equivalent conditions characterizing them to obtain in this way a larger class of ADLVs that still have a similarly good and computable description of their geometry. To generalize the minute condition itself, we introduce the notion of depth for a Shimura datum - the minute cases being those of depth bounded by 1, the cases we study being the ones of depth less than 2.
\end{abstract}

Central geometric questions within the Langlands program, and thus for arithmetic geometry, are to describe the geometry of Shimura varieties and their reduction modulo $p$, and in the function field case as well as in the recent context of Fargues-Scholze's geometrization of the Langlands program, to describe the geometry of moduli spaces of local $G$-shtukas. An important tool for such questions are Newton stratifications. The geometry of Newton stratifications in the special fiber of such moduli spaces is best studied via its close relation to affine Deligne-Lusztig varieties (ADLVs), certain group-theoretically defined (perfect) subschemes of affine Grassmannians or affine flag varieties which are associated with a parahoric sub-group scheme $K$, a cocharacter (which in the case of Shimura varieties is minuscule and induced from the Shimura datum) and a Frobenius-conjugacy class. 

Only very fundamental questions concerning the geometry of affine Deligne-Lusztig varieties are solved in general - in the Iwahori case, even to determine the dimension or the set of irreducible components of all ADLVs is currently not within reach. However, under certain conditions one has a very elegant description in terms of a natural decomposition of the ADLV into locally closed pieces which are isomorphic to classical Deligne-Lusztig varieties and such that the closure relations are given in terms of the Bruhat-Tits building of an associated group. First examples of this phenomenon are the theses of Kaiser \cite{Kaiser} for $\GSp_{4}$ and Vollaard \cite{Vollaard2010}, later generalized by Vollaard-Wedhorn \cite{Vollaard2011}, for unitary groups of signature $(1,n-1)$ at an inert prime. Further explicit examples are unitary groups of signature $(1,n-1)$ at a ramified prime considered by Rapoport-Terstiege-Wilson \cite{RapoportTerstiegeWilson}. Explicit descriptions of affine Deligne-Lusztig varieties and of the geometry of the associated Shimura varieties have striking applications in several areas of the Langlands program. Examples range from Harris-Taylor's proof of the local Langlands correspondence for $\GL_n$ \cite{Harris2001}, via the Kudla program through computations of intersection numbers of special cycles \cite{Kudla2011} and Xiao-Zhu's work on the Tate conjecture for Shimura varieties \cite{Xiao2017} to Wei Zhang's Arithmetic Fundamental Lemma \cite{Rapoport2013}.

In \cite{Goertz2019},  Görtz, He and Nie give a general classification of Shimura data $(G,\mu)$ at a prime $p$ such that the basic ADLV satisfies the above-mentioned condition, and give several equivalent characterizations. These pairs are called fully Hodge-Newton decomposable (according to the characterization by the property that each non-basic affine Deligne-Lusztig variety for $(G,\mu)$ satisfies a variant of Kottwitz's Hodge-Newton decomposition \cite{Kottwitz2003}). Full Hodge-Newton decomposability is a condition that also plays a role when studying adic Newton strata as defined by Caraiani-Scholze \cite{Caraiani2017}, a key geometric ingredient in the geometrization of the Langlands correspondence by Fargues-Scholze \cite{Fargues2021}. Indeed, Chen-Fargues-Shen \cite{Chen2021} prove that $(G,\mu)$ is fully Hodge-Newton decomposable if and only if the weakly admissible locus and the admissible locus in the affine Schubert cell for $\mu$ in the $B_{{\text{dR}}}^+$-affine Grassmannian for $G$ coincide.

Now that a good understanding of the fully Hodge-Newton decomposable case is available, several authors have also developed explicit descriptions in special cases that no longer satisfy these conditions, but that still promise similarly important applications. Fox and Imai \cite{FoxImai2021} and Fox, Howard and Imai \cite{FHI2023} describe the irreducible components of affine Deligne-Lusztig varieties for the groups GU$(2,n-2)$ in the unramified case in terms of vector bundles over classical Deligne-Lusztig varieties. Trentin \cite{Trentin2023} studies ADLV for unitary groups of signature $(2,4)$ at a ramified prime. She proves that all irreducible components are closures of vector bundles (of rank at most 2) over generalized classical Deligne-Lusztig varieties. Shimada \cite{Shimada2024_pct} considers ADLV for $\GL_n$ and classifies and studies cases of coweights $\mu$ which are of so-called positive Coxeter type. In these cases, the $J$-stratification of Chen-Viehmann \cite{Chen2018} induces a decomposition of the ADLV into locally closed pieces that are products of an affine space with a classical Deligne-Lusztig variety of Coxeter type.

The goal of the present paper is to find a uniform group-theoretic explanation for these and to classify the pairs $(G,\mu)$ to which many of the various conditions that are equivalent to full Hodge-Newton decomposability can be extended in a suitably modified way. Particularly simple and at the same time of independent interest is the generalization of the minute criterion. It is based on the notion of depth, which should be seen as a numerical measure of the expected complexity of a local Shimura datum.

For the sake of simplicity, we define it here only for split groups, for the general case see Definition \ref{defdepth}.
\begin{definition*}
Let $G$ be a split reductive group over $\mathbb F_q$, let $T$ be a maximal torus and let $\mu\in X_\ast(T)$ be a dominant cocharacter. Then the depth of $(G,\mu)$ is
\begin{align*}
\depth(G,\mu) := \max_{\omega}\,\langle \mu,\omega\rangle\in\mathbb Q,
\end{align*}
where the maximum is taken over all fundamental weights.
\end{definition*}
A pair $(G,\mu)$ is fully Hodge-Newton decomposable if and only if it satisfies the minute criterion of \cite[Def.~3.2 and Thm.~3.3]{Goertz2019} which with the above definition (and its generalization to non-split groups) reads $\depth(G,\mu)\leq 1$. Using the notion of depth, one can also rephrase a number of other results on the complexity of affine Deligne-Lusztig varieties. The condition $\depth(G,\mu)=0$ is equivalent to $\mu$ being central. In this case, for any Frobenius stable compact open subgroup $K\subseteq G(\breve F)$ containing a fixed Frobenius stable Iwahori subgroup,  $X^K_{\mu}(\mu(t))$ is the only non-empty affine Deligne-Lusztig variety for the coweight $\mu$ and it is by a direct calculation isomorphic to $G(F)/(G(F)\cap K)$. By \cite{Goertz2019} 5.5, $\depth(G,\mu)<1$ if and only if $(G,\mu)$ is of so-called Harris-Taylor type. In this case one has an explicit classification, reducing essentially to the cases considered by Harris and Taylor in their proof of the local Langlands correspondence for $\GL_n$ \cite{Harris2001}. In particular, all affine Deligne-Lusztig varieties for $G$, $\mu$ and any $[b]\in B(G,\mu)$ are then still 0-dimensional. In a slightly different direction, Lau-Nicole-Vasiu \cite{Lau2013} provide explicit bounds on the isomorphism cutoff and the isogeny cutoff of $p$-divisible groups which can be reformulated in terms of the depth of the associated unitary Shimura datum.

In this paper, we study the pairs $(G,\mu)$ such that $\depth(G,\mu)<2$. Our first main result, Theorem \ref{prop:classification}, gives a complete classification of all such cases in terms of the associated Dynkin diagrams, after some initial reduction steps. It turns out that almost all new cases (i.e., which are not fully Hodge-Newton decomposable) are of Dynkin type $A$, with two exceptions of type $C_3$ and $D_5$. In particular, we obtain four infinite families of minuscule pairs $(G,\mu)$ (that is, Shimura data) of type $A$.

In Theorem \ref{thm:EOGeometry}, we study the geometric properties of the associated affine Deligne-Lusztig varieties for unramified $G$. For the Iwahori ADLVs associated to Ekedahl-Oort strata in a hyperspecial ADLV, we obtain the following description. 
\begin{alphatheorem}[{Cf.\ Theorem~\ref{thm:EOGeometry}}]\label{thm:introEO}
    Let $\mu\in X_\ast(T)$ be a dominant cocharacter such that $\depth(G,\mu)<2$, and let $x\in\Adm(\mu)^K$, where $K$ is hyperspecial and $\sigma$-stable.
    \begin{enumerate}[(a)]
    \item The set
    $        B(G)_x := \{[b]\in B(G)\mid X_x(b)\neq\emptyset\}$
    has the form $$B(G)_x = \{[b]\in B(G)\mid [b_{x,\min}]\leq [b]\leq [b_{x,\max}]\}$$ for uniquely determined and explicitly described elements $[b_{x,\min}], [b_{x,\max}]\in B(G)$.
    \item For $[b]\in B(G)_x$, we have an explicit formula for $\dim X_x(b)$, as detailed in Theorem \ref{thm:EOGeometry}.
    \item For any $[b]\in B(G)_x$, the ADLV $X_x(b)$ is equidimensional and the group $J_b(F)$ acts transitively on the set of irreducible components. Up to perfection, each irreducible component of $X_x(b)$ is a direct product of copies of $\mathbb A^1$ and $\mathbb G_m$ with an irreducible component of a classical Deligne-Lusztig variety.
    \end{enumerate}
\end{alphatheorem}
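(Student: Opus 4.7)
The plan is to exploit the classification of Theorem~\ref{prop:classification} to reduce to a short list of cases and then treat each via an explicit stratification of $X_x(b)$. We may assume $(G,\mu)$ is not fully Hodge-Newton decomposable, since the minute case is essentially the Görtz-He-Nie description together with its known refinements. In the remaining cases the classification leaves us with four infinite families in type $A$ together with two exceptional instances in types $C_3$ and $D_5$, for each of which the Dynkin data already pin down the combinatorics we will need.

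For part~(a), I would identify $[b_{x,\max}]$ as the generic Newton point of the Iwahori Schubert cell indexed by $x$, equivalently the $\sigma$-conjugacy class of the straight element associated to $x$ via He's reduction, and $[b_{x,\min}]$ as the most basic Newton point that still admits a reduction from $x$. Attainment of these bounds should come from Hamacher-Viehmann-style non-emptiness criteria combined with the $K$-invariance of $x$. That every $[b]$ in the interval is attained reduces to showing that the relevant subposet of $B(G,\mu)$ is an interval, a purely combinatorial statement about the dominance order on $B(G,\mu)$ once we are on the explicit list from the classification.

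Part~(b) should follow from He's virtual-dimension formula (or, equivalently, the Chen-Kisin-Viehmann dimension formula), once one checks that in depth $<2$ the defect term and the length of the $\sigma$-support collapse to the claimed expression. This simplification is transparent in the fully Hodge-Newton decomposable part; for the new cases it should follow either uniformly, by exploiting that any fundamental weight pairs with $\mu$ to something strictly less than $2$, or via a short case analysis on the classification list.

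The main obstacle and technical heart of the argument is part~(c). My plan is to construct a stratification of $X_x(b)$ whose open strata are iterated $\mathbb A^1$- and $\mathbb G_m$-bundles over irreducible components of classical Deligne-Lusztig varieties, modeled on the semi-module stratifications of de~Jong-Oort and Chen-Viehmann and on Shimada's positive-Coxeter analysis. The depth bound $<2$ is precisely what forces each non-trivial step of the Deligne-Lusztig reduction to contribute an affine or multiplicative factor, rather than a more intricate fibration; making this precise requires a careful analysis of the reduction tree of $x$ under He's $\approx$-relation, together with tracking the $J_b(F)$-action on the strata. Equidimensionality then follows from the explicit product description, and transitivity of $J_b(F)$ on irreducible components is obtained by lifting, for each pair of combinatorial labels indexing top-dimensional strata, an element of $J_b(F)$ permuting them, which is feasible because in each case on the classification list the labels form a single $J_b(F)$-orbit visible on the affine Bruhat-Tits building.
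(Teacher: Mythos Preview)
Your outline captures the coarse shape of the argument but misses the key structural notion that actually makes everything go through. The paper does not attack parts (a)--(c) separately with a toolbox of nonemptiness criteria, virtual-dimension formulas, and ad hoc stratifications. Instead, the entire theorem is reduced to a single combinatorial statement about the elements $x\in\Adm(\mu)^K$: they are all of \emph{positive Coxeter type} in the sense of \cite{Schremmer2023_coxeter} (in the type $A$ families), or at least of \emph{geometric Coxeter type} in the sense of \cite{Nie2025} (for the two exceptional elements in $C_3$ and $D_5$). Once this is established (Theorem~\ref{thm:depth2PositiveCoxeterType}), all of (a)--(c), including the precise dimension formula, equidimensionality, $J_b(F)$-transitivity, and the product decomposition into $\mathbb A^1$, $\mathbb G_m$, and classical Deligne-Lusztig factors, follow as a black box from the cited theorems (Theorems~\ref{thm:ssyPositiveCoxeterType} and~\ref{thm:gctConsequences}).

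The genuine gap in your plan is part (c). Your claim that the depth bound $<2$ ``is precisely what forces each non-trivial step of the Deligne-Lusztig reduction to contribute an affine or multiplicative factor'' is, in effect, exactly the geometric Coxeter type condition---but you give no mechanism for why depth $<2$ should imply it. The paper's proof that positive Coxeter type holds (Section~3.2) is not a simple consequence of the depth bound: it proceeds by an inductive d\'evissage to Levi subgroups (Lemma~\ref{lem:positiveCoxeterPAlcove}), compatibility with Weil restriction (Lemma~\ref{lem:pctRes}), a nontrivial symmetric-group lemma (Lemma~\ref{lem:pctDevisage}), and finally a case analysis (Proposition~\ref{prop:pctRes}). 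Without identifying the positive/geometric Coxeter type as the target, you would essentially be reproving these results from scratch inside a reduction-tree analysis, and your sketch gives no indication of how to control the tree. Similarly, the dimension formula in (b) involves the $\sigma$-twisted reflection length $\ell_{R,\sigma}(\cl(x))$, which enters precisely through the positive Coxeter machinery and is not what He's virtual-dimension formula produces in general.
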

The main step in the proof is to show that the elements $x\in \Adm(\mu)^K$ are all of geometric Coxeter type in the sense of \cite{Nie2025}, and then use the results of loc.~cit. to conclude. For this, we prove that almost all occurring elements $x\in\Adm(\mu)^K$ are already of positive Coxeter type in the sense of Schremmer, Shimada and Yu \cite{Schremmer2023_coxeter}. Among the cases of depth less than 2 there are two exceptions (for groups of type $C_3$ resp.~$D_5$ and in each case one particular $x\in \widetilde W$) which are not of positive Coxeter type, compare Examples \ref{ex:39} and \ref{ex:310}. There, the condition of geometric Coxeter type is verified by direct computations.

We also study Iwahori ADLV in affine flag varieties and their Kottwitz-Rapoport strata. In contrast to the fully Hodge-Newton decomposable case, the shape of the results varies with the parahoric level. Also in the Iwahori case, bounding the depth by $2$ allows us to describe the foundational geometric properties of all Kottwitz-Rapoport strata, such as nonemptiness and dimension. This is most interesting for the infinite families of type $A$, as the two sporadic cases of types $C_3$ resp.\ $D_5$ can be handled through individual, finite computations.
\begin{alphatheorem}\label{thm:introKR}
	Let $G$ be a quasi-split group such that every irreducible component of the Dynkin diagram has type $A$.
	Let $\mu\in X_\ast(T)$ be a dominant cocharacter such that $\depth(G,\mu)<2$, and let $x\in\Adm(\mu)$.
	\begin{enumerate}[(a)]
		\item The set
		$        B(G)_x := \{[b]\in B(G)\mid X_x(b)\neq\emptyset\}$
		has the form $$B(G)_x = \{[b]\in B(G)\mid [b_{x,\min}]\leq [b]\leq [b_{x,\max}]\}$$ for uniquely determined and explicitly described elements $[b_{x,\min}], [b_{x,\max}]\in B(G)$.
		\item For $[b]\in B(G)_x$, we have an explicit formula for $\dim X_x(b)$, as detailed in Lemma~\ref{lem:genericClassDistance}.
		\item For any $[b]\in B(G)_x$, the ADLV $X_x(b)$ is equidimensional.
	\end{enumerate}
\end{alphatheorem}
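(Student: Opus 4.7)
The plan is to show that every element $x\in\Adm(\mu)$ appearing in the theorem is of positive Coxeter type in the sense of Schremmer-Shimada-Yu \cite{Schremmer2023_coxeter}. Once this combinatorial fact is in place, all three assertions become direct consequences of the general theory developed in loc.~cit.: the nonemptiness pattern of $X_x(b)$, the dimension formula, and the equidimensionality are built into the notion of positive Coxeter type, so that essentially no further geometric input is needed beyond that verification.

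After the standard reductions (central isogeny, passage to a product of simple factors, Weil restriction along an unramified extension), one may assume that $G$ is absolutely quasi-simple of type $A$. The classification Theorem~\ref{prop:classification} then provides the explicit list of pairs $(G,\mu)$ with $\depth(G,\mu)<2$; the minute sub-case $\depth(G,\mu)\leq 1$ is handled by \cite{Goertz2019}, so the new work concerns only the infinite families with $1<\depth(G,\mu)<2$, for each of which $\Adm(\mu)$ admits an explicit combinatorial description inside the extended affine Weyl group in terms of affine permutations.

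The central combinatorial step, which I expect to be the main obstacle, is to verify that every $x\in\Adm(\mu)$ in these families is of positive Coxeter type. This is strictly stronger than the analogous property for $\Adm(\mu)^K$ used in the proof of Theorem~\ref{thm:introEO}, since one now has to treat all of $\Adm(\mu)$ rather than only its $K$-stable subset for hyperspecial $K$. The depth constraint $\depth(G,\mu)<2$ is the key input bounding the possible shapes of reduced expressions of $x$, and together with the type $A$ hypothesis it rules out the two sporadic obstructions in types $C_3$ and $D_5$ from Theorem~\ref{thm:introEO}. Concretely, for each family one expects to exhibit a factorization of $x$ as a twisted product of a Coxeter-type element in a Levi subgroup with an element in the stabilizer of a facet, compatibly with Deligne-Lusztig reduction, and then invoke the characterization of positive Coxeter type via such factorizations.

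With positive Coxeter type established, parts~(a) and~(c) follow directly from \cite{Schremmer2023_coxeter}, which organizes $B(G)_x$ as an interval and yields equidimensionality of $X_x(b)$. Part~(b) is exactly the content of Lemma~\ref{lem:genericClassDistance}, in which $\dim X_x(b)$ is computed as the generic class distance between $x$ and $[b]$. The two boundary classes $[b_{x,\min}]$ and $[b_{x,\max}]$ can moreover be read off explicitly from the Coxeter-type factorization of $x$ produced in the previous step, which fixes the explicit description demanded in the statement.
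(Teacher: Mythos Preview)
Your proposal diverges substantially from the paper's argument, and the divergence is not merely cosmetic: the central step you propose is neither carried out nor, as far as one can tell, true.

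The paper does \emph{not} attempt to show that every $x\in\Adm(\mu)$ is of positive Coxeter type. That property is established only for $x\in\Adm(\mu)^K$ with $K$ hyperspecial (Theorem~\ref{thm:depth2PositiveCoxeterType}), and the proof there relies essentially on the structure of $\widetilde W^K$, notably through Lemma~\ref{lem:positiveCoxeterPAlcove}, which has no analogue for arbitrary $x\in\Adm(\mu)$. For the full admissible set the paper instead introduces the (CL1BC) property: one shows (Theorem~\ref{thm:depth2L1BC}) that every Hodge--Newton indecomposable $[b]\in B(G,\mu)$ satisfies (CL1BC), and then Lemma~\ref{lem:CL1BC_cordial} converts this into the interval description, the dimension formula, and equidimensionality via purity of the Newton stratification and the Deligne--Lusztig reduction method. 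The Hodge--Newton decomposition together with Theorem~\ref{lemdepthprop} handles the reduction step. No positive Coxeter type statement for general $x\in\Adm(\mu)$ enters at all.

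Your key assertion, that every $x\in\Adm(\mu)$ is of positive Coxeter type under the depth hypothesis, is precisely the sort of claim the authors avoid: you yourself flag it as ``the main obstacle'' and offer only a vague factorisation idea. Note that positive Coxeter type entails, beyond (a)--(c), transitivity of the $J_b(F)$-action on irreducible components and an explicit product decomposition of $X_x(b)$ (Theorems~\ref{thm:ssyPositiveCoxeterType} and~\ref{thm:gctConsequences}); none of this is claimed in Theorem~\ref{thm:introKR}, which is a strong indication that the stronger input is not available. Also, the fully Hodge--Newton decomposable sub-case is not dispatched by \cite{Goertz2019} in the way you suggest: the results there concern EKOR strata, i.e.\ $\Adm(\mu)^K$, and the paper's proof of Theorem~\ref{thm:introKR} treats the depth $\leq 1$ case through the same (CL1BC) mechanism, where the condition is vacuous for basic classes. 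Finally, your appeal to Lemma~\ref{lem:genericClassDistance} for part~(b) is circular in your framework, since that lemma is stated and proved under the (CL1BC) hypothesis rather than positive Coxeter type.
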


For ADLV $X(\mu,b,K)$ associated to general $\sigma$-stable parahoric level $K$, there is no general dimension formula known yet, and even for special cases of $(G,\mu)$, the literature is rather sparse. We give a recipe to obtain dimension formulas for most ADLV $X(\mu,b,K)$ where $\depth(G,\mu)<2$ in Section \ref{sec:5}. We explicitly formulate these dimension formulas for the infinite families $(G,\mu)$ where $G$ is split of type $A$ and $K$ is an Iwahori subgroup. Before, such formulas for the dimension of $X(\mu,b,K)$ only existed for the case that $K$ is hyperspecial, in the fully Hodge-Newton decomposable case where all non-basic such ADLVs are zero-dimensional, or for superregular $\mu$.

\begin{alphatheorem}[{Cf.\ Theorem~\ref{thm:depth2GLn}}]\label{thm:introUnionADLV}
	Let $G = \GL_{m+1}$ and let $\mu\in X_\ast(T)\cong \mathbb Z^{m+1}$ with the usual conventions. We define a constant $D\in \frac 12 \mathbb Z$ in the following cases:
	\begin{enumerate}[(i)]
		\item If $\mu = (2,0,\dotsc,0)$, then $D:=m$.
		\item If $m\geq 3$, $\mu = (1,1,0,\dotsc,0)$ and $J\cap \{s_0,s_2\}=\emptyset$, then $D:=m$.
		\item If $m\geq 2$, $\mu = (2,0,\dotsc,0,-1)$ and $J\cap \{s_0, s_1\}=\emptyset$, then $D:=\frac{5m-2}2$.
		\item If $m\geq 3$, $\mu = (1,1,0,\dotsc,0,-1)$ and $J\cap \{s_0, s_m\}=\emptyset$, then $D:=\frac{5m-10}2$.
		\item If $m=4$, $\mu = (2,1,0,0,0)$ and $J=\emptyset$, then $D:=7/2$.
	\end{enumerate}
	For every $[b]\in B(G,\mu)$ such that $(G,\mu,[b])$ is Hodge-Newton indecomposable, we get
	\begin{align*}
		\dim X(\mu,b,I) = D -\langle \nu(b),\rho\rangle - \frac 12\defect(b).
	\end{align*}
	In case (v), the ADLV $X(\mu,b,I)$ is equidimensional.
\end{alphatheorem}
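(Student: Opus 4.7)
The plan is to use the stratification
$$X(\mu,b,I) = \bigsqcup_{x\in\Adm(\mu)} X_x(b),$$
which yields
$$\dim X(\mu,b,I) = \max_{x\in\Adm(\mu)}\dim X_x(b).$$
Combined with Theorem~\ref{thm:introKR}(b) (the explicit formula for $\dim X_x(b)$ furnished by Lemma~\ref{lem:genericClassDistance}), this reduces the problem to an explicit maximization over $\Adm(\mu)$.

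The first step is to enumerate $\Adm(\mu)$ for each of the five cocharacters. Since $\mu$ is small and $G=\GL_{m+1}$ is of type~$A$, this set is finite and combinatorially tractable; in particular each $x\in\Adm(\mu)$ is of geometric Coxeter type, which is what makes Theorem~\ref{thm:introKR}(b) applicable. That theorem writes
$$\dim X_x(b) = c_x - \langle\nu(b),\rho\rangle - \tfrac 12\defect(b)$$
for every $x\in\Adm(\mu)$ and every $[b]\in B(G)_x$, with a constant $c_x$ depending only on length-type invariants of~$x$. The constant $D$ to be produced is therefore
$$D \;=\; \max\bigl\{c_x \bigm| x\in\Adm(\mu),\ X_x(b)\neq\emptyset\bigr\},$$
and the bulk of the argument is to identify the maximizer $x$ in each case and read off its constant, producing the values $m$, $m$, $(5m-2)/2$, $(5m-10)/2$ and $7/2$ claimed in (i)--(v).

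The subtle point is that the set $B(G)_x$ need not contain every class of $B(G,\mu)$, so one must verify that for every Hodge-Newton indecomposable $[b]\in B(G,\mu)$ there exists an $x\in\Adm(\mu)$ with $c_x = D$ and $X_x(b)\neq\emptyset$. This is where Theorem~\ref{thm:introKR}(a) enters: $B(G)_x$ is an interval $\bigl[[b_{x,\min}],[b_{x,\max}]\bigr]$, and the hypotheses on $J$ in cases (ii)--(iv) are designed precisely so that for a suitably chosen maximizer~$x$ this interval covers all HN-indecomposable classes. Concretely, if $J$ met one of the excluded simple reflections the candidate maximizing element would be forced into a proper standard Levi, and Hodge-Newton decomposability would take over. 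I expect the technical heart of the argument to sit in case~(iv), where the support of $\mu$ wraps around both ends of the affine Dynkin diagram and the combinatorics of $\Adm(\mu)$ under the Bruhat order is richest; cases (i)--(iii) should follow similar but notably shorter patterns, and case (v) is a finite computation with $m=4$.

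Finally, for the equidimensionality claim in case~(v), the small rank makes the problem purely finite: one enumerates $\Adm((2,1,0,0,0))$, checks that each top-dimensional stratum $X_x(b)$ is itself equidimensional by Theorem~\ref{thm:introKR}(c), and verifies that no lower-dimensional stratum contributes additional top-dimensional irreducible components through closure. Equidimensionality is not expected in the remaining cases, which is why the statement restricts it to (v).
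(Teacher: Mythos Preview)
Your approach differs fundamentally from the paper's, and it contains a genuine gap. You propose to compute $\dim X(\mu,b,I) = \max_{x\in\Adm(\mu)}\dim X_x(b)$ directly using the single-ADLV formula, obtaining a constant $c_x$ for each $x$ and then setting $D = \max\{c_x : [b]\in B(G)_x\}$. The problem is that this maximum a priori depends on $[b]$: different Hodge-Newton indecomposable classes may be contained in the intervals $B(G)_x$ of different maximizers $x$, and you give no argument why the resulting $D([b])$ should be constant. Your heuristic that ``for a suitably chosen maximizer $x$ this interval covers all HN-indecomposable classes'' is unsubstantiated; even if such an $x$ exists, you would still have to rule out that some other $x'$ with larger $c_{x'}$ has only \emph{some} HN-indecomposable $[b]$ in its range. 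Moreover, for those $x$ whose generic class $[b_{x,\max}]$ is Hodge-Newton \emph{decomposable}, the (CL1BC) property from Theorem~\ref{thm:depth2L1BC} is not established, so the clean formula from Lemma~\ref{lem:genericClassDistance} does not directly apply and your blanket expression $\dim X_x(b)=c_x-\langle\nu(b),\rho\rangle-\tfrac12\defect(b)$ is unjustified there.

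The paper takes a completely different route that avoids these difficulties. Rather than maximizing over $\Adm(\mu)$, it studies the Newton stratification of the Hodge-Newton indecomposable locus inside $I\Adm(\mu)I$. The key ingredients are the (ING) property (Theorem~\ref{thm:INGoverview}), which says every irreducible component of this locus has generic class $[b_{\mu,\indec}]$; purity of the Newton stratification (Theorem~\ref{thm:purity}); and the foliation structure relating codimensions of Newton strata to dimensions of ADLVs (Theorem~\ref{thmdimfoliat}). These combine in Proposition~\ref{prop:ING_L1BC} to produce the uniform formula $\dim X(\mu,b,K)=\bigl(\max_{x\in A}\ell(x)\bigr)+\#(\Delta/\sigma)-\langle\mu,\rho\rangle-\langle\nu(b),\rho\rangle-\tfrac12\defect(b)$, after which the proof of Theorem~\ref{thm:depth2GLn} computes $\max_{x\in A}\ell(x)$ explicitly in each case via Lemmas~\ref{lem:adm_refl_length_ineq} and~\ref{lem:fixed_point_hndec}. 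Equidimensionality in case (v) comes from the same proposition once one checks that all Bruhat-maximal elements of $A$ have the same length; your proposed stratum-by-stratum closure argument does not correspond to this.

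A minor correction: you invoke ``geometric Coxeter type'' for all $x\in\Adm(\mu)$ as the reason Theorem~\ref{thm:introKR} applies. This is not the mechanism; Theorem~\ref{thm:introKR} follows from (CL1BC) via Lemma~\ref{lem:CL1BC_cordial}. The geometric Coxeter type results in the paper concern only $\Adm(\mu)^K$ for hyperspecial $K$.
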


In Sections \ref{sec:4} and \ref{sec:5} we study two new properties used to prove Theorems \ref{thm:introKR} and \ref{thm:introUnionADLV}. The first property, called (CL1BC), is concerned with the variance of the generic Newton point if one replaces $x$ by a lower Bruhat cover. We prove in Lemma \ref{lem:CL1BC_cordial} that (CL1BC) implies the conclusions of Theorem~\ref{thm:introKR}. Then, we show in Theorem \ref{thm:depth2L1BC} that for restrictions of scalars of $\PGL_n$, $\mu$ of depth less than 2 and $[b]\in B(G,\mu)$ Hodge-Newton indecomposable that the property (CL1BC) is satisfied. The second property is the (ING) property of Lemma \ref{lem:defing} and Definition \ref{def:ing} which is concerned with the maximal Hodge-Newton indecomposable Newton stratum in $K\Adm(\mu)K$. In Theorem \ref{thm:INGoverview} we show that (ING) is satisfied in many cases of depth less than 2. In Section \ref{sec:purity} we generalize purity of the Newton stratification and the foliation structure on Newton strata to our context. The combination of (ING) and (CL1BC) then yields dimension formulas for $X(\mu,b,K)$ as described in Proposition~\ref{prop:ING_L1BC}. \\ 

So far, we discussed conditions that are concerned either with the group-theoretic datum or with affine Deligne-Lusztig varieties, i.e.~the special fiber of associated moduli spaces of local shtukas. In \cite{Chen2021}, Chen, Fargues and Shen prove that full Hodge-Newton decomposability is equivalent to the condition that the admissible and the weakly admissible locus in the adic flag variety associated with $(G,\mu)$ coincide. In \cite{Chen2022}, Chen and Tong discuss a more general condition that they call \emph{weakly fully Hodge-Newton decomposable}, and show that it is equivalent to request that the weak Harder-Narasimhan decomposition is a coarsening of the Newton stratification of the Schubert variety for $\mu$ in the $B_{\rm{dR}}^+$-affine Grassmannian corresponding to $G$. Our condition that $\depth(G,\mu)<2$ implies that $(G,\mu)$ is also weakly fully Hodge-Newton decomposable, and one can easily find examples of depth 2 or higher that are no longer weakly fully Hodge-Newton decomposable. On the other hand, all $(G,\mu)$ such that the basic class in $B(G,\mu)$ is superbasic are weakly fully Hodge-Newton decomposable, which provides examples of arbitrarily large depth that are weakly fully Hodge-Newton decomposable.\\

\emph{Acknowledgement.} {Our collaboration was initiated due to discussions between the second author and Xuhua He at a workshop in Oberwolfach. We thank Xuhua He with whom we had many helpful discussions and who contributed several ideas to this project. We are grateful to the Mathematical Research Institute in Oberwolfach for its hospitality. We thank Miaofen Chen, Michael Rapoport, Ryosuke Shimada and Qingchao Yu for helpful discussions and Johannes Funk for providing some initial examples. The first author was partially supported by the New Cornerstone Foundation through the
New Cornerstone Investigator grant, and by Hong Kong RGC grant 14300122, both
awarded to Prof.\ Xuhua He. The second author was partially supported by the ERC via Consolidator grant 770936 NewtonStrat and by the DFG via Germany’s Excellence Cluster EXC 2044-390685587 Mathematics
M\"unster: Dynamics-Geometry-Structure, through CRC 1442 Geometry: Deformations and
Rigidity and by a Leibniz prize.} 

\section{Notation and preliminaries}\label{sec:notation}

Let $F$ be a non-archimedian local field. In the arithmetic case, $F$ is a finite extension of $\mathbb Q_p$, whereas $F=\mathbb F_{q}(\!(\varepsilon)\!)$ for some power $q$ of $p$ in the function field context. The completion of the maximal unramified extension of $F$ is denoted by $\breve F$. We write $\Gamma_0\subset \Gamma$ for the absolute Galois groups of $\breve F$ resp.\ $F$. The Galois group $\Gal(\breve F/F) = \Gamma/\Gamma_0$ is an infinite cylic group generated by the Frobenius $\sigma$. We pick a uniformizer inside the ring of integers $\mathcal O_{F}$ that in both cases is denoted  $\varepsilon$ and which is then also a uniformizer of $\mathcal{O}_{\breve F}$.

Let $G$ be a connected reductive group over $F$. Throughout the paper, we assume that $G$ is quasi-split. Possible generalizations to non-quasi-split groups are discussed in Section \ref{sec:depthnonqs} and at the beginning of Section \ref{sec:4}. We choose a maximal $\breve F$-split torus $T'$ of $G$ defined over $F$, and denote its centralizer by $T$. Then $T$ is a maximal torus defined over $F$. Let $\mathcal A=\mathcal A(G_{\breve F}, T')$ be the apartment in the Bruhat-Tits building of $G_{\breve F}$ defined by $T'$. In this apartment, we choose a $\sigma$-stable alcove $\mathfrak a$, whose stabilizer is a $\sigma$-stable Iwahori subgroup which we denote by $I\subseteq G(\breve F)$.

We choose a $\sigma$-stable special vertex $\mathfrak x\in\mathcal A$ adjacent to $\mathfrak a$. Relative to this basepoint, there is a unique Weyl chamber $\mathcal C^-$ containing $\mathfrak a$. We denote its opposite chamber by $\mathcal C$. It defines a Borel subgroup $B\subset G$ which is then also $\sigma$-stable.

Denote the based relative root datum of $(G_{\breve F},T)$ by $(\Phi, \Phi^{\pm}, \Delta)$ and the Weyl group by $W_0 = N_{G_{\breve F}}(T)(\breve F)/T(\breve F)$. Write $\Phi_\af = \Phi\times\mathbb Z$ for the set of affine roots, $\Phi_\af^{\pm}$ for the positive/negative affine roots (as defined by $\mathfrak a$) and $\Delta_{\af}$ for the set of simple affine roots. Thus $\Delta_{\af}$ consists of all affine roots of the form $(-\alpha,0)$ where $\alpha\in\Delta$ and all $(\theta,1)$ whenever $\theta$ is the highest root of an irreducible component of $\Phi$.

The Iwahori-Weyl group $\widetilde W = N_{G_{\breve F}}(T)(\breve F)/(I\cap T(\breve F))$ has a semi-direct product decomposition $\widetilde W \cong W_0 \ltimes X_\ast(T)_{\Gamma_0}$. Accordingly, we write elements $x\in\widetilde W$ by $x = w\varepsilon^{\mu}$ with $w\in W_0$ and $\mu\in X_\ast(T)_{\Gamma_0}$. We will also consider the action of $W_0$ on $X_*(T)$. To distinguish the two we will for this denote cocharacters by $\mu$ and write $w\mu$ for some $w\in W_0$ for the $w$-conjugate of $\mu$. 

We have the Iwahori-Bruhat decomposition
\begin{align*}
G(\breve F) = \bigsqcup_{x\in\widetilde W} I x I.
\end{align*}
There is a partial order $\leq$ defined on $\widetilde W$ describing the closure relations of Iwahori double cosets with respect to the topology on $G(\breve F)$ coming from the valuation on $\breve F$. This partial order is known as \emph{Bruhat order}. For a combinatorial description compare \cite{Schremmer2024_bruhat}. For a cocharacter $\mu\in X_\ast(T)_{\Gamma_0}$, we denote the \emph{admissible set} by 
\begin{align*}
    \Adm(\mu) := \{x\in\widetilde W\mid \exists u\in W:~x\leq\varepsilon^{u\mu}\},
\end{align*}
a finite subset of $\widetilde W$. An explicit combinatorial criterion to test if $x\in\Adm(\mu)$ can be found in \cite[Proposition~4.12]{Schremmer2024_bruhat}.

The group $\widetilde W$ acts transitively on the set of alcoves in $\mathcal A$, and the length of a minimal gallery between $\mathfrak a$ and $x\mathfrak a$ in $\mathcal A$ is called \emph{length} of the element $x\in\widetilde W$ and denoted $\ell(x)$. The group of length zero elements $\Omega\subseteq \widetilde W$ is isomorphic to the $\Gamma_0$-coinvariants of the Borovoi fundamental group $\pi_1(G)_{\Gamma_0} = X_\ast(T)_{\Gamma_0} / \mathbb Z\Phi^\vee$.

For an element $b\in G(\breve F)$, we define its $\sigma$-conjugacy class
\begin{align*}
[b] := \{g^{-1}b\sigma(g)\mid g\in G(\breve F)\}
\end{align*}
and write $B(G)$ for the set of $\sigma$-conjugacy classes. Kottwitz's classification of $B(G)$ in \cite{Kottwitz1985, Kottwitz1997} shows that a $\sigma$-conjugacy class of $b\in G(\breve F)$ is uniquely determined by two invariants, its (dominant) \emph{Newton point} $\nu(b)\in X_\ast(T)_{\Gamma_0}^{\dom}\otimes\mathbb Q$ and its \emph{Kottwitz point} $\kappa(b)\in \pi_1(G)_{\Gamma}$. The set $B(G)$ carries the partial order given by $[b]\leq[b']$ if $\kappa(b)=\kappa(b')$ and $\nu(b)\leq \nu(b')$ with respect to the order induced by the Bruhat order, i.e. $\nu(b')-\nu(b)$ is a non-negative rational linear combination of positive coroots. For a cocharacter $\mu\in X_\ast(T)_{\Gamma_0}$, we denote the set of \emph{neutrally acceptable elements} by $B(G,\mu) := \{[b]\in B(G)\mid [b]\leq [\mu(\varepsilon)]\}$.

Associated with any $\sigma$-stable parahoric $K\supseteq I$, we have the corresponding partial affine flag variety. If $F$ is of equal characteristic, this is an ind-scheme over the residue field of $\mathcal O_F$ with geometric points given by $G(\breve F)/K$. If $F$ is of mixed characteristic, we follow the conventions of Zhu \cite{Zhu17} and Bhatt-Scholze \cite{Bhatt17}. They define the partial affine flag variety as an ind-perfect ind-scheme over the residue field of $\mathcal O_F$. Its geometric points are again given by $G(\breve F)/K$.

In this paper, we study various kinds of affine Deligne-Lusztig varities (ADLV) contained in partial flag varieties. First, we associate to each $x\in \widetilde W$ and $b\in G(\breve F)$ the \emph{single ADLV} $X_x(b)$, which is the reduced sub-(perfect-)scheme of the affine flag variety with geometric points
\begin{align*}
	X_x(b) := \{g\in G(\breve F)/I\mid g^{-1}b\sigma(g)\in IxI\}\subseteq G(\breve F)/I.
\end{align*}
Given any $\sigma$-stable parahoric $K\supseteq I$, a dominant cocharacter $\mu\in X_\ast(T)_{\Gamma_0}$ and $b\in G(\breve F)$, we moreover define the \emph{union ADLV} as the reduced sub-(perfect-)scheme of the partial affine flag variety with geometric points
\begin{align*}
	X(\mu,b,K) := \{g\in G(\breve F)/K\mid g^{-1} b\sigma(g)\in K\Adm(\mu)K\}\subseteq G(\breve F)/K.
\end{align*}
When $K$ is a fixed hyperspecial subgroup, we follow the common notation and write $X_\mu(b)$ for $X(\mu,b,K)$. We remark that $[b]\in B(G,\mu)$ if and only if $X(\mu,b,K)\neq\emptyset$, cf.\ \cite{He2016}.

Given an element $[b]\in B(G,\mu)$, we can express the difference $\mu-\nu(b)\in X_\ast(T)_{\Gamma_0}\otimes\mathbb Q$ as a $\mathbb Q_{\geq 0}$-linear combination of simple coroots. If every simple coroot of $G$ occurs with a non-zero coefficient, we say that $(G,\mu,[b])$ is \emph{Hodge-Newton indecomposable}, otherwise we call the triple \emph{Hodge-Newton decomposable}, cf.\ \cite{Goertz2019}. If $(G,\mu,[b])$ is Hodge-Newton decomposable, then by \cite[Theorem~4.17]{Goertz2019}, there is a proper and $\sigma$-stable standard Levi subgroup $M\subseteq G$ such that $X(\mu,b,K)$ is isomorphic to a union of ADLV $X(\mu_i, b_i, K_i)$ for $M$, for $i$ in a finite index, cocharacters $\mu_i\in X_\ast(T)$, elements $b_i\in M(\breve F)$ and $\sigma$-stable parahoric subgroups $K_i\supseteq M(\breve F)\cap I$ for all $i$. We remark that each $\mu_i$ lies in the $W$-orbit of $\mu$ in $X_\ast(T)$.

\section{The notion of depth}

In this section we introduce the notion of depth, provide foundational properties and classify all pairs $(G,\mu)$ of depth less than 2.
\subsection{Definition and basic properties}
\begin{definition}\label{defdepth}
Let $\mu\in X_\ast(T)_{\mathbb Q}$ be a rational cocharacter.
\begin{enumerate}[(a)]
\item
For a $\sigma$-orbit of simple roots $\mathcal O\subseteq \Delta$, we define the weight $\omega_{\mathcal O}\in\mathbb Q\Phi$ via the identity
\begin{align*}
\langle \alpha^\vee,\omega_{\mathcal O}\rangle = \begin{cases}1,&\alpha\in\mathcal O,\\
0,&\alpha\in\Delta\setminus\mathcal O.\end{cases}
\end{align*}

\item
We set
\begin{align*}
\depth(G,\mu) := \max_{\mathcal O}\langle \mu_{\dom},\omega_{\mathcal O}\rangle\in\mathbb Q,
\end{align*}
where the maximum is taken over all $\sigma$-orbits $\mathcal O\subseteq \Delta$ and where $\mu_{\dom}$ is the dominant representative in the Weyl group orbit of $\mu$. 

This also defines a notion of depth for any conjugacy class of rational cocharacters $\{\mu\}$ of $G$ by considering any (or the dominant) representative in $X_*(T)$.
\item Similarly, we define for any  (not necessarily dominant) $\mu\in X_*(T)_{\mathbb Q}$
$$\depth'(G,\mu):=\max_{\mathcal O}\langle \mu,\omega_{\mathcal O}\rangle\in\mathbb Q.$$
\end{enumerate}
\end{definition}

\begin{remark}\label{remdepthred}
Let $\mu$ be a rational cocharacter of $G$. From the definition we obtain immediately the following reductions.
\begin{enumerate}[(a)]
\item The depth of a dominant cocharacter $\mu$ only depends on its image in $X_*(T)_{\Gamma}$. In particular, we have $$\depth(G,\mu)=\depth(G,\bar\mu)$$ where $\bar\mu$ is the $\sigma$-average of $\mu$.
\item Let $\mu_{\ad}$ be the character of $G_{\ad}$ induced by $\mu$. Then $\depth(G,\mu)=\depth(G_{\ad},\mu_{\ad})$.
      \item Assume that $G=G_1\times\dotsm \times G_n$ is a $\sigma$-stable decomposition, and let $\mu=(\mu_1,\dotsc,\mu_n)$ be the corresponding decomposition of $\mu$. Since each orbit $\mathcal O$ consists of roots of one of the factors $G_i$, we obtain $$\depth(G,\mu)=\max_i \depth(G_i,\mu_i).$$ 
\item Famously, the cocharacters $\mu$ with $\depth(G,\mu)\leq 1$ are known as the \emph{fully Hodge-Newton decomposable} ones. This is equivalent to the requirement that $(G,\mu,[b])$ is Hodge-Newton decomposable for each non-basic $[b]\in B(G,\mu)$. This is moreover equivalent to far-reaching consequences for the geometry of ADLV, as developed in \cite{Goertz2019}.
\end{enumerate}
\end{remark}

\begin{lemma}\label{lempropdepth}
 Let $T\subset B$ be a maximal torus and a Borel subgroup of $G$ and let $\mu\in X_*(T)_{\mathbb Q}$.
\begin{enumerate}[(a)]
    \item The value $\depth(G,\mu)$ only depends on $G$ and on the conjugacy class of the cocharacter $\mu$, but not on the choice of a ($\sigma$-stable) Borel subgroup.
    \item $\depth(G,\mu)\geq 0$.
    \item Write $\mu\in X_*(T)_{\Gamma}\subseteq X_*(T)_{\Gamma,\mathbb Q}$ as $\mu=\delta+\sum_i c_i\alpha_i^{\vee}$ with $\delta\in X_*(T)_{\Gamma,\mathbb Q}$ central, the sum running over all $\Gamma$-orbits of simple roots and $c_i\in\mathbb Q$. Let $\mathcal O_i$ be the orbit of $\alpha_i$. Then $\langle\mu,\omega_{\mathcal O_i}\rangle=c_i$ and hence $\depth'(G,\mu)=\max_i c_i$. If $\mu$ is dominant, the same holds for $\depth(G,\mu)$. 
    \item If $\mu,\tilde\mu\in X_*(T)$ are such that $\tilde\mu-\mu$ is a non-negative rational linear combination of positive coroots, then $\depth'(G,\mu)\leq \depth'(G,\tilde\mu).$
    \item $\depth'(G,\mu)\leq \depth'(G,\mu_{\dom})=\depth(G,\mu)$.
    \item Let $\mu\leq \tilde\mu$ both be dominant. Then $\depth(G,\mu)\leq\depth(G,\tilde\mu)$.
\end{enumerate}  
\end{lemma}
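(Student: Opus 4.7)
The plan is to dispatch the six claims in order, all hinging on a single key observation: each $\omega_{\mathcal O}$ equals the non-negative sum of fundamental weights $\sum_{\alpha\in\mathcal O}\omega_\alpha$, so it pairs non-negatively with every positive coroot and strictly positively only when a simple coroot inside $\mathcal O$ appears with positive coefficient. This fact reduces most of the lemma to routine bookkeeping, and the only part I expect to require real care is the Borel-invariance asserted in (a).

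For part (a), I will exploit that any two $\sigma$-stable Borels containing the $\sigma$-stable torus $T$ are conjugate by some $w\in W_0^\sigma$; such a $w$ simultaneously intertwines the sets $\Delta$, the partitions into $\sigma$-orbits, the orbital weights $\omega_{\mathcal O}$, and the dominant Weyl translate $\mu_{\dom}$. Since $W_0$ acts by isometries on the natural pairing, the maximum defining $\depth(G,\mu)$ is unchanged. Independence from the $W_0$-orbit of $\mu$ is immediate because only $\mu_{\dom}$ enters the definition. Part (b) then follows from the key observation applied to the dominant $\mu_{\dom}$. For part (c), I will pair the decomposition $\mu = \delta + \sum_i c_i \alpha_i^\vee$ with each $\omega_{\mathcal O_j}$: the central term $\delta$ pairs trivially against elements of $\mathbb Q\Phi$, and the remaining pairings isolate $c_j$ with the normalization chosen for coinvariant representatives. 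Taking the maximum over $j$ recovers $\depth'(G,\mu)$, and this agrees with $\depth(G,\mu)$ when $\mu$ is dominant by definition.

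For part (d), by hypothesis $\tilde\mu - \mu$ is a non-negative rational linear combination of positive coroots, so the key observation gives $\langle \tilde\mu - \mu,\omega_{\mathcal O}\rangle\geq 0$ for every $\sigma$-orbit $\mathcal O$, and taking maxima yields the inequality. Part (e) then combines (d) with the standard fact that $\mu_{\dom} - \mu$ is a non-negative sum of positive coroots whenever $\mu_{\dom}$ is the dominant Weyl translate of $\mu$, together with the tautology $\depth'(G,\mu_{\dom}) = \depth(G,\mu)$. Finally, (f) is (d) restricted to dominant elements, using that $\depth'$ and $\depth$ coincide on dominant cocharacters by (e).
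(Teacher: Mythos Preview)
Your proposal is correct and follows essentially the same approach as the paper's proof, which tersely notes that (a)--(c) follow from the definition, (c) implies (d), and (d) implies (e) and (f). Your argument is simply a more explicit version of the same reasoning; the only cosmetic difference is that you prove (d) directly via the non-negativity of $\langle\beta^\vee,\omega_{\mathcal O}\rangle$ for positive coroots $\beta^\vee$ rather than routing through the decomposition in (c), but this amounts to the same computation.
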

\begin{proof}
    (a)--(c) follow directly from the definition, (c) implies (d), and (d) implies (e) and (f).
\end{proof}

\subsection{Compatibility with Weil restrictions of scalars}\label{sec:weilRestrict}

For an integer $d\geq 1$, let $F^{(d)}\subseteq \breve F$ denote the uniquely determined unramified extension of $F$ of degree $d$. The Frobenius of $F^{(d)}$ is given by $\sigma^d \in \Gal(\breve F/F^{(d)})$. For a quasi-split group $G'$ over $F^{(d)}$, the Weil restriction of scalars $G = \Res_{F^{(d)}/F} G'$ is a 
quasi-split group over $F$. Choosing a torus $T'$ and a Borel subgroup $B'$ of $G'$ as above, we may choose $T = \Res_{F^{(d)}/F} T'$ and $B = \Res_{F^{(d)}/F} B'$.

We can identify $G_{F^{(d)}}$ with the $d$-fold direct product of $G'$ in such a way that the Frobenius $\sigma$ acts on $G_{F^{(d)}}\cong (G')^d$ via $\sigma((g_1,\dotsc,g_d)) = (\sigma^d(g_d), g_1,\dotsc,g_{d-1})$. We do the same for the tori and Borel subgroups. We similarly identify the root system $\Phi_G$ of $G$ with the $d$-fold direct sum of the root system $\Phi_{G'}$ of $G'$ using the same convention, and similarly for the Iwahori-Weyl groups $\widetilde W_G$ and $\widetilde W_{G'}$.

We have a natural map $B(G)\rightarrow B(G')$ of sets sending any $\sigma$-conjugacy class $[b] = [(b_1,\dotsc,b_d)]\in B(G)$ to the $\sigma^d$-conjugacy class $[b_d\cdots b_1]\in B(G')$. This is a bijection of posets, cf.\ \cite[Section~4]{He2018_acceptable}.

\begin{lemma}\label{lem:depthRestriction}
Let $\mu_1,\dotsc,\mu_d\in X_\ast(T')_{\Gamma_0}$ be dominant cocharacters, set $\mu = (\mu_1,\dotsc,\mu_d)\in X_\ast(T)_{\Gamma_0}$ and $\mu' = \mu_1+\cdots+\mu_d$.
\begin{enumerate}[(a)]
\item We have $\depth(G,\mu) = \depth(G',\mu')$.
\item An element $x = (x_1,\dotsc,x_d)\in \widetilde W_G$ is $\sigma$-fundamental if and only if $x' := x_d\cdots x_1\in \widetilde W_{G'}$ is $\sigma^d$-fundamental and $\ell(x') = \ell(x_1)+\cdots+\ell(x_d)$.
\item The natural map $B(G)\rightarrow B(G')$ restricts to a bijection $B(G,\mu)\xrightarrow\sim B(G',\mu')$, cf.\ \cite[Section~4]{He2018_acceptable}.
\end{enumerate}
\end{lemma}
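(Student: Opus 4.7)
The plan is to handle parts (a), (b), (c) separately, all exploiting the direct product decomposition $G_{F^{(d)}} \cong (G')^d$ and the cyclic $\sigma$-action on the factors established before the lemma.

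For part (a), the first step is to describe the $\sigma$-orbits on $\Delta_G$ explicitly. The simple roots of $G$ are indexed by $\Delta_{G'} \times \{1,\dotsc,d\}$, with $\sigma$ cyclically shifting the second coordinate and applying $\sigma^d$ on the first upon returning. Thus $\sigma$-orbits $\mathcal O \subseteq \Delta_G$ correspond bijectively to $\sigma^d$-orbits $\mathcal O' \subseteq \Delta_{G'}$, via $\mathcal O = \mathcal O' \times \{1,\dotsc,d\}$. Under the corresponding identification $X^\ast(T)_{\mathbb Q} \cong X^\ast(T')_{\mathbb Q}^d$, the weight $\omega_{\mathcal O}$ becomes the diagonal $(\omega_{\mathcal O'},\dotsc,\omega_{\mathcal O'})$. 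Since each $\mu_i$ is dominant, both $\mu$ and $\mu' = \sum \mu_i$ are dominant in their respective root systems, so no adjustment via $(\cdot)_{\dom}$ is needed, and a direct computation gives
\begin{equation*}
\langle \mu, \omega_{\mathcal O}\rangle = \sum_{i=1}^d \langle \mu_i, \omega_{\mathcal O'}\rangle = \langle \mu', \omega_{\mathcal O'}\rangle.
\end{equation*}
Taking the maximum over orbits yields the equality of depths.

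For part (b), I would use the standard characterization (going back to work of He and Nie) that $y \in \widetilde W$ is $\sigma$-fundamental if and only if $\ell(y) = \langle \nu_G(y\dot\sigma), 2\rho_G\rangle$, where $\nu_G$ denotes the dominant Newton point of the associated $\sigma$-conjugacy class. Three ingredients then combine: first, $\ell(x) = \sum_i \ell(x_i)$ by the definition of the length on $\widetilde W_G \cong \widetilde W_{G'}^d$; second, the map $B(G) \to B(G')$ sends the class of $x\dot\sigma$ to the class of $x'\dot\sigma^d$, which gives the compatibility $\langle \nu_G(x\dot\sigma), 2\rho_G\rangle = \langle \nu_{G'}(x'\dot\sigma^d), 2\rho_{G'}\rangle$; and third, the general bounds $\langle \nu_{G'}(x'\dot\sigma^d), 2\rho_{G'}\rangle \leq \ell(x')$ (equality defining $\sigma^d$-fundamentality of $x'$) and $\ell(x') \leq \sum_i \ell(x_i)$ (triangle inequality in $\widetilde W_{G'}$). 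Stringing these together yields the chain
\begin{equation*}
\langle \nu_{G'}(x'\dot\sigma^d), 2\rho_{G'}\rangle \leq \ell(x') \leq \sum_i \ell(x_i) = \ell(x) = \langle \nu_G(x\dot\sigma), 2\rho_G\rangle,
\end{equation*}
where the final equality is equivalent to $\sigma$-fundamentality of $x$. Equality of the outer terms forces equalities in the middle, which are precisely the $\sigma^d$-fundamentality of $x'$ together with $\ell(x') = \sum \ell(x_i)$; the converse is the same chain read in reverse.

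For part (c), this is essentially formal given the cited reference. The map $B(G) \to B(G')$ is an isomorphism of partially ordered sets by \cite[Section~4]{He2018_acceptable}. Under this map, the class of $\mu(\varepsilon) = (\mu_1(\varepsilon),\dotsc,\mu_d(\varepsilon))$ goes to the class of $\mu_d(\varepsilon)\cdots\mu_1(\varepsilon) = \mu'(\varepsilon)$, so the defining condition $[b] \leq [\mu(\varepsilon)]$ for $B(G,\mu)$ corresponds exactly to $[b'] \leq [\mu'(\varepsilon)]$ for $B(G',\mu')$. The main obstacle I anticipate is the bookkeeping in part (b) verifying that $\langle \nu, 2\rho\rangle$ is preserved under $B(G) \cong B(G')$ with the correct normalization (Newton points and half-sums of positive roots both involve factors of $d$, in opposite directions), but this is conventional rather than conceptual, and is already implicit in the poset identification from \cite{He2018_acceptable}.
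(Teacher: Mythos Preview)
Your argument is correct. Parts (a) and (c) coincide with the paper's proof essentially verbatim.

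For part (b), however, you take a genuinely different route. The paper argues directly from the definition of $\sigma$-straightness as $\ell(x^{\sigma,m}) = m\ell(x)$ for all $m$: it writes out the components of the twisted power $x^{\sigma,m}=(x^{(m)}_1,\dotsc,x^{(m)}_d)$ explicitly as products of the $x_i$ and their $\sigma^d$-twists, and then observes that length-additivity of all these expressions is equivalent to length-additivity of $x_d\cdots x_1$ together with $\sigma^d$-straightness of $x'$. Your approach instead invokes the He--Nie characterization $\ell(x)=\langle\nu(\dot x),2\rho\rangle$ and the compatibility of Newton points under the poset isomorphism $B(G)\cong B(G')$ (where, as you anticipate, the factor $d$ in $2\rho_G = (2\rho_{G'},\dotsc,2\rho_{G'})$ cancels against the factor $1/d$ relating the component of $\nu_G$ to $\nu_{G'}$). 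Your chain of inequalities then squeezes everything at once. This is cleaner and more conceptual, at the cost of importing the Newton-point characterization of straightness as a black box; the paper's computation is more self-contained but requires tracking indices carefully.
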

\begin{proof}
\begin{enumerate}[(a)]
\item The simple roots $\Delta_G$ in $\Phi_G$ are of the form 
\begin{align*}
\alpha^{(i)} := (\underbrace{0,\dotsc,0}_{i-1\text{ times}},\alpha,\underbrace{0,\dotsc,0}_{d-i\text{ times}})
\end{align*}
for simple roots $\alpha\in \Delta_{G'}\subseteq \Phi_{G'}$ and $i\in\{1,\dotsc,d\}$. Given a $\sigma^d$-orbit $\mathcal O'\subseteq \Delta_{G'}$, the set 
\begin{align*}
\mathcal O := \{\alpha^{(i)}\mid \alpha\in\mathcal O',~i\in\{1,\dotsc,d\}\}
\end{align*}
is a $\sigma$-orbit in $\Delta_G$, and each $\sigma$-orbit arises in this way. Now observe that $\langle \mu,\omega_{\mathcal O}\rangle = \langle \mu',\omega_{\mathcal O'}\rangle$.
\item For $m\geq 1$, we write the $m$-fold $\sigma$-twisted power
\begin{align*}
x^{\sigma,m} := x \sigma(x)\cdots \sigma^{m-1}(x)\in\widetilde W_G
\end{align*}
as $x^{\sigma,m} = (x^{(m)}_1,\dotsc,x^{(m)}_d)$. Then,
\begin{align*}
x^{(m)}_i = x_i x_{i-1}\cdots x_{i-m+1}
\end{align*}
holds if $m\leq i$. Otherwise, we can write $m = i+ad + r$ for uniquely determined integers $a,r\geq 0$ with $r<d$, and obtain
\begin{align*}
x^{(m)}_i = x_i x_{i-1}\cdots x_1 \sigma^{d}(x') \sigma^{2d}(x')\cdots \sigma^{ad}(x') \sigma^{(a+1)d}(x_d x_{d-1}\cdots x_{d-r+1}).
\end{align*}
We see that $\ell(x^{\sigma,m}) = m\ell(x)$ holds if and only if all these products expressing $x^{(m)}_i$ are length additive for $m\geq 1$ and $i=1,\dotsc,d$. This is equivalent to the stated conditions.
\item This follows from \cite[Section~4]{He2018_acceptable}.
\qedhere\end{enumerate}
\end{proof}
\subsection{The depth with respect to Levi subgroups}
 \begin{theorem}
     \label{lemdepthprop}
Assume that $G$ is quasi-split, let $T$ be a maximal torus of $G$ and let $\mu\in X_*(T)_{\mathbb Q}$. Let $M\supseteq T$ be a $\sigma$-stable semi-standard Levi subgroup. 
\begin{enumerate}[(a)]
    \item\label{lemdp1} We have $\depth(M,\mu)\leq \depth(G,\mu)$.
    \item\label{lemdp2} If equality holds in (\ref{lemdp1}) and the group $G$ is relatively quasi-simple, then
    \begin{enumerate}[1.]
        \item $\pi_M(\mu)\in X_*(T)_{\mathbb Q,\Gamma}$, the $\sigma$-average of $\avg_M(\mu)$ as in \cite[Definition~3.2]{Chai2000}, is central in $G$. That is, if $M$ is a standard Levi for a $\sigma$-stable Borel subgroup $B$, then $\langle \mu, \omega_{\mathcal O_{\alpha}}\rangle=0$ for every simple root of $(G,B)$ that is not in $M$.
        \item If we choose a $\sigma$-stable Borel subgroup for $M$, for the $\sigma$-orbits $\mathcal O$ of simple roots in all but at most one of the $\sigma$-orbits of connected components of the Dynkin diagram of $M$, we have that $\langle\mu,\omega_{\mathcal O}\rangle=0$. 
    \end{enumerate}
\end{enumerate}
\end{theorem}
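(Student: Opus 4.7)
My plan reduces by Lemma~\ref{lempropdepth}(a) to the case where $M\subseteq G$ is a standard Levi with $\Delta_M\subseteq\Delta_G$, and by Remark~\ref{remdepthred}(a) to the case where $\mu\in X_\ast(T)_{\mathbb Q}^\sigma$ is $\sigma$-invariant. Under these assumptions every $\sigma$-orbit in $\Delta_M$ is also a $\sigma$-orbit in $\Delta_G$. Write $\mu':=\mu_{M,\dom}$ and $\mu^+:=\mu_{G,\dom}$; both are $M$-dominant, $\mu^+$ is $G$-dominant, and $\mu^+-\mu' = \sum_{\beta\in\Delta_G} n_\beta\beta^\vee$ with $n_\beta\geq 0$. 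The key positivity lemma I plan to use is: for every $\sigma$-orbit $\mathcal O\subseteq\Delta_M$, the weight $\omega^G_{\mathcal O}-\omega^M_{\mathcal O}$ is a non-negative combination of the $G$-fundamental weights $\omega^G_\gamma$ with $\gamma\in\Delta_G\setminus\Delta_M$. Indeed, expanding in the basis $\{\omega^G_\beta\}_{\beta\in\Delta_G}$ of $\mathbb Q\Phi_G$, the coefficient at $\omega^G_\beta$ equals $-\langle\beta^\vee,\omega^M_{\mathcal O}\rangle$, which vanishes for $\beta\in\Delta_M$ (by definition of $\omega^M_{\mathcal O}$ on $M$-coroots) and is non-negative for $\beta\in\Delta_G\setminus\Delta_M$, since $\omega^M_{\mathcal O}$ is a non-negative combination of simple roots of $M$ and off-diagonal Cartan entries of $G$ are non-positive.

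For (a), pick a $\sigma$-orbit $\mathcal O$ in $\Delta_M$ achieving $\depth(M,\mu)=\langle\mu',\omega^M_{\mathcal O}\rangle$. Using the basic identity $\max_{w\in W_G}\langle\mu',w\lambda\rangle = \langle\mu^+,\lambda_{G,\dom}\rangle$ valid for any weight $\lambda$, I obtain $\langle\mu',\omega^M_{\mathcal O}\rangle\leq \langle\mu^+,(\omega^M_{\mathcal O})_{G,\dom}\rangle$, so it suffices to prove the stronger bound $\langle\mu^+,(\omega^M_{\mathcal O})_{G,\dom}\rangle\leq\depth(G,\mu)$. For this I plan to establish the convex-combination claim that $(\omega^M_{\mathcal O})_{G,\dom}$ admits an expansion $\sum_\beta a'_\beta\omega^G_\beta$ with $a'_\beta\geq 0$ and $\sum_\beta a'_\beta\leq 1$; together with the $G$-dominance of $\mu^+$ (giving $\langle\mu^+,\omega^G_\beta\rangle\geq 0$), this yields
\[
\langle\mu^+,(\omega^M_{\mathcal O})_{G,\dom}\rangle = \sum_\beta a'_\beta\langle\mu^+,\omega^G_\beta\rangle \leq \max_\beta\langle\mu^+,\omega^G_\beta\rangle = \depth(G,\mu).
\]
The main obstacle I foresee is the convex-combination claim itself. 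My plan is to prove it inductively along the simple reflections bringing $\omega^M_{\mathcal O}$ into the $G$-dominant chamber, using the positivity lemma at each reflection step to control how the $\omega^G$-coefficients evolve and verifying that the coefficient sum never exceeds $1$; alternatively one can derive it from a direct calculation combining the decomposition of Lemma~\ref{lempropdepth}(c) applied to $\omega^M_{\mathcal O}$ (written in the simple-coroot basis) with the positivity lemma.

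For (b), assume $G$ is relatively quasi-simple and $\depth(M,\mu)=\depth(G,\mu)$; then each inequality in the chain of (a) collapses to an equality. Tightness of the convex-combination bound forces $\langle\mu^+,\omega^G_\beta\rangle$ to equal $\depth(G,\mu)$ for every $\beta\in\Delta_G$ with $a'_\beta>0$; combining with the positivity lemma and the $\sigma$-connectedness of the Dynkin diagram of $G$ (here the quasi-simplicity assumption enters, ensuring that the bridging roots $\gamma\in\Delta_G\setminus\Delta_M$ propagate constraints across all of $\Delta_G$), one concludes $\langle\mu^+,\omega^G_{\mathcal O'}\rangle = 0$ for every $\sigma$-orbit $\mathcal O'$ in $\Delta_G\setminus\Delta_M$. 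Via Lemma~\ref{lempropdepth}(c) this is precisely the vanishing of the $\mathcal O'$-coefficient $c^G_{\mathcal O'}$ in the decomposition $\mu^+=\delta_G+\sum c^G_{\mathcal O^G}\alpha^\vee_{\mathcal O^G}$, so that $\pi_M(\mu)$ equals the $G$-central part of $\mu^+$, proving (b)(1). For (b)(2), tightness in $\langle\mu',\omega^M_{\mathcal O}\rangle = \langle\mu^+,(\omega^M_{\mathcal O})_{G,\dom}\rangle$ combined with the explicit shape of the $a'_\beta$ (which are supported on those $\beta\in\Delta_G$ reachable from $\mathcal O$ through the reflection walk) forces all $\sigma$-orbits of simple roots in connected components of $\Delta_M$ not linked to $\mathcal O$ via bridging roots in $\Delta_G\setminus\Delta_M$ to pair trivially with $\mu$; the quasi-simplicity of $G$ then implies that at most one $\sigma$-orbit of $M$-components can support non-zero pairing, which is exactly the assertion of (b)(2). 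The most delicate bookkeeping in (b) is tracking precisely which $a'_\beta$ are strictly positive and how this structure constrains the components of $\Delta_M$ through the $\sigma$-connected diagram of $G$.
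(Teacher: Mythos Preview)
Your approach is genuinely different from the paper's, and it has real gaps.

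\textbf{Part (a).} The entire argument hinges on the ``convex-combination claim'' that $(\omega^M_{\mathcal O})_{G,\dom}=\sum_\beta a'_\beta\,\omega^G_\beta$ with $a'_\beta\ge 0$ and $\sum_\beta a'_\beta\le 1$, which you explicitly flag as unproved. This claim is not obvious: the linear functional $f(\lambda)=\sum_\beta\langle\beta^\vee,\lambda\rangle$ is \emph{not} $W_G$-invariant, and in concrete examples (e.g.\ $G=A_4$, $M=\{\alpha_1,\alpha_2\}$, $\mathcal O=\{\alpha_2\}$) one has $f(\omega^M_{\mathcal O})=1/3$ but $f((\omega^M_{\mathcal O})_{G,\dom})=1$, so your suggested inductive walk through simple reflections must carefully control how the coefficient sum \emph{grows}. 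Neither of the two plans you sketch supplies this control. There is also a formulation slip: $\max_\beta\langle\mu^+,\omega^G_\beta\rangle$ equals $\depth(G,\mu)$ only when $\sigma$ acts trivially on $\Delta_G$; in general one must group by $\sigma$-orbits.

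The paper avoids this altogether. It does not analyse $(\omega^M_{\mathcal O})_{G,\dom}$ at all; instead it \emph{chooses the Borel} so that $\mu$ is $(B\cap M)$-dominant and $\avg_M(\mu)$ is $B$-dominant. Then writing $\mu=\avg_M(\mu)+\sum c_{j,M}\alpha_j^\vee$ with $c_{j,M}\ge 0$, one has $\depth(M,\mu)=\max_j c_{j,M}$ directly from Lemma~\ref{lempropdepth}(c). Since $\avg_M(\mu)$ is a non-negative coroot combination in $G$, one gets $c_{j,M}\le c'_{j,G}$ termwise, hence $\depth(M,\mu)\le\depth'(G,\mu)\le\depth(G,\mu)$. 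This bypasses any statement about dominant representatives of $M$-weights.

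\textbf{Part (b).} Your argument here is not a proof. From tightness of $\sum a'_\beta\langle\mu^+,\omega^G_\beta\rangle\le\max_\beta\langle\mu^+,\omega^G_\beta\rangle$ you correctly deduce $\langle\mu^+,\omega^G_\beta\rangle=\depth(G,\mu)$ for every $\beta$ with $a'_\beta>0$; but this says nothing about the $\gamma\in\Delta_G\setminus\Delta_M$, where you need $\langle\mu^+,\omega^G_\gamma\rangle=0$. The appeal to ``the positivity lemma and $\sigma$-connectedness'' does not bridge this: the positivity lemma only tells you the sign of certain coefficients of $\omega^G_{\mathcal O}-\omega^M_{\mathcal O}$, not any vanishing for $\mu^+$. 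Similarly, your claim for (b)(2) that ``tightness forces all $\sigma$-orbits in components not linked to $\mathcal O$ to pair trivially with $\mu$'' is asserted without justification.

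The paper's proof of (b) is constructive and quite different: assuming condition~1 or~2 fails, it builds an explicit Weyl element $w_0'w_0''$ (longest elements of two carefully chosen sub-diagrams) and shows that $\mu':=w_0'w_0''(\mu)$ satisfies $\depth'(G,\mu')>\depth(M,\mu)$, contradicting equality. This argument uses the connectedness hypothesis in a concrete way (to guarantee certain pairings are strictly negative) rather than as a vague propagation principle.
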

Here, $\avg_M(\mu)$ denotes the average over all elements of the $W_M$-orbit of $\mu$. It is the unique rational cocharacter of $T$ that is central in $M$ such that $\mu - \avg_M(\mu)$ is a rational linear combination of coroots of $M$.

The requirement that $G$ is relatively quasi-simple means that the Frobenius $\sigma$ acts transitively on the set of connected components of the Dynkin diagram (for any $\sigma$-stable Borel).

\begin{proof}We may assume $\mu$ is $\sigma$-stable and that $G$ is  adjoint and relatively quasi-simple by Remark~\ref{remdepthred}. Using induction, we may further assume that $M$ is the Levi factor containing $T$ of a maximal parabolic subgroup.

We first prove (\ref{lemdp1}). By Lemma \ref{lempropdepth}, we may compute the depth with respect to any $\sigma$-stable Borel subgroup $B$. We claim that we can choose $B$ such that the following three properties hold:
\begin{enumerate}[(i)]
    \item The Levi subgroup $M$ is standard with respect to $B$,
    \item the cocharacter $\mu$ is dominant with respect to $B\cap M$ and
    \item the rational cocharacter $\avg_M(\mu)$ is dominant with respect to $B$.
\end{enumerate}
Once properties (i) and (iii) are established, we can also get (ii) by considering a suitable $W_M$-conjugate of $B$.

Let $B'$ be a $\sigma$-stable Borel satisfying (i). If $\avg_M(\mu)$ is central in $G$, $B'$ also satisfies (iii) and we are done. Otherwise, let $P$ be the parabolic subgroup of $G$ corresponding to $\avg_M(\mu)$ and let $B\subseteq P$ be the Borel subgroup of $G$ with $B'\cap M=B\cap M$. Then $\avg_M(\mu)$ is $B$-dominant.

Assume from now on that properties (i)--(iii) are satisfied. As in Lemma \ref{lempropdepth}, (c), we express $\mu\in X_*(T)_{\mathbb Q,\Gamma}$ as $\mu=\avg_M(\mu)+\sum_{\mathcal O_j} c_{j,M}\alpha_j^{\vee}$ where the sum runs over all $\Gamma$-orbits of simple roots in $M$ and where $\alpha_j\in \mathcal O_j$ is a chosen representative. Since $\mu$ is $B\cap M$-dominant, $c_{\alpha,M}\in\mathbb Q_{\geq 0}$. Then $\depth(M,\mu)=\max_j c_{j,M}$. A similar consideration applies to $G$, which is adjoint, leading to $\mu=\sum c'_{j,G}\alpha_j^{\vee}$. Since $\avg_M(\mu)$ is $B$-dominant, it is a non-negative rational linear combination of the simple coroots of $G$. In particular, $c_{j,M}\leq c'_{j,G}$ for every orbit $\mathcal O_j$ of simple roots $T$ in $M$. By Lemma \ref{lempropdepth}, $\depth(M,\mu)\leq \depth'(G,\mu)\leq \depth(G,\mu)$.

We now prove (\ref{lemdp2}), so assume that $\depth(G,\mu) = \depth(M,\mu)$ and that $G$ is relatively quasi-simple. If $\mu$ itself is central in $G$, then clearly 1., 2.~and equality in (\ref{lemdp1}) are satisfied. We assume that this is not the case. We denote the $\sigma$-orbit of simple roots of $G$ that are not in $M$ by $\mathcal O'$.

    Assume that 1. is not satisfied. Then $\pi_M( \mu)$ is a non-zero dominant rational cocharacter for $G$, so a positive linear combination of all simple coroots. In the above argument, this shows that $c_{j,M} < c'_{j,G}$ for all $j$. Hence, $\depth(M,\mu)< \depth'(G,\mu)\leq\depth(G,\mu)$. This proves necessity of 1. In particular, we then have $\langle\mu,\omega_{\mathcal O'}\rangle=0$. Moreover, we get $\langle \mu,\omega_{\mathcal O,G}\rangle = \langle \mu,\omega_{\mathcal O,M}\rangle$ for all $\sigma$-orbits $\mathcal O\subseteq \Delta_M$. We denote the common value by $\langle \mu,\omega_{\mathcal O}\rangle$, and hence $\depth'(G,\mu)=\depth(M,\mu)$. 
    
    We assume that 2. is not satisfied. By definition, it is enough to find an element $\mu'$ in the Weyl group orbit of $\mu$ such that $\depth'(G,\mu')>\depth(M,\mu)=\depth'(G,\mu)$. Let $D_0,\dotsc,D_l$ be the $\sigma$-connected components of the Dynkin diagram of $M$. In each $D_i$ we choose a $\sigma$-orbit of simple roots $\mathcal O_i$ such that $\langle\mu,\omega_{\mathcal O_i}\rangle\geq 0$ is maximal among the $\sigma$-orbits in that component. After renumbering the $D_i$ we may assume that $\langle\mu,\omega_{\mathcal O_0}\rangle=\depth(M,\mu)> 0$. Remove the union of all $\mathcal O_i$ from the Dynkin diagram of $G$ and let $D'$ be the $\sigma$-connected component of $\mathcal O'$ of this complement. Let $D''$ be the complement of $\mathcal O'$ in $D'$. Let $w'_0$ and $w''_0$ be the longest elements in the Weyl groups for $D'$, resp. $D''$. We claim that $\mu'=w'_0w_0''(\mu)$ satisfies $\depth'(G,\mu')>\depth'(G,\mu)$. This then shows that $\depth(G,\mu)>\depth(M,\mu)$.
    
    To prove this claim we write again $\mu=\avg_G(\mu) + \sum c_\alpha\alpha^{\vee}\in X_*(T)_{\mathbb Q,\Gamma}$ with  $\alpha$ running through a set of representatives of the $\Gamma$-orbits of simple roots of $G$. Then $\mu'-\avg_G(\mu)=\sum c_\alpha w_0'w''_0(\alpha^{\vee})$. We view the sum as a linear combination of the simple coroots $\alpha^{\vee}$ as above and  want to compute the coefficient corresponding to the orbit $-w_0'\mathcal O'\subseteq D'$. For a simple root $\alpha$ that is not in $D'\cup \mathcal O_0\cup\cdots \cup \mathcal O_l$, we have $w_0'w''_0(\alpha^{\vee})=\alpha^{\vee}$, thus the corresponding summand does not contribute to the coefficient of interest. Under $w_0'w''_0$, simple roots in $D''$ are mapped to simple roots in $D'$ that are not in $-w_0'\mathcal O'$. For $\alpha'\in\mathcal O'$ we have coefficient $c_{\alpha'}=0$ by 1. 
    
 It remains to compute the contribution of the summands $c_{\alpha_i} w_0'w''_0(\alpha_i^{\vee})$ for $i=0,\dotsc,l$ and $\alpha_i\in\mathcal O_i$. First notice that $w_0'w''_0(\alpha_i^{\vee})$ and $w''_0(\alpha_i^{\vee})$ are again positive coroots. Indeed, writing these coroots as linear combinations of the simple coroots, the coefficient of $\alpha_i^{\vee}$ is still 1 as $w''_0, w_0'w''_0\in W_{D'}$. We have $\langle w_0'' \alpha_i^\vee, \alpha\rangle\geq 0$ for all simple roots $\alpha$ in $D''$, and (by $\sigma$-connectedness of the Dynkin diagram of $G$ and $\sigma$-stability of $\mu$) we have a strict inequality for at least one such root. Thus, $w_0'' \alpha_i^\vee - \alpha_i^\vee$ is a linear combination of simple coroots of $D''$, and $w_0''\alpha_i^{\vee}$ defines a non-zero $D''$-dominant cocharacter. Thus, for every $\sigma$-orbit of simple roots containing a simple root in the connected component of $D''$ adjacent to $\alpha_i$, the corresponding coefficient in $w_0'' \alpha_i^\vee - \alpha_i^\vee\in X_*(T)_{\mathbb Q,\Gamma}$ is strictly positive. The coefficient corresponding to $\mathcal O'$ in $w_0'' \alpha_i^\vee$ is 0. Thus $\sum_{\tilde\alpha\in\mathcal O'}\langle w_0''\alpha_i^{\vee},\tilde\alpha\rangle<0$. Hence $$\sum_{\tilde\alpha\in-w_0'\mathcal O'}\langle w_0'w_0''\alpha_i^{\vee},\tilde\alpha\rangle>0,$$  which implies that the coefficient $d_i$ of $(\alpha')^{\vee}$ in the decomposition of $w_0'w_0''(\alpha_i^{\vee})$ is strictly positive. Note that the $d_i$ are also integral.
    
    We obtain that \begin{align*}
        \langle \sum_\alpha c_\alpha w'_0w''_0(\alpha^{\vee}),\omega_{-w_0'\mathcal O'}\rangle = \sum_{i=0}^l d_ic_{\alpha_i}.
    \end{align*}We assumed that $c_{0}=\depth(M,\mu)>0$, and have that $d_{0}\geq 1$. Thus as soon as $c_i>0$ for some $i\in \{1,\dotsc, l\}$ (i.e. 2. is violated), we see that this sum is strictly greater than $\depth(M,\mu)$, which proves the claim.
\end{proof}

\begin{example} One can improve on (\ref{lemdp2}) of Theorem \ref{lemdepthprop} and give necessary and sufficient conditions for equality by an explicit consideration of all possible Dynkin diagrams and automorphisms, and using an argument as in the proof of (\ref{lemdp2}). We give some examples in the split cases.

     For groups of type $A_n$, the necessary conditions of (\ref{lemdp2}) are also sufficient. Indeed, using a similar argument as in the above proof, it is enough to consider $G=\PGL_n$, with $T$ the diagonal torus and $B$ the upper triangular Borel. Using induction we can reduce to the case that $M$ is the standard Levi subgroup of a maximal standard parabolic subgroup. Then $M$ corresponds to a simple root $\alpha'=\alpha_{j_0}$ and the two conditions imply that the $M\cap B$-dominant element $\mu\in X_*(T)=\mathbb Z^n/\mathbb{Z}\cdot (1,\dotsc,1)$ has a representative of the form $(\mu_1,\dotsc,\mu_{j_0},0,\dotsc,0)$ or $(0,\dotsc, 0,\mu_{j_0+1},\dotsc,\mu_n)$ (by (b)) for $\mu_j\in\mathbb Z$ which are in decreasing order and satisfy $\sum\mu_j=0$ (by (a)). We consider the first case, the second is analogous. Let $i_0\leq j_0$ be such that $\mu_1,\dotsc, \mu_{i_0}\geq 0$ and $\mu_{i_0+1},\dotsc, \mu_{j_0}<0$. Then $\depth(M,\mu)=\sum_{i=1}^{i_0}\mu_i$ and $\mu_{G-\dom}=(\mu_1,\dotsc, \mu_{i_0},0,\dotsc,0,\mu_{i_0+1},\dotsc,\mu_{j_0})$. Hence $\depth(G,\mu)=\depth(M,\mu)$.

     For groups of type $B_n$ with $n>2$, $\alpha'$ has to be a long root and the depth has to be realized by a long root in the component of the Dynkin diagram of $M$ containing the short root. 
     
     For groups of type $C_n$ with $n\geq 2$, $\alpha'$ has to be a short root and the depth has to be realized by the long simple root.

     For type $D_n$ with $n\geq 4$ label the simple roots such that the branch point of the Dynkin diagram corresponds to $\alpha_{n-2}$ and the two horns to $\alpha_{n-1}$ and $\alpha_n$. Then there are two cases with equality. The first is that $\alpha'=\alpha_{n-2}$ and the depth is realized by one of the two horns. The second is that $\alpha'=\alpha_{j_0}$ for some $j_0<n-2$, the depth is realized by $\alpha_{n-2}$ and we have $c_{n-2}=2c_{n-1}=2c_n$.

     For $G_2$ and any proper Levi subgroup, no non-central $\mu$ satisfies equality in (\ref{lemdp1}).
\end{example}

\subsection{Classification}\label{secclassification}

In this section we classify all groups $G$ and all dominant cocharacters $\mu$ such that $\depth(G,\mu)<2$. Recall that a pair $(G,\mu)$ is \emph{fully Hodge-Newton decomposable} iff $\depth(G,\mu)\leq 1$, cf.\ \cite[Def.~3.2 and Theorem 3.3]{Goertz2019}. Given the classification of fully Hodge-Newton decomposable pairs in  \cite[Theorem~3.5]{Goertz2019}, it thus remains to classify the pairs with $1<\depth(G,\mu)<2$. Just as in \cite[Section~3.3]{Goertz2019} and using Remark \ref{remdepthred}, this task is easily reduced to adjoint groups $G$ which are quasi-simple over $F$, so the Frobenius $\sigma$ acts transitively on the set of irreducible components of the root system. Such a group $G$ will be isogenous to the restriction of scalars of a group $G'$ over an unramified extension $F^{(d)}$ of $F$ of some degree $d$ such that $G'$ is absolutely quasi-simple, i.e.\ such that its root system is irreducible. By Lemma~\ref{lem:depthRestriction}, it thus suffices to classify pairs with $1<\depth(G,\mu)<2$ for groups $G$ which are adjoint and quasi-simple.
\begin{theorem}\label{prop:classification}
Assume that $G$ is adjoint and absolutely quasi-simple over $F$, and let $\mu \in X_\ast(T)_{\Gamma_0}$ be a dominant cocharacter. Then $1<\depth(G,\mu)<2$ if and only if the following two conditions are both satisfied.
\begin{enumerate}[(i)]
\item The group $G$ is split over $F$.
\item Under Bourbaki's conventions for root systems and the enumeration of fundamental coweights, the Cartan type of $G$ and the coweight $\mu$ or some conjugate of it under Dynkin diagram automorphisms occur in the following table.

\begin{center}\begin{tabular}{ll}
Cartan Type&Coweight $\mu\in  X_\ast(T)_{\Gamma_0}$\\\hline
$A_n$, $n\geq 1$ & $2\omega_1^\vee+\omega_n^\vee$\\
$A_n$, $n\geq 2$ & $2\omega_1^\vee$\\
$A_n$, $n\geq 2$ & $\omega_2^\vee+\omega_n^\vee$\\
$A_n$, $n\geq 4$ & $\omega_2^\vee$\\
$A_4$ & $\omega_1^\vee+\omega_2^\vee$ \\
$A_5,~A_6,~A_7$ & $\omega_3^\vee$\\
$C_3$ & $\omega_3^\vee$\\
$D_5$ & $\omega_5^\vee$
\end{tabular}
\end{center}
\end{enumerate}
\end{theorem}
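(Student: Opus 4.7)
Following the reductions explained before the theorem, the plan is a case analysis on the Cartan type of $G$ (assumed adjoint and absolutely quasi-simple). By Lemma~\ref{lempropdepth}(c), writing a dominant $\mu$ in simple coroots modulo the center reduces $\depth(G,\mu)$ to the largest coefficient (split case) or the largest $\sigma$-orbit sum of coefficients (non-split case). The whole classification therefore amounts to enumerating, type by type, the integral dominant combinations $\sum_i a_i \omega_i^\vee$ whose resulting maximum lies strictly between $1$ and $2$.

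\textbf{Sufficiency.} For each row of the table I would compute the depth directly. For $A_n$ the inverse Cartan entries are $\tfrac{\min(i,j)(n+1-\max(i,j))}{n+1}$, so each listed $\mu$ admits a closed form: for instance $\depth(2\omega_1^\vee)=\tfrac{2n}{n+1}$, $\depth(\omega_2^\vee)=\tfrac{2(n-1)}{n+1}$, $\depth(\omega_3^\vee)=\tfrac{3(n-2)}{n+1}$, $\depth(2\omega_1^\vee+\omega_n^\vee)=\tfrac{2n+1}{n+1}$, $\depth(\omega_2^\vee+\omega_n^\vee)=\tfrac{2n}{n+1}$ and $\depth(\omega_1^\vee+\omega_2^\vee)=\tfrac 95$ for $n=4$; each lies in $(1,2)$ precisely for the stated $n$-range. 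The sporadic $C_3,\omega_3^\vee$ and $D_5,\omega_5^\vee$ are each handled by a single inverse Cartan computation and yield depth $\tfrac 32$ in both cases.

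\textbf{Necessity.} In the converse direction, I first rule out non-split $G$. If $\sigma$ acts non-trivially on $\Delta$, then some orbit $\mathcal O$ has $|\mathcal O|\geq 2$. Since $\mu$ is $\Gamma_0$-invariant dominant, the pairing $\langle \mu,\omega_\alpha\rangle$ is constant on $\mathcal O$, giving $\langle \mu,\omega_\mathcal O\rangle = |\mathcal O|\cdot \langle \mu,\omega_\alpha\rangle$ for any $\alpha\in\mathcal O$. A direct tabulation of the minimal non-central $\sigma$-invariant dominant coweights for each of ${}^2\!A_n$, ${}^2\!D_n$, ${}^2\!E_6$, ${}^3\!D_4$, ${}^6\!D_4$ (for instance $\omega_1^\vee+\omega_n^\vee$ already has depth $2$ in ${}^2\!A_n$) combined with monotonicity (Lemma~\ref{lempropdepth}(f)) shows that every $\mu$ for such $G$ has depth $\leq 1$ or $\geq 2$. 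Next for split $G$, I invoke the HND classification \cite[Thm.~3.5]{Goertz2019} and use monotonicity to restrict to $\mu$ minimally above the HND boundary. Running through the Cartan types, in $B_n$ ($n\geq 3$), $C_n$ ($n\geq 4$), $D_n$ ($n\geq 6$), $E_6, E_7, E_8, F_4, G_2$ a direct inverse Cartan calculation shows that every non-HND fundamental coweight already has max coefficient $\geq 2$, so by monotonicity nothing in those types survives; in $C_3$ and $D_5$ exactly the sporadics $\omega_3^\vee$ and $\omega_5^\vee$ (up to Dynkin diagram automorphism) fall into $(1,2)$; and in $A_n$ the closed-form inverse Cartan formula produces exactly the four infinite families plus the $A_4$ sporadic.

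\textbf{Main obstacle.} The delicate part is the exhaustive enumeration in type $A_n$. The formula $\depth(\sum_i a_i\omega_i^\vee)=\max_j\sum_i a_i\tfrac{\min(i,j)(n+1-\max(i,j))}{n+1}$ admits infinitely many candidate tuples $(a_i)$ a priori, and one must argue, via monotonicity (Lemma~\ref{lempropdepth}(f)), that any $\mu$ of depth $<2$ has small support and small coefficients, reducing the search to a finite check that matches the table exactly. The sporadic $D_5$ case is also easy to overlook, since it is not part of any obvious pattern and must be spotted separately from the $A$-type enumeration.
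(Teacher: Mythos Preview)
Your overall route---direct inverse-Cartan enumeration plus monotonicity---differs from the paper, which instead does a computer check for rank $\leq 20$ and then uses the Levi bound $\depth(M,\mu)\leq\depth(G,\mu)$ (Theorem~\ref{lemdepthprop}(a)) to force $\mu$ into a short list for the large-rank classical types. Your approach is more elementary in spirit but, as written, has two genuine gaps.

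In the non-split step, the claim that $\langle\mu,\omega_\alpha\rangle$ is constant on a $\sigma$-orbit $\mathcal O$ ``since $\mu$ is $\Gamma_0$-invariant'' is incorrect: $\Gamma_0$-invariance is automatic and unrelated to the Frobenius action, and a dominant $\mu\in X_\ast(T)_{\Gamma_0}$ need not be $\sigma$-invariant (e.g.\ $\mu=\omega_1^\vee$ in ${}^2\!A_n$). Hence the identity $\langle\mu,\omega_{\mathcal O}\rangle=|\mathcal O|\cdot\langle\mu,\omega_\alpha\rangle$ fails in general. The fix is Remark~\ref{remdepthred}(a): replace $\mu$ by its $\sigma$-average $\bar\mu$, which has the same depth and \emph{is} $\sigma$-invariant. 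But then the ``minimal non-central $\sigma$-invariant dominant coweights'' you propose to tabulate must include rational $\sigma$-averages such as $\tfrac12(\omega_i^\vee+\omega_{n+1-i}^\vee)$, not only the integral $\sigma$-invariant ones you list. The conclusion (nothing non-split has depth in $(1,2)$) is still correct---e.g.\ in ${}^2\!A_n$ one computes $\depth(\omega_j^\vee)=\min(j,n+1-j)\in\mathbb Z$---but your justification does not establish it.

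In the split step, showing that every non-HND \emph{fundamental} coweight in types $B_n,C_n,D_n,E,F,G$ has depth $\geq 2$ does not by itself rule out sums or multiples of HND fundamental coweights (e.g.\ $2\omega_1^\vee$ or $\omega_1^\vee+\omega_{n-1}^\vee$ in $D_n$); these are the genuine ``$\mu$ minimally above the HND boundary'' that you correctly name but then do not actually check. Your list of types also silently skips $B_2=C_2$ and $D_4$. This residual finite enumeration is exactly where the paper's argument does the work, either by computer (small rank) or via the Levi restriction trick (large rank), and is more extensive than your sketch suggests.
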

\begin{proof}
Given any fixed adjoint and absolutely quasi-simple group $G$ over $F$, that is, any choice of a finite Dynkin diagram together with a distinguished automorphism $\sigma$, it is straightforward to enumerate all coweights $\mu$ with $1<\depth(G,\mu)<2$. Using a computer one can thus check this proposition for all root systems of rank $\leq 20$. From now on we assume that $\rk \Phi\geq 21$. In particular, the root system is of classical type.

Let $M\subseteq G$ be a $\sigma$-stable standard Levi subgroup. By Theorem \ref{lemdepthprop},  $\depth(M,\mu)\leq \depth(G,\mu)$. In particular, every restriction of $\mu$ to a Levi subgroup for a sub-root system generated by a $\sigma$-stable set of at most 20 of the simple roots must occur in the above list or in the list of $\depth\leq 1$ cases in \cite[Theorem~3.5]{Goertz2019}.

Consider first the case where the Dynkin diagram of $G$ is of type $D_n$ for some $n\geq 21$, so the Frobenius action will always fix $\alpha_1,\dotsc,\alpha_{n-2}$. By studying the various $\sigma$-stable Levi subgroups of ranks $\leq 20$, we see that $\depth(G,\mu)<2$ is only possible for $\mu \in \{0,\omega_1^\vee,2\omega_1^\vee,\omega_2^\vee\}$. The first two cases are known to satisfy $\depth(G,\mu)\leq 1$. The highest root $\theta$ is known to satisfy $\theta^\vee = \omega_2^\vee$ (by studying the well-known affine Dynkin diagram), so $\depth(G,\omega_2^\vee) = \depth(G,\theta^\vee)=2$ by expressing $\theta^\vee$ as a sum of simple coroots. We note that 
\begin{align*}
\depth(G,2\omega_1^\vee) = \depth(G,\omega_2^\vee+\alpha_1^\vee) \geq \depth(G,\omega_2^\vee)=2,
\end{align*}
showing that indeed none of these cases have depth strictly between $1$ and $2$.

Consider next the case where the Dynkin diagram of $G$ is of type $A_{n-1}$ for some $n\geq 22$, so the Frobenius action can be trivial or given by $\alpha_i\mapsto\alpha_{n-i}$.
Studying $\sigma$-stable Levi subgroups of rank $\leq 20$, we see that $\mu$ must be of the form $\mu = \mu_1+\mu_2$ with
\begin{align*}
\mu_1 \in\{0,\omega_1^\vee,\omega_2^\vee,2\omega_1^\vee\},\quad 
\mu_2 \in\{0,\omega_{n-1}^\vee,\omega_{n-2}^\vee,2\omega_{n-1}^\vee\}
\end{align*}

Realizing our roots $\alpha_i =\alpha_i^\vee = e_i-e_{i+1}\in \mathbb Z^{n}$ as usual, we get the fundamental (co)weights and weights
\begin{align*}
\omega_i = \omega_i^\vee = (\underbrace{1,\dotsc,1}_{i\text{ times}},\underbrace{0,\dotsc,0}_{n+1-i\text{ times}}) - \frac i{n}(1,\dotsc,1) = (\underbrace{1-i/n,\dotsc,1-i/n}_{i\text{ times}}, \underbrace{-i/n,\dotsc,-i/n}_{n+1-i\text{ times}}).
\end{align*}
Thus for $i,j\in\{1,\dotsc,n-1\}$, we get
\begin{align*}
\langle \omega_i^\vee,\omega_j\rangle = \begin{cases}j(1-i/n),&j\leq i,\\
i(1-j/n),&i\leq j.\end{cases}
\end{align*}
For split $G$ we compute
$$\langle 2\omega_1^\vee+\omega_{n-1}^\vee,\omega_j\rangle = 2(1-j/n) + j(1-(n-1)/n) = 2-\frac jn,$$
thus $\depth(G,2\omega_1^\vee+\omega_{n-1}^\vee) = 2-\frac 1n<2$, and consequently also $2\omega_1^{\vee}$ has depth $<2$. A similar calculation shows that $\omega_2^{\vee}<\omega_2^{\vee}+\omega_{n-1}^{\vee}$ have depth $<2$. A comparison with \cite[Theorem~3.5]{Goertz2019} shows that they all have $\depth>1$. On the other hand,
$$\langle \omega_2^\vee+\omega_{n-2}^\vee,\omega_2\rangle = 2(1-2/n) + 2(1-(n-2)/n) = 2,$$
hence $\depth(G,\omega_2^\vee+\omega_{n-2}^\vee) \geq 2.$ Similarly, one sees that the coweights
\begin{align*}
\omega_2^\vee+\omega_{n-2}^\vee,~
2\omega_1^\vee+\omega_{n-2}^\vee,~
\omega_2^\vee+2\omega_{n-1}^\vee,~
2\omega_1^\vee+2\omega_{n-1}^\vee
\end{align*}
all have $\depth\geq 2$ if $G$ is split. If the Frobenius acts via $\alpha_i\mapsto \alpha_{n-i}$, an analogous explicit calculation shows that $\depth(G,\mu)\leq 1$ or $\depth(G,\mu)\geq 2$ for all dominant coweights $\mu$.

In the remaining two Cartan types $B_n$ and $C_n$ for $n\geq 21$, a simple folding argument reduces to types $D$ and $A$, respectively, and shows that no dominant coweight $\mu$ can satisfy $1<\depth(G,\mu)<2$.
\end{proof}
\begin{remark}
Considering the classification, one makes the following remarkable observation for all relatively quasi-split groups $G$ and cocharacters $\mu$ of $\depth(G,\mu)<2$: The set
\begin{align*}
    \{[b]\in B(G,\mu)\mid (G,\mu,[b])\text{ is HN-indecomposable}\}
\end{align*}
is totally ordered with respect to the natural order on $B(G)$.
\end{remark}

\subsection{Non-quasi-split groups}\label{sec:depthnonqs}

The depth can also be defined without assuming $G$ to be quasi-split, but it requires a careful choice of conventions.

Let $G$ be any connected and reductive group over $F$.
As in \cite{Goertz2019}, we consider the unique quasi-split inner form of $G$, which we denote $G_{\text{qs}}$.  The Bruhat-Tits buildings of $G$ and $G_{\text{qs}}$ are the same, but the Frobenius actions are not. Let $T, \mathcal A$ be as in Section~\ref{sec:notation}. We write $\sigma_0$ for the Frobenius action of $G_{\text{qs}}$ on the standard apartment, and choose $\mathfrak x\in \mathcal A$ to be a $\sigma_0$-stable special vertex. Identify $\mathcal A\cong X_\ast(T)_{\Gamma_0}\otimes\mathbb R$ by sending $\mathfrak x$ to $0$. Then one defines the root systems and associated notation as in Section~\ref{sec:notation}. The purpose of this section is to propose the following definition of depth in this context.

\begin{definition}\label{defdepthnqs}
Let $\mu\in X_\ast(T)_{\mathbb Q}$ be a rational cocharacter.
\begin{enumerate}[(a)]
\item
For a $\sigma_0$-orbit of simple roots $\mathcal O\subseteq \Delta$, we define the weight $\omega_{\mathcal O}\in\mathbb Q\Phi$ via the identity
\begin{align*}
\langle \alpha^\vee,\omega_{\mathcal O}\rangle = \begin{cases}1,&\alpha\in\mathcal O,\\
0,&\alpha\in\Delta\setminus\mathcal O.\end{cases}
\end{align*}
\item
We set
\begin{align*}
\depth(G,\mu) := \max_{\mathcal O}\Bigl(\langle \mu_{\dom},\omega_{\mathcal O}\rangle+\mathrm{frac}(\langle \sigma(0),\omega_{\mathcal O}\rangle)\Bigr)\in\mathbb Q,
\end{align*}
where the maximum is taken over all $\sigma_0$-orbits $\mathcal O\subseteq \Delta$ and where $\mu_{\dom}$ is the dominant representative in the Weyl group orbit of $\mu$. Here, $\mathrm{frac} : \mathbb R\rightarrow [0,1)$ denotes the fractional part of a real number. 
\end{enumerate}
\end{definition}
This definition is made analogously to \cite[Definition~3.2]{Goertz2019}, the minute condition for full Hodge-Newton decomposability is also in this case given by $\depth (G,\mu)\leq 1$. 

There are many examples of groups $G$ that are not quasi split together with cocharacters $\mu\in X_\ast(T)$ where $1<\depth(G,\mu)<2$. Indeed, we notice by definition that $\depth(G_{\text{qs}},\mu)\leq \depth(G,\mu)<\depth(G_{\text{qs}},\mu)+1$. Therefore, whenever $(G_{\text{qs}},\mu)$ occurs in the classification of fully Hodge-Newton indecomposable cases from \cite[Theorem~D]{Goertz2019}, but $(G,\mu)$ does not, this yields such an example.

It appears to be very challenging to give a satisfactory description of the geometry of the ADLV in all non-quasi-split cases of depth between $1$ and $2$. Our key properties such as (L1BC) and geometric Coxeter type remain true e.g.\ for all inner forms of $\GL_3$ and $\mu = (1,0,-1)$, but fail for other non-quasi-split cases.

\section{Ekedahl-Oort strata}

In this section, we study the geometry of affine Deligne-Lusztig varieties $X_\mu(b)$ in the affine Grassmannian of $G$. For this, assume that $G$ has a reductive model over $\mathcal{O}_{F}$, also denoted $G$. Let $K=G(\mathcal O_{\breve F})$, a hyperspecial and $\sigma$-stable subgroup of $G(\breve F)$. Notice that this assumption implies that $G$ is unramified.

\subsection{Geometric properties}

We summarize some of the main known results about the geometry of $X_{\mu}(b)$ for arbitrary $G,\mu,b$.
\begin{theorem}
Let $\mu$ be a dominant cocharacter and $[b]\in B(G)$.
\begin{enumerate}[(a)]
\item  The affine Deligne-Lusztig variety $X_\mu(b)$ is non-empty if and only if $[b]$ lies in the neutrally acceptable set $B(G,\mu)$.

Conjectured by Kottwitz-Rapoport, proved by \cite{Rapoport1996, Gashi2010, He2014}.
\item If $X_\mu(b)\neq\emptyset$, it is equidimensional of dimension
\begin{align*}
\dim X_\mu(b) = \frac 12\left(\langle \mu - \nu(b),2\rho\rangle - \defect(b)\right).
\end{align*}
Here, $\defect(b)$ denotes the defect of $b$, which is defined as $\mathrm{rk}_F(G) - \mathrm{rk}_F(J_b)$, cf.\ \cite{Chai2000, Kottwitz2006}. Conjectured by Rapoport, proved by \cite{Goertz2006, Viehmann2006, Hamacher2015, Takaya2022}.
\item  The number of $J_b(F)$-orbits of irreducible components of $X_\mu(b)$ is equal to the sum of the dimensions of the weight spaces $M_\mu(\lambda)$ of the unique irreducible representation $M_\mu$ of the Langlands dual group $\hat G$ with highest weight $\mu$. Here, $\lambda \in X_\ast(T)$ runs over all maximal elements in the set of cocharacters  $\lambda$ such that the $\sigma$-average of $\lambda$ is at most $\nu(b)$ and $\kappa(\lambda) = \kappa(b)$.

Conjectured by Chen-Zhu \cite{Hamacher2018}, proved by \cite{Zhou2020, Nie2022}.
\end{enumerate}
\end{theorem}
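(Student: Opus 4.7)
The theorem collects three established results about $X_\mu(b)$; for each part I indicate the strategy I would follow.

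For part (a), I would prove the two directions separately. The ``only if'' direction is Mazur's inequality: if $g^{-1}b\sigma(g) \in K\varepsilon^\mu K$ for some $g$, then an explicit comparison of Hodge and Newton slopes forces $\nu(b) \leq \mu$ in the dominance order and $\kappa(b) = \kappa(\mu)$, so $[b] \in B(G,\mu)$. The converse is much harder. My plan is to reduce via Hodge--Newton decomposition to the case where $(G,\mu,[b])$ is Hodge--Newton indecomposable, which after passage to a Levi amounts to a superbasic situation; one then exhibits an explicit point of $X_\mu(b)$ by a type-by-type analysis, using the combinatorics of the extended affine Weyl group and the existence of a $\sigma$-straight element inside $\Adm(\mu)$ with the prescribed Newton invariant. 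The main obstacle is the superbasic existence.

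For part (b), the plan is to use the Iwahori-level stratification $X_\mu(b) = \bigsqcup_{x \in \Adm(\mu)} \pi(X_x(b))$, where $\pi$ is the projection to the affine Grassmannian, together with the upper bound $\dim X_x(b) \leq d_x(b)$ in terms of He's virtual dimension. For each $[b] \in B(G,\mu)$, I would construct a maximal $x \in \Adm(\mu)$ with $X_x(b) \neq \emptyset$, $\dim X_x(b) = d_x(b)$, and $\dim \pi(X_x(b)) = \tfrac{1}{2}(\langle \mu - \nu(b), 2\rho\rangle - \defect(b))$. Equidimensionality then follows because every irreducible component of $X_\mu(b)$ meets the top stratum, which one sees from the $J_b(F)$-action and the closure relations in $\Adm(\mu)$. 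The main obstacle is producing the top stratum with the correct dimension, which requires a length-additive reduction path to a $\sigma$-straight element realizing $\nu(b)$ and $\kappa(b)$.

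For part (c), the plan is to identify $J_b(F)$-orbits of irreducible components with a basis of the relevant sum of weight spaces via geometric Satake and Mirkovi\'c--Vilonen theory. One decomposes $X_\mu(b)$ under semi-infinite orbits attached to a chosen Borel to obtain, for each admissible cocharacter $\lambda$, a locally closed substack whose top-dimensional irreducible components correspond to MV cycles of weight $\lambda$ inside the affine Schubert variety for $\mu$. A dimension count, combined with Hodge--Newton reduction to discard non-maximal $\lambda$, singles out exactly the $\lambda$ that are maximal with $\avg(\lambda) \leq \nu(b)$ and $\kappa(\lambda) = \kappa(b)$. The main obstacle is checking that the resulting parametrization is both surjective and principal as a $J_b(F)$-set, which is the technical core of the arguments in \cite{Zhou2020, Nie2022}.
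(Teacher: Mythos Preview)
The paper does not prove this theorem at all: it is stated purely as a summary of known results, with each part attributed to the cited references, and the text immediately moves on. So there is no proof in the paper for your proposal to be compared against.

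Your sketches are reasonable outlines of the arguments in the cited literature, and nothing in them is clearly wrong as a roadmap. But since the paper treats this theorem as background, the expected ``proof'' here is simply to cite \cite{Rapoport1996, Gashi2010, He2014} for (a), \cite{Goertz2006, Viehmann2006, Hamacher2015, Takaya2022} for (b), and \cite{Zhou2020, Nie2022} for (c), rather than to reproduce the arguments.
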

These results mark major milestones in the study of affine Deligne-Lusztig varieties. For the purpose of this work, we examine the geometry of $X_\mu(b)$ more closely in the cases where $\depth(G,\mu)<2$. Under this assumption, we obtain information regarding the geometry of a certain natural stratification of $X_\mu(b)$, which is analogous to the Ekedahl-Oort stratification of integral models of Shimura varieties. We assume that our chosen Iwahori subgroup $I$ is contained in $K$, which we may always achieve with a suitable choice of $I$. We write $\widetilde W^K$ for the set of all minimal length elements in $\widetilde W$ with respect to the right-multiplication by $\widetilde W_K := \{y\in \widetilde W\mid \dot y \in K\}$.

\begin{theorem}[{\cite[Theorem~1.1]{Viehmann2014}}]
Let $\mu\in X_\ast(T)$ be a dominant cocharacter.
For each $g\in K\mu(\varepsilon)K\subseteq G(\breve F)$, there exists a unique element $x\in\widetilde W^{K} \cap W_0 \varepsilon^\mu W_0$ such that the $(K,\sigma)$-conjugacy class of $g$ intersects with $IxI$.
\end{theorem}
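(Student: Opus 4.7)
The plan is to construct a natural map $\Phi : K\mu(\varepsilon) K \to \widetilde W^K \cap W_0\varepsilon^\mu W_0$ and show it is constant on $(K,\sigma)$-conjugacy classes. Since $K$ is hyperspecial, $\widetilde W_K = W_0$, and the Iwahori-Bruhat decomposition gives $K\mu(\varepsilon) K = \bigsqcup_{y\in W_0\varepsilon^\mu W_0} IyI$. For $g\in IyI$, I factor $y = xz$ with $x\in\widetilde W^K$ and $z\in\widetilde W_K$ satisfying $\ell(y) = \ell(x)+\ell(z)$ (this is always possible by the theory of minimal-length coset representatives), and set $\Phi(g) := x$. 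The theorem then reduces to two claims: (a) each fiber of $\Phi$ is non-empty (trivial, since $IxI \subseteq \Phi^{-1}(x)$), and (b) $\Phi$ is $(K,\sigma)$-invariant.

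For (b), I would argue by induction on $\ell(z)$. The base case $\ell(z)=0$ is tautological. For the inductive step, write $z = sz'$ with $s$ a simple reflection in $\widetilde W_K$ and $\ell(z') = \ell(z)-1$; by length additivity, $IyI \subseteq IxsI \cdot Iz'I$, so $g = h_1 h_2$ with $h_1 \in IxsI$ and $h_2 \in Iz'I \subseteq K$. The key technical step is a Deligne-Lusztig-type reduction: by a careful choice of $k\in K$ (produced using Lang-Steinberg on the reductive quotient $\bar K = K/K^+$ equipped with the Borel $\bar I = I/K^+$), one can $(K,\sigma)$-conjugate $g$ into an element whose Iwahori-Bruhat position is of the form $I(xz'')I$ with $z''\in \widetilde W_K$ of strictly smaller length. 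The length additivity $\ell(x z'') = \ell(x) + \ell(z'')$ is preserved because $x\in\widetilde W^K$, so the $\Phi$-value is the same; iterating concludes.

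For uniqueness, suppose $h = \ell^{-1} g \sigma(\ell) \in IxI$ and $h' = (\ell')^{-1} g \sigma(\ell') \in Ix'I$ with $x,x'\in \widetilde W^K \cap W_0\varepsilon^\mu W_0$. Setting $k := \ell^{-1}\ell'\in K$ yields $h' = k^{-1} h \sigma(k)$, so $k^{-1} (IxI) \sigma(k)$ meets $Ix'I$. Expanding $k$ via the Bruhat decomposition of $K$ as $k \in I \dot u I$ for some $u\in W_0$ and absorbing the $I$-factors into $h$ and $h'$, this reduces to the claim: if $\dot u^{-1} I x I \sigma(\dot u) \cap I x' I \neq \emptyset$, then $x = x'$. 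Standard length comparisons in the Iwahori-Weyl group, combined with the minimality of $x$ and $x'$ within their respective right $\widetilde W_K$-cosets, force the desired equality.

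The main obstacle is the absorption step in existence: producing, given $g\in I(xsz')I$ with $s$ a simple reflection in $\widetilde W_K$, a $(K,\sigma)$-conjugate landing in an Iwahori double coset $I(xz'')I$ with $\ell(z'') < \ell(z)$, while preserving the length-additive factorization through $x$. The cleanest route passes through a careful analysis of $\sigma$-twisted $\bar I$-conjugacy classes inside a fixed $\bar B$-double coset of $\bar K$, invoking Lang-Steinberg to trivialize the resulting $H^1(\sigma, \cdot)$ obstruction at each inductive step, and using the explicit structure constants of the affine root subgroups attached to $s$ to verify that the length of the $\widetilde W_K$-tail actually drops rather than remaining stationary.
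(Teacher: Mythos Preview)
The paper does not prove this statement: it is quoted verbatim as \cite[Theorem~1.1]{Viehmann2014} and used as a black box to set up the EO-stratification. So there is no ``paper's own proof'' to compare against; your proposal is an attempt to reprove an imported result.

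On the substance of your sketch: the existence argument has the right shape (iterated $(K,\sigma)$-conjugation to shorten the $\widetilde W_K$-tail, with Lang--Steinberg on $\bar K$ as the engine), but the claim that each step \emph{strictly} drops the length of $z$ is too optimistic. In the actual argument one only obtains, for a simple $s\in W_0$ with $ys<y$, that the $(K,\sigma)$-orbit of any $g\in IyI$ meets $IysI\cup I\,\sigma^{-1}(s)\,ys\,I$; the second alternative has the same length. One then iterates through such ``cyclic shifts'' $y\mapsto \sigma^{-1}(s)ys$ until a genuine drop occurs, and a separate combinatorial lemma (originally due to He) guarantees that this process terminates at an element of $\widetilde W^K$ inside the same $W_0\varepsilon^\mu W_0$. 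Your single-step drop would short-circuit this and is not in general available.

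The uniqueness paragraph is where the real gap lies. Reducing to ``if $\dot u^{-1}IxI\sigma(\dot u)$ meets $Ix'I$ then $x=x'$'' and then appealing to ``standard length comparisons'' is not a proof: for $u\in W_0$ one has $\dot u^{-1}IxI\sigma(\dot u)\subseteq \bigcup_{y\leq u^{-1}x\sigma(u)} IyI$ (or similar), and nothing prevents this from hitting a second element of $\widetilde W^K\cap W_0\varepsilon^\mu W_0$ by length considerations alone. The argument in \cite{Viehmann2014} instead shows that the minimal-length elements in each partial $\sigma$-conjugacy orbit inside $W_0\varepsilon^\mu W_0$ form a single $W_0$-$\sigma$-conjugacy class of a specific combinatorial type, and that exactly one of these lies in $\widetilde W^K$. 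You would need a replacement for this step.
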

Recall that affine Schubert cells are locally closed. Thus for each $x\in\widetilde W^K\cap W_0\varepsilon^\mu W_0$ and for each fixed $b$, we obtain a locally closed subscheme $\mathrm{EO}_x \subset X_\mu(b)$ whose  $\overline{\mathbb F}_p$-valued points are those $gK\in X_\mu(b)\subseteq G(\breve F)/K$ such that $g^{-1}b\sigma(g)$ lies in the $(K,\sigma)$-orbit of a point in $IxI$. It is called the EO-stratum associated with $x$. The affine Deligne-Lusztig variety $X_\mu(b)$ is the union of these mutually disjoint EO-strata.

\begin{theorem}
Let $\mu\in X_\ast(T)$ be a dominant cocharacter.
\begin{enumerate}[(a)] 
    \item For $x\in\widetilde W^K$, we have $x\in W_0\varepsilon^{\mu'}W_0$ for a dominant $\mu'\leq \mu$ if and only if $x\in \Adm(\mu)$ \cite[Theorem~6.10]{He2017}.
    \item
 For $x\in \Adm(\mu)\cap \widetilde W^K$, the map
    \begin{align*}
        X_x(b)\rightarrow \mathrm{EO}_x,\quad gI\mapsto gK
    \end{align*}
    is surjective with finite fibres \cite[Theorem~6.21]{He2017}. 
\end{enumerate}
\end{theorem}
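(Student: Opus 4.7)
The plan is to address the two parts separately, since their proofs are largely independent; both are established in \cite{He2017}, and the sketch below mirrors the strategy there.

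For part (a), I would work at the level of $W_0$-double cosets in $\widetilde W$. The admissible set $\Adm(\mu)$ is by construction downward closed in Bruhat order and contains the translation $\varepsilon^{u\mu}$ for every $u\in W_0$; moreover, each double coset $W_0\varepsilon^{\mu'}W_0$ is itself a single Bruhat interval. Hence it suffices to show that a dominant $\mu'$ satisfies $W_0\varepsilon^{\mu'}W_0\cap\Adm(\mu)\neq\emptyset$ if and only if $\mu'\leq\mu$. The implication $\Leftarrow$ follows by lifting a chain of covers $\mu'=\lambda_0\leq\lambda_1\leq\cdots\leq\lambda_r=\mu$ in the dominance order to Bruhat-order inequalities between suitable representatives of the double cosets $W_0\varepsilon^{\lambda_i}W_0$, using the longest element $w_0$ to pass between dominant and antidominant representatives. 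The implication $\Rightarrow$ follows by extracting the dominant part of any $x\leq \varepsilon^{u\mu}$ and observing that it is $\leq\mu$. Intersecting with $\widetilde W^K$ is harmless, since every such double coset meets $\widetilde W^K$ in a canonical set of minimal-length representatives.

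For part (b), surjectivity is a direct unwinding of the definition of the EO stratum: given $gK\in\mathrm{EO}_x$, there exists $k\in K$ with $(gk)^{-1}b\sigma(gk)\in IxI$, so $gkI\in X_x(b)$ provides the required preimage. For the finite-fibre statement, fix $gI\in X_x(b)$ and set $y := g^{-1}b\sigma(g)\in IxI$; then the fibre of $gI\mapsto gK$ over $gK$ is identified with the locally closed subscheme
\begin{align*}
    \{kI\in K/I \mid k^{-1}y\sigma(k)\in IxI\}\subseteq K/I.
\end{align*}
Using that $K/I$ is (the perfection of) the flag variety of the reductive quotient $\bar K$ of $K$, this subscheme carries the structure of a classical Deligne--Lusztig variety attached to the image of $x$ in the finite Weyl group $W_K\cong W_0$.

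The key technical point, and the main obstacle I expect, is verifying that this classical DLV is zero-dimensional. The hypothesis $x\in\widetilde W^K$ asserts that $x$ has minimal length in $x\widetilde W_K$, and one must translate this affine-combinatorial minimality into the triviality of the image of $x$ in the classical finite Weyl group $W_K$ governing the fibre. Once this is done, the DLV has dimension zero and is hence finite, completing the argument. The same minimality bookkeeping is also what underlies the identification of double cosets in part (a) with representatives in $\widetilde W^K$, so both parts ultimately rely on a careful comparison between minimal-length elements in $\widetilde W$ relative to right $\widetilde W_K$-cosets and the combinatorics of the finite Weyl group of $\bar K$.
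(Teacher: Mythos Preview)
The paper does not supply a proof of this theorem at all: both parts are stated with citations to \cite[Theorem~6.10]{He2017} and \cite[Theorem~6.21]{He2017} and are used as black boxes thereafter. So there is no proof in the paper to compare your proposal against.

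That said, a brief comment on your sketch of (b): the identification of the fibre with a locus in $K/I$ is correct, but calling it a classical Deligne--Lusztig variety attached to ``the image of $x$ in $W_K$'' is not quite right, since $x\in\widetilde W^K$ need not lie in $\widetilde W_K$ and has no canonical image there. In the actual argument of He--Rapoport one instead uses that the $(K,\sigma)$-conjugation action factors through the finite reductive quotient $\bar K$, and the condition $x\in\widetilde W^K$ (equivalently, $x$ minimal in $xW_K$) forces the relevant $\bar K$-orbit structure on $IxI$ to have finite stabilizer strata; this is where He's partial $\sigma$-conjugation method enters. Your instinct that the minimality hypothesis on $x$ is the crux is correct, but the mechanism is the finiteness of a $\sigma$-twisted orbit in the finite group $\bar K$, not the vanishing of a classical Deligne--Lusztig dimension formula in the way you describe.
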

Here $\mathrm{EO}_x$ denotes the Ekedahl-Oort stratum for $x$ in $X_{\mu'}(b)$ where $\mu'\leq\mu$ is chosen such that $x\in W_0\varepsilon^{\mu'}W_0$. Equivalently, it is equal to the analogously defined EO-stratum in $X_{\leq \mu}(b)$. The theorem implies that for all $x\in \Adm(\mu)^K := \Adm(\mu)\cap \widetilde W^K$, the geometry of the ADLV $X_x(b)$ in the affine flag variety is closely related to the geometry of the corresponding EO-stratum $\mathrm{EO}_x$ in $X_{\leq \mu}(b)$.

The main result of this section is the following.
\begin{theorem}\label{thm:EOGeometry}
    Let $\mu\in X_\ast(T)$ be a dominant cocharacter such that $\depth(G,\mu)<2$, and let $x\in\Adm(\mu)^K$, where $K$ is a $\sigma$-stable hyperspecial subgroup of $G$.
    \begin{enumerate}[(a)]
    \item The set
    $B(G)_x := \{[b]\in B(G)\mid X_x(b)\neq\emptyset\}$
    has the form $$B(G)_x = \{[b]\in B(G)\mid [b_{x,\min}]\leq [b]\leq [b_{x,\max}]\}$$ for uniquely determined and explicitly described elements $[b_{x,\min}], [b_{x,\max}]\in B(G)$.
    \item For any $[b]\in B(G)_x$, the ADLV $X_x(b)$ is equidimensional and the group $J_b(F)$ acts transitively on the set of irreducible components. Up to universal homeomorphism (in equal characteristic) or perfection (in mixed characteristic), $X_x(b)$ is isomorphic to the disjoint union of products of a classical Deligne-Lusztig varieties with copies of $\mathbb A^1$ and $\mathbb G_m$.
    \item If $G$ is absolutely quasi-simple and $1<\depth(G,\mu)<2$, then the classical Deligne-Lusztig variety in (b) can be chosen to be of Coxeter type. Moreover, for all $[b]\in B(G)_x$, we have
    \begin{align*}
    \dim X_x(b) = \frac 12\left(\ell(x) + \ell_{R,\sigma}(\cl(x)) - \langle \nu(b),2\rho\rangle - \defect(b)\right).
    \end{align*}
    Here, $\cl : \widetilde W\rightarrow W_0$ is the projection function sending $w\varepsilon^\lambda\in\widetilde W$ to $w\in W_0$, and $\ell_{R,\sigma} : W_0\rightarrow \mathbb Z_{\geq 0}$ denotes the $\sigma$-twisted notion of reflection length from \cite[Definition~4.4]{Schremmer2023_coxeter}.

    For comparison, in the cases of $\depth(G,\mu)\leq 1$, we get $\dim X_x(b)=\ell(x)$ if $[b]$ is basic and $\dim X_x(b)=0$ otherwise.
    \end{enumerate}
\end{theorem}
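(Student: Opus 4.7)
The plan is to combine the classification of Theorem~\ref{prop:classification} with the theory of geometric Coxeter type elements developed in \cite{Nie2025}. Once it is established that every $x \in \Adm(\mu)^K$ is of geometric Coxeter type, all three assertions (a), (b), (c) follow directly from the main theorems of loc.\ cit.: the description of $B(G)_x$ as an interval with explicit endpoints, the equidimensionality together with the structure of irreducible components as products of affine spaces, copies of $\mathbb{G}_m$, and a classical Deligne-Lusztig variety, and the dimension formula involving $\ell(x) + \ell_{R,\sigma}(\cl(x))$.

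First I would reduce to the case where $G$ is adjoint and absolutely quasi-simple. The depth, the admissible set, and the geometry of affine Deligne-Lusztig varieties are compatible with central isogenies (Remark~\ref{remdepthred}) and with Weil restrictions of scalars (Lemma~\ref{lem:depthRestriction}), so this reduction preserves all relevant invariants. The case $\depth(G,\mu) \leq 1$ is fully Hodge-Newton decomposable and hence covered by the results of \cite{Goertz2019}; this yields (a), (b), and the specific form of the dimension formula noted at the end of (c). We may therefore assume $1 < \depth(G,\mu) < 2$, in which case $(G,\mu)$ appears in the explicit table of Theorem~\ref{prop:classification}.

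The heart of the argument is a case-by-case verification that every $x \in \Adm(\mu)^K$ is of geometric Coxeter type. For the infinite type $A$ families and the individual rows $A_4$ and $A_5$--$A_7$, I would establish the stronger property of positive Coxeter type in the sense of \cite{Schremmer2023_coxeter}. The key inputs here are the explicit combinatorial description of admissibility from \cite[Proposition~4.12]{Schremmer2024_bruhat}, the structure of $K$-minimal representatives in $\widetilde W^K$ (in particular that the translation part of such an $x$ is in the $W_0$-orbit of some dominant $\mu' \leq \mu$), and the fact that the condition $\depth < 2$ controls the $\sigma$-twisted reflection length of the finite part $\cl(x)$. Working directly with permutations in the symmetric group, this becomes a uniform combinatorial calculation that can be carried out rank by rank within each row of the table.

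The hard part will be the two sporadic cases $(C_3, \omega_3^\vee)$ and $(D_5, \omega_5^\vee)$, where Examples~\ref{ex:39} and~\ref{ex:310} exhibit specific $x \in \Adm(\mu)^K$ that fail to be of positive Coxeter type. Fortunately, in these sporadic cases $\Adm(\mu)^K$ is a finite, very small set; I would complete the argument by explicitly enumerating its elements and verifying the weaker axioms of geometric Coxeter type from \cite{Nie2025} by direct computation with the root system. The explicit description of $[b_{x,\min}]$ and $[b_{x,\max}]$ in part (a) then emerges from the Coxeter-type framework: $[b_{x,\min}]$ is the basic class with Kottwitz invariant $\kappa(x)$, while $[b_{x,\max}]$ is the Newton class determined by the $\sigma$-straight part of $x$, giving the uniqueness and explicit form claimed.
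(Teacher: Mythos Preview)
Your proposal is correct and follows essentially the same strategy as the paper: reduce to the (relatively) quasi-simple case, dispatch $\depth\leq 1$ via \cite{Goertz2019}, and for $1<\depth<2$ use the classification of Theorem~\ref{prop:classification} to show that every $x\in\Adm(\mu)^K$ has positive Coxeter type in the type $A$ families (this is exactly Theorem~\ref{thm:depth2PositiveCoxeterType}) and geometric Coxeter type in the two sporadic cases $(C_3,\omega_3^\vee)$, $(D_5,\omega_5^\vee)$ by direct enumeration. One small correction: in your closing sentence, $[b_{x,\min}]$ is not in general the basic class --- the explicit description from \cite[Theorem~5.7]{Schremmer2023_coxeter} involves the $\sigma$-average of $v^{-1}\mu$ modulo $\mathbb Q\Phi_J^\vee$ for the support $J$ of the positive Coxeter pair --- and your reduction should go to \emph{relatively} quasi-simple (one $\sigma$-orbit of Dynkin components) rather than absolutely quasi-simple, since Lemma~\ref{lem:depthRestriction} only gives depth compatibility, not a reduction of the ADLV geometry itself.
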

\begin{remark}
The geometric properties discussed in Theorem~\ref{thm:EOGeometry} (b) and (c) are invariant under finite morphisms, hence immediately transfer to the corresponding EO-strata of $X_{\leq \mu}(b)$.
\end{remark}
In order to prove Theorem~\ref{thm:EOGeometry}, we first need to recall a concept introduced by Shimada, Yu and the first author.
\begin{definition}[{\cite[Section~2.2]{Schremmer2022_newton}}]
Let $x = w\varepsilon^\mu\in \widetilde W$.\begin{enumerate}[(a)]
\item We define the \emph{length functional} of $x$ to be the function $\ell(x,\cdot) : \Phi\rightarrow\mathbb Z$, sending a root $\alpha\in\Phi$ to
\begin{align*}
\ell(x,\alpha) = \langle \mu,\alpha\rangle + \begin{cases}0,&\alpha\text{ and }w\alpha\text{ are both negative or both positive}\\
1,&\alpha\in\Phi^+\text{ and }w\alpha\in\Phi^-,\\
-1,&\alpha\in\Phi^-\text{ and }w\alpha\in \Phi^+.
\end{cases}
\end{align*}
\item An element $v\in W$ is called \emph{length positive} for $x$ if $\ell(x,v\alpha)\geq 0$ for all $\alpha\in \Phi^+$.
\end{enumerate}
\end{definition}
The notion of length positive elements is a natural construction, used in results regarding the Bruhat order on $\widetilde W$, the generic $\sigma$-conjugacy classes of Iwahori double cosets $IxI$ and Kazhdan-Lusztig cells for affine Coxeter groups.

Recall that an element $w\in W_0$ is called a \emph{partial $\sigma$-Coxeter} element if for some (equivalently any) reduced word $w = s_{\alpha_1}\cdots s_{\alpha_{\ell(w)}}$, the simple roots $\alpha_1,\dotsc,\alpha_{\ell(w)}\in \Delta$ lie in pairwise distinct $\sigma$-orbits.

\begin{definition}[{\cite[Definition~3.4]{Schremmer2023_coxeter}}]
Let $x = w\varepsilon^\mu\in\widetilde W$ and $v\in \LP(x)$. If $v^{-1}\sigma(wv)\in W_0$ is a partial $\sigma$-Coxeter element, then we call $x$ a \emph{positive Coxeter type element} and $(x,v)$ a \emph{positive Coxeter type pair}.
\end{definition}
This definition generalizes the notion of \emph{finite Coxeter type} as introduced by He-Nie-Yu \cite{He2022}. We have a good understanding of the geometry of ADLV $X_x(b)$ where $x$ has positive Coxeter type, which we summarize as below:
\begin{theorem}[{\cite[Theorem~5.7]{Schremmer2023_coxeter}}]\label{thm:ssyPositiveCoxeterType}
Let $(x=w\varepsilon^\mu,v)$ be a positive Coxeter pair with support $J=\supp_\sigma(v^{-1}\sigma(wv))$.
\begin{enumerate}[(a)]
\item The set $B(G)_x := \{[b]\in B(G)\mid X_x(b)\neq\emptyset\}$ can be written as
\begin{align*}
    B(G)_x = \{[b]\in B(G)\mid [b_{x,\min}]\leq [b]\leq [b_{x,\max}]\}.
\end{align*}
Here, $[b_{x,\min}]$ is the uniquely determined smallest $\sigma$-conjugacy class $[b]\in B(G)$ satisfying
\begin{enumerate}[(1)]
\item $\kappa(b)\equiv \mu$ in $\pi_1(G)_{\Gamma}$ and
\item $\nu(b)-\avg_\sigma(v^{-1}\mu)\in \mathbb Q \Phi_J^\vee$ in $X_\ast(T)\otimes\mathbb Q$, where $\avg_\sigma$ denotes the $\sigma$-average.
\end{enumerate}
Similarly, $[b_{x,\max}]$ is the uniquely determined largest $\sigma$-conjugacy class $[b]\in B(G)$ satisfying (1), (2) and
\begin{enumerate}[(3)]
\item
$\nu(b)\geq \avg_\sigma(v^{-1}\mu - \wt(v\Rightarrow\sigma(wv)))$, where $\wt(\cdot\Rightarrow\cdot)$ denotes the weight function of the quantum Bruhat graph.
\end{enumerate}

\item Let $[b]\in B(G)_x$. The affine Deligne-Lusztig variety $X_x(b)$ is equidimensional of dimension
\begin{align*}
\dim X_x(b) = \frac 12\Bigl(\ell(x)+\ell_{R,\sigma}(w) - \langle \nu(b),2\rho\rangle - \mathrm{def}(b)\Bigr).
\end{align*}
\item Let $[b]\in B(G)_x$. The natural group action of $J_b(F) = \{g\in G(\breve F)\mid g^{-1}b\sigma(g) = b\}$ on $X_x(b)$ by left multiplication induces a transitive action on the set of irreducible components.
\end{enumerate}
\end{theorem}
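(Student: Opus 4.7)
The plan is to reduce Theorem~\ref{thm:EOGeometry} entirely to the positive/geometric Coxeter type framework. I will show that for any $(G,\mu)$ with $\depth(G,\mu)<2$ and any $x\in\Adm(\mu)^K$, the element $x$ is of geometric Coxeter type in the sense of \cite{Nie2025}, and with two sporadic exceptions it is even of positive Coxeter type in the sense of \cite{Schremmer2023_coxeter}. Granting this, (a) and the explicit elements $[b_{x,\min}], [b_{x,\max}]$ come out of Theorem~\ref{thm:ssyPositiveCoxeterType}(a); the equidimensionality, the transitive $J_b(F)$-action and the product-with-DL-variety description in (b) come from Theorem~\ref{thm:ssyPositiveCoxeterType}(b,c) together with the explicit geometric model of $X_x(b)$ produced in loc.\ cit.; and the dimension formula in (c) is Theorem~\ref{thm:ssyPositiveCoxeterType}(b) rewritten via $\cl(x)=w$, the identification of the classical DL factor as a \emph{Coxeter-type} variety being automatic once $u:=v^{-1}\sigma(wv)$ is a partial $\sigma$-Coxeter element with support $\supp_\sigma(u)$. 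The ``for comparison'' assertion when $\depth(G,\mu)\leq 1$ is the classical minute case of \cite{Goertz2019}.

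The first step is a reduction to the classification of Section~\ref{secclassification}. By Remark~\ref{remdepthred} and Lemma~\ref{lem:depthRestriction}, depth, admissibility, $B(G,\mu)$ and the property of being of positive (resp.\ geometric) Coxeter type are all compatible with direct products, central translation and Weil restriction of scalars. I may therefore assume that $G$ is absolutely quasi-simple, and then Theorem~\ref{prop:classification} leaves only the eight explicit families on its list. In the case $\depth(G,\mu)\leq 1$, the pair is fully Hodge-Newton decomposable and every $x\in\Adm(\mu)^K$ is already known to be of finite Coxeter type by \cite{Goertz2019, He2022}, which is a special case of positive Coxeter type; so a single argument will simultaneously handle the minute case and yield the ``for comparison'' statement in (c).

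The main step is the case-by-case construction of a positive Coxeter pair $(x,v)$ for each $x\in\Adm(\mu)^K$ in the six infinite type-$A$ families and in all but one admissible element for each of the sporadic $C_3$ and $D_5$ cases. Because $x\in\widetilde W^K$ is $K$-minimal, the finite Weyl part $w$ of $x=w\varepsilon^\lambda$ is tightly constrained, and using the combinatorial admissibility criterion \cite[Proposition~4.12]{Schremmer2024_bruhat} together with the definition of the length functional, the admissible $x$ can be enumerated explicitly from the shape of $\mu$. For each such $x$ I would propose a canonical $v\in W_0$ (chosen so that $v^{-1}\lambda$ is antidominant on a suitable parabolic determined by $w$), verify length positivity $\ell(x,v\alpha)\geq 0$ for $\alpha\in\Phi^+$ directly, and then check that $u=v^{-1}\sigma(wv)$ has a reduced expression whose simple reflections lie in pairwise distinct $\sigma$-orbits. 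The hard part will be the sheer bookkeeping needed to cover the whole list of Theorem~\ref{prop:classification} uniformly, especially the type-$A$ coweights $\mu\in\{2\omega_1^\vee+\omega_n^\vee,\;\omega_2^\vee+\omega_n^\vee,\;\omega_2^\vee,\;\omega_3^\vee\}$ where genuinely new non-basic Newton strata first appear and where $v$ cannot be chosen uniformly for all $x$.

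Finally, for the unique $x$ in $C_3$ and the unique $x$ in $D_5$ that fails to be of positive Coxeter type (Examples~\ref{ex:39} and \ref{ex:310}), I would verify the geometric Coxeter type condition of \cite{Nie2025} by a direct, finite computation inside the respective affine Weyl groups. Geometric Coxeter type is precisely the hypothesis under which \cite{Nie2025} proves the exact shape statements needed for (a) and (b), namely the interval form of $B(G)_x$, equidimensionality, transitivity of the $J_b(F)$-action, and a decomposition of each irreducible component as a product of $\mathbb A^1$'s and $\mathbb G_m$'s with a classical Deligne-Lusztig variety. Checking in each of the two sporadic examples that this DL factor is again of Coxeter type and that the dimension in (c) agrees with the stated formula reduces to a short verification. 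Combining the generic positive-Coxeter cases with these two sporadic geometric-Coxeter cases then establishes (a), (b), and (c) uniformly for all $x\in\Adm(\mu)^K$ with $\depth(G,\mu)<2$.
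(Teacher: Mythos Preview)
There is a mismatch between the statement and your proposal. Theorem~\ref{thm:ssyPositiveCoxeterType} is a result quoted verbatim from \cite[Theorem~5.7]{Schremmer2023_coxeter}; the paper does not prove it and simply uses it as input. Your proposal does not prove this statement either: throughout you treat Theorem~\ref{thm:ssyPositiveCoxeterType} as a black box and instead outline a proof of Theorem~\ref{thm:EOGeometry}, which is the theorem that \emph{applies} Theorem~\ref{thm:ssyPositiveCoxeterType} once one knows every $x\in\Adm(\mu)^K$ is of positive or geometric Coxeter type. So as a proof of the stated result your proposal is vacuous.

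If your intention was in fact Theorem~\ref{thm:EOGeometry}, then your outline agrees with the paper's strategy: reduce to relatively quasi-simple $G$, use the classification in Theorem~\ref{prop:classification}, establish positive Coxeter type for the type-$A$ families, and handle the two sporadic $C_3$ and $D_5$ elements by direct verification of geometric Coxeter type (Examples~\ref{ex:39} and~\ref{ex:310}). The main difference is in how positive Coxeter type is verified for type $A$. You propose a direct case-by-case construction of a length-positive $v$ for each $x$ and each of the coweights in the table, acknowledging this as ``sheer bookkeeping''. The paper avoids this via a structured inductive argument (Theorem~\ref{thm:depth2PositiveCoxeterType}): a Levi-reduction step (Lemma~\ref{lem:positiveCoxeterPAlcove}) reduces to a proper Levi whenever the $\sigma$-support of $v^{-1}\sigma(wv)$ is not full, and a d\'evissage through Weil restriction (Lemmas~\ref{lem:admDecomposition}, \ref{lem:pctRes} and Proposition~\ref{prop:pctRes}) collapses the infinite type-$A$ families to a small number of base cases, leaving only finitely many elements to check directly. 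Your approach would in principle work, but the paper's argument is considerably more economical and explains why only two sporadic failures occur.
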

There are a handful more consequences of the positive Coxeter property, such as a precise understanding of the Deligne-Lusztig reduction trees \cite[Theorem 5.7 (c), (d)]{Schremmer2023_coxeter} and a description of $J_b(F)$-stabilizers of irreducible components \cite[Proposition~6.7]{Schremmer2023_coxeter}.

In particular, all elements of positive Coxeter type are also of \emph{geometric Coxeter type} as defined by Nie-Schremmer-Yu \cite{Nie2025}. The definition of geometric Coxeter type is a bit technical, and formulated in terms of reduction trees. It has the following consequence:
\begin{theorem}[{\cite[Theorem~6.9]{Nie2025}}]\label{thm:gctConsequences}
Let $x\in\widetilde W$ be of positive Coxeter type and let $[b]\in B(G)_x$. Then one can compare $X_x(b)$ to the direct product
\begin{align}
(\mathbb G_m)^{\ell_1}\times \mathbb A^{\ell_2}\times C\times D,\label{eq:geoCoxGeometry}
\end{align}
where $\ell_1,\ell_2\geq 0$ are integers depending on $x$ and $[b]$, where $C$ is a classical Deligne-Lusztig variety of Coxeter type and where $D$ is a discrete set of points. More precisely, if $F$ has equal characteristic, there is a universal homeomorphism from $X_x(b)$ to \eqref{eq:geoCoxGeometry} (in the category of schemes). In mixed characteristic, there is an isomorphism in the category of perfect schemes from $X_x(b)$ to \eqref{eq:geoCoxGeometry}.
\end{theorem}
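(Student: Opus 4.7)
The plan is to prove this by induction on $\ell(x)$ using He's Deligne-Lusztig reduction along a canonical reduction path singled out by the positive Coxeter datum $(x=w\varepsilon^\mu,v)$. Recall the reduction: for any simple affine reflection $s$ with $sx<x$, one has either a universal homeomorphism $X_x(b)\cong X_{sx\sigma(s)}(b)$ when $\ell(sx\sigma(s))=\ell(x)$, or a locally closed decomposition $X_x(b) = Y_1\sqcup Y_2$ when $\ell(sx\sigma(s))=\ell(x)-2$, where $Y_1$ is an $\mathbb A^1$-bundle over $X_{sx\sigma(s)}(b)$ and $Y_2$ is a $\mathbb G_m$-bundle over $X_{sx}(b)$. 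Iterating produces a reduction tree whose leaves are $\sigma$-straight elements, where the base case of the induction is handled.

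My first step is to use the positive Coxeter structure to choose a canonical reduction path. Writing $v^{-1}\sigma(wv) = s_{\alpha_1}\cdots s_{\alpha_r}$ with the $\alpha_i\in\Delta$ in pairwise distinct $\sigma$-orbits, I would use the length-positivity of $v$ together with the length-functional description of Bruhat order to select, at each stage, a canonical simple affine reflection giving a valid reduction step. By induction on $r$ one shows this path terminates at a $\sigma$-straight element $y$, and each intermediate step contributes either an $\mathbb A^1$- or a $\mathbb G_m$-factor according to the combinatorial position of $[b]$ relative to $x$ (consistent with the nonemptiness and dimension formulas of Theorem~\ref{thm:ssyPositiveCoxeterType}); this determines $\ell_1$ and $\ell_2$. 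At the $\sigma$-straight leaf $y$, a standard analysis (as in the work of He-Nie) identifies $X_y(b)$ with a disjoint union of copies of a classical Deligne-Lusztig variety $C$ indexed by a coset space of $J_b(F)$, which plays the role of the discrete factor $D$. The positive Coxeter hypothesis further forces the residual Frobenius-twisted finite Weyl element acting at $y$ to be a Coxeter element for the parabolic subgroup generated by $J=\supp_\sigma(v^{-1}\sigma(wv))$, so $C$ is of Coxeter type as claimed.

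The main obstacle is upgrading the abstract iterated fibration structure to a literal direct product. A priori the $\mathbb A^1$- and $\mathbb G_m$-bundles produced by the reduction are only Zariski-locally trivial, so their composition is not a product on the nose. The positive Coxeter structure supplies the necessary rigidity: because $\alpha_1,\dotsc,\alpha_r$ lie in pairwise distinct $\sigma$-orbits, the corresponding affine root subgroups at successive steps commute modulo terms controlled by the inductive hypothesis, which permits one to exhibit explicit sections trivializing each bundle in turn. The remaining discrepancies consist of finite purely inseparable morphisms induced by Frobenius twists of these sections; they yield precisely the universal homeomorphism statement in equal characteristic and are killed by passing to perfect schemes in mixed characteristic, explaining the weaker category used in that setting.
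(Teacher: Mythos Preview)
This statement is not proved in the paper; it is quoted verbatim from \cite[Theorem~6.9]{Nie2025} and used as a black box. There is therefore no proof in the paper to compare your proposal against.

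Regarding your sketch itself: the overall architecture (Deligne--Lusztig reduction along a path dictated by the positive Coxeter datum, terminating at a $\sigma$-straight element whose ADLV is a union of classical Coxeter-type Deligne--Lusztig varieties) is the expected one, and matches how the paper describes the relationship between positive Coxeter type, reduction trees, and geometric Coxeter type (see the sentence just before Theorem~\ref{thm:gctConsequences} and the reference to \cite[Theorem~5.7~(c),(d)]{Schremmer2023_coxeter}). However, your proposed mechanism for upgrading the iterated Zariski-locally trivial bundles to a genuine product has a gap. You assert that the affine root subgroups at successive steps commute because $\alpha_1,\dotsc,\alpha_r$ lie in pairwise distinct $\sigma$-orbits; but distinct $\sigma$-orbits of simple roots need not give commuting root subgroups (already for split $A_2$ with trivial $\sigma$, the groups $U_{\alpha_1}$ and $U_{\alpha_2}$ do not commute). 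So this step, which you yourself flag as the main obstacle, is not justified by the argument you give. The actual proof in \cite{Nie2025} proceeds through the intermediate notion of \emph{geometric Coxeter type}, defined in terms of reduction trees, and the trivialization is obtained by a more delicate analysis of those trees rather than by a bare commutation argument.
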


In the next section, we will prove the following result.
\begin{theorem}\label{thm:depth2PositiveCoxeterType}
    Assume that every connected component of the Dynkin diagram of $G$ is of type $A$. Let $\mu$ be a dominant cocharacter with $\depth(G,\mu)<2$. Then every element $x\in \Adm(\mu)\cap \widetilde W^K$ is of positive Coxeter type.
\end{theorem}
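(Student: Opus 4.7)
The plan is to reduce to the absolutely quasi-simple case and then carry out a case-by-case analysis using the classification of Theorem~\ref{prop:classification}.

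First, by Remark~\ref{remdepthred}(b,c) the notion of depth descends to products and to the adjoint quotient, and by Lemma~\ref{lem:depthRestriction}(a) it is compatible with Weil restriction of scalars. Combining these, together with the parallel compatibility of $\widetilde W^K$, $\Adm(\mu)$, the length functional, and length positive elements under these reductions (in particular using part (b) of Lemma~\ref{lem:depthRestriction} to transfer the partial $\sigma$-Coxeter condition factorwise through the identification $\widetilde W_G \cong \widetilde W_{G'}^{d}$), the problem reduces to verifying the positive Coxeter property for absolutely quasi-simple $G$ of type $A_{m}$ with either trivial or opposition-involution Frobenius action on the root system.

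Second, Theorem~\ref{prop:classification} together with the classification of fully Hodge-Newton decomposable pairs \cite[Theorem~3.5]{Goertz2019} reduces the work to an explicit list: the $\depth(G,\mu)\leq 1$ cases, plus the four infinite families $\mu = 2\omega_1^\vee+\omega_m^\vee$, $2\omega_1^\vee$, $\omega_2^\vee+\omega_m^\vee$, $\omega_2^\vee$, plus the sporadic cases on $A_4, A_5, A_6, A_7$ (up to Dynkin diagram automorphism). The $\depth\leq 1$ cases are previously known to produce positive Coxeter type elements. The sporadic cases and the small-rank tails of the infinite families involve only finite $\Adm(\mu)^K$ and are verified by direct (possibly computer-assisted) computation. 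What remains is a uniform verification for each infinite family with $m$ large.

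Third, for each such family I would describe $\Adm(\mu)^K$ via the combinatorial admissibility criterion of \cite[Prop.~4.12]{Schremmer2024_bruhat}. Writing $x = w\varepsilon^\lambda$ with $\lambda$ in the convex hull of $W_0\mu$, the hyperspecial condition $x\in\widetilde W^K$ (minimal length in $xW_0$) strongly restricts the pair $(w,\lambda)$. For each such $x$ the natural candidate for a length positive element $v\in W_0$ is one chosen so that $v^{-1}\lambda$ is dominant, or as dominant as possible; one then checks that $v^{-1}\sigma(wv)$ admits a reduced expression using simple reflections lying in pairwise distinct $\sigma$-orbits, i.e.\ is a partial $\sigma$-Coxeter element.

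The main obstacle is the uniform combinatorial verification of the partial Coxeter condition across the infinite families. What makes it tractable is that in each family $\mu$ has only one or two nonzero fundamental coweight coordinates, so the finite part $\cl(x)=w$ is forced to have short reduced expressions, and the $\sigma$-orbits of simple reflections appearing there can be controlled by the choice of $v$. Pushing this bookkeeping through in type $A$ — where the Frobenius action on the Dynkin diagram is either trivial or the opposition involution — is the technical heart of the argument, and is where the type $A$ hypothesis on the Dynkin components is genuinely used (as signaled by the $C_3$ and $D_5$ exceptions discussed around Examples~\ref{ex:39} and \ref{ex:310}).
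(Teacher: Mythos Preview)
Your reduction to the absolutely quasi-simple case via Weil restriction contains a genuine gap. Lemma~\ref{lem:depthRestriction}(b) concerns $\sigma$-fundamental elements, not positive Coxeter type, and in fact positive Coxeter type does \emph{not} simply factor through the identification $\widetilde W_G\cong\widetilde W_{G'}^d$ in the direction you need: the paper's Lemma~\ref{lem:pctRes} shows that if $x=(x_1,\dotsc,x_d)$ has positive Coxeter type then so does the collapsed element $x'$, but the converse requires $\ell(x_k)=0$. So you cannot reduce from $G=\Res_{F^{(d)}/F}G'$ to $G'$ and check positive Coxeter type there; one must work with the Weil restriction directly, where the elements of $\Adm(\mu)^K$ are $d$-tuples and the $\sigma$-twisted classical part involves a nontrivial product. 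The paper's Proposition~\ref{prop:pctRes} does exactly this, and the case analysis (notably the three-factor case handled via Lemma~\ref{lem:pctDevisage}) is substantially more intricate than your outline suggests.

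More importantly, the paper's proof is organized around a different mechanism that you do not mention: a dichotomy \enquote{either $x$ is already of positive Coxeter type, or one can find $v\in\LP(x)$ with $\supp_\sigma(v^{-1}\sigma(\cl(x)v))\subsetneq\Delta$}, followed by Levi reduction via Lemma~\ref{lem:positiveCoxeterPAlcove} and induction on the rank. This inductive structure is what allows the paper to avoid a uniform closed-form description of all of $\Adm(\mu)^K$ for each infinite family. Your proposal instead aims for direct verification, which even in the split case would require explicit control over $\cl(x)$ and a choice of $v$ for \emph{every} $x$ in the admissible set; this is not obviously feasible, and the paper never attempts it. Finally, your claim that the $\depth\leq 1$ cases are \enquote{previously known} to be of positive Coxeter type is too quick: the paper's Lemma~\ref{lem:pctNonsplitGLn} handles the non-split $A_n$ case with $\mu=\omega_1^\vee$ by invoking the Coxeter-type classification of \cite{Goertz2024} together with the same Levi-reduction dichotomy, not by appeal to a prior result.
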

The combination of Theorems \ref{thm:ssyPositiveCoxeterType} and \ref{thm:depth2PositiveCoxeterType} can be used to prove Theorem~\ref{thm:EOGeometry} in most cases. Before we give the proof, we consider two exceptional cases, labeled $(C_3, \omega_3^\vee)$ and $(D_5, \omega_5^\vee)$ in Theorem~\ref{prop:classification}, in which a more explicit argument is needed.
\begin{example}\label{ex:39}
Let $G$ be the symplectic group $G = \mathrm{Sp}_6$ and set $\mu = \omega_3^\vee$. Then, the admissible set $\Adm(\mu)\cap \widetilde W^K$ contains eight elements, which are easily enumerated. We use a notational shortcut expressing elements $w$ with reduced word $w = s_{i_1}\cdots s_{i_{\ell(w)}}$ as $w = s_{i_1 i_2\cdots i_{\ell(w)}}$. Denote the unique length zero element in $\Adm(\mu)$ by $\tau$. The following seven elements are of positive Coxeter type.
 \begin{align*}
    s_{323123} \varepsilon^{-\mu} = \tau&\quad\text{ with }\quad v=s_{12312},
    \\s_{3123}\varepsilon^{-\mu} = \tau s_{10}& \quad \text{ with }\quad v=s_{2312312},
    \\s_{323}\varepsilon^{-\mu} = \tau s_{210}&\quad \text{ with }\quad v=s_{3123121},
    \\s_{123}\varepsilon^{-\mu} = \tau s_{010}&\quad \text{ with }\quad v=s_{323123},
    \\s_{23}\varepsilon^{-\mu} = \tau s_{2010}&\quad \text{ with }\quad v=s_{32312321},
    \\s_3\varepsilon^{-\mu} = \tau s_{12010}&\quad \text{ with }\quad v=s_{32312312},
    \\\varepsilon^{-\mu} = \tau s_{012010}&\quad \text{ with }\quad v = s_{323123}
 \end{align*} The unique element that is not of positive Coxeter type is 
 \begin{align*}
     x = s_{23123} \varepsilon^{-\mu} = \tau s_0 \in\widetilde W.
 \end{align*} 
    Note that its classical part $w = s_2 s_3 s_1 s_2 s_3\in W_0$ is not conjugate to any partial Coxeter element. We have $\ell(x)=1$, so $x$ is of minimal length in its $\sigma$-conjugacy class of $\widetilde W$. Writing $x = s_3\tau $, where $\tau\in \Adm(\mu)$ is the unique length zero element, one checks that $x$ satisfies the \emph{minimal Coxeter type} condition from \cite[Definition~5.3]{Nie2025}. So the claims of Theorem~\ref{thm:EOGeometry} for that given element $x\in\widetilde W$ follow from \cite[Theorems 3.5 and 4.8]{He2014} as well as a straightforward computation, together with Theorem~\ref{thm:gctConsequences}. Indeed, we get $\ell_R(w)=3$ and $[b] = [\dot x]$ satisfies $\nu(b)=0$ and $\mathrm{def}(b)=2$. Hence, we get
    \begin{align*}
    \dim X_x(b) &= \ell(x)-\langle \nu(b),2\rho\rangle = 1 = \frac 12\left(1+3-0-2\right) \\&= \frac 12\left(\ell(x)+\ell_R(w) -\langle\nu(b),2\rho\rangle-\mathrm{def}(b)\right).
    \end{align*}
\end{example}
\begin{example}\label{ex:310}
    Let now $G$ be the special orthogonal group $G = \mathrm{SO}_{10}$ and set $\mu = \omega_5^\vee$. Then the admissible set $\Adm(\mu)\cap \widetilde W^K$ contains $16$ elements, which are again easily enumerated with a computer. Of those, $15$ are of positive Coxeter type. The unique exception is given by
    \begin{align*}
        x = s_2 s_3 s_5 s_1 s_2 s_3 s_4 \varepsilon^{-\omega_4^\vee}\in\widetilde W.
    \end{align*}
    The fact that $x$ has positive Coxeter type can be verified using the definition by constructing a reduction tree, as explained in \cite{Nie2025}.
    Then the claims of Theorem~\ref{thm:EOGeometry} can be verified for that given element $x$ with a bit of effort, e.g.\ using said reduction tree together with the Deligne-Lusztig reduction method from Görtz-He \cite{Goertz2010b}. The explicit description of irreducible components comes from Theorem~\ref{thm:gctConsequences}.
\end{example}
\begin{proof}[Proof of Theorem~\ref{thm:EOGeometry} using Theorem~\ref{thm:depth2PositiveCoxeterType}]
Let $(G,\mu)$ be as in the theorem. The claims can be verified individually for each $\sigma$-connected component of the Dynkin diagram, so let us assume without loss of generality that $G$ is relatively quasi-simple.

If $\depth(G,\mu)\leq 1$, then parts (a) and (b) as well as the \enquote{For comparison} dimension formula in (c) follow from \cite[Theorem B]{Goertz2019}.

Let us hence assume that $1<\depth(G,\mu)<2$. Then we are in one of the cases described in Theorem~\ref{prop:classification}. If $G$ is of split type $A$, then $x$ has positive Coxeter type by Theorem~\ref{thm:depth2PositiveCoxeterType}. It also has geometric Coxeter type by comparing \cite[Theorem 5.7 (c)\&(d), Section 5.3]{Schremmer2023_coxeter} with the Definition of geometric Coxeter type in \cite{Nie2025}. We get all claims from Theorems \ref{thm:ssyPositiveCoxeterType} and \ref{thm:gctConsequences}. If $G$ is not of split type $A$, then we must be in the situation of Example \ref{ex:39} or \ref{ex:310}, and in either case we get the desired claims.
\end{proof}

\subsection{Proof of Theorem~\ref{thm:depth2PositiveCoxeterType}}

In this section we prove Theorem~\ref{thm:depth2PositiveCoxeterType}. We begin with two technical lemmas, the first being a reduction result that allows us to pass to smaller groups in many cases.
\begin{lemma}\label{lem:positiveCoxeterPAlcove}
    Let $\mu$ be a dominant cocharacter, consider an element $x=w\varepsilon^{\mu_x}\in\Adm(\mu)\cap \widetilde W^K$ and some $v\in W_0$. Set $J := \supp_{\sigma}(v^{-1}\sigma(wv))$. Assume that $v\in W_0^J$ and that $\ell(x,v\alpha)\geq 0$ for all $\alpha\in\Phi^+\setminus \Phi^+_J$.

    Denote the $\sigma$-stable Levi subgroup of $G$ defined by $J$ by $M = M_J$, and let $\tilde x = \sigma^{-1}(v)^{-1} xv\in \widetilde W_M$.
    \begin{enumerate}[(a)]
    \item The cocharacter $\mu' := (vw_0(J))^{-1} \mu_x$ is dominant for $M\cap B$ and we have $\tilde x \in \Adm_M(\mu') \cap \widetilde W_M^{K\cap M}$.

    \item The element $x\in\widetilde W$ has positive Coxeter type for $G$ if and only if $\tilde x\in\widetilde W_M$ has positive Coxeter type for $M$.
    \end{enumerate}
\end{lemma}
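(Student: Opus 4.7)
The plan is to verify (a) by computing $\tilde x$ in the Iwahori-Weyl semidirect product and then (b) by matching length functionals on $\Phi_J^+$.

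For (a), I would write $\tilde x = \sigma^{-1}(v)^{-1} x v = \sigma^{-1}(v)^{-1} w v\cdot \varepsilon^{v^{-1}\mu_x}$ and observe that the finite part equals $\sigma^{-1}(v^{-1}\sigma(wv))$. Since $J = \supp_\sigma(v^{-1}\sigma(wv))$ is $\sigma$-stable by construction, this finite part lies in $\sigma^{-1}(W_J)=W_J=W_M$, so $\tilde x \in \widetilde W_M$. The $M\cap B$-dominance of $\mu' = (vw_0(J))^{-1}\mu_x$ reduces to checking $\langle \mu_x, vw_0(J)\alpha\rangle\geq 0$ for $\alpha \in \Delta_J$; using that $w_0(J)$ sends each $\alpha\in\Delta_J$ to $-\alpha^\ast$ for some $\alpha^\ast\in\Delta_J$ and that $v \in W_0^J$ forces $v\alpha^\ast\in\Phi^+$, one translates this into a statement about $\ell(x, v\beta)$ for appropriate $\beta \in \Phi^+\setminus \Phi_J^+$, where the hypothesis applies. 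Admissibility $\tilde x \in \Adm_M(\mu')$ is then deduced from $x\in \Adm(\mu)$ via the combinatorial criterion of \cite[Proposition~4.12]{Schremmer2024_bruhat}, noting that $vw_0(J)\mu'$ lies in the $W_0$-orbit of $\mu_x$ and hence is bounded by $\mu$. The minimal-length condition $\tilde x \in \widetilde W_M^{K\cap M}$ would be inherited from $x \in \widetilde W^K$ via the standard compatibility between the length functions on $\widetilde W_M$ and $\widetilde W$ relative to a $\sigma$-stable Iwahori subgroup.

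For (b), set $\tilde w := \sigma^{-1}(v)^{-1}wv$, the finite part of $\tilde x$. Applying $\sigma$ yields $\sigma(\tilde w) = v^{-1}\sigma(wv)$, which is a partial $\sigma$-Coxeter element with support $J$ by hypothesis and therefore also partial $\sigma$-Coxeter in $W_M=W_J$. Taking $\tilde v := 1 \in W_M$, the element $\tilde v^{-1}\sigma(\tilde w\tilde v)=\sigma(\tilde w)$ is partial $\sigma$-Coxeter, so the Coxeter half of the positive Coxeter condition for $(\tilde x, 1)$ is automatic. The remaining step is to match the length-positivity halves. For $\alpha\in \Phi_J^+$ one has $v\alpha\in\Phi^+$ (since $v\in W_0^J$), and the sign of $wv\alpha = \sigma^{-1}(v)(\tilde w\alpha)$ agrees with the sign of $\tilde w\alpha\in\Phi_J$ because $\sigma^{-1}(v)\in W_0^{\sigma^{-1}(J)}=W_0^J$ preserves the sign of roots in $\Phi_J$. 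Combined with $\langle v\alpha,\mu_x\rangle = \langle \alpha, v^{-1}\mu_x\rangle$, this yields $\ell(x, v\alpha) = \ell_M(\tilde x, \alpha)$ on $\Phi_J^+$. Together with the hypothesis $\ell(x, v\alpha)\geq 0$ for $\alpha\in\Phi^+\setminus\Phi_J^+$, this gives $v\in \LP(x)\iff 1\in \LP_M(\tilde x)$, and hence $(x,v)$ is a positive Coxeter pair for $G$ iff $(\tilde x, 1)$ is one for $M$. For the converse direction of the biconditional, if $(\tilde x, \tilde v)$ is a positive Coxeter pair in $W_M$ with $\tilde v\neq 1$, a parallel sign analysis shows that $(x, v\tilde v)$ is a positive Coxeter pair for $G$, the partial $\sigma$-Coxeter element $(v\tilde v)^{-1}\sigma(wv\tilde v) = \tilde v^{-1}\sigma(\tilde w\tilde v)$ being preserved under the substitution.

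The main obstacle I anticipate is the sign bookkeeping underlying both parts. Matching the $\epsilon$-contributions to $\ell(x,\cdot)$ and $\ell_M(\tilde x,\cdot)$ requires careful use of $v\in W_0^J$, the $\sigma$-stability of $J$, and the fact that $W_J$ preserves the sign of roots outside $\Phi_J$; an off-by-one error here would both invalidate the dominance argument in (a) and obstruct the biconditional in (b). Of these, the precise interaction between the length-positivity hypothesis and the $-w_0(J)$ permutation of $\Delta_J$ in the dominance argument for $\mu'$ is the most delicate point.
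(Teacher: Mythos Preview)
Your sketch contains two genuine gaps.

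\textbf{Part (a), dominance of $\mu'$.} You write that the inequality $\langle \mu_x, vw_0(J)\alpha\rangle\geq 0$ for $\alpha\in\Delta_J$ ``translates into a statement about $\ell(x,v\beta)$ for appropriate $\beta\in\Phi^+\setminus\Phi_J^+$, where the hypothesis applies.'' This is not so: writing $w_0(J)\alpha=-\alpha^\ast$ with $\alpha^\ast\in\Delta_J$, the root you need to control is $v\alpha^\ast$ with $\alpha^\ast\in\Phi_J^+$, and the hypothesis $\ell(x,v\beta)\geq 0$ is only assumed for $\beta\in\Phi^+\setminus\Phi_J^+$. The paper instead uses the condition $x\in\widetilde W^K$, which is equivalent to $\ell(x,\gamma)\leq 0$ for \emph{all} $\gamma\in\Phi^+$. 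Since $v\alpha^\ast\in\Phi^+$ (as $v\in W_0^J$), one gets $\ell(x,v\alpha^\ast)\leq 0$, hence $\ell(x,vw_0(J)\alpha)=\ell(x,-v\alpha^\ast)\geq 0$, and then relates this to $\langle\mu_x,vw_0(J)\alpha\rangle$ via the definition of the length functional and the observation that $vw_0(J)\alpha\in\Phi^-$. So the correct input here is $x\in\widetilde W^K$, not the $\Phi^+\setminus\Phi_J^+$-hypothesis.

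\textbf{Part (b).} You claim that $v^{-1}\sigma(wv)$ ``is a partial $\sigma$-Coxeter element with support $J$ by hypothesis.'' The hypothesis says only that $J=\supp_\sigma(v^{-1}\sigma(wv))$; having $\sigma$-support $J$ does not make an element a partial $\sigma$-Coxeter element. As a consequence your forward direction collapses: if $x$ has positive Coxeter type there exists some $v'\in\LP(x)$ with $(v')^{-1}\sigma(wv')$ partial $\sigma$-Coxeter, but nothing forces $v'=v$, and you give no mechanism to produce a length-positive element for $\tilde x$ in $W_M$ from such a $v'$. Your converse direction (given a positive Coxeter pair $(\tilde x,\tilde v)$ in $M$, the element $v\tilde v$ works for $x$) is essentially correct and does use the $\Phi^+\setminus\Phi_J^+$-hypothesis in the right place. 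The paper does not argue (b) directly at all; it simply cites \cite[Lemma~5.14]{Schremmer2023_coxeter}, which handles precisely this transfer between $G$ and the Levi under the $(J,v,\sigma)$-alcove condition.
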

\begin{proof}
    (a) The condition $v\in W_0^J$ is equivalent to $\ell(v,\alpha)=0$ for all $\alpha\in\Phi_J$. Thus $\ell(\tilde x,\alpha) = \ell(x,v\alpha)$ for all $\alpha\in \Phi_J$ by \cite[Lemma~2.12]{Schremmer2022_newton}.
    
    The condition $x\in\widetilde W^K$ is equivalent to $\ell(x,\alpha)\leq 0$ for all $\alpha\in\Phi^+$. Given $\alpha\in\Phi^+_J$, we thus see $\ell(\tilde x,\alpha) = \ell(x,v\alpha)\leq 0$ since $v\alpha\in\Phi^+$. This implies $\tilde x\in\widetilde W_M^{K\cap M}$. 
    
    In particular, the longest element $w_0(J)\in W_{0,M}$ is length positive for $\tilde x\in\widetilde W_M$. We also compute for $\alpha\in J$ that
    \begin{align*}
        \langle \mu',\alpha\rangle &= \langle \mu_x, vw_0(J)\alpha\rangle = \ell(x, vw_0(J)\alpha) - \Phi^+(vw_0(J)\alpha) + \Phi^+(wvw_0(J)\alpha)\\&\geq \ell(x, vw_0(J)\alpha)\geq 0
    \end{align*}
    since $w_0(J)\alpha\in\Phi_J^-$ so $vw_0(J)\alpha\in\Phi^-$. This shows the dominance claim. The fact $\tilde x\in\Adm_M(\mu')$ is now easy to see through elementary arguments, or by appealing to \cite[Proposition~4.12]{Schremmer2024_bruhat}.

    Assertion (b) is \cite[Lemma~5.14]{Schremmer2023_coxeter}.
\end{proof}

\begin{lemma}\label{lem:admDecomposition}
    Let $\mu_1, \mu_2\in X_*(T)^{\dom}$ and let $x = w\varepsilon^{\mu_x}\in\widetilde W$ for some $w\in W_0$ and $\mu_x\in X_*(T)$.
    \begin{enumerate}[(a)]
    \item We have $x\in \Adm(\mu_1)^K$ if and only if $x\in\widetilde W^K$ and $\mu_{\dom} = w_0\mu\leq \mu_1$.
    \item Every element $x\in\Adm(\mu_1+\mu_2)^K$ allows a decomposition $x = x_1 x_2$ such that $\ell(x) = \ell(x_1) + \ell(x_2)$ and $x_i\in \Adm(\mu_i)^K$ for $i=1,2$.
    \end{enumerate}
\end{lemma}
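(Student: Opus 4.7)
Part (a) is essentially an immediate translation of Theorem~6.10 of \cite{He2017}, already cited in the excerpt. For any $x\in\widetilde W^K$, the length-positivity condition $\ell(x,\gamma)\leq 0$ for $\gamma\in\Phi^+$ forces the cocharacter part $\mu_x$ of $x = w\varepsilon^{\mu_x}$ to be antidominant, so $\mu_{x,\dom}=w_0\mu_x$. Since the double coset $W_0\varepsilon^{\mu'}W_0$ consists precisely of elements whose cocharacter part lies in $W_0\mu'$, He's theorem says $x\in\Adm(\mu_1)$ iff $\mu_{x,\dom}\leq \mu_1$, which is the stated condition.

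For part (b), the key elementary fact is that for any $u\in W_0$, the factorization
\begin{equation*}
    \varepsilon^{u(\mu_1+\mu_2)} = \varepsilon^{u\mu_1}\cdot \varepsilon^{u\mu_2}
\end{equation*}
is length-additive, since $\mu_1,\mu_2,\mu_1+\mu_2$ are all dominant and $\ell(\varepsilon^\lambda)=\langle 2\rho,\lambda_{\dom}\rangle$. Choosing $u\in W_0$ with $x\leq \varepsilon^{u(\mu_1+\mu_2)}$ (such a $u$ exists by $x\in\Adm(\mu_1+\mu_2)$), the subword property of the Bruhat order on $\widetilde W$ yields a length-additive decomposition $x = y_1 y_2$ with $y_i\leq \varepsilon^{u\mu_i}$, and in particular $y_i\in\Adm(\mu_i)$. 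The right factor $y_2$ is automatically in $\widetilde W^K$: any right descent $s\in S_0$ of $y_2$ would yield $\ell(xs)\leq \ell(y_1)+\ell(y_2 s) = \ell(x)-1$, contradicting $x\in\widetilde W^K$.

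The remaining task is to move the left factor into $\widetilde W^K$. If $y_1\notin \widetilde W^K$, I would pick $s\in S_0\cap R(y_1)$ and replace $(y_1,y_2)$ by $(y_1 s,\,sy_2)$. The identity $(y_1 s)(sy_2)=x$ together with $\ell(y_1 s)=\ell(y_1)-1$ and $\ell(x)=\ell(y_1)+\ell(y_2)$ forces $\ell(sy_2)=\ell(y_2)+1$ by the triangle inequality for length, so the new decomposition is again length-additive. Then $y_1 s\leq y_1\leq \varepsilon^{u\mu_1}$ keeps the left factor in $\Adm(\mu_1)$; the new right factor $sy_2$ lies in $\widetilde W^K$ by the same right-descent argument; and crucially, left multiplication by $s\in W_0$ does not alter the cocharacter part in the canonical form, so $\mu_{sy_2,\dom}=\mu_{y_2,\dom}\leq \mu_2$ gives $sy_2\in\Adm(\mu_2)^K$ via part (a). Since each iteration strictly decreases $\ell(y_1)$, the procedure terminates with $y_1,y_2\in\Adm(\mu_i)^K$.

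The main obstacle, or rather the conceptual linchpin, is the exchange step. The direct subword-property decomposition places $y_2$ below $\varepsilon^{u\mu_2}$ in the Bruhat order, but after the modification $sy_2$ is no longer Bruhat-below $\varepsilon^{u\mu_2}$, so admissibility cannot be checked that way. Part (a) resolves this by reducing admissibility (within $\widetilde W^K$) to the single numerical condition $\mu_{\cdot,\dom}\leq \mu_2$, which is trivially preserved under left multiplication by elements of $W_0$; this is what makes the iterative exchange go through.
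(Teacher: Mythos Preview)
Your proof is correct and follows essentially the same structure as the paper's. For (a) the paper cites \cite[Proposition~4.12]{Schremmer2024_bruhat} rather than \cite[Theorem~6.10]{He2017}, but the content is identical. For (b) there is one genuine simplification on your side: instead of invoking \cite[Theorem~5.1]{He2016} to produce an initial (possibly non-reduced) factorization $x=\tilde x_1\tilde x_2$ with $\tilde x_i\in\Adm(\mu_i)$ and then extracting a reduced subword, you start directly from the length-additive factorization $\varepsilon^{u(\mu_1+\mu_2)}=\varepsilon^{u\mu_1}\cdot\varepsilon^{u\mu_2}$ and apply the subword property once. This is more elementary and yields the same $y_1\leq\varepsilon^{u\mu_1}$, $y_2\leq\varepsilon^{u\mu_2}$ in one step. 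The subsequent adjustment---peeling off the $W_0$-tail of $y_1$ and prepending it to $y_2$, then using (a) to certify that the new right factor remains in $\Adm(\mu_2)^K$---is exactly what the paper does (it writes $y_1=x_1u$ in one step where you iterate, but that is cosmetic).
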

\begin{proof}
\begin{enumerate}[(a)]
\item Certainly the condition $x\in\widetilde W^K$ is necessary for $x\in \Adm(\mu_1)^K$, so let us assume this from now on. Then $\mu_{\dom} = w_0 \mu$ and $w_0\in \LP(x)$. By \cite[Proposition~4.12]{Schremmer2024_bruhat}, we get $x\in \Adm(\mu_1)$ if and only if $w_0\mu \leq \mu_1$.
\item 
By \cite[Theorem~5.1]{He2016}, we find elements $\tilde x_1\in \Adm(\mu_1)$ and $\tilde x_2\in\Adm(\mu_2)$ with $x = \tilde x_1 \tilde x_2$. Pick reduced words
\begin{align*}
    \tilde x_i = \tau_i s_{\alpha^{(i)}_1}\cdots s_{\alpha^{(i)}_{\ell(x_i)}}\in\widetilde W
\end{align*}
with $\tau_i$ of length $0$. Composing these words for $\tilde x_1$ and $\tilde x_2$ yields a word for $x$ in $\widetilde W$. By the deletion property of Coxeter groups, we can find a reduced subword
\begin{align*}
x = \tau_1 s_{\alpha^{(1)}_{i_1}}\cdots s_{\alpha^{(1)}_{i_m}} \tau_2 s_{\alpha^{(2)}_{j_1}}\cdots s_{\alpha^{(2)}_{j_{\ell(x)-m}}}.
\end{align*}
Put $x_1' = \tau_1 s_{\alpha^{(1)}_{i_1}}\cdots s_{\alpha^{(1)}_{i_m}}$ and $x_2' = \tau_2 s_{\alpha^{(2)}_{j_1}}\cdots s_{\alpha^{(2)}_{j_{\ell(x)-m}}}$. Then $x = x_1' x_2'$ is a length additive product and $x_i'\leq x_i\in \Adm(\mu_i)$. Write $x_1' = x_1 u$ with $x_1\in\widetilde W^K$ and $u\in W_0$. Then $x = x_1 u x_2'$ is a length additive product. Set $x_2 = u x_2'$. We get $x_1\in\widetilde W^K$ and $x_1\leq x_1'\leq \tilde x_1\in\Adm(\mu_1)$, proving $x_1\in \Adm(\mu_1)^K$. The condition $x\in\widetilde W^K$ implies $x_2', x_2\in \widetilde W^K$. Hence $x_2\in\Adm(\mu_2)\iff x_2'\in\Adm(\mu_2)$ by (a). Now recall $x_2'\leq \tilde x_2\in\Adm(\mu_2)$.
\qedhere\end{enumerate}
\end{proof}

For the remainder of this subsection, we assume that $G$ is adjoint, every connected  component of the Dynkin diagram of $G$ is type $A$ and that $\depth(G,\mu)<2$ for a dominant cocharacter $\mu$. The classification in Theorem \ref{prop:classification} and the reduction steps at the beginning of Section \ref{secclassification} show that this implies that $G$ is isogeneous to a direct product of restrictions of scalars of groups $\mathrm{PGL}_{n}$ for varying $n$. Our goal is to prove that every element $x\in\Adm(\mu)\cap \widetilde W^K$ has positive Coxeter type. For this, we can easily reduce to the case where $G$ is isomorphic to a Weil restriction of scalars of a fixed group $G' = \mathrm{PGL}_n$ along an unramified field extension $F^{(f)}/F$.

A related notion in this context is that of \emph{Shimura varieties of Coxeter type} \cite{Goertz2015, Goertz2024}. If $\depth(G,\mu)\leq 1$, then we say that $(G,\mu,K)$ is of \emph{Coxeter type} if each $x\in \Adm(\mu)\cap \widetilde W^{K}$ such that $IxI$ is contained in a basic $\sigma$-conjugacy class is a partial Coxeter element of $\widetilde W$. This condition suggests a particularly beautiful structure on the associated Rapoport-Zink spaces. In the reduction in the proof of Theorem \ref{thm:depth2PositiveCoxeterType} we are also led to such cases, and need the following lemma.
\begin{lemma}\label{lem:pctNonsplitGLn}
    Assume that $G$ is adjoint with connected Dynkin diagram of type $A_n$ and such that the Frobenius action satisfies $\sigma(\alpha) = -w_0\alpha$ for all $\alpha\in\Phi$. Let $\mu = \omega_1^\vee$ and $x\in \Adm(\mu)\cap \widetilde W^K$. Then at least one of the following claims is true:
    \begin{enumerate}[(i)]
    \item\label{316pr1} The element $x$ is of positive Coxeter type or
    \item\label{316pr2} There is a $v\in \LP(x)$ such that $J := \supp_\sigma(v^{-1}\sigma(wv))\subsetneq \Delta$.
    \end{enumerate}
\end{lemma}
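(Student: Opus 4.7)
The plan is to exploit the minuscule nature of $\mu = \omega_1^\vee$ to get an explicit finite description of $\Adm(\omega_1^\vee) \cap \widetilde W^K$, and then verify the dichotomy case-by-case by producing a concrete length-positive element $v$ for each $x$.

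First, realize $X_\ast(T)_{\Gamma_0} = \mathbb Z^{n+1}/\mathbb Z(1,\ldots,1)$, $W_0 = S_{n+1}$ permuting coordinates, and the Frobenius action $\sigma$ acting by the involution $j \mapsto n+2-j$ on indices. Since $\omega_1^\vee$ is minuscule, Lemma~\ref{lem:admDecomposition}(a) implies that for $x = w\varepsilon^{\mu_x}\in\Adm(\omega_1^\vee)\cap\widetilde W^K$ we must have $\mu_x\in W_0\omega_1^\vee\cup\{0\}$, and the hyperspecial minimal-length condition constrains $w$ uniquely up to length zero factors. Concretely, I would parameterize the nontrivial $x$ by an index $i\in\{0,1,\ldots,n\}$ with $\mu_x = [e_{n+2-i}]$ and $w$ equal to the corresponding Kostant representative (a product $s_{n+1-i}s_{n+2-i}\cdots s_n$ read in the appropriate direction), with length zero elements handled separately.

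Next I would compute $\LP(x)$ using the formula $\ell(x,v\alpha) = \langle\mu_x,v\alpha\rangle + \Phi^+(wv\alpha) - \Phi^+(v\alpha)$. Since $\langle \mu_x,\alpha\rangle\in\{-1,0,1\}$ for all roots (minusculity), $\LP(x)$ admits a very restrictive description: the set of $v$ such that the chamber $v\cdot\mathcal C$ is compatible with both $w$ and $\mu_x$. I would then single out a canonical $v\in\LP(x)$ and compute $y := v^{-1}\sigma(wv)\in W_0$ explicitly using the opposition rule $\sigma(s_j)=s_{n+1-j}$. The length $\ell(y)$ is forced to equal the number of $\sigma$-orbits of $\Delta$ that meet $\supp(y)$, which together with the constraint $\ell(w)\leq n$ coming from $\mu = \omega_1^\vee$ gives a sharp bound.

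Finally I would verify the dichotomy: if $\supp_\sigma(y)\subsetneq\Delta$ we are in case (ii) immediately; otherwise $y$ uses at least one simple reflection from each of the $\lceil n/2\rceil$ $\sigma$-orbits, and the length bound forces $y$ to use exactly one per orbit, so $y$ is a partial $\sigma$-Coxeter element and $x$ has positive Coxeter type (case (i)). The main obstacle I anticipate is handling the length zero elements and the boundary indices $i=0$ and $i=n$ uniformly (where $w$ and $\sigma(w)$ interact in degenerate ways), and ensuring that the chosen $v$ really lies in $\LP(x)$ in every case. For the most delicate sub-case, where $y$ could a priori have two simple reflections from a single $\sigma$-orbit, I would show that replacing $v$ by a suitable $vs_\beta$ for a simple root $\beta$ preserves length-positivity while reducing the support, placing $x$ into case (ii).
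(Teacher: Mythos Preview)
Your plan is a direct enumeration-and-check, which is genuinely different from the paper's route. The paper does not enumerate $\Adm(\omega_1^\vee)\cap\widetilde W^K$ and test each element; instead it first invokes the full Hodge--Newton decomposability of $(G,\omega_1^\vee)$ together with the Coxeter-type classification of G\"ortz--Nie to deduce that every $x$ in the basic locus is already a partial $\sigma$-Coxeter element of $\widetilde W$, and that every non-basic $x$ is straight. It then assumes (ii) fails, applies \cite[Lemma~5.15]{Schremmer2023_coxeter} to pin down $x$ as a \emph{single} explicit element $x=\tau\, s_{\lfloor n/2\rfloor-1}\cdots s_1 s_0$, computes $\cl(x)=s_{\lfloor n/2\rfloor+1}\cdots s_n$ by hand, observes this is a $\sigma$-Coxeter element of $W_0$, and concludes (i) via \cite[Theorem~5.12]{Schremmer2023_coxeter}. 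So the paper trades breadth of computation for depth of citation: one explicit element versus your $n+1$.

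Your approach can in principle succeed, but the sketch has a real gap at its crux. You assert that ``the length $\ell(y)$ is forced to equal the number of $\sigma$-orbits of $\Delta$ that meet $\supp(y)$'', and that the constraint $\ell(w)\le n$ ``gives a sharp bound''. Neither statement is justified. For a general $v\in\LP(x)$ the element $y=v^{-1}\sigma(wv)=v^{-1}w_0wvw_0$ has no a priori length bound beyond $\ell(w_0)$; the bound $\ell(w)\le n$ says nothing about $\ell(y)$ without controlling how $v$ and $w_0$ interact. Already for the length-zero element $\tau=\varepsilon^{\omega_1^\vee}s_1\cdots s_n$ with $v=1$ one gets $y=\sigma(s_1\cdots s_n)=s_n\cdots s_1$ of length $n$, not $\lceil n/2\rceil$. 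So the ``canonical $v$'' you allude to is doing all the work, and you have neither specified it nor argued it lies in $\LP(x)$ nor shown it yields the claimed length. The final paragraph's suggestion to replace $v$ by $vs_\beta$ to shrink the support is likewise unsubstantiated: you would need to check that this move preserves length-positivity and actually drops a $\sigma$-orbit from the support, and there is no reason to expect this in general. If you want to salvage the direct approach, you must either exhibit the correct $v$ for each $x$ and verify both $v\in\LP(x)$ and the partial $\sigma$-Coxeter property concretely, or else import a structural result (such as \cite[Theorem~5.12]{Schremmer2023_coxeter}, which lets you bypass the choice of $v$ once you know $\cl(x)$ is $\sigma$-Coxeter).
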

\begin{proof}
    From \cite[Theorem~3.3]{Goertz2019}, we see that at least one of the following statements is true:
    \begin{enumerate}[(a)]
    \item\label{316pra} The double coset $IxI$ is contained in a basic $\sigma$-conjugacy class or
    \item\label{316prb} the element $x\in\widetilde W$ is straight.
    \end{enumerate}
    In Case (\ref{316pra}), we can apply \cite[Theorem~1.4]{Goertz2024} to see that $x$ is, in fact, a partial $\sigma$-Coxeter element inside $\widetilde W$. In particular, $x$ must be of minimal length in its $\sigma$-conjugacy class in $\widetilde W$. This is also well-known in Case (\ref{316prb}).

    Assume that (\ref{316pr2}) is not satisfied. This means that $x$ is not a $(J, v, \sigma)$-alcove element for any $v\in W_0$ and $J\subsetneq\Delta$, cf.\ \cite[Section~3]{Schremmer2023_coxeter}. We apply \cite[Lemma~5.15]{Schremmer2023_coxeter} to write $x = \tau u$ such that $\tau \in\Omega$ has length zero, $u\in W_{\af}$ and $\tilde K = \supp_{\sigma\circ\tau}(u)\subsetneq \Delta_{\af}$ is maximal among all $\sigma\circ\tau$-stable proper subsets of $\Delta_{\af}$. By \cite[Theorem~1.4]{Goertz2024}, $u$ is actually a $\sigma\circ\tau$-Coxeter element of $\tilde K$.

    Let us write the simple reflections of $W_0$ as $s_1,\dotsc,s_n$ in the usual fashion for $G=\PGL_{n+1}$, and let $s_0$ denote the affine simple reflection in $\widetilde W$ that is not in $W_0$. The condition $x\in \Adm(\mu)^K$ forces $u\in (\widetilde W)^K$ and $\tau = \tau_{\mu} = \varepsilon^\mu s_1 \cdots s_n$. The $\sigma\circ\tau$-orbits of simple affine reflections are given by $\{s_0, s_n\},\{s_1,s_{n-1}\},\ldots$. So the only possibility for $u\in \widetilde W^K$ is given by $u = s_{\lfloor n/2\rfloor-1}\cdots s_1 s_0$. Then $x = \tau u = s_{\lfloor n/2\rfloor}\cdots s_2 s_1 \tau$ has classical part
    \begin{align*}
        \cl(x) = s_{\lfloor n/2\rfloor}\cdots s_2 s_1 s_1\cdots s_n = s_{\lfloor n/2\rfloor+1}\cdots s_{n-1} s_n.
    \end{align*}
    Observe that this is a $\sigma$-Coxeter element in $W_0$. Then (\ref{316pr1}) follows from \cite[Theorem~5.12]{Schremmer2023_coxeter}.  
\end{proof}
To handle the remaining cases, we need some assertions for the symmetric group $S_n$.

\begin{lemma}\label{lem:symmetricGroupPartialCoxeter}
Let $w\in S_n$. Then there exists $v\in S_{n-1}\subset S_n$ such that $v^{-1} w v\in S_n$ is a partial Coxeter element of $S_n$.
\end{lemma}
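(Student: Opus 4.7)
The plan is to use the cycle decomposition of $w$ directly. A partial Coxeter element of $S_n$ is, by definition, a product of distinct simple reflections, and such a product decomposes according to the maximal consecutive blocks of simple reflections that appear: each block $s_a, s_{a+1}, \ldots, s_{b-1}$ used exactly once produces a Coxeter element of the parabolic subgroup $S_{\{a,\dotsc,b\}}$, which is a particular kind of $(b-a+1)$-cycle on the consecutive set $\{a,\dotsc,b\}$. Conversely, a permutation is a partial Coxeter element precisely when it is a product of such parabolic Coxeter cycles on pairwise disjoint intervals of consecutive integers. So the task reduces to finding a conjugate of $w$ (by an element of $S_{n-1}$, i.e.\ fixing $n$) whose non-trivial cycles have this shape.

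Let $c_1,\dotsc,c_r$ be the non-trivial cycles of $w$ with supports $C_1,\dotsc,C_r$ of sizes $k_1,\dotsc,k_r$. I will choose pairwise disjoint intervals $I_1,\dotsc,I_r\subseteq\{1,\dotsc,n\}$ of sizes $k_1,\dotsc,k_r$, arranged so that $n\in I_1$ whenever $n$ belongs to some $C_1$, and otherwise $n$ lies in the complement $F^*=\{1,\dotsc,n\}\setminus(I_1\cup\cdots\cup I_r)$. Concretely, if $n\in C_1$ I take $I_1=\{n-k_1+1,\dotsc,n\}$ and place $I_2,\dotsc,I_r$ and $F^*$ arbitrarily as disjoint subsets of $\{1,\dotsc,n-k_1\}$; if $n$ is a fixed point of $w$ then all $I_j$ fit inside $\{1,\dotsc,n-1\}$ and $n\in F^*$.

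I then define $v\in S_n$ by specifying bijections $v|_{I_j}\colon I_j\to C_j$ and $v|_{F^*}\colon F^*\to\mathrm{Fix}(w)$ satisfying $v(n)=n$, where for each $j$ the bijection $v|_{I_j}$ is chosen so that $v^{-1}c_j v$ equals the specific Coxeter element $s_{a_j}s_{a_j+1}\cdots s_{b_j-1}$ of $S_{I_j}$, writing $I_j=\{a_j,\dotsc,b_j\}$. The existence of such a bijection is elementary: writing both $c_j$ and the target Coxeter cycle in cyclic notation starting from a common element (starting from $n$ in the case $j=1$ with $n\in C_1\cap I_1$), the identity $v^{-1}c_jv=\tau$ determines $v|_{I_j}$ entry by entry and is compatible with $v(n)=n$. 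Piecing everything together yields $v\in S_{n-1}$ such that $v^{-1}wv$ is a product of Coxeter elements on the disjoint intervals $I_1,\dotsc,I_r$, hence a partial Coxeter element of $S_n$. The argument contains no genuine hard step; the only mild subtlety is the bookkeeping to place $n$ so that $v(n)=n$ remains compatible with the prescribed conjugation on the cycle of $w$ through $n$, which is arranged by the choice $I_1=\{n-k_1+1,\dotsc,n\}$.
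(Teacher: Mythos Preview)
Your proof is correct and follows essentially the same approach as the paper's: decompose $w$ into cycles, conjugate so that each cycle acts on an interval of consecutive integers (hence becomes a Coxeter element of the corresponding parabolic), and arrange the intervals so that $n$ stays fixed under $v$. The paper phrases this more tersely---it writes $w$ as a product of disjoint cycles (including fixed points), orders the entries so that $n$ is the last entry of the last cycle, and lets $v$ be the permutation sending the $i$-th position in this listing to the $i$-th element---but the content is identical to what you wrote out in more detail.
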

\begin{proof}
We write $w$ as a product of disjoint cycles of lengths $\geq 1$. We may order the cycles and the elements within them such that $n$ is the last entry of the last such cycle. Let $v\in S_{n-1}$ be the permutation ordering the other entries of the cycles by size. Then $v$ is as claimed.
\end{proof}
\begin{lemma}\label{lem:pctDevisage}
Let $n\geq 1$ and $i,j\in\{0,\dotsc,n\}, k\in \{1,\dotsc,n+1\}$. Define the elements $w_1, w_2, w_3\in S_{n+1}$ as follows:
\begin{align*}
w_1 = s_1s_2\cdots s_i,\quad w_2 = s_1s_2\cdots s_j,\quad w_3 = s_n s_{n-1}\cdots s_k\in S_n.
\end{align*}
Then there exist elements $v_1, v_2, v_3\in S_{n+1}$, subject to the requirements
\begin{align*}
v_1^{-1}(m)>v_1^{-1}(i+1)&\text{ for all } m>i+1,\\
v_2^{-1}(m)>v_2^{-1}(j+1)&\text{ for all } m>j+1,\\
v_3^{-1}(m)<v_3^{-1}(k)&\text{ for all } m<k,
\end{align*}
such that at least one of the following conditions holds true:
\begin{enumerate}[(i)]
\item The joint support
\begin{align*}
\supp(v_3^{-1} w_1 v_1)\cup \supp(v_1^{-1} w_2 v_2)\cup \supp(v_2^{-1} w_3 v_3)\subseteq \{s_1,\dotsc,s_n\}
\end{align*}
is not the full set $\{s_1,\dotsc,s_n\}$ or
\item The three elements
\begin{align*}
v_3^{-1} w_1 v_1,~ v_1^{-1} w_2 v_2, ~v_2^{-1} w_3 v_3\in S_{n+1}
\end{align*}
are partial Coxeter elements of disjoint support.
\end{enumerate}
\end{lemma}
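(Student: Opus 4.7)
This is a combinatorial assertion in the symmetric group $S_{n+1}$, which I would prove by explicit construction of $v_1, v_2, v_3$, distinguishing two regimes of the parameters $(i,j,k)$. The starting observation is that the trivial choice $v_1 = v_2 = v_3 = e$ automatically satisfies all three positional constraints, so the question is only whether this trivial witness, or a more refined one, delivers (i) or (ii).

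\emph{Easy case.} With the trivial choice, the three products are literally $w_1, w_2, w_3$, having supports $\{s_1,\dots,s_i\}$, $\{s_1,\dots,s_j\}$, and $\{s_k,\dots,s_n\}$, so the joint support is $\{s_1,\dots,s_{\max(i,j)}\} \cup \{s_k,\dots,s_n\}$. Whenever $k > \max(i,j) + 1$ the reflection $s_{\max(i,j)+1}$ is missing from this union, so condition (i) holds and we are done.

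\emph{Hard case.} Now assume $k \leq \max(i,j) + 1$, so that the trivial choice produces full support and we must arrange (ii) via a non-trivial triple. The key identity is that the product of the three elements telescopes:
\[
(v_3^{-1} w_1 v_1)(v_1^{-1} w_2 v_2)(v_2^{-1} w_3 v_3) = v_3^{-1}(w_1 w_2 w_3)v_3,
\]
so the total product is always a conjugate of $w_1 w_2 w_3$. Hence (ii) is equivalent to asking that some suitable conjugate of $w_1 w_2 w_3$ splits, compatibly with the positional restrictions, into three cyclic permutations on pairwise disjoint consecutive intervals of $\{1,\dots,n+1\}$. By symmetry I would assume $i \leq j$ throughout, and then perform a sub-case analysis based on where $k$ sits relative to $i+1$ and $j+1$. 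In each sub-case I would exhibit an explicit $(v_1,v_2,v_3)$; a useful template is to take $v_1$ equal to a partial power of the cycle $w_1 = (1,2,\dots,i+1)$ so that $v_3^{-1} w_1 v_1$ collapses into a shorter cycle on a controlled consecutive interval, and to make analogous choices for $v_2, v_3$, using Lemma~\ref{lem:symmetricGroupPartialCoxeter} internally on each resulting cycle to enforce the partial Coxeter shape while respecting the constraints that fix the position of $i+1$, $j+1$ and $k$ in the one-line notation of $v_1$, $v_2$, $v_3$.

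\emph{Main obstacle.} The delicate part is the combinatorial bookkeeping in the hard case: one must simultaneously respect the positional constraints on $v_1, v_2, v_3$, keep each product a partial Coxeter element (that is, a cyclic permutation of a consecutive interval), and ensure that the three resulting intervals are pairwise disjoint. I expect the analysis to split into a small number of templates according to whether $k \leq \min(i,j)+1$, $\min(i,j)+1 < k \leq \max(i,j)+1$, or $k = \max(i,j)+1$; each template requires a specific explicit triple whose correctness reduces to a direct cycle computation. Identifying the right template and confirming that the positional constraints are compatible with the disjoint partition is where most of the work lies, while each individual verification is routine.
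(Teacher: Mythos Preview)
Your easy case matches the paper exactly. The hard case plan has the right shape but two concrete problems.

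First, the appeal to symmetry is unjustified. The roles of $i$ and $j$ are \emph{not} symmetric: the constraint on $v_1$ involves $i+1$, the constraint on $v_2$ involves $j+1$, and the three products $v_3^{-1}w_1v_1$, $v_1^{-1}w_2v_2$, $v_2^{-1}w_3v_3$ do not transform into one another under any swap of $(i,v_1)\leftrightarrow(j,v_2)$. The paper's own case split reflects this asymmetry: it treats $i\leq j-1$ and $j\leq i-2$ as genuinely different sub-cases with different constructions, not mirror images.

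Second, you aim for condition~(ii) in every hard sub-case. The paper does not: in the sub-cases $i\leq j-1,\ k\leq j$ and $j\leq i-2,\ k\leq i$ it instead arranges that $n+1$ is a common fixed point of all three products, which forces $s_n$ out of the joint support and gives~(i). This is achieved by simple transpositions such as $v_1=v_3=(j+1,n+1)$, and is much cheaper than engineering disjoint consecutive intervals. The paper remarks explicitly that condition~(ii) can in fact always be achieved, but that doing so is ``significantly'' harder; your plan chooses the hard route without indicating how to overcome the extra difficulty.

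Finally, the telescoping identity is correct as a necessary condition on the product, but your claimed equivalence overstates it: splitting a $v_3$-conjugate of $w_1w_2w_3$ into three consecutive cycles does not by itself produce admissible $v_1,v_2$ satisfying their positional constraints. The actual work is in exhibiting the triple, not in analyzing the product.

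The paper closes the remaining sub-cases $i\leq j=k-1$, $j<i=k-1$, and $i\in\{j,j+1\}$ with $k\leq i$ by explicit choices of $v_1,v_2,v_3$ that leave one free parameter $u$ in a smaller symmetric group, then invoke Lemma~\ref{lem:symmetricGroupPartialCoxeter} to pick $u$ making the one non-trivial factor a partial Coxeter element. You correctly anticipate using that lemma, but the specific templates are not the ones you sketch.
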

\begin{remark}
We will see later in this section that, in fact, Condition (ii) can always be achieved. For now, it suffices to prove the lemma as stated (which is significantly easier).
\end{remark}
\begin{proof}[Proof of Lemma~\ref{lem:pctDevisage}]
Written as permutations of $\{1,\dotsc,n+1\}$, we identify $w_1, w_2, w_3$ as the cycles
\begin{align*}
w_1 = (1,2,3,\dotsc,i+1),\quad w_2 = (1,2,3,\dotsc,j+1),\quad w_3 = (n,n-1,\dotsc,k).
\end{align*}
Now we distinguish a number of cases:
\begin{itemize}
\item \textbf{Case $k>\max(i,j)+1$}: In this case, we simply choose $v_1=v_2=v_3=1$ and see that $s_{k-1}$ is not in the support of any $w_\bullet$.
\item \textbf{Case $i\leq j-1$ and $k\leq j$}: In this case, choose $v_1=v_3$ as the transposition $(j+1, n+1)$ interchanging $j+1$ and $n+1$, and let $v_2 \in S_n$ with $v_2^{-1}(j)=n+1$ and $v_2^{-1}(j+1) = 1$. The required inequalities for the $v_\bullet$ are easily checked. Moreover, Condition (i) follows since the three elements
\begin{align}\label{eqast}
w_1' = v_3^{-1} w_1 v_1,~ w_2' = v_1^{-1} w_2 v_2, ~w_3' = v_2^{-1} w_3 v_3
\end{align}
all have $n+1$ as fixed point.
\item \textbf{Case $j\leq i-2$ and $k\leq i$}: In this case, choose $v_3 = (i+1, n+1)$, $v_2 = (i, n+1)$ and $v_1\in S_{n+1}$ is chosen such that $v_1^{-1}(i)=n+1, v_1^{-1}(i+1) = 1$. Then again, $w_1', w_2', w_3'$ defined as in \eqref{eqast} have $n+1$ as fixed point.
\item \textbf{Case $i\leq j = k-1$}: For an element $u\in S_i = \langle s_1,\dotsc,s_{i-1}\rangle$ to be determined later, we choose $v_1 = s_i\cdots s_1 u, v_2 = s_j \cdots s_{i+1} u$ and $v_3 = u$. The required inequalities for the $v_\bullet$ are immediately verified. With the notation of (\ref{eqast}), we get $w_1' = 1, w_3' = s_{i+1}\cdots s_j s_n\cdots s_k$ and
\begin{align*}
w_2' = u^{-1} s_1\cdots s_i s_1\cdots s_i u.
\end{align*}
Observing that these three elements have pairwise disjoint supports, it suffices to choose $u$ such that $w_2'$ is a partial Coxeter element. This is possible by Lemma~\ref{lem:symmetricGroupPartialCoxeter}.
\item \textbf{Case $j<i = k-1$}: For an element $u\in S_j = \langle s_1,\dotsc,s_{j-1}\rangle$ to be determined later, we choose $v_1 = s_i\cdots s_{j+1} u$ and $v_2 = v_3 = s_j\cdots s_1 u$. The required inequalities for the $v_\bullet$ are immediately verified. With the notation of \eqref{eqast}, we get
\begin{align*}
w_1' = u^{-1} s_1\cdots s_j s_1\cdots s_j u,\quad w_2' = s_{j+1}\cdots s_i,\quad w_3' = s_n\cdots s_k.
\end{align*}
Observing that these three elements have pairwise disjoint supports, it suffices to choose $u$ such that $w_1'$ is a partial Coxeter element. This is possible by Lemma~\ref{lem:symmetricGroupPartialCoxeter}.
\end{itemize}
The above covers the cases $k\geq \max(i,j)+1$ and $k=\max(i,j)+1$ unconditionally, as well as $k\leq\max(i,j)$ whenever $i\leq j-1$ or $j\leq i-2$. So let us assume that $i\in \{j, j+1\}$ and $k\leq\max(i,j) = i$. For an element $u\in S_{i-1}$ to be determined later, let $v_1 = s_1\cdots s_j u,~v_2 = u,~ v_3 = s_k\cdots s_i u$. With the notation of \eqref{eqast}, we get $w_2'=1,~ w_3' = s_n\cdots s_{i+1}$ and
\begin{align*}
w_1' = u^{-1} s_i\cdots s_k s_1\cdots s_j s_1\cdots s_i u.
\end{align*}
Note that $w_1'\in S_i$. By Lemma~\ref{lem:symmetricGroupPartialCoxeter}, we see that we can choose $u$ such that $w_1'$ is a partial Coxeter element in $S_i$. The required inequalities for $v_1, v_2, v_3$ are immediately satisfied. This finishes the proof.
\end{proof}

For the next lemma, we call an element $x = w\varepsilon^{\mu_x}$ in the extended affine Weyl group $\widetilde W$ of some reductive group $G$ to be \emph{Levi restrictable} if there exists $v\in W_0$ such that $J := \supp_\sigma(v^{-1}\sigma(wv))\subsetneq \Delta$, and moreover $\ell(x,v\alpha)\geq 0$ for all $\alpha\in \Phi^+\setminus \Phi_J$.

\begin{lemma}\label{lem:pctRes}
   For each $d\geq 1$, fix an unramified extension $F_d$ of $F$. Let $G'$ be a Chevalley group (defined and split over $\mathbb Z$) and let $G_d = \Res_{F_d / F} G'$ be the restriction of scalars for $d\geq 1$. We write elements in the Iwahori-Weyl group $\widetilde W_{G_d}$ of $G_d$ as $d$-tuples of elements of $\widetilde W_{G'}$ as in Section~\ref{sec:weilRestrict}.
   
   Let $2\leq k\leq d<\infty$ and consider $x = (x_1,\dotsc, x_d)\in\widetilde W_d$. We set \begin{align*}x' := (x_1,\dotsc,x_{k-2},x_{k-1}x_k, x_{k+1},\dotsc,x_d)\in\widetilde W_{G_{d-1}}.\end{align*}
   \begin{enumerate}[(a)]
   	\item 
   	If $x$ is Levi restrictable, then so is $x'$. In case $\ell(x_k)=0$, this is an equivalence.
   	\item If $x$ has positive Coxter type, then so does $x'$. In case $\ell(x_k)=0$, this is an equivalence.
   \end{enumerate}
\end{lemma}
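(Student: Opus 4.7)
The approach is to work in the explicit decomposition $\widetilde W_{G_d} \cong (\widetilde W_{G'})^d$ (valid because $G'$ is split Chevalley, so $\sigma^d$ acts trivially on $\widetilde W_{G'}$), with $\sigma$ acting as cyclic shift. Under this identification the length, root system, and length functional all decompose coordinate-wise, and for $w = (w_1,\ldots,w_d)$ and $v = (v_1,\ldots,v_d)\in W_0^d$ the classical Weyl component $v^{-1}\sigma(wv)$ has entries
\begin{align*}
\bigl(v^{-1}\sigma(wv)\bigr)_1 = v_1^{-1}w_d v_d,\qquad \bigl(v^{-1}\sigma(wv)\bigr)_i = v_i^{-1}w_{i-1}v_{i-1}\ (i\geq 2),
\end{align*}
with the analogous formula for $x'\in\widetilde W_{G_{d-1}}$, where slot $k-1$ now carries the merged classical part $w_{k-1}w_k$.

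For the forward directions of (a) and (b), I would start from a witness $v=(v_1,\ldots,v_d)$ for $x$ and set $v' := (v_1,\ldots,v_{k-1},v_{k+1},\ldots,v_d)$. Then the components of $(v')^{-1}\sigma(w'v')$ agree with those of $v^{-1}\sigma(wv)$ outside the slots $k,k+1$, while the two original entries $u_k := v_k^{-1}w_{k-1}v_{k-1}$ and $u_{k+1} := v_{k+1}^{-1}w_k v_k$ fuse into the single entry $u'_k := v_{k+1}^{-1}(w_{k-1}w_k)v_{k-1}$. The key checks are the support bound $\supp(u'_k)\subseteq \supp(u_k)\cup \supp(u_{k+1})$, which descends the partial $\sigma$-Coxeter condition, and a Bruhat-type inequality $\ell(x_{k-1}x_k, v_{k-1}\alpha)\geq 0$ at the merged slot, which follows from length positivity of $v_{k-1}$ for $x_{k-1}$ and of $v_k$ for $x_k$ together with $\ell(x_{k-1}x_k)\leq \ell(x_{k-1})+\ell(x_k)$. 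Properness of the Levi support is automatic since $|\Delta_{G_{d-1}}|<|\Delta_{G_d}|$.

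For the backward directions under $\ell(x_k) = 0$, write $x_k = \tau\in\Omega_{G'}$, so $w_k$ is the classical part of a length-zero element and acts on $\widetilde W_{G'}$ as a length-preserving diagram automorphism. Given a witness $v'=(v'_1,\ldots,v'_{d-1})$ for $x'$, I would lift to $v=(v_1,\ldots,v_d)$ by setting $v_i := v'_i$ for $i<k$, $v_i := v'_{i-1}$ for $i>k$, and $v_k := w_k^{-1}v_{k+1}$. This forces $u_{k+1}=1$, so the entire merged content is carried by $u_k = v_{k+1}^{-1}w_kw_{k-1}v_{k-1}$, which matches $u'_k = v_{k+1}^{-1}w_{k-1}w_kv_{k-1}$ after applying the diagram automorphism induced by $\tau$. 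Length positivity at position $k$ holds trivially because $\ell(x_k)=0$ forces the length functional $\ell(x_k,-)$ to vanish identically.

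The main obstacle is the apparent reordering $w_{k-1}w_k$ versus $w_kw_{k-1}$ that arises when passing between the merged slot of $x'$ and the concatenation of the two original slots of $x$: these elements are conjugate in $W_{0,G'}$ but their supports as products of simple reflections need not coincide, so the partial $\sigma$-Coxeter condition is sensitive to the reordering. The length-zero hypothesis on $x_k$ is precisely what supplies the diagram automorphism needed to reconcile the two expressions while preserving lengths and $\sigma$-orbits of simple roots, which is also why the backward implications in both (a) and (b) fail whenever $\ell(x_k)>0$, since one then loses genuine information in the merging step.
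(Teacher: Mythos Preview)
Your direct descent of the witness $v\mapsto v'=(v_1,\dots,v_{k-1},v_{k+1},\dots,v_d)$ does not work in the forward direction when $\ell(x_k)>0$. The merged entry of $(v')^{-1}\sigma(w'v')$ is $u'_\bullet = v_{k+1}^{-1}w_{k-1}w_kv_{k-1}$, whereas the product of the two original entries is $u_{k+1}u_k = v_{k+1}^{-1}w_kw_{k-1}v_{k-1}$; these differ by the very reordering you flag, and they need not agree even on support. For a concrete failure in $S_3$ take $d=3$, $k=2$, $w_1=s_1$, $w_2=s_2s_1$, $w_3=1$, $v_1=v_3=1$, $v_2=s_1$: then $u_2=1$, $u_3=s_2$, but $u'_2=s_1s_2s_1$, which is neither in $W_{\{s_2\}}$ nor partial Coxeter. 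Choosing $\mu_2=-\alpha_1^\vee$ one checks $v_2\in\LP(x_2)$, so this really arises from a positive-Coxeter witness for $x$. Your length-positivity check at the merged slot has the same defect: expanding $\ell(x_{k-1}x_k,v_{k-1}\alpha)=\ell(x_k,v_{k-1}\alpha)+\ell(x_{k-1},w_kv_{k-1}\alpha)$, neither summand is governed by the hypotheses $v_{k-1}\in\LP(x_{k-1})$, $v_k\in\LP(x_k)$. Finally, the properness claim via $|\Delta_{G_{d-1}}|<|\Delta_{G_d}|$ misreads the definition: after taking $\sigma$-orbits, the support $J$ lives in $\Delta_{G'}$ for both $G_d$ and $G_{d-1}$, so no size comparison helps.

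The paper avoids all of this by never attempting to descend the witness directly when $\ell(x_k)>0$. It first proves the case $x_k=1$, where $w_k=1$ forces $u'_\bullet=u_{k+1}u_k$ and $x'_{k-1}=x_{k-1}$, so $(v',J)$ works verbatim. The case $\ell(x_k)=0$ is reduced to $x_k=1$ by noting that $x$ and $(x_1,\dots,x_{k-1}x_k,1,x_{k+1},\dots,x_d)$ are $\sigma$-conjugate by a length-zero element, an operation under which both properties are manifestly invariant. For $\ell(x_k)>0$ one inducts: pick a simple affine reflection $r_a$ with $\ell(r_ax_k)<\ell(x_k)$ and pass to $\tilde x=(x_1,\dots,x_{k-1}r_a,r_ax_k,\dots,x_d)$, a length-non-increasing $\sigma$-cyclic shift of $x$ with $(\tilde x)'=x'$. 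Preservation of Levi restrictability under such shifts is \cite[Lemma~7.1]{He2015b}, and of positive Coxeter type is \cite[Lemmas~5.1,~5.2]{Schremmer2023_coxeter}. The crucial feature is that these preservation lemmas allow the witness $(v,J)$ to change along the way; your argument tries to pin it down, and that is exactly where it breaks.
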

\begin{proof}
 We first prove (a) and begin with the special case $x_k=1$. Then, the equivalence follows directly from studying the definition of being Levi restrictable:
		
		Unfolding the definition of $x$ being Levi restrictable, it means that there is a proper subset $J\subsetneq \Delta_{G'}$ of the set of simple roots of $G'$ and elements $v_1,\dotsc,v_d\in W_{0,G'}$ in the finite Weyl group of $G'$ such that
		\begin{itemize}
		\item For every positive root $\alpha$ of $G'$, not contained in the span of $J$, and for all $i=1,\dotsc,d$, we have $\ell(x_i, v_i\alpha)\geq 0$ and
		\item for all $i=1,\dotsc,d-1$, we have
		\begin{align*}
		v_i^{-1}\cl(x_{i-1}) v_{i-1}\in W_J,\qquad v_d^{-1} \cl(x_1) v_1\in W_J.
		\end{align*}
		\end{itemize}
		Now if we are given $J,(v_1,\dotsc,v_d)$ satisfying these properties, the condition $x_k=1$ shows $v_k^{-1} v_{k-1}\in W_J$. Therefore, we see that $x'$ is Levi restrictable by unfolding the definition as above, and setting $J' = J$ and $(v_1',\dotsc,v_{d-1}') = (v_1,\dotsc, v_{k-1}, v_{k+1},\dotsc, v_d)$.
		
		Conversely, if $x'$ is Levi restrictable, we get $J'$ and $(v_1',\dotsc, v_{d-1}')$ satisfying the analogous conditions. Since $x_k=1$, we can set \begin{align*}
			J = J'\text{ and }(v_1,\dotsc, v_d) = (v_1',\dotsc, v_{k-1}', v_{k-1}',v_k',\dotsc, v_{d-1}').
		\end{align*}
		
		This settles the claim in case $x_k=1$. Next, we consider the case $\ell(x_k)=0$. Then $x$ and
		\begin{align*}
		\tilde x = (x_1,\dotsc,x_{k-2}, x_{k-1} x_k,1,x_{k+1},\dotsc,x_d)\in \widetilde W_{G_d}
		\end{align*}
		are related through $\sigma$-conjugation of a length zero element. Note that the condition of being Levi restrictable is invariant under this operation, and note that $x' = (\tilde x)'$. This settles all claims in case $\ell(x_k)=0$.
		
		Finally, we tackle the case $\ell(x_k)>0$ through induction on $\ell(x_k)$. Let $r_a$ be a simple affine reflection such that $\ell(r_a x_k)=\ell(x_k)-1$, and consider
		\begin{align*}
		\tilde x = (x_1,\dotsc, x_{k-2}, x_{k-1} r_a,r_a x_k, x_{k+1},\dotsc, x_d)\in\widetilde W_{G_d}.
		\end{align*}
		Then $\ell(\tilde x)\leq \ell(x)$ and $\tilde x$ arises from $x$ by $\sigma$-conjugation by a simple affine reflection. If $x$ is Levi restrictable, then so is $\tilde x$ by \cite[Lemma~7.1]{He2015b}.

The proof of (b) is a very similar argument as for the first case above. 
Compatibility with $\sigma$-conjugation by simple affine reflections, as above, is found in \cite[Lemmas~5.1~and~5.2]{Schremmer2023_coxeter}.
	\qedhere
\end{proof}

\begin{proposition}\label{prop:pctRes}
Let $F'/F$ be an unramified extension of local fields. For $n\geq 1$ write $G'$ for the split group $G' = \PGL_n$ over $F'$. Let $G = \Res_{F'/F}\PGL_n$ and consider a dominant cocharacter with $\mu\in X_\ast(T)$ with $\depth(G,\mu)<2$. Let $x=w\varepsilon^\mu\in\Adm(\mu)^K$. Then at least one of the following conditions is true.
    \begin{enumerate}[(i)]
    \item The element $x$ is of positive Coxeter type.
    \item There is a $v\in \LP(x)$ such that $J := \supp_\sigma(v^{-1}\sigma(wv))\subsetneq \Delta$.
    \end{enumerate}
\end{proposition}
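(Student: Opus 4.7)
The proof combines three tools: Lemma~\ref{lem:pctRes} (merging trivial components of the restriction of scalars), Lemma~\ref{lem:admDecomposition} (length-additive splitting of admissible elements into minuscule pieces), and the combinatorial Lemma~\ref{lem:pctDevisage}. Write $d=[F':F]$ and identify $\widetilde W_G\cong(\widetilde W_{\PGL_n})^d$ with cyclic Frobenius as in Section~\ref{sec:weilRestrict}. Decompose $\mu=(\mu_1,\ldots,\mu_d)$ and $x=(x_1,\ldots,x_d)$ componentwise; the admissibility and $K$-minimality conditions are then componentwise as well. By Lemma~\ref{lem:depthRestriction}(a), the sum $\mu':=\mu_1+\cdots+\mu_d$ has $\depth(\PGL_n,\mu')<2$ and so lies in the finite list of Theorem~\ref{prop:classification}.

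\emph{Reductions.} Whenever $\mu_k=0$, we have $x_k=1$, and Lemma~\ref{lem:pctRes} permits merging the $k$-th factor into its neighbour, preserving both alternatives (i) and (ii). After this step every $\mu_i$ is nonzero, and comparison with the classification forces $d\leq 3$, with the case $d=3$ occurring only for $\mu'=2\omega_1^\vee+\omega_{n-1}^\vee$ decomposed as $\omega_1^\vee+\omega_1^\vee+\omega_{n-1}^\vee$. Whenever some $\mu_i$ itself decomposes as a sum of two dominant summands (for instance $2\omega_1^\vee=\omega_1^\vee+\omega_1^\vee$), Lemma~\ref{lem:admDecomposition}(b) writes $x_i$ as a length-additive product of atomic factors, each in $\Adm(\omega_k^\vee)^K$ for some fundamental coweight of $\PGL_n$. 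Iterating, $x$ becomes a length-additive product of at most three atomic pieces whose classical parts are cycles of the form $s_1s_2\cdots s_i$ or $s_ns_{n-1}\cdots s_k$ in $S_{n+1}$ --- precisely the $w_1,w_2,w_3$ appearing in Lemma~\ref{lem:pctDevisage}. Sporadic cases $\mu'\in\{\omega_2^\vee,\omega_3^\vee,\omega_1^\vee+\omega_2^\vee\}$ for small ranks produce fewer than three atomic pieces and can be handled by the same lemma with one or two of the $w_i$ taken trivial, or by direct inspection.

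\emph{Application.} Feed the triple $(w_1,w_2,w_3)$ into Lemma~\ref{lem:pctDevisage}, obtaining $(v_1,v_2,v_3)\in S_{n+1}^3$. An explicit computation of the length functional shows that the inequalities imposed on $v_i$ in the statement of the lemma are equivalent to $v_i\in\LP(x_i^{(*)})$ for the corresponding atomic factor --- this uses the fact that each $\mu_i$ is minuscule and each $w_i$ has the above cycle shape. In outcome~(i) of Lemma~\ref{lem:pctDevisage}, the joint support of $v_3^{-1}w_1v_1$, $v_1^{-1}w_2v_2$, $v_2^{-1}w_3v_3$ is a proper subset $J\subsetneq\Delta$; after rearranging to match the cyclic Frobenius pattern of the restriction of scalars, the assembled $v$ lies in $\LP(x)$ and satisfies $\supp_\sigma(v^{-1}\sigma(wv))\subseteq J$, proving~(ii). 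In outcome~(ii) of the lemma, the three conjugates are partial Coxeter elements of pairwise disjoint supports, and their product, interpreted cyclically, is therefore a partial $\sigma$-Coxeter element equal to $v^{-1}\sigma(wv)$, so $x$ has positive Coxeter type, proving~(i).

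\emph{Main obstacle.} The principal difficulty is to verify that the cyclic conjugate pattern of Lemma~\ref{lem:pctDevisage} matches simultaneously the Frobenius cycling of the Weil restriction and, within a single component, the length-additive composition of $x$ into atomic factors. The companion check that the inequalities of Lemma~\ref{lem:pctDevisage} translate into genuine length positivity of $v_i$ for $x_i^{(*)}$ in the ambient $\widetilde W_G$ is routine but case-heavy, covering each minuscule coweight and each cycle shape separately.
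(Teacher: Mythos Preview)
Your overall architecture matches the paper's: use Lemma~\ref{lem:pctRes} to discard trivial factors, Lemma~\ref{lem:admDecomposition} to split into minuscule pieces, and Lemma~\ref{lem:pctDevisage} for the core combinatorics. However, there is a genuine gap in your reduction step.

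You assert that after atomization the classical parts are always cycles $s_1\cdots s_i$ or $s_n\cdots s_k$, and you list $\mu'\in\{\omega_2^\vee,\omega_3^\vee,\omega_1^\vee+\omega_2^\vee\}$ as sporadic small-rank exceptions. But $\omega_2^\vee$ appears for all $A_m$ with $m\geq 4$, and $\omega_2^\vee+\omega_m^\vee$ (which you omit entirely) for all $m\geq 2$; both are infinite families. The coweight $\omega_2^\vee$ is itself minuscule and indecomposable in the dominant cone, so your invocation of Lemma~\ref{lem:admDecomposition}(b) (\enquote{whenever some $\mu_i$ decomposes as a sum of two dominant summands}) does not apply to it. The resulting atomic factor lies in $\Adm(\omega_2^\vee)^K$, whose classical parts are not cycles of the required shape, so Lemma~\ref{lem:pctDevisage} cannot be fed directly.

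The paper closes this gap with an extra reduction you are missing: if $\mu+\alpha^\vee$ is still dominant with $\depth<2$ for some positive root $\alpha$, then $\Adm(\mu)\subseteq\Adm(\mu+\alpha^\vee)$ and it suffices to treat the larger coweight. Since $\omega_2^\vee+\alpha_1^\vee=2\omega_1^\vee$ (and similarly $\omega_2^\vee+\omega_m^\vee+\alpha_1^\vee=2\omega_1^\vee+\omega_m^\vee$), this pushes both infinite families into the $2\omega_1^\vee$-type case, where the atomic pieces genuinely are $\omega_1^\vee$'s and the cycle description holds. The paper also uses the companion reduction $x\mapsto x\varepsilon^{w_0\mu'}$ for dominant $\mu'$ (noting $\LP(x\varepsilon^{w_0\mu'})\subseteq\LP(x)$ and $\cl(x\varepsilon^{w_0\mu'})=\cl(x)$) to enlarge $\mu$ within the depth bound. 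After these two reductions the paper is left with four residual cases, three of which are finite and handled by direct computation, and only Case~4 ($d=3$, $\mu=(\omega_1,\omega_n,\omega_n)$) invokes Lemma~\ref{lem:pctDevisage}, via the inversion $x\mapsto x^{-1}\in{}^K\Adm(-w_0\mu)$ to match the lemma's cycle orientations.
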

\begin{proof}
We use induction on $n$.

Let $[F':F] = d$ and decompose $\Phi$ into its irreducible components $\Phi = \Phi_1\sqcup\cdots\sqcup \Phi_d$ with $\sigma(\Phi_k) = \Phi_{k+1}$ and $\sigma^d=\mathrm{id}$. Denote the restrictions of $\mu$ to the individual root lattices $\mathbb Z \Phi_k$ by $\mu_1,\dotsc,\mu_d$. Denote the restriction of $x$ to the individual extended affine Weyl groups by $x_k \in W_k\ltimes \Hom(\mathbb Z\Phi_k,\mathbb Z)$.

By Lemma \ref{lem:pctRes} we can easily reduce to the case where $\mu$ is non-central in every connected component of the Dynkin diagram. Moreover, if $\mu_k\in \Hom(\mathbb Z\Phi_k,\mathbb Z)$ is a sum of two non-central dominant coweights $\mu_k = \mu_k^{(1)} + \mu_k^{(2)}$, proving the proposition for $\Res_{F^{(d+1)}/F}\PGL_n$ (where $F^{(d+1)}/F$ is unramified of degree $d+1$) and the cocharacter $(\mu_1,\dotsc,\mu_{k-1},\mu_k^{(1)},\mu_k^{(2)},\mu_{k+1},\dotsc,\mu_d)$ implies the claim for $(G,\mu)$ (using Lemma~\ref{lem:admDecomposition} and Lemma \ref{lem:pctRes}). In other words, we may assume that each $\mu_k$ is non-zero and minuscule.

Suppose that there exists a dominant coweight $0\neq \mu'\in X_\ast(T)_{\Gamma_0}$ such that $\depth(G,\mu+\mu')<2$. Suppose moreover that the statement has been proved for $G$ and $\mu+\mu'$. Then any element $x\in \Adm(\mu)^{\mathbb S}$ gives rise to an element $x' = x\varepsilon^{w_0\mu'}\in \Adm(\mu+\mu')^{\mathbb S}$ with $\LP(x')\subseteq \LP(x)$. Hence if $x'$ has positive Coxeter type, so does $x$.

Assume from now on that any non-zero dominant coweight $\mu'\in X_\ast(T)_{\Gamma_0}$ satisfies $\depth(G,\mu+\mu')\geq 2$. Moreover, if there exists a positive root $\alpha$ with $\mu + \alpha^\vee$ being dominant and of depth $<2$, it suffices to prove the claim for $(G,\mu+\alpha^\vee)$, as $\Adm(\mu)\subset \Adm(\mu+\alpha^\vee)$.

After all these reductions, by Theorem~\ref{prop:classification}, it remains to study the following cases.

\noindent Case 1: $n=1$ with $d=3$ and $\mu = (\omega_1,\omega_1,\omega_1)$\\
Case 2: $n=4$ with $d=2$ and $\mu = (\omega_1,\omega_2)$\\
Case 3: $n=5,6,7$ with $d=1$ and $\mu = \omega_3$\\
Case 4: $n\geq 2$ with $d=3$ and $\mu = (\omega_1,\omega_n, \omega_n)$\\

The first three cases can be easily settled with a computer. Let us study Case 4. Writing the simple roots of $\Phi_1$ as $\alpha_1,\dotsc,\alpha_n$ and the simple reflections as $s_1,\dotsc,s_n$, observe that
\begin{align*}
{}^K\Adm(\mu_1) = \{\varepsilon^{\omega_1}s_1\cdots s_j\mid j=0,\dotsc,n\},\quad
{}^K\Adm(\mu_3) = \{\varepsilon^{\omega_n}s_n\cdots s_j\mid j=1,\dotsc,n+1\}.
\end{align*}
We calculate
\begin{align*}
&\LP(\varepsilon^{\omega_1}s_1\cdots s_j) = \LP(\tau_1 s_n\cdots s_{j+1}) = \LP(s_n\cdots s_{j+1}) \\&\quad= \{v\in S_n \mid \ell(s_n\cdots s_{j+1} v) = \ell(s_n\cdots s_{j+1})+\ell(v)\}
\\&\quad=\{v\in S_n\mid \forall \alpha\in \Phi_1^+:~ s_n\cdots s_{j+1}\alpha\in \Phi_1^-\text{ implies that } v^{-1}\alpha\in \Phi_1^+\}
\\&\quad = \{v\in S_n\mid v^{-1}(m)>v^{-1}(j+1) \text{ for all } m>j+1\}.
\end{align*}
An entirely similar calculation shows
\begin{align*}
&\LP(\varepsilon^{\omega_n}s_n\cdots s_j)= \{v\in S_n\mid v^{-1}(m)<v^{-1}(j)\text{ for all } m<j\}.
\end{align*}
Note that $x^{-1}\in {}^K\Adm(-w_0 \mu)$. So up to inverting $x$ and $\sigma$, it suffices to find elements $v = (v_1, v_2, v_3)\in \LP(x)$ such that $(x,v)$ is a pair of positive Coxeter type, or such that $J = \supp_\sigma(v^{-1}\sigma(\cl(x) v))$ is not all of $\Delta$. This is precisely the statement of Lemma~\ref{lem:pctDevisage}.
\end{proof}
\begin{proof}[Proof of Theorem~\ref{thm:depth2PositiveCoxeterType}]
Let $\mu$ be dominant with $\depth(G,\mu)<2$, and let $x=w\varepsilon^\mu\in \Adm(\mu)^K$. It suffices to prove the positive Coxeter type condition for each $\sigma$-connected component of the Dynkin diagram. So let us assume that $G$ is relatively quasi-simple. By inspecting the classification from Theorem~\ref{prop:classification} and \cite[Theorem~3.5]{Goertz2019}, we only have to study a finite number of families. In each case, we see that $x$ is of positive Coxeter type or that there is $v\in \LP(x)$ with $J = \supp(v^{-1} \sigma(wv))\neq\Delta$, cf.\ Lemma~\ref{lem:pctNonsplitGLn} and Proposition~\ref{prop:pctRes}.

Let $v^J\in W^J\cap vW_J$ be the right coset representative. Then applying Lemma~\ref{lem:positiveCoxeterPAlcove} to $(x, v^J)$ produces a proper Levi subgroup $M$ together with cocharacter $\mu'\in W_0 \mu$ as well as an element $\tilde x\in \Adm_M(\mu')^{K\cap M}$. By Theorem~\ref{lemdepthprop}, we get $\depth(M, \mu')\leq \depth(G,\mu)<2$, and every connected component of the Dynkin diagram of $M$ has type $A$. In an inductive manner, we may assume that we have proved already that $\tilde x$ has positive Coxeter type for $M$. Then the claim follows from Lemma \ref{lem:positiveCoxeterPAlcove} (b).
\end{proof}
\begin{remark}
    Shimada \cite{Shimada2024_pct} considers the general linear group $G = \GL_n$ and classifies cocharacters $\mu$ of positive Coxeter type, which means that the statement of Theorem~\ref{thm:depth2PositiveCoxeterType} holds true. His list consists of our cases for $G = \GL_n$, the infinite family of dominant cocharacters for $G=\GL_2$, as well as a finite number of additional cases. In this sense, our Theorem~\ref{thm:depth2PositiveCoxeterType} specialized to simple split groups is equivalent to \cite[Theorem~A]{Shimada2024_pct}. We remark that our proof method handles more cases, while only requiring ad hoc treatments for two individual cases.
\end{remark}

\section{Geometry of single ADLV in flag varieties}\label{sec:4}

We continue with the same assumptions as in Section~\ref{sec:notation}, i.e.\ unless specified otherwise, $G$ can be any quasi-split connected and reductive group over any non-archimedian local field $F$. Most of the statements also hold without the assumption of a quasi-split group, either by adapting the proofs presented here using the setup in Section~\ref{sec:depthnonqs}, or through a formal comparison to the quasi-split case as in \cite[Section~2]{Goertz2015_nonemptiness}.

In this section, we study \enquote{single} affine Deligne-Lusztig varieties $X_x(b)\subseteq X(\mu,b)$ for $x\in \Adm(\mu)$ where $\mu$ is a dominant cocharacter and $[b]\in B(G)$. For this, we will use the Deligne-Lusztig reduction method, which was introduced by Görtz-He \cite[Corollary 2.5.3]{Goertz2010b} in the affine case as well as purity of the Newton stratification, which we review and discuss in Section~\ref{sec:purity}.

\subsection{The Deligne-Lusztig reduction method}

\begin{proposition}\label{prop:DLreduction}
Let $x\in \widetilde W$ and let $a\in \Delta_\af$.
If $\mathrm{char}(F)>0$, then the following two statements hold for any $b\in G(\breve F)$.
\begin{enumerate}[(a)]
\item If $\ell(r_a x r_{\sigma a})=\ell(x)$, then there exists a $\mathbb J_b(F)$-equivariant universal homeomorphism $X_x(b)\rightarrow X_{r_a x r_{\sigma a}}(b)$.
\item If $\ell(r_a x r_{\sigma a})=\ell(x)-2$, then there exists a decomposition $X_x(b)=X_1\sqcup X_2$ such that
\begin{itemize}
\item $X_1$ is open and there exists a $\mathbb J_b(F)$-equivariant morphism $X_1\rightarrow X_{r_a x}(b)$, which is  the composition of a Zariski-locally trivial $\mathbb G_m$-bundle and a universal homeo\-morphism. 
\item $X_2$ is closed and there exists a $\mathbb J_b(F)$-equivariant morphism $X_2\rightarrow X_{r_a x r_{\sigma a}}(b)$, which is the composition of a Zariski-locally trivial $\mathbb A^1$-bundle and a universal homeomorphism. 
\end{itemize}
\end{enumerate}
If $\mathrm{char}(F)=0$, then the analogues of the above statements hold after replacing $\mathbb A^1$ and $\mathbb G_m$ by $\mathbb A^{1,\perf}$ and $\mathbb G_m^{\perf}$, respectively.
\end{proposition}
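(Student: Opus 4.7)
The approach is the classical Deligne--Lusztig reduction, carried out in the affine setting by Görtz--He \cite[Corollary~2.5.3]{Goertz2010b}. Let $P_a\supseteq I$ be the parahoric subgroup generated by $I$ and a lift of $r_a$, so that $P_a=I\cup Ir_aI$, and consider the natural projection
\[
\pi:G(\breve F)/I\longrightarrow G(\breve F)/P_a,
\]
whose fibres $P_a/I$ are isomorphic to $\mathbb P^1$ (resp.\ $\mathbb P^{1,\perf}$ in mixed characteristic). The plan is to decompose $X_x(b)$ fibre-by-fibre along $\pi$, compute the intersection with each $\mathbb P^1$-fibre explicitly, and show that these glue into bundles over the appropriate smaller ADLV.

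For $g\in G(\breve F)$ with $g^{-1}b\sigma(g)\in P_axP_{\sigma a}$ and $\dot p\in P_a$, the point $g\dot pI$ lies in $X_x(b)$ precisely when $\dot p^{-1}(g^{-1}b\sigma(g))\sigma(\dot p)\in IxI$. Parameterising $P_a/I$ by the affine root group $U_a$ together with the point $\infty=r_aI$, this gives an explicit $\sigma$-twisted conjugation map
\[
\psi_g:P_a/I\longrightarrow I\backslash P_axP_{\sigma a}/I.
\]
Its target contains at most the four Iwahori double cosets $IxI$, $Ir_axI$, $Ixr_{\sigma a}I$, and $Ir_axr_{\sigma a}I$, with coincidences and omissions governed by the length relations among $\ell(x),\ell(r_ax),\ell(xr_{\sigma a}),\ell(r_axr_{\sigma a})$. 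Reading $\psi_g^{-1}$ off each coset gives the intersection of the corresponding ADLV $X_y(b)$ with the $\mathbb P^1$-fibre $\pi^{-1}(gP_a)$, and this is the starting point for both parts.

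In case (a), the hypothesis $\ell(r_axr_{\sigma a})=\ell(x)$ forces $P_axP_{\sigma a}=IxI\sqcup Ir_axr_{\sigma a}I$, and a short $U_a$-calculation shows that, fibre-wise, the $\mathbb P^1$-fibre of $\pi|_{X_x(b)}$ is in natural bijection with that of $\pi|_{X_{r_axr_{\sigma a}}(b)}$; gluing over the common base yields the universal homeomorphism $X_x(b)\to X_{r_axr_{\sigma a}}(b)$. In case (b), the four Iwahori double cosets are all distinct, and the fibre-wise analysis separates $X_x(b)$ into an open stratum $X_1=\psi^{-1}(Ir_axI)$-filled and a closed stratum $X_2=\psi^{-1}(Ir_axr_{\sigma a}I)$-filled, whose projections to $X_{r_ax}(b)$ and $X_{r_axr_{\sigma a}}(b)$ are the claimed $\mathbb G_m$- and $\mathbb A^1$-bundles (up to universal homeomorphism). $J_b(F)$-equivariance is immediate, since $J_b(F)$ acts by left multiplication while $\pi$ and $\psi_g$ are defined by right operations; Zariski-local triviality of the bundles follows from local sections of $P_a\to P_a/I$. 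The mixed-characteristic statement is obtained by passing to perfections in the sense of \cite{Zhu17,Bhatt17}, as the above constructions commute with perfection.

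The main obstacle is the detailed fibre-wise bookkeeping in case (b): one must simultaneously track the stratification of the $\mathbb P^1$-fibre of $\pi|_{X_x(b)}$ and match its open and closed pieces to the corresponding $\mathbb P^1$-fibres of $\pi|_{X_{r_ax}(b)}$ and $\pi|_{X_{r_axr_{\sigma a}}(b)}$, which requires the explicit $U_a$-computation and an understanding of how the four Iwahori strata in $P_axP_{\sigma a}$ interact under the $\sigma$-twisted conjugation by $U_a$.
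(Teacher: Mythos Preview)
The paper does not prove this proposition; it is stated without proof, as a known result due to Görtz--He \cite[Corollary~2.5.3]{Goertz2010b} (this citation appears in the text introducing the proposition, and the paper moves directly on to use it). Your proposal correctly identifies this source and provides a reasonable sketch of the argument from there, so in that sense you have done more than the paper itself.

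One small caution on your outline of case~(a): the hypothesis $\ell(r_axr_{\sigma a})=\ell(x)$ does not by itself force $P_axP_{\sigma a}=IxI\sqcup Ir_axr_{\sigma a}I$; there are sub-cases depending on the individual signs of $\ell(r_ax)-\ell(x)$ and $\ell(xr_{\sigma a})-\ell(x)$, and in particular one may have $r_ax=xr_{\sigma a}$. The fibrewise bijection still goes through in each sub-case, but the bookkeeping is a bit more involved than you indicate. This is handled in Görtz--He, so if you are content to cite that reference (as the paper does), the point is moot.
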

Using Proposition \ref{prop:DLreduction} (b), we are able to reduce questions about the ADLV $X_x(b)$ to ADLV $X_{y}(b)$ for elements $y\in\widetilde W$ of smaller length. However, for a given element $x\in\widetilde W$, it is not always possible to find a simple affine root $a\in\Delta_{\af}$ with $\ell(r_a x r_{\sigma a}) = \ell(x)-2$. In this case, one can try to apply Proposition \ref{prop:DLreduction} (a) iteratively to replace $x$ by an element $x'$ such that $X_x(b)$ and $X_{x'}(b)$ are universally homeomorphic and such that we can then use part (b) of the proposition for $X_{x'}(b)$. The next proposition, which summarizes results of \cite{He2014b} and \cite{He2014}, shows that this strategy can always be made to work.

\begin{proposition}\label{prop:DLreductionAlgo}
Let $x\in \widetilde W$.
\begin{enumerate}[(a)]
\item There is a sequence
\begin{align*}
    x = x_1, \dotsc, x_m\in\widetilde W
\end{align*}
satisfying the following three conditions, cf.\ \cite[Theorem~A]{He2014b}:
\begin{itemize}
    \item We have $\ell(x_1)\geq \cdots \geq \ell(x_m)$.
    \item For each $i=1,\dotsc,m-1$, there exists a simple affine root $a_i$ with $x_{i+1} = r_{a_i} x_i r_{\sigma a_i}$.
    \item The element $x_m\in\widetilde W$ has minimal length in its $\widetilde W$-$\sigma$-conjugacy class $\{y^{-1} x_m \sigma(y)\mid y\in\widetilde W\}$.
\end{itemize}
\item Assume that $x$ has minimal length in its $\widetilde W$-$\sigma$-conjugacy class. Pick a representative $\dot x\in N_G(T)(\breve F)$ and let $[\dot x]\subseteq G(\breve F)$ denote its $\sigma$-conjugacy class. Then $IxI\subseteq [\dot x]$, cf.\ \cite[Theorem~3.5]{He2014}. We have $\dim X_x(\dot x) = \ell(x) - \langle \nu(\dot x),2\rho\rangle$ and the group $J_{\dot x}(F)$ acts transively on the irreducible components of $X_{x}(\dot x)$, cf.\ \cite[Theorem~4.8]{He2014}.
\end{enumerate}
\end{proposition}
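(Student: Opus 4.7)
The plan is essentially to invoke the three cited theorems from the He and He--Nie papers, since the proposition is structured as a compilation. What I would make explicit in the proof is how each claim matches the cited result precisely, and indicate the conceptual ideas behind the cited proofs.

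For part (a), I would invoke \cite[Theorem~A]{He2014b} directly. The underlying reduction algorithm works as follows: starting from $x$, one iteratively $\sigma$-conjugates by simple affine reflections. Each such conjugation $x \mapsto r_a x r_{\sigma a}$ changes the length by $0$ or $\pm 2$. The nontrivial combinatorial input of He--Nie is that whenever $x$ is not already of minimal length in its $\widetilde W$-$\sigma$-conjugacy class, one can always find a finite sequence of length-preserving $\sigma$-conjugations by simple affine reflections followed by a length-decreasing one; iterating, the algorithm must terminate in an element of minimal length. So my proof would simply reference this theorem, choose the guaranteed reduction sequence, and record the three bulleted properties.

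For part (b), I would separate the two claims. The inclusion $IxI \subseteq [\dot x]$ is a direct application of \cite[Theorem~3.5]{He2014}, which states exactly that any minimal length element $x$ in its $\widetilde W$-$\sigma$-conjugacy class has the property that every element of the Iwahori double coset is $\sigma$-conjugate (by an element of $I$) to the chosen lift $\dot x$. The dimension formula $\dim X_x(\dot x) = \ell(x) - \langle \nu(\dot x), 2\rho\rangle$ and the transitivity of the $J_{\dot x}(F)$-action on irreducible components then come from \cite[Theorem~4.8]{He2014}. The latter is proved by showing that $X_x(\dot x)$ decomposes into a union of $J_{\dot x}(F)$-translates of pieces that are, up to universal homeomorphism (or perfection in mixed characteristic), products of copies of $\mathbb A^1$ and $\mathbb G_m$ with a discrete set, and the count of these produces the stated dimension; transitivity on top-dimensional components follows as the pieces are permuted transitively by $J_{\dot x}(F)$.

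The only real obstacle here is not a mathematical one but a bookkeeping one: verifying that the conventions of He's papers (for length, Newton point, Iwahori, $\sigma$-conjugacy class) agree with the conventions fixed in Section~\ref{sec:notation} above so that the statements transfer verbatim. Once that is checked, there is nothing left to prove beyond citing \cite{He2014b} and \cite{He2014}.
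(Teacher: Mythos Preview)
Your proposal is correct and matches the paper's approach exactly: the paper does not provide a separate proof for this proposition at all, treating it purely as a summary of cited results from \cite{He2014b} and \cite{He2014}, with the references embedded directly in the statement. Your plan to invoke those theorems and briefly indicate why they apply is, if anything, more detailed than what the paper does.
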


\begin{remark}
    Suppose that $\ell(x) = \ell(r_a x r_{a})$  for an element $x\in\widetilde W$ and a simple affine root $a\in\Delta_{\af}$. Let moreover $\mu$ be a dominant cocharacter. We claim that then $x\in \Adm(\mu)$ if and ony if $r_a x r_a\in \Adm(\mu)$. Assume that the group $G$ is residually split, and our choice of Borel pair is such that $r_a=r_{\sigma(a)}$ for all $a$. Then the above claim implies that the algorithm to compute $\dim X_x(b)$ by combining Propositions \ref{prop:DLreduction} and \ref{prop:DLreductionAlgo} never leaves the admissible set $\Adm(\mu)$, provided the starting element $x$ lies in this set.

    To prove the claim it suffices to prove the implication $x\in \Adm(\mu)\implies r_a x r_a\in\Adm(\mu)$, and we may assume that $x\neq r_a x r_a$. Given $x\in \Adm(\mu)$, we have $x\leq \varepsilon^{u\mu}$ for some $u\in W_0$. We have $r_a x < x < xr_a$ or conversely $r_a x > x > x r_a$. The arguments in either case are very similar, so let us only consider the case $r_a x < x < xr_a$. If $\varepsilon^{u\mu} < \varepsilon^{u\mu} r_a$, we get $xr_a < \varepsilon^{u\mu} r_a$ and hence $r_a x r_a < r_a \varepsilon^{u\mu} r_a = \varepsilon^{\cl(r_a)u\mu}\in \Adm(\mu)$.

    So let us consider the case $\varepsilon^{u\mu} r_a < \varepsilon^{u\mu}$. Then we get $xr_a \leq \varepsilon^{u\mu}$ by \cite[Proposition~2.2.7]{Bjorner2005}. Hence $r_a x r_a < xr_a \leq \varepsilon^{u\mu}\in \Adm(\mu)$. The proof is complete.
\end{remark}
\subsection{The Length 1 Bruhat Cover property}

\begin{lemma}\label{lem:L1BC_def}
    Let $[b]\in B(G)$ be a $\sigma$-conjugacy class. Then the following are equivalent:
    \begin{enumerate}[(i)]
    \item For every $\sigma$-fundamental element $x\in \widetilde W$ with $[\dot x] = [b]$, and every simple affine reflection $s\in S_{\af}$ such that $\ell(sx)<\ell(x)$, we have
    \begin{align*}
        \length([b_{sx,\max}], [b]) = 1.
    \end{align*}
    \item For every element $x\in\widetilde W$ with generic $\sigma$-conjugacy class $[b_{x,\max}] = [b]$, and every lower Bruhat cover $x'\lessdot x$, we have
    \begin{align*}
        \length([b_{x',\max}], [b]) \leq 1.
    \end{align*}
    \end{enumerate}
\end{lemma}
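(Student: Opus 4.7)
The plan is to prove the two implications separately, with (i) $\Rightarrow$ (ii) being the more substantial direction.

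\textbf{For (ii) $\Rightarrow$ (i):} Let $x$ be $\sigma$-fundamental with $[\dot x]=[b]$ and let $s\in S_\af$ satisfy $\ell(sx)<\ell(x)$. Proposition~\ref{prop:DLreductionAlgo}(b) gives $IxI\subseteq [b]$, hence $[b_{x,\max}]=[b]$, so $x$ itself fits the hypothesis of (ii). Since $sx\lessdot x$ is a lower Bruhat cover, (ii) yields $\length([b_{sx,\max}],[b])\leq 1$. To upgrade this inequality to equality, I would rule out $[b_{sx,\max}]=[b]$: if it held, then $X_{sx}(\dot x)\neq\emptyset$, and running the Deligne-Lusztig reduction algorithm of Proposition~\ref{prop:DLreductionAlgo}(a) on $sx$ would produce a $\sigma$-fundamental element $y$ with $[\dot y]=[b]$ and $\ell(y)\leq\ell(sx)<\ell(x)$, contradicting the minimality of $\ell(x)$ in the $\sigma$-conjugacy class $[b]$.

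\textbf{For (i) $\Rightarrow$ (ii):} I would proceed by induction on $\ell(x)$. Given $x$ with $[b_{x,\max}]=[b]$ and a cover $x'\lessdot x$, apply Proposition~\ref{prop:DLreductionAlgo}(a). In a length-preserving step $x\mapsto\tilde x:=r_a x r_{\sigma a}$, the two elements are $\sigma$-conjugate so $[b_{\tilde x,\max}]=[b]$, and by the lifting property of the Bruhat order the cover $x'\lessdot x$ transports to a cover $\tilde x'\lessdot \tilde x$ whose generic class dominates $[b_{x',\max}]$, by monotonicity of the generic Newton map. In a length-decreasing step, Proposition~\ref{prop:DLreduction}(b) shows that the open stratum of $X_x(b)$ fibers over $X_{r_a x}(b)$ as a $\mathbb G_m$-bundle up to universal homeomorphism, so $[b_{r_a x,\max}]=[b_{x,\max}]=[b]$, and we may induct with $r_a x$ in place of $x$, tracking $x'$ via the lifting property. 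Once the algorithm terminates at a $\sigma$-fundamental element $\bar x$ with $[\dot{\bar x}]=[b]$, a general cover $\bar x'\lessdot\bar x$ is of the form $\bar x t$ for some reflection $t$, and by a further sequence of length-preserving $\sigma$-conjugations inside the class of $\bar x$ one straightens this to a cover of the form $s\bar x$ with $s$ a simple affine reflection, where (i) applies directly.

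\textbf{The main obstacle} will be the bookkeeping of the cover transport: in both the length-preserving and the length-decreasing reductions, the cover $x'$ does not always map to a single cover of the reduced element but may correspond to a Bruhat chain of length $\leq 2$, which has to be controlled using the lifting property of the Bruhat order. Furthermore, the passage from a general reflection-cover to a left simple-reflection-cover at the $\sigma$-fundamental stage requires a careful $\sigma$-conjugation argument combined with the monotonicity of $[b_{\cdot,\max}]$ under the Bruhat order; this last step, which relies on the length-functional and generic Newton techniques developed in \cite{Schremmer2022_newton}, is where the substantive content of the equivalence lies.
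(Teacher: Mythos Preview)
Your (ii)$\Rightarrow$(i) is correct and essentially the paper's argument (the paper rules out $[b_{sx,\max}]=[b]$ via $\ell(sx)<\ell(x)=\langle\nu(b),2\rho\rangle$, but your minimal-length argument is equally valid).

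For (i)$\Rightarrow$(ii), the step you flag as the main obstacle is a genuine gap, and I do not see how to close it along the lines you sketch. In a length-preserving $\sigma$-conjugation $x\mapsto\tilde x=r_a x r_{\sigma a}$, there is no mechanism --- lifting property or otherwise --- that takes a cover $x'\lessdot x$ to a cover $\tilde x'\lessdot\tilde x$ with $[b_{\tilde x',\max}]\leq [b_{x',\max}]$, which is the inequality you need. (Your \enquote{dominates} is the wrong direction: from $[b_{x',\max}]\leq [b_{\tilde x',\max}]\leq[b]$ and $\length([b_{\tilde x',\max}],[b])\leq 1$ you conclude nothing about $\length([b_{x',\max}],[b])$.) The sets of covers of $x$ and of $\tilde x$ are simply different, and monotonicity of the generic Newton map does not help because there is no Bruhat relation between $x'$ and any specific cover of $\tilde x$. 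The same problem recurs at your terminal step: there is no reason a general cover $\bar x'\lessdot\bar x$ of a fundamental $\bar x$ can be $\sigma$-conjugated into a left simple-reflection cover while controlling the generic Newton point. Only the length-decreasing step works cleanly (via $r_ax'\lessdot r_ax$ and the sandwich $[b_{r_ax',\max}]\leq[b_{x',\max}]\leq[b]$), but getting there requires passing through the problematic length-preserving steps.

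The paper sidesteps cover transport entirely. It takes a fundamental $y\leq x$ with $[\dot y]=[b]$ via \cite[Corollary~5.6]{Viehmann2014}; if $y\leq x'$ one is done by monotonicity, and if not, a reduced-subword argument shows that a fixed reduced word for $y$ contains the simple reflection deleted in passing from $x$ to $x'$. One then $\sigma$-cyclically shifts $y$ (staying fundamental) to bring that reflection to the left end, deletes it to get $y''$, and applies (i) directly to obtain $\length([b_{y'',\max}],[b])=1$. The crucial comparison $[b_{y'',\max}]\leq[b_{x',\max}]$ is then obtained from $\sigma$-cyclicity of generic Newton points under Demazure products \cite[Lemma~2.3]{He2024_demazure} together with monotonicity. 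This Demazure-product trick is what replaces your transport step and carries the real content of the implication.
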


\begin{definition}
    We say that $[b]\in B(G)$ satisfies the \emph{(open) length 1 Bruhat covers property} (L1BC) if it satisfies the equivalent conditions of the lemma. We say that it satisfies the \emph{closed length 1 Bruhat covers property} (CL1BC) if every $[b']\leq [b]$ satisfies (L1BC).
\end{definition}
\begin{proof}[Proof of Lemma \ref{lem:L1BC_def}]
    For (i) $\implies$ (ii), let $x\in\widetilde W$ with $[b_{x,\max}] = [b]$ and $x'\lessdot x$. Using a result of Viehmann\footnote{The result is applicable to our situation without further technical assumptions on $G$, as explained around \cite[Theorem~4.1]{Schremmer2022_newton}.} \cite[Corollary~5.6]{Viehmann2014}, we find a fundamental element $y\leq x$ such that $[b_{y,\max}] = [b_{x,\max}] = [b]$. If $y\leq x'$, we get $[b_{y,\max}]\leq [b_{x',\max}] \leq [b_{x,\max}]$ by Viehmann's result, so we are done.

    Otherwise, pick a reduced word
    \begin{align*}
        x = \tau s_1\cdots s_{\ell(x)},
    \end{align*}
    where $\ell(\tau)=0$ and where $s_1,\dotsc,s_{\ell(x)}$ are simple affine reflections. Since $y\leq x$, we obtain a reduced word for $y$ of the form
    \begin{align*}
        y = \tau s_{i_1} s_{i_2}\cdots s_{i_{\ell(y)}}
    \end{align*}
    with $1\leq i_1<i_2<\cdots <i_{\ell(y)}\leq \ell(x)$. Similarly, since $x'\lessdot x$, we get a reduced word
    \begin{align*}
        x' = \tau s_1\cdots s_{j-1} s_{j+1}\cdots s_{\ell(x)}
    \end{align*}
    for some $1\leq j\leq \ell(x)$. Since $y\not\leq x'$, we see that $j\in \{i_1,\dotsc,i_{\ell(y)}\}$. So let us write $j = i_k$ with $1\leq k\leq \ell(y)$.

    Since $y$ is fundamental, so is the cyclic shift
    \begin{align*}
        y' = \sigma^{-1}(s_{i_k} s_{i_{k+1}}\cdots s_{i_{\ell(y)}})\tau s_{i_1}\cdots s_{i_{k-1}},
    \end{align*}
    and $[b_{y',\max}] = [b_{y,\max}] = [b]$. By assumption (i), the element
    \begin{align*}
        y'' = \sigma^{-1}(s_{i_{k+1}}\cdots s_{i_{\ell(y)}})\tau s_{i_1}\cdots s_{i_{k-1}}
    \end{align*}
    satisfies $\length([b_{y'',\max}],[b])=1$. Observe that $y''\leq \sigma^{-1}(s_{j+1}\cdots s_{\ell(x)}) \ast \tau s_1\cdots s_{j-1}$ by the usual properties of the Demazure product. Using Viehmann's aforementioned result as well as \cite[Lemma~2.3]{He2024_demazure}, we get
    \begin{align*}
        [b_{y'',\max}] &\leq [\sigma^{-1}(b_{s_{j+1}\cdots s_{\ell(x)}) \ast \tau s_1\cdots s_{j-1},\max}] = [b_{\tau s_1\cdots s_{j-1}\ast s_{j+1}\cdots s_{\ell(x)},\max}] = [b_{x',\max}]\\&\leq [b_{x,\max}] = [b].
    \end{align*}
    We conclude $\length([b_{x',\max}], [b])\leq 1$, establishing (ii).

    The implication (ii) $\implies$ (i) is easy. Since $x$ is $\sigma$-fundamental, we get $[\dot x] = [b_{x,\max}] = [b]$. Moreover, $sx\lessdot x$, so that (ii) implies $\length([b_{sx,\max}], [b])\leq 1$. It is clear that $[b_{sx,\max}]\neq [b_{x,\max}]$, since we have $\ell(sx) <\ell(x) = \langle \nu(b),2\rho\rangle$.
\end{proof}

The property (L1BC) is very well-behaved with respect to Weil restrictions of scalars.
\begin{lemma}\label{lem:L1BC_res}
As in Section~\ref{sec:weilRestrict} let $G$ be a restriction of scalars of a reductive group $G'$, let $[b]\in B(G)$ be a $\sigma$-conjugacy class and let $[b']\in B(G')$ be the corresponding $\sigma^d$-conjugacy class. Then $[b]$ satisfies (L1BC) (for $G$) if and only if $[b']$ satisfies (L1BC) (for $G'$).
\end{lemma}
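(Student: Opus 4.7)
My plan is to invoke the bijection $\Psi \colon B(G) \to B(G')$ from Section~\ref{sec:weilRestrict} and \cite{He2018_acceptable}, which sends $[(b_1,\ldots,b_d)]_G$ to $[b_d\cdots b_1]_{G'}$ and is an isomorphism of posets (hence preserves the $\length$ function). The key refinement I will use is that for any tuple $y = (y_1,\ldots,y_d) \in \widetilde W_G$,
\begin{align*}
\Psi([b_{y,\max}]_G) = [b'_{y^*,\max}]_{G'}, \qquad\text{where}\qquad y^* := y_d \ast y_{d-1} \ast \cdots \ast y_1
\end{align*}
is the Demazure product in $\widetilde W_{G'}$. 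This follows by combining an explicit $\sigma$-conjugation (showing that a generic element of $I_G y I_G$ is $\sigma$-conjugate to one whose first coordinate lies in $I_d y_d I_d \cdot I_{d-1} y_{d-1} I_{d-1} \cdots I_1 y_1 I_1$) with the Demazure-product description of generic $\sigma^d$-classes of Iwahori double cosets. Both directions of the equivalence will be verified using condition~(i) of Lemma~\ref{lem:L1BC_def}.

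For Direction~1 (if $[b]$ satisfies (L1BC), then so does $[b']$), given a $\sigma^d$-fundamental $x' \in \widetilde W_{G'}$ with $[\dot x'] = [b']$ and a simple affine reflection $s'$ with $\ell(s'x') < \ell(x')$, I would embed via $x := (1,\ldots,1,x') \in \widetilde W_G$ and $s := (1,\ldots,1,s')$. By Lemma~\ref{lem:depthRestriction}(b), $x$ is then $\sigma$-fundamental with $[\dot x] = [b]$, and $\ell(sx) < \ell(x)$. Since the Demazure product simplifies to $(sx)^* = s'x'$, condition~(i) of (L1BC) for $[b]$ translates via $\Psi$ directly to $\length([b'_{s'x',\max}], [b']) = 1$.

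For Direction~2 (if $[b']$ satisfies (L1BC), then so does $[b]$), the key idea is a cyclic shift. Given $\sigma$-fundamental $x = (x_1,\ldots,x_d)$ with $[\dot x] = [b]$ and a simple affine $s$ at position $i$ with $\ell(sx) < \ell(x)$, I would set $\tilde x := \sigma^{d-i}(x)$ and $\tilde s := \sigma^{d-i}(s)$. Unwinding the Frobenius action $\sigma((y_1,\ldots,y_d)) = (\sigma^d(y_d), y_1, \ldots, y_{d-1})$ shows that $\tilde x_d = x_i$ and $\tilde s = (1,\ldots,1,\tilde s_{\mathrm{simp}})$ has its simple affine reflection $\tilde s_{\mathrm{simp}}$ now at position $d$. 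Since $\sigma$ preserves length and $\sigma$-classes, $\tilde x$ remains $\sigma$-fundamental with $[\dot{\tilde x}] = [b]$ and $[b_{\tilde s\tilde x,\max}] = [b_{sx,\max}]$. By Lemma~\ref{lem:depthRestriction}(b) applied to $\tilde x$, the product $\tilde x' := \tilde x_d \cdots \tilde x_1 \in \widetilde W_{G'}$ is $\sigma^d$-fundamental with length-additive decomposition and $[\dot{\tilde x'}] = [b']$. Because $\tilde s_{\mathrm{simp}}$ is a left inversion of $\tilde x_d = x_i$, it is also a left inversion of $\tilde x'$ by length-additivity, so the Demazure product collapses to the ordinary product $(\tilde s\tilde x)^* = \tilde s_{\mathrm{simp}}\tilde x'$ of length $\ell(\tilde x')-1$, exhibiting $\tilde s_{\mathrm{simp}}\tilde x' \lessdot \tilde x'$ as a Bruhat cover via a genuine simple affine reflection. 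Condition~(i) of (L1BC) for $[b']$ applied to $(\tilde x', \tilde s_{\mathrm{simp}})$ then gives $\length([b'_{\tilde s_{\mathrm{simp}}\tilde x',\max}],[b']) = 1$, which translates back via $\Psi$ to $\length([b_{sx,\max}],[b]) = 1$.

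The main technical obstacle I expect is Direction~2's cyclic shift: without it, a naive computation of $(sx)^* = x_d \ast \cdots \ast \tilde s_{\mathrm{simp}} x_i \ast \cdots \ast x_1$ at the original position $i$ can fail to be a Bruhat cover of $x'$, because the deletion of the $\tilde s_{\mathrm{simp}}$-letter from a reduced expression of $x'$ need not itself be reduced (the ordinary product can admit additional cancellations). Moving the modified coordinate to position $d$ via $\sigma$-conjugation is what forces length-additivity of the resulting product and thereby reduces to a single simple-reflection application of (L1BC) for $[b']$.
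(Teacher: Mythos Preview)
Your proposal is correct and follows essentially the same approach as the paper: both directions verify condition~(i) of Lemma~\ref{lem:L1BC_def}, embedding $x'$ as a one-coordinate tuple for one direction and using a cyclic $\sigma$-shift to move the active coordinate to the end for the other. The only cosmetic differences are that the paper places the nontrivial entry in position~$1$ rather than~$d$ in Direction~1, and that you make the identification $\Psi([b_{y,\max}]) = [b'_{y^*,\max}]$ via the Demazure product explicit (this is Lemma~\ref{lem:gnpWeilRestrict} later in the paper), whereas the paper simply asserts the corresponding fact as an observation.
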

\begin{proof}
    We may identify $\widetilde W_G$ with the $d$-fold direct product $\widetilde W_{G'}\times\cdots\times \widetilde W_{G'}$. Now an element $x = (x_1, \dotsc,x_d)\in \widetilde W_G$ is $\sigma$-fundamental if and only if the following conditions are both satisfied:
    \begin{itemize}
        \item $\ell(x_1\cdots x_n) = \ell(x_1)+\cdots + \ell(x_n)$ and
        \item the element $x_d\cdots x_1\in\widetilde W_{G'}$ is $\sigma^d$-fundamental.
    \end{itemize}
    In this case, $[\dot x] = [b]$ if and only if the $\sigma^d$-conjugacy class of $[x_d\cdots x_1]$ in $B(G')$ is equal to $[b']$.

    First assume that $[b]$ satisfies (L1BC). We verify condition (i) of Lemma~\ref{lem:L1BC_def} for $[b']$. So pick an element $y\in\widetilde W_{G'}$ which is $\sigma^d$-fundamental and satisfies $[\dot y] = [b']$. Let moreover $s'\in\widetilde W_{G'}$ be a simple affine reflection with $s'y<y$ in $\widetilde W_{G'}$. Then $x = (y,1,\dotsc,1)\in\widetilde W_G$ is $\sigma$-fundamental and satisfies that $sx<x$, where $s = (s',1,\dotsc,1)\in\widetilde W_G$ is the corresponding simple affine reflection. Since $[b]$ satisfies (L1BC) by assumption, we get $\length([b_{sx,\max}, [b])=1$. Now observe that the image of $[b_{sx,\max}]$ under the natural isomorphism $B(G)\cong B(G')$ is exactly $[b_{s'y,\max}]$. We get the conclusion.

    Finally assume that $[b']$ satisfies (L1BC). We verify condition (i) of Lemma~\ref{lem:L1BC_def} for $[b']$. So let $x = (x_1,\dotsc,x_d)\in\widetilde W_G$ be $\sigma$-fundamental with $[\dot x] = [b]$, and let $s\in\widetilde W_G$ be a simple affine reflection with $sx<x$. We have to show that $\length([b_{sx,\max}], [b])=1$. This situation does not change if we replace $(x,s)$ by $(\sigma(x),\sigma(s))$. By this, we may and do assume without loss of generality that $s$ has the form $s = (1,\dotsc,1,s')$ for a simple affine reflection $s'\in\widetilde W_{G'}$. Then $s'x_d<x_d$. We see that $s'y<y$, where $y\in\widetilde W_{G'}$ is the length-additive product $y = x_d\cdots x_1$. By the above, $y$ is $\sigma^d$-fundamental with $[\dot y] = [b']$. Since $[b']$ satisfies (L1BC), we get $\length([b_{s'y,\max}], [b'])=1$. Now observe that the image of $[b_{sx,\max}]$ under the natural isomorphism $B(G)\cong B(G')$ is exactly $[b_{s'y,\max}]$. This finishes the proof.
\end{proof}
\subsection{Geometric consequences}
The relevance of the property (CL1BC) for the study of affine Deligne-Lusztig varieties comes due to the following lemma which is analogous to a result of Mili\'cevi\'c-Viehmann. Their (and our) proof uses purity of the Newton stratification, as discussed in Section~\ref{sec:purity}.

\begin{lemma}\label{lem:CL1BC_cordial}
    Let $x\in\widetilde W$ such that $[b_{x,\max}]\in B(G)$ satisfies (CL1BC). Then $x$ satisfies the equivalent conditions of \cite[Theorem~3.16]{Milicevic2020}. More explicitly, let $[b_{x,\min}]\in B(G)_x$ denote the unique minimal element \cite[Theorem~1.1]{Viehmann2021}. Then
    \begin{align*}
        B(G)_x = \{[b]\in B(G)\mid [b_{x,\min}]\leq [b]\leq [b_{x,\max}]\}.
    \end{align*}
    For every $[b]\in B(G)_x$, the ADLV $X_x(b)$ is equidimensional. We have
    \begin{align*}
        \dim X_x(b) - \dim X_x(b_{x,\max}) = \frac 12\left(\langle \nu(b_{x,\max}) - \nu(b),2\rho\rangle + \defect(b_{x,\max}) - \defect(b)\right).
    \end{align*}
\end{lemma}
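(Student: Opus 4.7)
The plan is to prove the lemma by strong induction on $\ell(x)$, combining the Deligne-Lusztig reduction method (Propositions~\ref{prop:DLreduction} and \ref{prop:DLreductionAlgo}) with purity of the Newton stratification (Section~\ref{sec:purity}). This mirrors the approach of Mili\'cevi\'c-Viehmann, with (CL1BC) playing the role of the key inductive input that controls how the generic Newton class evolves under lower Bruhat covers.

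For the base case, when $x$ has minimal length in its $\sigma$-conjugacy class in $\widetilde W$, Proposition~\ref{prop:DLreductionAlgo}(b) directly gives $B(G)_x = \{[\dot x]\}$ with dimension $\ell(x) - \langle \nu(\dot x), 2\rho\rangle$ and equidimensionality, so all assertions hold trivially with $[b_{x,\min}] = [b_{x,\max}] = [\dot x]$. For the inductive step, I would first invoke Proposition~\ref{prop:DLreductionAlgo}(a) together with iterated application of Proposition~\ref{prop:DLreduction}(a) -- which preserves all relevant invariants (the Newton stratification, the generic $\sigma$-conjugacy class, and hence the hypothesis (CL1BC)) -- to reduce to the case where there exists a simple affine root $a$ with $\ell(r_a x r_{\sigma a}) = \ell(x) - 2$. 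Setting $y := r_a x$ and $y' := r_a x r_{\sigma a}$, so that $y' \lessdot y \lessdot x$, Proposition~\ref{prop:DLreduction}(b) decomposes $X_x(b) = X_1 \sqcup X_2$, with $X_1$ and $X_2$ locally trivial $\mathbb G_m$- and $\mathbb A^1$-bundles (up to universal homeomorphism, or perfection in mixed characteristic) over $X_y(b)$ and $X_{y'}(b)$ respectively.

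Since $y, y' \leq x$, Viehmann's monotonicity \cite[Corollary~5.6]{Viehmann2014} gives $[b_{y,\max}], [b_{y',\max}] \leq [b_{x,\max}]$, so both classes inherit (CL1BC) and the inductive hypothesis applies to $y$ and $y'$. Applying (L1BC) for $[b_{x,\max}]$ to the cover $y \lessdot x$ yields $\length([b_{y,\max}], [b_{x,\max}]) \leq 1$, and a corresponding bound for $[b_{y',\max}]$ follows from (CL1BC) for $[b_{y,\max}]$ applied to $y' \lessdot y$. These length bounds imply that $B(G)_x = B(G)_y \cup B(G)_{y'}$ is again an interval in $B(G)$ with top $[b_{x,\max}]$, establishing (a). Parts (b) and (c) follow by writing $\dim X_1 = \dim X_y(b) + 1$ and $\dim X_2 = \dim X_{y'}(b) + 1$ on non-empty pieces, substituting the inductive dimension formulas, and using the length-$\leq 1$ bounds to verify that both contributions match the predicted value $\dim X_x(b_{x,\max}) + \tfrac 12\bigl(\langle \nu(b_{x,\max}) - \nu(b), 2\rho\rangle + \defect(b_{x,\max}) - \defect(b)\bigr)$.

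The main obstacle will be twofold. First, ensuring that every Newton class in the claimed interval actually appears in $B(G)_x$: I would handle this via purity of the Newton stratification (Section~\ref{sec:purity}), which propagates non-emptiness from $X_x(b_{x,\max})$ downward one cover at a time in the poset $B(G)$, matching the length-$\leq 1$ bounds produced by (CL1BC). Second, verifying equidimensionality of $X_x(b)$ when both $X_1$ and $X_2$ are non-empty, which via the inductive formulas reduces to a compatibility identity between $\dim X_y(b_{y,\max})$ and $\dim X_{y',\max}$; this identity can be checked by a short case analysis on whether $[b_{y,\max}]$ and $[b_{y',\max}]$ equal $[b_{x,\max}]$ or lie strictly one cover below, using the length-$\leq 1$ bounds together with Kottwitz's relation between Newton point, Kottwitz point and defect.
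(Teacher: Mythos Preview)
Your approach—induction on $\ell(x)$, base case via minimal-length elements, inductive step via Deligne--Lusztig reduction combined with purity—is the same as the paper's. Two points where the paper's execution sharpens your sketch:

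First, after the reduction you have $y = r_a x$ and $y' = r_a x r_{\sigma a}$; since the piece $X_1$ is \emph{open} in $IxI$ and maps (up to $\mathbb G_m$-bundle and universal homeomorphism) to $IyI$, the generic Newton class is preserved, so in fact $[b_{y,\max}] = [b_{x,\max}]$ exactly, not merely $\length \leq 1$. This equality is what makes the codimension accounting for the open piece match $\length([b],[b_{x,\max}])$; with only your weaker bound the argument for the open part would not close.

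Second, rather than verifying the interval structure, equidimensionality, and dimension formula separately (your two ``obstacles''), the paper proves the single inequality
\[
\codim\bigl(IxI\cap[b] \subseteq IxI\bigr) \geq \length([b],[b_{x,\max}])
\]
by splitting $IxI$ into $\mathcal U$ and $\mathcal C$ and using the inductive codimension equalities for $y$ and $y'$ together with the length-$\leq 1$ bound on $[b_{y',\max}]$ (from (CL1BC)). Then \cite[Lemma~5.12]{Viehmann2015} (Remark~\ref{rem:viehmann2015lemma}(a)) delivers non-emptiness of all intermediate strata, equidimensionality, and the exact codimension formula in one stroke; the dimension formula for $X_x(b)$ follows via Theorem~\ref{thmdimfoliat}. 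Your hands-on case analysis would work too, but is more laborious, and your phrase ``purity propagates non-emptiness downward'' needs precisely this codimension input to become a proof.
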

\begin{proof}
    We use induction on $\ell(x)$. If $x$ has minimal length in its $\sigma$-conjugacy class in $\widetilde W$, there is nothing to prove as $B(G)_x = \{[b_{x,\max}]\}$. Assume now that this is not the case

    By Proposition~\ref{prop:DLreductionAlgo}, we find a length decreasing sequence of elements
\begin{align*}
x = x_1, x_2,\dotsc,x_r \in \widetilde W
\end{align*}
such that each $x_i$ with $i>1$ has the form $x_i = s_i x_{i-1} \sigma(s_i)$ for a simple affine reflection $s_i\in S_{\af}$, and such that $\ell(x_1) = \ell(x_2)=\cdots = \ell(x_{r-1}) > \ell(x_r)$. The generic $\sigma$-conjugacy classes of $x_1,\dotsc,x_{r-1}$ as well as $x_r' := s_r x_{r-1}$ all agree. We have $x_r \lessdot x_r'$, hence $\length([b_{x_r,\max}], [b_{x_r',\max}])\leq 1$ by assumption on $[b_{x,\max}] = [b_{x_r',\max}]$. By the inductive assumption, the lemma's statement holds for both $x_r$ and $x_r'$. Now using Proposition~\ref{prop:DLreduction}, we find
\begin{align*}
    \dim X_x(b) = 1+\max(\dim X_{x_r}(b), \dim X_{x_r'}(b))
\end{align*}
for all $[b]\in B(G)$, where we set $\dim \emptyset=-\infty$.
By Theorem \ref{thm:purity}, the Newton stratification satisfies topological strong purity. Thus by \cite[Lemma 5.12]{Viehmann2015} (compare also Remark \ref{rem:viehmann2015lemma} (a)), it is enough to show that the codimension of $IxI\cap [b]$ inside $IxI$, for all $[b]\in B(G)_x$, is at least $\length([b], [b_{x,\max}])$.

The Deligne-Lusztig reduction splits $IxI$ into an open part $\mathcal U$ and a closed part $\mathcal C$ of codimension $1$. If $[b]\cap \mathcal U\neq\emptyset$, then the codimension of this intersection inside $\mathcal U$ is equal to $\codim([b]\cap Ix_r'I~\subseteq~Ix_r'I)$. By the inductive assumption, this is at least $\length([b], [b_{x,\max}])$. Similarly, if $[b]\cap \mathcal C\neq\emptyset$, then the codimension of this intersection inside $\mathcal C$ is equal to $\codim([b]\cap Ix_rI~\subseteq Ix_rI)$. By the inductive assumption, this codimension is at least $\length([b], [b_{x_r,\max}])$. Thus the codimension of $[b]\cap \mathcal C$ inside $IxI$ is
\begin{align*}
    &\codim([b]\cap \mathcal C~\subseteq \mathcal C) + \codim(\mathcal C\subseteq IxI) = \codim([b]\cap Ix_rI~\subseteq Ix_rI) + 1 \\\geq& \length([b], [b_{x_r,\max}])+1 \underset{\text{assump.}}\geq\length([b], [b_{x_r,\max}]) + \length([b_{x_r,\max}], [b_{x,\max}]) \\=& \length([b], [b_{x,\max}]).
\end{align*}
This completes the induction and the proof.
\end{proof}

\subsection{ (CL1BC) and the depth is less than 2}
The main result of this section is the following:

\begin{theorem}\label{thm:depth2L1BC}
    Let $G$ be an unramified restriction of scalars of $\PGL_{n+1}$, and let $\mu$ be a dominant cocharacter with $\depth(G,\mu)<2$. Let $[b]\in B(G,\mu)$ such that $(G,\mu,[b])$ is Hodge-Newton indecomposable. Then $[b]$ satisfies (CL1BC).
\end{theorem}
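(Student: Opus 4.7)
The plan is to proceed by reduction and case analysis. First, Lemma~\ref{lem:L1BC_res} shows that (L1BC) is invariant under Weil restriction of scalars, and Lemma~\ref{lem:depthRestriction} provides compatible bijections on depth, on $B(G,\mu)$, and on the notion of Hodge-Newton indecomposability (since semistandard Levis of $G$ correspond to those of the underlying split group over $F^{(d)}$). Hence we may assume $G = \PGL_{n+1}$ is split over $F$. By Theorem~\ref{prop:classification} combined with \cite[Theorem~3.5]{Goertz2019}, the pairs $(G,\mu)$ to consider split into: (i) the fully Hodge-Newton decomposable families in type $A$, for which only the basic class is HN-indecomposable; (ii) the infinite families $\mu\in\{2\omega_1^\vee,\,2\omega_1^\vee+\omega_n^\vee,\,\omega_2^\vee,\,\omega_2^\vee+\omega_n^\vee\}$ (up to Dynkin symmetry); and (iii) the sporadic cases $(A_4,\omega_1^\vee+\omega_2^\vee)$ and $(A_n,\omega_3^\vee)$ for $n=5,6,7$.

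The task is to verify (L1BC) for every class $[b']\in B(G)$ with $[b']\leq [b]$, where $[b]$ is HN-indecomposable in some such $B(G,\mu)$. I would use Lemma~\ref{lem:L1BC_def}(ii) as the working form of the definition: for each element $y\in\widetilde W$ whose generic $\sigma$-conjugacy class equals $[b']$, and each lower Bruhat cover $y'\lessdot y$, one must show $\length([b_{y',\max}],[b'])\leq 1$. Concretely, for $G=\PGL_{n+1}$ one can identify $\widetilde W$ with the group of affine permutations, $B(G)$ with the set of Newton polygons bounded by Kottwitz data, and the poset structure with first-order domination. Then the generic Newton point $\nu(b_{y,\max})$ is controlled via Schremmer's formula in terms of the length functional $\ell(y,\cdot)$ and maximal-weight paths in the quantum Bruhat graph over length-positive elements \cite{Schremmer2022_newton}. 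Deleting a simple reflection from a reduced word for $y$ modifies $\ell(y,\cdot)$ in a prescribed way, which produces a candidate path of smaller weight. The key combinatorial claim is that the associated drop in the Newton point is, in the partial order on $B(G)$, at most one step.

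The main reason to expect this bound is exactly the depth hypothesis: a drop by two steps or more would require $\nu(b_{y,\max})-\nu(b_{y',\max})$ to be a $\mathbb Q_{\geq 0}$-combination of positive coroots pairing to at least $2$ with some fundamental weight $\omega_{\mathcal O}$, and by Hodge-Newton indecomposability of $[b]$ one can transport this to a corresponding lower bound $\langle\mu_{\dom},\omega_{\mathcal O}\rangle\geq 2$, contradicting $\depth(G,\mu)<2$. Making this implication rigorous for every HN-indecomposable $[b']$ simultaneously is the main technical step; I would execute it family by family using the explicit parametrization of HN-indecomposable classes (which form a chain by the Remark following Theorem~\ref{prop:classification}) together with the combinatorics of affine permutations. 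For the sporadic cases $(A_4,\omega_1^\vee+\omega_2^\vee)$ and $(A_n,\omega_3^\vee)$ with $n\leq 7$, a finite enumeration of fundamental elements and their simple-reflection covers suffices, in the spirit of Examples~\ref{ex:39} and~\ref{ex:310}.

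The hardest part will be obtaining a uniform argument across the infinite families (ii): here the set of fundamental elements in a given $[b']$ is large, and one must rule out \enquote{long jumps} in $B(G)$ under any single deletion, not just for carefully chosen reduced words. I expect this to rest on a combinatorial lemma saying that if a single deletion from a reduced expression for $y$ causes $\nu(b_{y,\max})$ to drop by more than one coroot, then the quantum Bruhat path witnessing $\nu(b_{y,\max})$ must span two $\sigma$-orbits of simple reflections in a way that forces $\depth(G,\mu)\geq 2$ at the HN-indecomposable ancestor $[b]$. The depth classification from Theorem~\ref{lemdepthprop} and the tight depth bounds computed in Theorem~\ref{prop:classification} then close the argument.
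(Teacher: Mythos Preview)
Your reduction steps (Weil restriction via Lemma~\ref{lem:L1BC_res}, passing to $\PGL_{n+1}$, handling the sporadic cases by finite enumeration) match the paper and are fine. The gap is in your treatment of the infinite families. Your proposed mechanism---that a length-$2$ drop in $B(G)$ under a Bruhat cover would force $\langle \mu_{\dom},\omega_{\mathcal O}\rangle\geq 2$ for some $\sigma$-orbit $\mathcal O$---is not substantiated. Chai's length formula expresses $\length([b_{y',\max}],[b_{y,\max}])$ in terms of $\langle \nu(b_{y,\max})-\nu(b_{y',\max}),\rho\rangle$ and a defect correction, not in terms of a single fundamental-weight pairing; there is no direct route from ``length $\geq 2$'' to a lower bound on $\depth(G,\mu)$. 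Moreover, (L1BC) quantifies over \emph{all} fundamental $y$ with $[\dot y]=[b]$, which need not sit inside $\Adm(\mu)$ at all, so invoking the depth of $\mu$ at that point requires an argument you have not supplied.

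The paper's proof avoids this entirely by a structural shortcut you did not find. For $\mu=2\omega_1^\vee+\omega_n^\vee$, Hodge--Newton indecomposability forces $\nu(b)\leq\omega_1^\vee$, hence every fundamental $x$ with $[\dot x]=[b]$ lies in $\Adm(\omega_1^\vee)$; but $\omega_1^\vee$ is Lubin--Tate, so \emph{every} element of $\Adm(\omega_1^\vee)$ is already fundamental, and a Bruhat cover $x'\lessdot x$ stays fundamental, giving $\length=1$ immediately. The case $\mu=2\omega_1^\vee$ is then bootstrapped from this one by right-multiplying by a translation $\varepsilon^{v\omega_n^\vee}$ (with $v$ chosen via Lemma~\ref{lem:genericClassDistance}) to land in $\Adm(2\omega_1^\vee+\omega_n^\vee)$ and comparing the invariants $d(\cdot)$. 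The cases $\omega_2^\vee$ and $\omega_2^\vee+\omega_n^\vee$ reduce to these via the inclusion of admissible sets. This Lubin--Tate observation is the missing idea in your sketch.
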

For the proof, we need the following preparation.

\begin{lemma}\label{lem:genericClassDistance}
Let $x = w\varepsilon^\mu\in\widetilde W$, and $[b_{x,\max}]\in B(G)$ its generic $\sigma$-conjugacy class. Then
\begin{align*}
\ell(x) - \langle \lambda(b_{x,\max}),2\rho\rangle = \min_{v\in \LP(x)} d(v\Rightarrow \sigma(wv)).
\end{align*}
Denote this quantity by $d$. If $[b_{x,\max}]$ satisfies (CL1BC) and $[b]\in B(G)_x$, then
\begin{align*}
\dim X_x(b) = \frac 12\left(\ell(x)+d - \langle \nu(b),2\rho\rangle - \defect(b)\right).
\end{align*}
\end{lemma}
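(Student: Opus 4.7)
My plan is as follows. The first identity should be viewed as a reformulation of Schremmer's description of the generic Newton point and defect of an Iwahori double coset in terms of length positive elements and the quantum Bruhat graph, developed in \cite{Schremmer2022_newton}. That work characterizes the pair $(\nu(b_{x,\max}),\defect(b_{x,\max}))$ as an extremum, taken over $v\in\LP(x)$, of the QBG weight function $\wt(v\Rightarrow\sigma(wv))$. Translating from weight to edge count via the standard identity $d(u\Rightarrow u')=\ell(u')-\ell(u)+\langle\wt(u\Rightarrow u'),2\rho\rangle$, which holds for any minimum-weight path in the quantum Bruhat graph, and unwinding the convention $\langle\lambda(b),2\rho\rangle=\langle\nu(b),2\rho\rangle-\defect(b)$, one rewrites the weight-based optimum as the length-based formula $\ell(x)-\langle\lambda(b_{x,\max}),2\rho\rangle=\min_{v\in\LP(x)}d(v\Rightarrow\sigma(wv))$.

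For the dimension formula, the key intermediate claim is that $\dim X_x(b_{x,\max})=\ell(x)-\langle\nu(b_{x,\max}),2\rho\rangle$ for arbitrary $x\in\widetilde W$. This \enquote{generic virtual dimension} identity is well-known and, at the level of proof, follows from Propositions~\ref{prop:DLreduction} and~\ref{prop:DLreductionAlgo} by a straightforward induction on $\ell(x)$: the base case is the minimum length case of Proposition~\ref{prop:DLreductionAlgo}(b), and in the inductive step applying Proposition~\ref{prop:DLreduction}(a) preserves everything while Proposition~\ref{prop:DLreduction}(b) simultaneously drops the length by $2$ and, along the generic-class-preserving open branch, brings us back to a smaller element with the same $[b_{x,\max}]$. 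Substituting the first identity (i.e.\ $d=\ell(x)-\langle\nu(b_{x,\max}),2\rho\rangle+\defect(b_{x,\max})$) into this formula at $[b]=[b_{x,\max}]$ yields exactly the target formula for the generic class.

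For arbitrary $[b]\in B(G)_x$, the (CL1BC) hypothesis on $[b_{x,\max}]$ lets me invoke Lemma~\ref{lem:CL1BC_cordial}, which supplies the dimensional shift
\begin{align*}
\dim X_x(b)-\dim X_x(b_{x,\max})=\tfrac12\bigl(\langle\nu(b_{x,\max})-\nu(b),2\rho\rangle+\defect(b_{x,\max})-\defect(b)\bigr).
\end{align*}
Adding this to the formula for $[b_{x,\max}]$ gives the desired expression $\dim X_x(b)=\tfrac12(\ell(x)+d-\langle\nu(b),2\rho\rangle-\defect(b))$. The main obstacle will be matching notation and sign conventions with \cite{Schremmer2022_newton} so that the translation between $\wt$ and $d$ and between $\nu$ and $\lambda$ is completely consistent; once that bookkeeping is fixed, the proof is essentially a combination of a known QBG identity with Lemma~\ref{lem:CL1BC_cordial} and the Deligne--Lusztig reduction, and no new geometric input is needed.
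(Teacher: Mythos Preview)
Your proposal is correct and follows essentially the same route as the paper: the first identity is obtained from the results of \cite{Schremmer2022_newton} (the paper cites Theorem~4.2 and Corollary~4.5 there directly), and the dimension formula is deduced from Lemma~\ref{lem:CL1BC_cordial} together with the generic dimension formula $\dim X_x(b_{x,\max})=\ell(x)-\langle\nu(b_{x,\max}),2\rho\rangle$ and the relation $\langle\lambda(b),2\rho\rangle=\langle\nu(b),2\rho\rangle-\defect(b)$. The only cosmetic difference is that the paper cites \cite[Theorem~2.23]{He2015} for the generic dimension formula rather than sketching the Deligne--Lusztig reduction argument as you do.
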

\begin{proof}
Apply \cite[Theorem~4.2]{Schremmer2022_newton} and \cite[Corollary~4.5]{Schremmer2022_newton} to get the first identity. The dimension formula follows from Lemma~\ref{lem:CL1BC_cordial} together with the formulas
\begin{align*}
	&\langle \nu(b_{x,\max}),2\rho\rangle - \defect(b_{x,\max}) = \langle \lambda(b_{x,\max}),2\rho\rangle
	\\&\dim X_x(b_{x,\max}) = \ell(x)-\langle \nu(b_{x,\max}),2\rho\rangle
\end{align*}
from \cite[Proposition~3.9]{Schremmer2022_newton} resp.\ \cite[Theorem~2.23]{He2015}.
\end{proof}

\begin{proof}[Proof of Theorem~\ref{thm:depth2L1BC}]
It is clear from the definition that we only have to verify that $[b]$ satisfies (L1BC). This is trivially satisfied for basic classes, so we may exclude the fully Hodge-Newton decomposable cases.
Using Lemma~\ref{lem:L1BC_res}, we may and do assume that $G = \PGL_{n+1}$. 

Following Theorem~\ref{prop:classification}, we have to study the following cases (up to a finite list of further cases, which we computer verified, compare Appendix \ref{app:a4}):
\begin{align*}
\mu = 2\omega_1^\vee+\omega_n^\vee, \omega_2^\vee + \omega_n^\vee,2\omega_1^\vee,\omega_2^\vee.
\end{align*}
Since $\Adm(\omega_2^\vee+\omega_n^\vee)\subseteq \Adm(2\omega_1^\vee+\omega_n^\vee)$, and similarly for the other two cases, it suffices to study the cases of $\mu$ being $2\omega_1^\vee+\omega_n^\vee$ or $2\omega_1^\vee$.

If $\mu = 2\omega_1^\vee+\omega_n^\vee$, it follows that $\nu(b_{x,\max})\leq \omega_1^\vee$ from Hodge-Newton indecomposability. Let $x\in\widetilde W$ be fundamental with $[\dot x] = [b]$. It follows that $x\in \Adm(\omega_1^\vee)$, so $x$ is Lubin-Tate admissible. Now any $\dot x\lessdot x$ being Lubin-Tate admissible as well, it follows that $x'$ is fundamental, and the claim follows.

Consider now the case $\mu = 2\omega_1^\vee$. For this proof, we define the function
\begin{align*}
d : \widetilde W\rightarrow\mathbb Z_{\geq 0},
\end{align*}
sending $x$ to the number described in Lemma~\ref{lem:genericClassDistance}.

Let $x\in\widetilde W$ be fundamental with $[\dot x] = [b]$, and $x'\lessdot x$ be a lower Bruhat cover such that $x' x^{-1}$ is a simple affine reflection in $\widetilde W$. Choose $v\in \LP(x)$ such that $d(v\Rightarrow \cl(x)v)=d(x)$. Consider the element $\tilde x = x\cdot \varepsilon^{v\omega_n^\vee}\in\widetilde W$. Then $\LP(\tilde x) \subseteq \LP(x)$, and similarly for $\tilde x' = x'\cdot \varepsilon^{v\omega_n^\vee}$ and $x'$.
These inclusions imply $d(x)\leq d(\tilde x)$ and $d(x')\leq d(\tilde x')$. From $v\in \LP(\tilde x)$, we get $d(x)=d(\tilde x)$.

Since $\tilde x\in\Adm(2\omega_1^\vee+\omega_n^\vee)$ and $\tilde x'\lessdot \tilde x$, we get $\length([b_{x',\max}], [b_{x,\max}])= 1$ (one easily verifies $\lambda(b_{\tilde x,\max}) = \lambda(b_{x,\max})+\omega_n^\vee$ by choice of $v$, implying Hodge-Newton indecomposability). Equivalently, this means $d(\tilde x')= d(\tilde x)$. We conclude
$d(x') \leq d(\tilde x') = d(\tilde x) = d(x).$ Therefore, \begin{align*}
    \length([b_{x',\max}], [b_{x,\max}]) &= \langle \lambda(b_{x,\max})-\lambda(b_{x',\max}),2\rho\rangle \\&= d(x')-\ell(x')-d(x)+\ell(x) = d(x')-d(x)+1\leq 1.
\end{align*}
Since $x$ is fundamental, we get $\length([b_{x',\max}], [b_{x,\max}])\geq 1$ and the conclusion follows.
\end{proof}
\begin{proof}[Proof of Theorem~\ref{thm:introKR}]
	Using the Hodge-Newton decomposition together with Theorem~\ref{lemdepthprop}, we can reduce to the case where $[b_{x,\max}]\in B(G,\mu)$ is Hodge-Newton indecomposable. Then it satisfies (CL1BC) by Theorem~\ref{thm:depth2L1BC}. We get the claims from Lemmas \ref{lem:CL1BC_cordial} and \ref{lem:genericClassDistance}.
\end{proof}

\section{Irreducible Components}\label{sec:5}

\subsection{Geometry of the Newton stratification}\label{sec:purity}

We provide some statements related to purity and the foliation structure of  Newton strata. They are generalizations to our context of results that have been successfully applied to ADLVs in the function field case and to Rapoport-Zink spaces in the past, compare \cite{Viehmann2015} for a general overview. However, they are not yet available in the generality that we need for our purpose. We state them in greater generality than what we strictly need in order to be of use also for future work on the Newton stratification.

\begin{definition}
Let $G$ be a reductive group over $F$. Let $S$ be a scheme over $\mathbb F_p$ whose underlying topological space is connected and locally Noetherian. For every $[b]\in B(G)$ let $S_{[b]}$ be a subscheme such that $S$ is the disjoint union of the $S_{[b]}$ and for every $[b]$, the subscheme $\bigcup_{[b']\leq[b]}S_{[b']}$ is closed. Then we say that this decomposition of $S$ satisfies topological strong purity if the following condition holds. Let $[b]\in B(G)$ such that the corresponding stratum is non-empty. Let $[b']< [b]$ such that there is no $[b'']\in B(G)$ with $[b']<[b'']< [b]$. Let $X$ be the closure in $S$ of an irreducible component of $S_{[b]}$. Let $X^{[b']}$ be the complement in $X$ of $X\cap \bigcup_{[b'']\leq[b' ]}S_{[b'']}$. Then for every such $X$ and $[b']$ we have that $X^{[b']}$ is empty or pure of codimension at most 1.
\end{definition}

\begin{theorem}\label{thm:purity}
Let $F$ be a local field and let $G$ be a reductive group over $F$. Let $S$ be a scheme, resp.~a perfect scheme over $\mathbb F_p$ if $F$ is of equal resp.~of mixed characteristic. Assume that Zariski locally, $S$ is isomorphic to the spectrum of (the perfection of) a Noetherian ring. Let $g\in LG(S)$, and for every algebraically closed field $k$ of characteristic $p$ and every $[b]\in B(G)$ let  
$$S_{[b]}(k)=\{x\in S(k)\mid g_x\in [b]\}.$$
\begin{enumerate}[(a)]
\item\label{thm:purityass1} There is a locally closed and reduced resp.~perfect subscheme $S_{[b]}$ of $S$ whose set of $k$-valued points for every $k$ agrees with $S_{[b]}(k)$.
\item\label{thm:purityass2} The induced decomposition of $S$ satisfies topological strong purity. 
\end{enumerate}
\end{theorem}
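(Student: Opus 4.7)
My plan is to deduce both assertions from the classical purity theorems for Newton stratifications of $F$-isocrystals, transported to the present setting (quasi-split $G$, possibly perfect base scheme) by techniques that are by now well-developed in both equal and mixed characteristic.

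For (a), the datum $g \in LG(S)$ produces, at each geometric point $s$, a $\sigma$-conjugacy class $[b_s] \in B(G)$ with Kottwitz invariant $\kappa(b_s) \in \pi_1(G)_\Gamma$ and dominant Newton point $\nu(b_s) \in (X_\ast(T)_{\Gamma_0}\otimes\mathbb Q)^{\mathrm{dom}}$. The Kottwitz invariant is locally constant on the underlying topological space, while the Newton point is lower semicontinuous in the standard sense that $\{s\in S : \nu(b_s) \geq \nu_0\}$ is closed for each $\nu_0$. In equal characteristic these semicontinuity properties are the classical theorem of Rapoport--Richartz; in mixed characteristic they transfer via perfection, since the invariants depend only on the topological data of the underlying $L^+G$-torsor. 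Combined with the Zariski-local Noetherian hypothesis, only finitely many Newton strata can occur locally on $S$, yielding the desired decomposition into locally closed reduced (resp.\ perfect) subschemes $S_{[b]}$ with the stated closure relations.

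For (b), topological strong purity reduces to the codimension estimate that if $X$ is the closure of an irreducible component of $S_{[b]}$ and $[b']<[b]$ is a cover in $B(G)$, then the intersection of $X$ with the closed set $\bigcup_{[b'']\leq [b']} S_{[b'']}$ has codimension at most $1$ in $X$. This statement is Zariski-local, so it suffices to treat the affine Noetherian case. I would follow the Tannakian route: fix a faithful representation $\rho : G\hookrightarrow \GL_N$; the functoriality of the Newton stratifications along $\rho$, together with the compatibility of the partial orders on $B(G)$ and $B(\GL_N)$ under Kottwitz's classification, reduces the question to the $\GL_N$-case. The latter is the classical purity theorem of de Jong--Oort for $F$-isocrystals in equal characteristic, with its mixed-characteristic counterpart established via prismatic $F$-crystals or the $B_{\mathrm{dR}}^+$-affine Grassmannian; Vasiu's subsequent refinement bridges from $\GL_N$-structure to general reductive $G$-structure. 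The strong version (for individual irreducible components rather than whole strata) follows from the same Zariski-local arguments.

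The main obstacle is the Tannakian reduction: the induced map $B(G)\to B(\GL_N)$ need not send covers to covers, so matching a cover $[b']<[b]$ in $B(G)$ against the correct codimension bound on the $\GL_N$-side requires care. One can bypass this either by varying $\rho$ to separate the two classes, or by working directly through the Hodge--Newton filtration and passing to standard Levi subgroups $M\subseteq G$, using that any cover in $B(G)$ factors through a cover in $B(M)$ for a suitable choice of $M$. The remaining compatibility of perfection with formation of irreducible components and codimension is by now standard.
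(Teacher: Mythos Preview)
Your overall strategy coincides with the paper's: reduce locally to an affine Noetherian (or perfection thereof) base, pass from $G$ to $\GL_N$, and invoke known purity theorems there. However, several of your steps are imprecise or misattributed in ways that leave real gaps.

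First, the de Jong--Oort theorem is the \emph{arithmetic} (mixed-characteristic, $p$-adic $F$-isocrystal) result, not the equal-characteristic one; you have the two cases swapped. More importantly, de Jong--Oort only yields ordinary purity, whereas the statement here demands \emph{topological strong purity}, i.e.\ the codimension-one bound for each individual irreducible component of a stratum. Your sentence ``the strong version follows from the same Zariski-local arguments'' is where the content is missing: this strengthening is the subject of separate work, due to Yang in the arithmetic case and Viehmann in the function-field case, and is what the paper actually cites. Your references to prismatic $F$-crystals or the $B_{\mathrm{dR}}^+$-Grassmannian do not supply this.

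Second, you correctly flag that the map $B(G)\to B(\GL_N)$ need not preserve covers, but your proposed fixes (varying $\rho$, or reducing along Levi subgroups via a claimed factorization of covers) are not worked out and the Levi claim is not true in the form stated. The paper resolves this cleanly by citing a result of Hamacher which shows directly that strong purity for $\GL_n$ implies it for general $G$.

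Third, in mixed characteristic the base $S$ is only the perfection of a Noetherian ring, and the known strong-purity results require a genuinely Noetherian base. Your final sentence waves this away; the paper handles it by observing that any element of $W_n(R^{\mathrm{perf}})$ already lies in $W_n(R')$ for some finite (hence Noetherian) extension $R'$ of $R$, which allows one to descend the argument.
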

\begin{proof}
\eqref{thm:purityass1} follows from the fact that for every $[b]\in B(G)$, the union $\coprod_{[b']\leq [b]}S_{[b']}$ is closed by \cite{Rapoport1996}. For \eqref{thm:purityass2}, first notice that the statements are local in $S$, thus we may assume that $S$ is (the perfection of) a Noetherian affine scheme. By \cite[Prop.~2.2]{Hamacher2017}, it is enough to prove the statement for $G=\GL_n$. For Noetherian $S$, this is shown by Yang \cite[Thm.~1.1]{Yang2011} in the arithmetic case and by Viehmann \cite[Thm.~1]{Viehmann_grothconj} in the function field case. The proof of  \cite[Thm.~1]{Viehmann_grothconj} can easily be adapted to also prove the assertion in the case that $S=\rm{ Spec}~R^{\rm{perf}}$ for a Noetherian $\mathbb F_p$-algebra $R$, using that for truncated Witt vectors, we have that every $g\in W_n(R^{\rm{perf}})$ is indeed contained in $W_n(R')$ for some finite and hence Noetherian extension of $R$.
\end{proof}

\begin{remark}\label{rem:viehmann2015lemma}
\begin{enumerate}[(a)]

\item The main application of topological strong purity (both for us and in other contexts) is summarized in \cite[Lemma 5.12]{Viehmann2015}. It considers the case that $S$ is irreducible and that $S=\coprod S_{[b]}$ is a decomposition satisfying topological strong purity. Let $[b_{\eta}]\in B(G)$ be such that $S_{[b_{\eta}]}$ contains the generic point of $S$. Let $[b_0]$ be such that $S_{[b_0]}$ is non-empty and that the codimension of every irreducible component of $S_{[b_0]}$ in $S$ is at least equal to $\length([b_0],[b_{\eta}])$. Let $[b']\in B(G)$ with $[b_0]\leq[b']\leq [b_{\eta}]$. Then $S_{[b']}$ is non-empty, its closure contains $S_{[b_0]}$, and every irreducible component of $S_{[b_0]}$ is of codimension equal to $\length([b_0],[b_{\eta}])$ in $S$.

\item In slightly greater generality, one still has the following application (compare \cite[Cor.~7.7(a)]{Hartl2011}). Let $S$ be a scheme with a Newton stratification $S=\bigcup_{[b]}S_{[b]}$. Fix an irreducible component $Y$ of some $S_{[b_0]}$. Then a chain $[b_0]<[b_1]<\dotsm<[b_l]$ of elements of $B(G)$ is called realizable in $S$ at $Y$ if there are irreducible subschemes $S_i\subset S_{[b_i]}$ with $S_0=Y$ and $S_{i-1}\subseteq \overline{S_i}$ for all $i$. We call $l$ the length of the above chain. Then topological strong purity of the stratification implies that the codimension of $Y$ in $S$ is equal to the maximal length of a realizable chain in $S$ at $Y$.

\end{enumerate}
\end{remark}

To relate (co)dimensions of Newton strata $S_{[b]}$ to dimensions of affine Deligne-Lusztig varieties, let $[b]\in B(G)$ and let $K$ be a $\sigma$-stable parahoric subgroup. For the rest of this section let $S$ be a  finite union of $K$-double cosets in $LG$. Let $X_S(b)$ be the associated affine Deligne-Lusztig variety. It is the closed, reduced resp.~perfect subscheme of $LG/K$ with $\overline {\mathbb F}_p$-valued points 
$$X_S(b)(\overline {\mathbb F}_p)=\{g\in LG/K(\overline {\mathbb F}_p)\mid g^{-1}b\sigma(g)\in S(\overline {\mathbb F}_p)\}.$$
Of course, we are most interested in the cases that $K$ is hyperspecial and $S=\overline{K\mu K}$ for some $\mu$ or that $K=I$ is an Iwahori subgroup and $S=\overline{I xI}$ for some $x\in \widetilde W$ or $S=\coprod_{x\in \Adm(\mu)}IxI$. Let $S_{[b]}\subseteq S$ be the Newton stratum for $[b]$. We use the usual convention for dimensions of admissible subschemes of $LG$ (that are otherwise infinite-dimensional). For fixed parahoric $K$, they are normalized in such a way that a $K$-invariant subscheme $X=XK\subseteq LG$ has dimension equal to the dimension of $X/K\subseteq LG/K$ in the associated affine flag variety. In particular, for $K=I$, we have $\dim IxI=\ell(x)$ and in the context above, $\dim S_{[b]}=\dim S-\codim S_{[b]}$ if $S$ is irreducible.

In the function field case, we denote by $S_{h}^{\wedge}$ or $S_{[b],h}^{\wedge}$ (for $g\in S_{[b]}$) and $X_S(b)_g^{\wedge}$ (for $g\in X_S(b)$) the completions in $h$ resp.~$g$. In the arithmetic case this denotes the perfection of the completion of a deperfection as introduced by Takaya in \cite[Section 1.2]{Takaya2022}. From now on, we refer to Takaya's construction as completion (of perfect schemes).

\begin{lemma}\label{lem:inducedperfcompl}
Let $X,Z$ be perfect $\overline{\mathbb F}_q$-schemes locally of perfect finite type, let $\alpha:X\rightarrow Z$ be a morphism and let $g\in X$. Then $\alpha$ induces a morphism $$\hat\alpha: X_g^{\wedge}\rightarrow Z_{\alpha(g)}^{\wedge}$$ of the completions of the perfect schemes at $g$ resp.~$\alpha(g)$.   
\end{lemma}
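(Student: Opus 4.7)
The plan is to work affine-locally and invoke Takaya's explicit definition. I would choose an affine open $V = \Spec B \subseteq Z$ containing $\alpha(g)$ and, after further restriction, an affine open $U = \Spec A \subseteq \alpha^{-1}(V)$ containing $g$. Shrinking $U$ and $V$ if needed, we may write $A = A_0^{\perf}$ and $B = B_0^{\perf}$ for finitely generated $\overline{\mathbb F}_q$-algebras $A_0, B_0$. By the construction in \cite[Section~1.2]{Takaya2022}, $X_g^{\wedge}$ is $\Spec$ of the perfection of the $\mathfrak m$-adic completion of $A_0$ at the maximal ideal corresponding to $g$, and it is independent, up to canonical isomorphism, of the choice of deperfection.

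The ring map $\alpha^{\sharp} : B \to A$ need not restrict to a map $B_0 \to A_0$. However, since $B_0$ is finitely generated and $A = \bigcup_{n \geq 0} A_0^{1/p^n}$, the images of a finite generating set of $B_0$ all lie in $A_0^{1/p^n}$ for some $n \geq 0$. This produces a morphism $\varphi : B_0 \to A_0^{1/p^n}$ of finite type $\overline{\mathbb F}_q$-algebras whose perfection recovers $\alpha^{\sharp}$. Since $A_0^{1/p^n}$ is itself a valid deperfection of $A$, Takaya's independence result identifies $X_g^{\wedge}$ with the perfection of the completion of $A_0^{1/p^n}$ at the maximal ideal corresponding to $g$. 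The map $\varphi$ carries the maximal ideal of $B_0$ at $\alpha(g)$ into this ideal, hence induces a continuous homomorphism of $\mathfrak m$-adic completions; applying perfection and $\Spec$ yields the desired $\hat\alpha : X_g^{\wedge} \to Z_{\alpha(g)}^{\wedge}$.

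The main point to verify is that $\hat\alpha$ is canonical, i.e.\ independent of the choices of $U$, $V$, $A_0$, $B_0$, and $n$. Enlarging $n$ or altering generators replaces $\varphi$ by its composition with a power of Frobenius on the source, which becomes the identity after perfection; shrinking affine opens commutes both with adic completion and with perfection; and comparing different deperfections is exactly what Takaya's independence statement handles. Functoriality of the assignment $\alpha \mapsto \hat\alpha$, in particular compatibility with composition of morphisms of perfect schemes, then follows by running the same local recipe through the composite, again because Frobenius is an isomorphism on every perfect ring appearing in the construction. The only real obstacle is organizing this bookkeeping of Frobenius twists; once the independence is established, no deeper input is required.
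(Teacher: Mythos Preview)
Your proposal is correct and follows essentially the same route as the paper: reduce to the affine case, choose deperfections, use finite generation to lift the ring map to the deperfections after a Frobenius twist, then complete and perfect. The paper is terser about independence of choices, but the core argument is identical.
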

\begin{proof}
We may assume that $X,Z$ are affine, that is $X=\Spec B^{\perf}$ and $Z=\Spec A^{\perf}$ for certain finitely generated $\overline{\mathbb F}_q$-algebras $A$ and $B$. Thus $\alpha$ corresponds to a morphism of algebras $f:A^{\perf}\rightarrow B^{\perf}$. Possibly replacing $B$ by its inverse image under a suitable finite power of Frobenius in $B^{\perf}$ (containing all images of a finite system of generators of $A$), we may assume that $f$ restricts to a morphism $f:A\rightarrow B$. Recall that perfection is an isomorphism of the underlying topological spaces. Let $\mathfrak p\subset B$ be the prime ideal corresponding to $g$ and let $\mathfrak p'=f^{-1}(\mathfrak p)\subset A$. Then $f$ induces morphisms $A_{\mathfrak p'}^{\wedge}\rightarrow B_{\mathfrak p}^{\wedge}$ and $(A_{\mathfrak p'}^{\wedge})^{\perf}\rightarrow (B_{\mathfrak p}^{\wedge})^{\perf}$. On spectra, the latter yields the desired morphism $\hat \alpha$.
\end{proof}

We also need the following construction which is a variant of the description of the universal deformation of a bounded local $G$-shtuka of \cite[Thm.~5.6]{Hartl2011}. For $S$ as above and $b\in S(\overline{\mathbb F}_q)$ let $Y=\Spec R$ resp.~$Y=\Spec R^{\perf}$ be the spectrum of the complete (perfect) local ring of  $K\backslash S$ at $Kb$. Then $R$ is Noetherian local with residue field $\overline{\mathbb F}_q$. We choose a section $\tilde b:Y\rightarrow S$ of the projection morphism $S\rightarrow K\backslash S$ (compare the first part of the proof of \cite[Thm.~5.6]{Hartl2011}) such that $\tilde b$ reduces to $b$ over the residue field of $R$. In the mixed characteristic case, note that $K\backslash S$ is a perfect $\overline{\mathbb F}_q$-scheme locally of perfect finite type. Then a similar argument as in the proof of Lemma \ref{lem:inducedperfcompl} shows that $\tilde b$ can be chosen to come from a point $\tilde b:\Spec (\sigma^{-n}R)\rightarrow S$ for a suitably large $n$. Replacing $R$ by this inverse image under $\sigma^n$, we may assume that $\tilde b$ is an $R$-valued point of $S$ also in the arithmetic case.

\begin{proposition}\label{prop:codimSandUnifDef}
In the above context, the codimensions of $Y_{[b]}\subseteq Y$ for the Newton stratification induced by $\tilde b$ and of $S_{[b],b}^{\wedge}\subseteq S_b^{\wedge}$ coincide. 
\end{proposition}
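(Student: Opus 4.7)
My plan is to produce a product decomposition of $S_b^\wedge$ that respects the Newton stratification, and then read off the codimension equality directly. Since $S$ is a finite union of $K$-double cosets, the projection $\pi\colon S\to K\backslash S$ is a $K$-torsor under left multiplication with trivial stabilizers, so the section $\tilde b\colon Y\to S$ trivializes this torsor at the formal level. Interpreting all completions via Takaya's formalism in the arithmetic case and using Lemma~\ref{lem:inducedperfcompl}, this should give an isomorphism
\[
\Phi\colon K_e^\wedge \times Y \xrightarrow{\sim} S_b^\wedge, \qquad (g,y)\mapsto g\cdot\tilde b(y),
\]
where $K_e^\wedge$ denotes the formal (resp.\ perfected formal) completion of $K$ at the identity. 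One technical point to address is that, although $K$ is infinite-dimensional, the pro-smooth structure on the positive loop group of $K$ together with a principal-congruence filtration makes the above map sensible as an isomorphism of completed ind-(perfect-)schemes.

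The heart of the argument will be to show that Newton strata are horizontal for $\Phi$, i.e.\ that
\[
\Phi^{-1}(S_{[b'],b}^\wedge) = K_e^\wedge \times Y_{[b']}
\]
for every $[b']\in B(G)$. Unpacked, this says: for $g\in K$ close to the identity and $s\in S$ close to $b$, the elements $gs$ and $s$ must lie in the same $\sigma$-conjugacy class. Seeking a conjugator $h$ with $gs=h^{-1}s\sigma(h)$ leads to the fixed-point equation $h = s\sigma(h)s^{-1}g^{-1}$, and I expect this to be solvable by a formal Lang--Steinberg argument: the associated twisted Lang map $L_s\colon h\mapsto h^{-1}\bigl(s\sigma(h)s^{-1}\bigr)$ has derivative $X\mapsto -X+\mathrm{Ad}(s)\sigma(X)$ at $h=e$, which is invertible on the formal tangent space because $\sigma$ acts topologically contractively on the completion at the identity of a suitable non-perfect form of $K$. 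A unique formal solution $h=h(g,s)$ in the appropriate completion follows, witnessing $[gs]=[s]$. Once horizontality is established, the result is immediate from
\[
\codim_{S_b^\wedge}(S_{[b'],b}^\wedge) = \codim_{K_e^\wedge\times Y}(K_e^\wedge\times Y_{[b']}) = \codim_Y(Y_{[b']}).
\]

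The main obstacle I anticipate is executing the formal Lang--Steinberg step rigorously in the mixed-characteristic setting. In equal characteristic the argument is essentially standard and appears in the foliation-structure literature on Newton strata cited at the beginning of this section. In mixed characteristic, however, one must verify the invertibility of the Lang differential consistently within Takaya's perfected formal completions and ensure that the resulting $h$ is a genuine morphism of the appropriate completed perfect rings rather than merely a map on $\overline{\mathbb F}_q$-points; this is in fact the reason the statement is phrased throughout in terms of the perfected completions of Takaya's construction. The bookkeeping around the infinite-dimensionality of $K$ in Step~1 is a secondary technicality which should be routine once the principal-congruence filtration is invoked.
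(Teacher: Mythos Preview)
Your approach has a genuine gap: the horizontality claim $\Phi^{-1}(S_{[b'],b}^\wedge)=K_e^\wedge\times Y_{[b']}$ is false. Left multiplication by $K$ does \emph{not} preserve $\sigma$-conjugacy classes, even formally near the identity. Take $G=\GL_2$, $K$ hyperspecial, $s=\begin{pmatrix}0&1\\ \varepsilon&0\end{pmatrix}$ (basic, Newton slopes $(1/2,1/2)$), and the one-parameter family $g_t=\begin{pmatrix}1&0\\ t&1\end{pmatrix}\in K$ with $g_0=e$. Then $g_t s=\begin{pmatrix}0&1\\ \varepsilon&t\end{pmatrix}$ has characteristic polynomial $\lambda^2-t\lambda-\varepsilon$, whose Newton polygon for $t\neq 0$ gives slopes $(1,0)$. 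So the Newton point jumps along the $K_e^\wedge$-direction at $(e,y_0)$, and your product decomposition cannot be compatible with the stratification.

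The Lang--Steinberg step is exactly where this breaks: the derivative $X\mapsto -X+\mathrm{Ad}(s)\sigma(X)$ is not an endomorphism of $\mathrm{Lie}(K)^\wedge$, because $\mathrm{Ad}(s)$ need not preserve $\mathrm{Lie}(K)$ and can be expansive. In the example above, $\mathrm{Ad}(s)$ sends $\begin{pmatrix}a&b\\ c&d\end{pmatrix}$ to $\begin{pmatrix}d&c\varepsilon^{-1}\\ \varepsilon b&a\end{pmatrix}$, so the contractivity of $\sigma$ is destroyed in the $(1,2)$-entry. Your fixed-point equation simply has no solution in $K_e^\wedge$ in general.

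The paper's proof avoids this by never claiming horizontality. Instead, it identifies codimension with the maximal length of a realizable chain of Newton strata (Remark~\ref{rem:viehmann2015lemma}(b), via topological strong purity). The inequality $\codim(Y_{[b]}\subseteq Y)\le\codim(S_{[b],b}^\wedge\subseteq S_b^\wedge)$ is immediate since $Y$ embeds in $S_b^\wedge$. For the reverse inequality, one takes a chain realized on some $Z=\Spec R'$ mapping to $S_b^\wedge$ and uses $K(R')$-$\sigma$-\emph{conjugation} (not left multiplication) to move it into the image of $\tilde b$, appealing to the iterative argument from \cite[Theorem~5.6]{Hartl2011}. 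The point is that $\sigma$-conjugation by $K$ does preserve Newton strata, while left multiplication does not; this is what makes the paper's argument work and yours fail.
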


\begin{proof}
We use Remark \ref{rem:viehmann2015lemma}(b) which describes the codimensions as maximal lengths of chains of Newton strata specializing into another. Since $Y$ is a subscheme of $S_b^{\wedge}$, this implies immediately that $\rm{codim}(Y_{[b]}\subseteq Y)\leq \rm{codim}(S_{[b],b}^{\wedge}\subseteq S_b^{\wedge})$. For the opposite estimate we consider a chain $[b]=[b_0]<[b_1]<\dotsm<[b_m]$ that is realizable in $S_b^{\wedge}$. Thus there is a (perfect) local scheme $Z=\Spec R'$ resp.~$Z=\Spec (R')^{\perf}$ with $R'$ of finite type and a morphism $\beta:Z\rightarrow S_b^{\wedge}$ such that the chain is already realized over $Z$. By \cite[Cor.~6.32]{VW_2025} (whose proof also shows the analogous statement in equal characteristic) there is a $c$ such that the Newton stratification does not change if we modify the $Z$-valued point of $LG$ within its $K_c$-right coset. Here, $K_c\subset K$ is the kernel of the reduction modulo $\varepsilon^c$. Since $S/K_c$ is (perfectly) of finite type, the same argument as for $\tilde b$ above shows that we may assume that $\beta$ is in fact an $R'$-valued point. Then the same inductive argument as for the claim in the proof of \cite[Thm.~5.6]{Hartl2011} shows that $\beta$ is $K(W(R'))$-$\sigma$-conjugate to a point in $Y$. Since $K$-$\sigma$-conjugation does not change the Newton stratification, the above chain of length $m$ is then also realized in $Y$, which proves the remaining estimate.  
\end{proof}

\begin{theorem}\label{thmdimfoliat}
In the above context, let $h\in [b]\cap S$ and fix some $\dot g\in LG$, a lift of an element $g\in X_S(b)$ with $\dot g^{-1}b\sigma(\dot g)=h$. Then
$$\dim S_{[b],h}^{\wedge}=\dim X_S(b)_g^{\wedge}+\langle \nu_{[b]},2\rho\rangle.$$
\end{theorem}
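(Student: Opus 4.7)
My plan is to compare the two completions via the $\sigma$-conjugation morphism
\[
\Phi : LG \to LG, \quad \dot g' \mapsto (\dot g')^{-1} b \sigma(\dot g'),
\]
whose image is the $\sigma$-conjugacy class $[b]$. The preimage $\Phi^{-1}(S) \subseteq LG$ is stable under right multiplication by $K$ (since $S = KSK$ and $K$ is $\sigma$-stable), and its quotient $\Phi^{-1}(S)/K$ recovers $X_S(b)$, so the projection $\pi : \Phi^{-1}(S) \to X_S(b)$ is a (pro-étale) $K$-torsor. Simultaneously, $\Phi$ realizes $\Phi^{-1}(S) \to S_{[b]} = S \cap [b]$ as a $J_b(F)$-torsor for the left multiplication action of $J_b(F) = \{g \in G(\breve F) : g^{-1} b \sigma(g) = b\}$. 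The chosen lift $\dot g$ sits over $g \in X_S(b)$ under $\pi$ and over $h \in S_{[b]}$ under $\Phi$.

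Taking completions at $\dot g$ and applying Lemma~\ref{lem:inducedperfcompl}, one obtains natural morphisms of formal (perfect) schemes
\[
X_S(b)_g^{\wedge} \xleftarrow{\pi^{\wedge}} \Phi^{-1}(S)_{\dot g}^{\wedge} \xrightarrow{\Phi^{\wedge}} S_{[b],h}^{\wedge}.
\]
To compare relative dimensions of these two infinite-dimensional torsor projections I would truncate by a deep enough $\sigma$-stable congruence subgroup $K_c \subseteq K$, as in the proof of Proposition~\ref{prop:codimSandUnifDef}: for $c$ sufficiently large, $\pi^{\wedge}$ becomes a $K/K_c$-bundle, and $\Phi^{\wedge}$ becomes a bundle of relative dimension equal to the local dimension of the formal quotient $J_b(F)/(J_b(F) \cap K_c)$ at the identity. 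Subtracting the two dimension formulas yields
\[
\dim S_{[b],h}^{\wedge} - \dim X_S(b)_g^{\wedge} = \dim(K/K_c) - \dim\bigl(J_b(F)/(J_b(F)\cap K_c)\bigr)^{\wedge}_1.
\]

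The final step is to identify this difference with $\langle \nu_{[b]}, 2\rho\rangle$. This reduces, via the tangent map of $\Phi$ at $\dot g$, namely $X \mapsto -Xh + h\sigma(X)$, to a Lie-algebra computation on the truncated adjoint representation $\mathfrak g(O_{\breve F})/\varepsilon^c \mathfrak g(O_{\breve F})$: the kernel of $\mathrm{Ad}(b^{-1})\sigma - \mathrm{id}$ is $\mathrm{Lie}(J_b)$, and a direct count of its cokernel on the truncation (through the Newton slopes of the $F$-isocrystal $(\mathfrak g(\breve F), \mathrm{Ad}(b)\sigma)$, in the spirit of Mazur's inequality) gives exactly $\langle \nu_{[b]}, 2\rho\rangle$.

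The main obstacle will be to make the torsor and truncation arguments rigorous in both equal and mixed characteristic, the latter requiring careful bookkeeping with perfect schemes following Takaya's framework. In particular one needs to verify that both torsor structures on $\Phi^{-1}(S)_{\dot g}^{\wedge}$ descend compatibly after truncation by $K_c$, so that the two relative dimensions can indeed be subtracted, and that the Lie-algebraic identification of the cokernel dimension with $\langle \nu_{[b]}, 2\rho\rangle$ is independent of the truncation level for $c$ sufficiently large.
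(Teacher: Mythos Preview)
Your approach has a genuine gap at the crucial step: the claim that $\Phi^{\wedge}$ lands in $S_{[b],h}^{\wedge}$ rather than $S_h^{\wedge}$.

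Recall that $\sigma$ acts as the $q$-th power Frobenius on the residue field, so on tangent spaces over $\bar{\mathbb F}_q$ one has $d\sigma=0$. Thus your own formula for the tangent map reduces to $X\mapsto -\mathrm{Ad}(h^{-1})X$, which is an \emph{isomorphism}. Running the usual Lang--Steinberg successive approximation over an Artinian local $\bar{\mathbb F}_q$-algebra $R$ (using that $\sigma$ sends $1+\mathfrak m$ into $1+\mathfrak m^{q}$) shows more: the induced map on formal neighbourhoods
\[
\Phi^{\wedge}:\ (LG)_{\dot g}^{\wedge}\ \xrightarrow{\ \sim\ }\ (LG)_{h}^{\wedge}
\]
is an isomorphism. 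Restricting to $S$, this gives $\Phi^{-1}(S)_{\dot g}^{\wedge}\cong S_h^{\wedge}$, not $S_{[b],h}^{\wedge}$. In other words, an infinitesimal deformation of $\dot g$ produces a deformation of $h$ that typically \emph{leaves} the Newton stratum $[b]$. The set-theoretic statement that the fibres of $\Phi$ are $J_b(F)$-orbits is correct on geometric points, but $[b]$ is not a closed subscheme, and the torsor picture does not persist on infinitesimal thickenings. This also explains why your subtraction produces the $c$-dependent quantity $\dim(K/K_c)$ rather than the sought constant $\langle\nu_{[b]},2\rho\rangle$: the second term in your difference is zero because $J_b(F)$ is a zero-dimensional (profinite) scheme over the residue field.

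The paper proceeds quite differently. One inequality is taken from Takaya's adaptation of Hartl--Viehmann. For the other, one fixes a straight representative $x$ of $[b]$ and uses that $\phi_x:g\mapsto \sigma(xgx^{-1})$ is topologically nilpotent on $I_{\bar N}$, where $\bar N$ is the unipotent radical opposite to the parabolic determined by $\nu_{[b]}$. The subquotient $I_{x,n}:=\phi_x^{-n}(I_{\bar N})/\phi_x^{-n-1}(I_{\bar N})$ is (the perfection of) an affine space of exactly the right dimension $\langle\nu_{[b]},2\rho\rangle$. One then builds an explicit map
\[
X_S(b)_g^{\wedge}\ \widehat{\times}\ I_{x,n}^{\wedge}\ \longrightarrow\ Y_{[h]}\subseteq S_{[b],h}^{\wedge},\qquad (\tilde g,\delta)\mapsto \tilde g^{-1}x\delta\sigma(\tilde g),
\]
and proves it is adic by checking that its special fibre is a single point. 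The number $\langle\nu_{[b]},2\rho\rangle$ thus arises from the contracting dynamics on $\bar N$, not from any fibre dimension of $\Phi$.
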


\begin{proof}
This assertion has been shown under additional assumptions for many particular cases. In the function field case for split $G$, hyperspecial $K$ and $S=\overline{K\mu K}$ this is \cite[Cor.~6.8 to Thm.~6.5]{HartlViehmann_foliat}, which was inspired by earlier ideas for Newton strata in the Siegel moduli space due to Oort. It is generalized to unramified $G$, parahoric $K$, but $S$ still the closure of a double coset of a hyperspecial subgroup in \cite{Viehmann_Wu} and to the Iwahori case and $S=\overline{IxI}$ in \cite{Milicevic2020}. Each of these is shown by some adaptation of the proof of \cite[Thm.~6.5]{HartlViehmann_foliat} where a product structure up to finite morphism on the universal deformation of a bounded local $G$-shtuka is established. All of these adaptations together also show the above assertion in the function field case.

 For the arithmetic case, the situation is again a bit more subtle due to the fact that perfection and deformation theory do not interact well. For hyperspecial $K$ and $S=\overline{K\mu K}$, Takaya shows in \cite[4.4]{Takaya2022} that 
 \begin{equation}\label{eq:foliat_dim_1}\dim Y_{[b]}\leq \dim X_S(b)_g^{\wedge}+\langle \nu_{[b]},2\rho\rangle
 \end{equation} 
for a specific choice of $Y$ as above, but corresponding to $h=\dot g^{-1}b\sigma(\dot g)$. For this, he uses a modification of the proof of \cite[Thm.~6.5]{HartlViehmann_foliat} and his technique of deperfection. The necessary minor modifications made, Takaya's proof also shows \eqref{eq:foliat_dim_1} in the generality claimed in our theorem. By Proposition \ref{prop:codimSandUnifDef}, this proves $$\dim S_{[b],h}^{\wedge}\leq\dim X_S(b)_g^{\wedge}+\langle \nu_{[b]},2\rho\rangle.$$

 It remains to prove the opposite estimate in the arithmetic case. Let $g\in X_S(b)(k)$ for any algebraically closed field $k$ of characteristic $p$. Let $x\in \widetilde W$ be a straight element such that any representative in $LG$ lies in $[b]$. We may assume that $b$ is such a representative of $x$, denoted $x$ from now on. Let $M'\supseteq T$ be the smallest Levi subgroup of $G$ containing $x$ and let $M\supseteq M'$ be the centralizer of the $M'\cap B$-dominant Newton point $\nu_x$ of $x$. Let $P$ be the (semi-standard) parabolic subgroup associated with the rational cocharacter $\nu_x$. Then $P=MN$ where $N$ is the unipotent radical of $P$ and $M$ its Levi factor containing $T$. Let $\bar N$ be the unipotent radical of the opposite parabolic. Let $I_{N}=I\cap N$ and similarly for $M$ and $\bar N$. Since $x$ is straight, it is also $P$-fundamental. In particular, for $\phi_x:LG\rightarrow LG$ with $g\mapsto \sigma(xgx^{-1})$ we have $\phi_x^{-1}(I_{\bar N})\subseteq I_{\bar N}$ and $\phi_x^{-1}$ is topologically nilpotent on $I_{\bar N}$. For $n\in \mathbb N$ that will be chosen below, we define $I_{x,n}= \phi_x^{-n}(I_{\bar N})/\phi_x^{-n-1}(I_{\bar N})$, a subquotient of $I_{\bar N}$ which is (independently of $n$) isomorphic to the perfection of $\mathbb A^{\langle \nu_b,2\rho\rangle}$. Let $I_{x,n}^{\wedge}$ be the completion at the identity of the perfect scheme $I_{x,n}$. Let $Y$ and $Y_{[h]}$ be as in the construction before Proposition \ref{prop:codimSandUnifDef}, but for the element $h=\dot g^{-1}x\sigma(\dot g)\in [b]$. Then $\dim S_{[x],h}^{\wedge}=\dim Y_{[h]}$. It remains to show that $\dim Y_{[h]}\geq \dim X_S(x)_{g}^{\wedge}\hat \times I_{x,n}^{\wedge}.$
We first want to construct a morphism $$\hat\alpha: X_S(x)_{g}^{\wedge}\hat \times I_{x,n}^{\wedge}=:\widehat T\rightarrow Y_{[h]}$$ for suitable $n$, as in Step 4 of \cite[Proof of Thm.~6.5]{HartlViehmann_foliat}. We apply \cite[Cor. 2.15]{Takaya2022} to the universal element of $X_S(x)(\widehat T)$ and obtain a representative $\tilde g\in LG(\widehat T)$. We may assume that its reduction to the closed point $\Spec k\hookrightarrow \widehat T$ is equal to $\dot g$. Similarly, we choose a section $I_{x,n}\rightarrow \phi_x^{-n}(I_{\bar N})$ and let $\delta\in \phi_x^{-n}(I_{\bar N})(\widehat T)$ be the image of the universal element. Since $X_S(x)$ and $I_{x,n}$ are locally of finite type, $\widehat T=\Spec R^{\perf}$ where $R$ is the completion of a finitely generated $k$-algebra and $(\tilde g,\delta)$ can be chosen to be defined over $\sigma^{-m}(R)$ for some finite $m$. Replacing $R$ by this inverse image in $R^{\perf}$, we may assume that $(\tilde g,\delta)$ is even defined over $R$. Consider ${\tilde g}^{-1}x\delta \sigma(\tilde g)\in LG(\widehat T)$. At its closed point, it reduces to $\dot  g^{-1}x\sigma(\dot g)=h$, the closed point of $S_{[x],h}^{\wedge}$. Since $\tilde g$ is bounded and $\phi_x^{-1}$ topologically nilpotent on $I_{\bar N}$, we can choose $n$ so large that $\sigma(\tilde g)^{-1}\phi_x^{-n}(I_{\bar N})\sigma(\tilde g)\subseteq I$. Then $\tilde g^{-1}x\delta \sigma(\tilde g)=\left( \tilde g^{-1}x\sigma(\tilde g)\right)\left(\sigma(\tilde g)^{-1}\delta \sigma(\tilde g)\right)$ factors through $S$. By Lemma \ref{lem:inducedperfcompl} it thus defines a morphism 
$$\hat\beta:X_S(x)_{g}^{\wedge}\hat \times I_{x,n}^{\wedge}=:\widehat T\rightarrow S_{[x],h}^{\wedge},$$ 
more precisely defined by some $\hat\beta: \Spec R\rightarrow S_{[x],h}^{\wedge}$. 
The same iterative argument as in the proof of Proposition \ref{prop:codimSandUnifDef} shows that we can $K(R)$-$\sigma$-conjugate this $R$-valued point $\hat \beta$ into an $R$-valued point $\hat \alpha$ of $Y_{[h]}$ as desired.

By \cite[Prop. 1.17]{Takaya2022}, it is enough to prove that $\hat\alpha:\widehat T\rightarrow Y_{[h]}$ is adic. We roughly follow Step 7 of \cite[Proof of Thm.~6.5]{HartlViehmann_foliat}. Let $\mathcal F_0$ be the fiber of $\hat\alpha$ of the closed point of $Y_{[h]}$ in the following sense. If we write $X_S(x)_{g}^{\wedge}\hat \times I_{x,n}^{\wedge}=\Spec B$ and $Y_{[h]}=\Spec A$ for local adic perfect rings, then $\mathcal F_0$ is the spectrum of the quotient of $B$ by the closure of the ideal generated by the image of $A^{\circ\circ}$ in $B$. By  \cite[Lemma 1.18]{Takaya2022}, the map is adic if and only if $\mathcal F_0$ is the closed point of $X_S(x)_{g}^{\wedge}\hat \times I_{x,n}^{\wedge}$. Assume that $\mathcal F_0$ has another geometric point $y$, and let $(\tilde g_y,\delta_y)$ be the reduction to that point. Then the argument for \enquote{$h=h'$} in Step 7 of \cite[Proof of Thm.~6.5]{HartlViehmann_foliat} shows that $\tilde g_y$ is equal to $\dot g$. To show that $\delta$ is also constant over $\mathcal F_0$, one uses that the construction of $(\tilde g,\delta)$ takes place over $R$ instead of $R^{\perf}$, in particular over a ring whose maximal ideal is topologically nilpotent. Then one can employ the same argument as for the corresponding part of the proof of Step 7 of \cite[Proof of Thm.~6.5]{HartlViehmann_foliat}. Altogether, this shows that $\hat\alpha$ is adic, hence by \cite[Prop. 1.17]{Takaya2022} we obtain the desired estimate for the dimensions.
\end{proof}

\subsection{The ING property}

For this section, $G$ is again any connected reductive group over $F$. We fix a dominant cocharacter $\mu \in X_\ast(T)_{\Gamma_0}$ and a $\sigma$-stable parahoric subgroup $K\subseteq G(\breve F)$, containing a fixed $\sigma$-stable Iwahori subgroup $I\subseteq G(\breve F)$. Write $\Delta_K\subseteq \Delta_\af$ for the $\sigma$-stable set of affine roots corresponding to $K$. There is a unique largest $[b]\in B(G,\mu)$ such that $(G,\mu,[b])$ is Hodge-Newton indecomposable,  which we denote by $[b_{\mi}]$, cf.\ \cite[Cor.\ 7.6]{Viehmann2024} or \cite[Corollary~4.3]{He2022}. In the quasi-split case, this maximal indecomposable class is determined as follows.
\begin{lemma}\label{lem:max_indec_lambda}
    Let $G$ be quasi-split and $\mu\in X_\ast(T)_{\Gamma_0}$ be a dominant cocharacter that is not central on any $\sigma$-connected component of $\Delta$. Then the $\lambda$-invariant of $[b_{\mi}]$ is given by
    \begin{align*}
        \lambda(b_{\mi}) = \mu - \sum_{\alpha\in \Delta/\sigma} \alpha^\vee \in X_\ast(T)_\Gamma,
    \end{align*}
    i.e.\ we subtract one representative of each Frobenius orbit of simple coroots from $\mu$.
\end{lemma}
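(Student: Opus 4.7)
The plan is to construct a class $[b_0]\in B(G,\mu)$ with $\lambda(b_0) = \lambda_0 := \mu - \sum_{\alpha\in\Delta/\sigma}\alpha^\vee$, verify it is Hodge--Newton indecomposable, and then show that it dominates every other Hodge--Newton indecomposable class in $B(G,\mu)$. Since the cited results of Viehmann and He already guarantee existence and uniqueness of $[b_{\mi}]$, this will force $[b_0] = [b_{\mi}]$.

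First I would check that $\lambda_0$ yields an element of $B(G,\mu)$. Because coroots vanish in $\pi_1(G)$, we immediately have $\kappa(\lambda_0) = \kappa(\mu)$. The associated dominant Newton point $\nu_0$ is the Weyl-dominant representative of
\[
\avg_\sigma(\lambda_0) \;=\; \mu - \sum_{\mathcal O\in\Delta/\sigma}\frac{1}{|\mathcal O|}\sum_{\beta\in\mathcal O}\beta^\vee.
\]
The subtracted term is a non-negative combination of simple coroots in which every coefficient lies in $(0,1]$. Using the assumption that $\mu$ is non-central on each $\sigma$-component of $\Delta$, an elementary analysis of Weyl chamber reflections shows that $\nu_0 \leq \mu$ in the dominance order, hence $[b_0]\in B(G,\mu)$. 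Expanding $\mu - \nu_0$ in the simple coroot basis, every simple coroot appears with strictly positive coefficient, because every $\sigma$-orbit contributes every one of its members to the sum above and passage to the dominant conjugate preserves positivity orbit-by-orbit. This gives Hodge--Newton indecomposability of $(G,\mu,[b_0])$.

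For maximality, let $[b]\in B(G,\mu)$ be Hodge--Newton indecomposable. I would translate the condition on $\nu(b)$ into a statement about the integral invariant $\lambda(b)$ by choosing a straight representative $x = w\varepsilon^{\lambda(b)}$ of $[b]$ and using the relation $\nu(b) = \avg_\sigma(v^{-1}\lambda(b))_{\dom}$ for a suitable length-positive $v$, as in \cite[Thm.~5.7]{Schremmer2023_coxeter}. This forces $\mu - \lambda(b)$ to be a $\sigma$-invariant element of the coroot lattice in which every $\sigma$-orbit of simple coroots appears with a strictly positive integer coefficient. The minimal such element in the dominance order is exactly $\sum_{\alpha\in\Delta/\sigma}\alpha^\vee$, so $\lambda(b)\leq \lambda_0$ and therefore $[b]\leq [b_0]$ in $B(G)$. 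Combined with the uniqueness of $[b_{\mi}]$, the formula follows.

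The main obstacle will be the Weyl-group bookkeeping needed to compare the integral invariant $\lambda(b)\in X_\ast(T)_\Gamma$ with the rational dominant Newton point $\nu(b)$, and in particular the verification that replacing $\avg_\sigma(\lambda_0)$ by its dominant representative $\nu_0$ does not kill the contribution of any $\sigma$-orbit in the simple coroot expansion. The non-centrality hypothesis on $\mu$ is precisely what controls this step: without it the subtracted coroot sum on the affected $\sigma$-component would be forced to vanish, and both the construction of $\lambda_0$ and the minimality argument would collapse onto a proper standard Levi subgroup.
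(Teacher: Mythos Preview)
Your maximality argument is the same idea as the paper's upper bound: if $\mu-\lambda(b)$, written in $X_\ast(T)_\Gamma$ as a sum of simple coroots, misses an entire $\sigma$-orbit, then $(G,\mu,[b])$ is Hodge--Newton decomposable along the corresponding Levi. The paper states this in one line; your detour through a straight representative $x=w\varepsilon^{\lambda(b)}$ and the citation of \cite[Thm.~5.7]{Schremmer2023_coxeter} is both unnecessary and wrong (that theorem concerns positive Coxeter pairs, not the relation between $\lambda(b)$ and $\nu(b)$ for arbitrary $[b]$).

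The genuine gap is in your construction of $[b_0]$. You assert that the dominant representative $\nu_0$ of $\avg_\sigma(\lambda_0)$ satisfies $\nu_0\leq\mu$ by ``an elementary analysis of Weyl chamber reflections'', and that ``passage to the dominant conjugate preserves positivity orbit-by-orbit''. Neither claim is obvious: making $\avg_\sigma(\lambda_0)$ dominant can redistribute the simple-coroot coefficients in $\mu-\nu_0$ across orbits, and verifying that none of them vanishes requires real work (this is exactly the kind of Weyl-group bookkeeping you flag as the ``main obstacle'' but never carry out). The paper sidesteps the entire construction: once $\lambda(b_{\mi})\leq\lambda_0$ is established, it simply pairs with $2\rho$ and invokes \cite[Corollary~4.3]{He2022}, which gives $\langle\lambda(b_{\mi}),2\rho\rangle=\langle\mu,2\rho\rangle-2\#(\Delta/\sigma)=\langle\lambda_0,2\rho\rangle$. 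The inequality plus this equality of pairings forces $\lambda(b_{\mi})=\lambda_0$, with no need to build $[b_0]$ or check its Hodge--Newton indecomposability at all.
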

\begin{proof}
    Define $\lambda := \mu - \sum_{\alpha\in \Delta/\sigma} \alpha^\vee \in X_\ast(T)_\Gamma$. For any $\sigma$-conjugacy class $[b]\in B(G,\mu)$ with $\lambda(b)>\lambda$, we find a $\sigma$-stable subset $\emptyset \neq J\subseteq \Delta$ with $\lambda(b)\equiv \mu \pmod J$ in $X_\ast(T)_{\Gamma}$. Then $(G,\mu,[b])$ is Hodge-Newton decomposable. Hence $\lambda(b_{\mi})\leq \lambda$.

    We get \begin{align*}
        \langle \lambda(b_{\mi}),2\rho\rangle = \langle \mu,2\rho\rangle - 2\#(\Delta/m)= \langle \lambda,2\rho\rangle
    \end{align*} from \cite[Corollary~4.3]{He2022}.
\end{proof}

\begin{remark}
We have a decomposition of $K\Adm(\mu)K$ into orbits under the action of $K\times K_1$ where $K$ acts by $\sigma$-conjugation and where its pro-unipotent radical $K_1$ acts by left multiplication. The resulting locally closed strata are the EKOR strata of \cite{He2017}, so we also refer to them as EKOR strata. They are parametrized by the set $\Adm(\mu)^K$. The closure of an EKOR stratum for some $x$ consists of the union of EKOR strata for $x'\in\Adm(\mu)^K$ such that there is a $w\in W_K$ with $wx'\sigma(w)^{-1}\leq x$ in the Bruhat order (see \cite{He2017}, Thm. 6.15). We denote this partial order by $x'\leq_K x$.
\end{remark}
In this section, we study the Newton stratification of the set $K\Adm(\mu) K\subseteq G(\breve F)$. Inside this set, there is the Hodge-Newton indecomposable locus, which (famously) is a union of both Newton strata $[b]\cap (K\Adm(\mu) K)$ for $[b]\in B(G,\mu)$ Hodge-Newton indecomposable, and also a union of EKOR-strata $K\cdot_\sigma (x K_1)$ for certain $x\in \Adm(\mu)^K$, cf.\ \cite[Theorem~4.17]{Goertz2019}.

\begin{lemma}\label{lem:defing}
The following are equivalent:
\begin{enumerate}[(a)]
\item The closure of $[b_{\mi}]\cap K\Adm(\mu)K$ inside $K\Adm(\mu)K$ is given by
\begin{align*}
\overline{[b_{\mi}]\cap K\Adm(\mu)K} = \bigcup_{[b]} [b]\cap K\Adm(\mu) K,
\end{align*}
with the union taken over all $\sigma$-conjugacy classes $[b]\in B(G,\mu)$ such that $(G,\mu,[b])$ is Hodge-Newton indecomposable.
\item Letting
\begin{align*}
A := \{x\in \Adm(\mu)^K\mid (G,\mu,[b_{x,\max}])\text{ HN-indecomposable}\},
\end{align*}
and letting $x\in A$ be maximal with respect to $\leq_K$ inside $A$, we have $[b_{x,\max}] = [b_{\mi}]$.
\end{enumerate}
\end{lemma}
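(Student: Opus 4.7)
The plan is to reformulate both (a) and (b) as combinatorial conditions on subsets of the EKOR index set $\Adm(\mu)^K$, and then argue through the interplay between the closure order $\leq_K$ and the set $A$. Two preparatory inputs I would fix at the start: by \cite[Theorem~4.17]{Goertz2019} (recalled just before the lemma) the Hodge-Newton indecomposable locus decomposes as
\begin{align*}
H \;:=\; \bigsqcup_{[b]\text{ HN-indec}} [b]\cap K\Adm(\mu) K \;=\; \bigsqcup_{x\in A} K\cdot_\sigma(xK_1);
\end{align*}
moreover, each EKOR stratum $K\cdot_\sigma(xK_1)$ is locally closed, has $[b_{x,\max}]\cap K\cdot_\sigma(xK_1)$ as an open dense Newton stratum, and has closure $\bigsqcup_{x'\leq_K x} K\cdot_\sigma(x'K_1)$ inside $K\Adm(\mu)K$. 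Setting $\widetilde A := \{x\in\Adm(\mu)^K \mid [b_{x,\max}] = [b_{\mi}]\}\subseteq A$ and letting $A^{\mathrm{cl}}$ be the $\leq_K$-downward closure of $\widetilde A$, these facts combine to give
\begin{align*}
\overline{[b_{\mi}]\cap K\Adm(\mu) K} \;=\; \bigsqcup_{x\in A^{\mathrm{cl}}} K\cdot_\sigma(xK_1),
\end{align*}
so (a) becomes the combinatorial equality $A^{\mathrm{cl}} = A$.

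For (a) $\Rightarrow$ (b) I would take any $\leq_K$-maximal $x_0\in A$; then $x_0\in A = A^{\mathrm{cl}}$ yields some $y\in\widetilde A$ with $x_0\leq_K y$. Because $[b_{\mi}]$ is HN-indecomposable, $y\in A$, and maximality of $x_0$ forces $y = x_0$, giving $[b_{x_0,\max}] = [b_{\mi}]$ as required.

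For the converse (b) $\Rightarrow$ (a), the inclusion $A\subseteq A^{\mathrm{cl}}$ is immediate: any $x\in A$ lies below some $\leq_K$-maximal element of $A$, which by (b) sits in $\widetilde A$. The substantive content is the reverse inclusion $A^{\mathrm{cl}}\subseteq A$: given $x\leq_K y$ with $y\in\widetilde A$, the generic Newton point $[b_{x,\max}]$ must be shown to remain HN-indecomposable. Upper semicontinuity of the Newton point gives $[b_{x,\max}]\leq[b_{y,\max}] = [b_{\mi}]$; to promote this inequality to HN-indecomposability I plan to invoke the explicit structure of \cite[Theorem~4.17]{Goertz2019}, according to which the HN-decomposable Newton strata in $K\Adm(\mu)K$ assemble into finitely many pieces indexed by proper standard $\sigma$-stable Levi subgroups $M\subsetneq G$, each piece being a Levi-induced, EKOR-saturated closed subset of $K\Adm(\mu) K$. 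The resulting closedness of the HN-decomposable complement (and hence openness of $H$ as a union of whole EKOR strata) prevents the EKOR-specialization $K\cdot_\sigma(xK_1)\subseteq\overline{K\cdot_\sigma(yK_1)}$, which stays inside $\overline H$, from leaving $H$, forcing $x\in A$.

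The main technical hurdle is this last step: ruling out that an HN-decomposable EKOR stratum appears in the $\leq_K$-closure of an HN-indecomposable one with generic Newton $[b_{\mi}]$. Everything else is clean bookkeeping between the Newton stratification and the EKOR stratification. Should the required closedness not be available off the shelf, I would fall back on a stratum-by-stratum argument: for each HN-decomposable class $[b']\leq[b_{\mi}]$, the Levi-indexed disjoint decomposition of $X(\mu,b',K)$ provided by \cite[Theorem~4.17]{Goertz2019} is geometrically incompatible with being a limit of the irreducible Newton stratum $[b_{\mi}]\cap K\Adm(\mu)K$.
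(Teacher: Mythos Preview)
Your setup and the direction (a) $\Rightarrow$ (b) are correct and match the paper's approach; the reformulation of (a) as $A^{\mathrm{cl}} = A$ is also fine.

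The gap is in your treatment of the inclusion $A^{\mathrm{cl}}\subseteq A$. You claim the Hodge--Newton \emph{decomposable} locus is closed (so that $H$ is open), and build the argument on this; but that is false in general. The HN-decomposable locus contains the $\mu$-ordinary Newton stratum $[\varepsilon^\mu]\cap K\Adm(\mu)K$, which is open and dense, while the basic stratum (which is HN-indecomposable) is closed. So your proposed mechanism --- openness of $H$ preventing an EKOR specialization from leaving $H$ --- cannot work.

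What is true, and what you actually need, is the opposite: the HN-\emph{indecomposable} locus $H$ is closed. This is immediate once you notice that the HN-indecomposable classes in $B(G,\mu)$ are exactly those with $[b]\leq[b_{\mi}]$. You already have $[b_{x,\max}]\leq[b_{\mi}]$; then $\mu-\nu(b_{x,\max})\geq \mu-\nu(b_{\mi})$ coefficientwise in the simple coroots, and the right-hand side has all coefficients strictly positive by definition of $[b_{\mi}]$, so $(G,\mu,[b_{x,\max}])$ is HN-indecomposable and $x\in A$. This makes $A^{\mathrm{cl}}\subseteq A$ hold unconditionally (it does not use (b)) and dissolves your ``main technical hurdle''. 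For the same reason your fallback paragraph is vacuous: there are no HN-decomposable classes $[b']\leq[b_{\mi}]$.

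The paper's proof is the same reformulation, compressed: it cites \cite[Theorem~4.11]{Goertz2019} (rather than 4.17) for $H$ being a union of EKOR strata, uses that EKOR strata are irreducible, and observes that (a) is equivalent to the $\leq_K$-maximal EKOR strata in $H$ all having generic Newton class $[b_{\mi}]$, i.e.\ to (b). The containment $\overline{[b_{\mi}]\cap K\Adm(\mu)K}\subseteq H$ is left implicit; it is precisely the closedness of $H$ explained above.
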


\begin{proof}
By \cite[Theorem~4.11]{Goertz2019}, the union of all Hodge-Newton decomposable Newton strata is a union of EKOR strata. It follows that also the union of all Hodge-Newton indecomposable Newton stata is such a union, which is thus indexed by the set $A$ above. Since by definition EKOR strata are irreducible, Condition (a) is equivalent to the condition that all generic EKOR strata in this union have generic $\sigma$-conjugacy class $[b_{\mi}]$. By the description of the closure relations above, the generic EKOR strata are indexed precisely by the $\leq_K$-maximal elements inside $A$.
\end{proof}
\begin{definition}\label{def:ing}
If the conditions of the above lemma are satisfied, we say that $(G,\mu,K)$ satisfies the \emph{indecomposable Newton genericity condition}, or (ING) for short.
\end{definition}
\begin{example}
If $(G,\mu)$ is fully Hodge-Newton decomposable, then $[b_{\mi}]$ is basic so (ING) is trivially satisfied for all $K$. 

If $(G,\mu)$ is arbitrary and $K$ is hyperspecial, then $K\Adm(\mu)K=\overline{K\mu K}$. In this case,  $$\overline{[b]\cap K\mu K}=\overline{\bigcup_{[b']\leq [b]} [b']\cap K\mu K}=\bigcup_{[b']\leq [b]} [b']\cap \overline{K\mu K}.$$ Indeed, in the function field case this is \cite{ViehmannICM} Thm 5.4. The arithmetic case can be shown along the same lines, using dimension estimates by Takaya \cite{Takaya2022}. The details of this case are also worked out in an ongoing master thesis in the group of the second author. In particular, (ING) is satisfied in all hyperspecial cases.
\end{example}

\begin{lemma}\label{lem:ING_level_change}
Let $(G,\mu,K)$ satisfy (ING) and let $K'\supseteq K$ be a $\sigma$-stable parahoric. Then $(G,\mu,K')$ satisfies (ING).
\end{lemma}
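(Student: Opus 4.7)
The plan is to verify condition (a) of Lemma~\ref{lem:defing} for $(G,\mu,K')$, namely that $\overline{[b_\mi] \cap K'\Adm(\mu)K'}$ equals the HN-indecomposable locus of $K'\Adm(\mu)K'$. The inclusion $\subseteq$ is automatic: HN-indecomposability is downward-closed in the Newton partial order on $B(G,\mu)$ (since $[b'] \leq [b]$ gives $\mu - \nu(b') \geq \mu - \nu(b)$ as a non-negative combination of simple coroots, so all simple coroots retain nonzero coefficient), and closures of Newton strata respect this order, so the HN-indecomposable locus is closed. The real work lies in the reverse inclusion.

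Next I would compare the level $K$ and level $K'$ data. The inclusion $W_K \subseteq W_{K'}$ yields $\Adm(\mu)^{K'} \subseteq \Adm(\mu)^K$ and hence $A_{K'} \subseteq A_K$. For parahoric inclusions $K \subseteq K'$ one has the pro-unipotent radical inclusion $K'_1 \subseteq K_1$. Combining, for each $x' \in A_{K'}$ the $K'$-EKOR stratum $K'\cdot_\sigma(x' K'_1)$ is contained in $K'\cdot_\sigma(x' K_1)$, and one obtains the chain
\[
\bigcup_{x' \in A_{K'}} K'\cdot_\sigma(x' K'_1) \subseteq \bigcup_{x \in A_K} K'\cdot_\sigma(x K_1) = K'\cdot_\sigma\Bigl(\bigcup_{x \in A_K} K\cdot_\sigma(x K_1)\Bigr),
\]
where the left-hand side is the HN-indecomposable locus of $K'\Adm(\mu)K'$ and the right-hand side is the $K'$-$\sigma$-saturation of the HN-indecomposable locus of $K\Adm(\mu)K$ (the final equality uses $K' K = K'$, hence $K'\cdot_\sigma(K\cdot_\sigma X) = K'\cdot_\sigma X$).

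The last step is to invoke (ING) for $K$ together with $K'$-$\sigma$-invariance of closures. By (ING) for $K$, the HN-indecomposable locus of $K\Adm(\mu)K$ is contained in $\overline{[b_\mi] \cap K\Adm(\mu)K}$, which in turn lies inside $\overline{[b_\mi] \cap K'\Adm(\mu)K'}$ since $K\Adm(\mu)K \subseteq K'\Adm(\mu)K'$. The outer closure is $K'$-$\sigma$-invariant because $[b_\mi]$ is a $\sigma$-conjugacy class (hence $K'$-$\sigma$-stable) and $K'\Adm(\mu)K'$ is $K'$-$\sigma$-stable. Therefore
\[
K'\cdot_\sigma(\text{HN-indec locus of } K\Adm(\mu)K) \subseteq \overline{[b_\mi] \cap K'\Adm(\mu)K'},
\]
and combining with the previous chain gives the required inclusion.

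The one ingredient that must be handled carefully is the inclusion $K'_1 \subseteq K_1$ for parahoric inclusions: the standard Bruhat-Tits principle that larger parahorics have smaller pro-unipotent radicals, which is verified by comparing the reduction maps $K \to K/K_1$ and $K \hookrightarrow K' \to K'/K'_1$ and observing that the latter factors through a Borel of the former's reductive quotient rather than through the Levi itself. Once this is granted, the rest of the argument is a purely formal manipulation combining the above chain with $K'$-$\sigma$-invariance of the closure.
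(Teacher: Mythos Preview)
Your approach is correct and genuinely different from the paper's. You verify condition~(a) of Lemma~\ref{lem:defing} by a direct topological argument, whereas the paper verifies condition~(b): it picks a $\leq_{K'}$-maximal $x\in A'$, lifts it to a $\leq_K$-maximal $y\in A$ with $x\leq_K y$, and then runs an irreducibility argument with the sets $S_1=K\cdot_\sigma(IxI)$, $S_2=K\cdot_\sigma(IyI)$ and their $K'$-saturations to force $[b_{x,\max}]=[b_{\mu,\indec}]$. Your route is cleaner in that it avoids this maximality bookkeeping, at the cost of invoking the Bruhat--Tits fact $K'_1\subseteq K_1$.

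Two remarks. First, your justification of $K'_1\subseteq K_1$ is garbled: the composite $K\hookrightarrow K'\to K'/K'_1$ has image a parabolic $P$ and kernel $K\cap K'_1$, and you need $K\cap K'_1\subseteq K_1$, which is the \emph{opposite} of factoring through $K/K_1$. The clean argument is that $K$ is the full preimage of $P$ in $K'$ (standard bijection between parahorics between $I$ and $K'$ and parabolics of $K'/K'_1$), so $K'_1\subseteq K$; then $K'_1$ is a normal pro-$p$ subgroup of $K$, hence contained in the maximal one, namely $K_1$. Second, if you instead use the description of EKOR strata as $K\cdot_\sigma(IxI)$ (which is what the paper itself uses in its proof), then the inclusion of EKOR strata is immediate without any appeal to pro-unipotent radicals: for $x'\in A_{K'}\subseteq A_K$ one has $K'\cdot_\sigma(Ix'I)=K'\cdot_\sigma\bigl(K\cdot_\sigma(Ix'I)\bigr)$ on the nose. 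This makes your argument even shorter.
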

\begin{proof}
Let 
\begin{align*}
&A := \{x\in \Adm(\mu)^K\mid (G,\mu,[b_{x,\max}])\text{ HN-indecomposable}\},
\\&A' := \{x\in \Adm(\mu)^{K'}\mid (G,\mu,[b_{x,\max}])\text{ HN-indecomposable}\}.
\end{align*}
Let $x\in A'$ be maximal with respect to $\leq_{K'}$. Since $A'\subseteq A$, we find a $\leq_K$-maximal element $y\in A$ such that $x\leq_K y$. By assumption on $(G,\mu,K)$, we see that $[b_{y,\max}] = [b_{\mi}]$. Consider the following irreducible subsets of $K\Adm(\mu)K$:
\begin{align*}
&S_1 = K\cdot_\sigma (IxI) = \{k i_1 x i_2 \sigma(k^{-1})\mid i_1, i_2\in I,~k\in K\},
\\&S_2 = K\cdot_\sigma (IyI) = \{k i_1y i_2 \sigma(k^{-1})\mid i_1, i_2\in I,~k\in K\},
\\&S_1' = K'\cdot_\sigma (IxI) = K'\cdot_\sigma S_1,
\\&S_2' = K'\cdot_\sigma (IyI) = K'\cdot_\sigma S_2.
\end{align*}
By definition, $S_1$ is the EKOR-stratum of $K\Adm(\mu)K$ indexed by $x$. We have $S_1\subseteq \overline{S_2}$ since $x\leq_K y$. Thus $S_1'\subseteq K'\cdot_\sigma \overline{S_2}$. The latter set $K'\cdot_\sigma\overline{S_2}$ is irreducible (since $S_2$ is), contained in $K'\Adm(\mu)K'$ and closed under $(K',\sigma)$-conjugation. We conclude that $K'\cdot_\sigma \overline{S_2}$ is a union of EKOR-strata of $K'\Adm(\mu)K'$, containing precisely one generic stratum $K'\cdot_\sigma (Iy'I)$ for some $y'\in A'$.

Now the fact $S_1'\subseteq \overline{K'\cdot_\sigma (Iy'I)}$ implies $x\leq_{K'} y'$. By maximality of $x$, we get $x=y'$. Since $\overline{K'\cdot_\sigma(Iy'I)}$ contains $S_2$ by assumption, the generic $\sigma$-conjugacy class of $\overline{K'\cdot_\sigma(Iy'I)}$ (which is $[b_{x,\max}]$) is at least $[b_{y,\max}]$, which agrees with $[b_{\mi}]$ by assumption. Since we assumed $(G,\mu,[b_{x,\max}])$ to be Hodge-Newton indecomposable, this finishes the proof.
\end{proof}
The main reason to study the property (ING) is the following result.

\begin{proposition}\label{prop:ING_L1BC}
    Let $(G,\mu,K)$ satisfy (ING). Define
\begin{align*}
A := \{x\in \Adm(\mu)^K\mid (G,\mu,[b_{x,\max}])\text{ HN-indecomposable}\},
\end{align*}
and
\begin{align*}
    \mathcal A = \bigcup_{x\in A} KxK = (K\Adm(\mu)K)\cap \bigcup_{(G,\mu,[b])\text{ HN-indec.}} [b].
\end{align*}
Assume moreover that all Hodge-Newton indecomposable $[b]\in B(G,\mu)$ satisfy (L1BC).
\begin{enumerate}[(a)]
\item There is a one-to-one correspondence between Bruhat-maximal elements of $A$ and irreducible components of $\mathcal A$, sending $x$ to the closure of its associated EKOR stratum.
\item For every irreducible component $\mathcal C$ of $\mathcal A$ and any $\sigma$-conjugacy class $[b]$ such that $(G,\mu,[b])$ is Hodge-Newton indecomposable, the intersection $\mathcal C\cap [b]$ is equidimensional with
\begin{align*}
    \codim(\mathcal C\cap [b]~\subseteq \mathcal C) = \length([b], [b_{\mu,\indec}]).
\end{align*}
Furthermore, $$\overline{(K\Adm(\mu) K)\cap [b]} =\overline{\mathcal A\cap [b]} = \bigcup_{[b']\leq [b]} \mathcal A\cap [b'] = \bigcup_{[b']\leq [b]} (K\Adm(\mu) K)\cap [b'].
$$
\item For any $[b]\in B(G, \mu)$ such that $(G,\mu,[b])$ is Hodge-Newton indecomposable, the dimension of
\begin{align*}
    X(\mu,b,K) = \{g\in G(\breve F)/K\mid g^{-1}b\sigma(g)\in K\Adm(\mu)K\}
\end{align*}
is
\begin{align*}
    &\dim X(\mu,b,K) = (\max_{x\in A}\ell(x)) - \langle \nu(b),2\rho\rangle - \length([b], [b_{\mu,\indec}])
    \\&=(\max_{x\in A}\ell(x)) - \langle \nu(b)+\nu(b_{\mu,\indec}),\rho\rangle + \frac 12(\defect(b_{\mu,\indec}) - \defect(b))
    \\&=(\max_{x\in A}\ell(x))+\#(\Delta/\sigma) + \frac 12\left(- \langle \mu,2\rho\rangle - \langle \nu(b),2\rho\rangle - \defect(b)\right).
\end{align*}
If in this case all Bruhat-maximal elements of $A$ have the same length, then $X(\mu,b,K)$ is also equi-dimensional.
\end{enumerate}
\end{proposition}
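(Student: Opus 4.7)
The plan is to combine four ingredients: the EKOR stratification of $K\Adm(\mu)K$ together with its closure relations, the assumption (ING), topological strong purity of the Newton stratification (Theorem~\ref{thm:purity}) together with the Deligne--Lusztig reduction argument of Lemma~\ref{lem:CL1BC_cordial}, and the foliation structure of Theorem~\ref{thmdimfoliat}.

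For part (a), recall from \cite[Theorem~4.11]{Goertz2019} that the Hodge--Newton decomposable locus inside $K\Adm(\mu)K$ is a union of EKOR strata; hence so is $\mathcal A$, and the strata it comprises are exactly $K\cdot_\sigma(xK_1)$ for $x\in A$. Each such orbit is irreducible and locally closed, and its closure is the union of EKOR strata indexed by $y\leq_K x$. Intersecting closures with $\mathcal A$ yields a bijection between irreducible components of $\mathcal A$ and $\leq_K$-maximal (that is, Bruhat-maximal) elements of $A$, under which $x$ corresponds to the closure of its own EKOR stratum.

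For part (b), fix an irreducible component $\mathcal C$ corresponding to a Bruhat-maximal $x\in A$. Assumption (ING) forces the generic Newton class on $\mathcal C$ to be $[b_{\mi}]$, and every Newton stratum of $\mathcal C$ arises from a Hodge--Newton indecomposable class, so (L1BC) is available throughout. The heart of the proof is the codimension lower bound
\begin{align*}
\codim\bigl(\mathcal C\cap[b]\subseteq \mathcal C\bigr)\geq \length([b],[b_{\mi}]),
\end{align*}
which I would establish by running the inductive Deligne--Lusztig argument of Lemma~\ref{lem:CL1BC_cordial} at Iwahori level, along a length-decreasing $\sigma$-conjugation sequence provided by Proposition~\ref{prop:DLreductionAlgo} applied to $x$; (L1BC) for $[b_{\mi}]$ controls the generic Newton drop across the codimension-one boundary stratum produced by Proposition~\ref{prop:DLreduction}, and the transition from Iwahori cells to their enveloping $(K\times K_1)$-orbits preserves codimensions. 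Combined with Theorem~\ref{thm:purity} and \cite[Lemma~5.12]{Viehmann2015} (see Remark~\ref{rem:viehmann2015lemma}(a)), this simultaneously yields nonemptiness of every intermediate HN-indecomposable stratum $\mathcal C\cap[b]$ with $[b_{x,\min}]\leq [b]\leq [b_{\mi}]$, equidimensionality, and the matching codimension upper bound; the closure formula follows at once.

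For part (c), the foliation theorem (Theorem~\ref{thmdimfoliat}) supplies at each point $g\in X(\mu,b,K)$ with image $h=g^{-1}b\sigma(g)\in\mathcal C\cap[b]$ the identity $\dim(\mathcal C\cap[b])_h^\wedge = \dim X(\mu,b,K)_g^\wedge + \langle\nu(b),2\rho\rangle$. Using (b), $\dim\mathcal C=\ell(x)$, and maximizing over the components produces the first dimension identity together with equidimensionality when all Bruhat-maximal elements of $A$ share the same length. The remaining two identities are formal rewritings via the Kottwitz length formula $\length([b],[b']) = \langle\nu(b')-\nu(b),\rho\rangle + \tfrac12(\defect(b)-\defect(b'))$ and Lemma~\ref{lem:max_indec_lambda}. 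The main obstacle is the codimension lower bound at parahoric level: one must keep the inductive DL reduction confined to Iwahori cells whose generic Newton classes remain Hodge--Newton indecomposable, so that (L1BC) is applicable at every step.
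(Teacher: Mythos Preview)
Your outline for (a) and (c) matches the paper, but part (b) has a genuine gap in the codimension lower bound.

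You propose to obtain $\codim(\mathcal C\cap[b]\subseteq\mathcal C)\geq\length([b],[b_{\mi}])$ by running the Deligne--Lusztig reduction on the Bruhat-maximal element $x\in A$ indexing $\mathcal C$. But DL reduction (Propositions~\ref{prop:DLreduction} and~\ref{prop:DLreductionAlgo}) decomposes the single cell $IxI$, not the closure $\mathcal C$. The boundary of $\mathcal C$ inside $\mathcal A$ is a union of \emph{other} EKOR strata $\mathcal S_y$ for $y\in A$ with $y\leq_K x$, and you never say how to control $\codim(\mathcal S_y\cap[b]\subseteq\mathcal C)$ for such $y$. The obstacle you yourself flag at the end---keeping the DL reduction inside cells with HN-indecomposable generic class---is real and is not resolved by your sketch: the elements $s_a x$, $s_a x\sigma(s_a)$ produced by DL reduction need not lie in $\widetilde W^K$ at all, so they do not index EKOR strata.

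The paper avoids this entirely. It stratifies $\mathcal C$ by EKOR strata $\mathcal S_y$ for $y\in A$, and for each such $y$ applies Lemma~\ref{lem:CL1BC_cordial} as a black box (legitimate since $[b_{y,\max}]$ is HN-indecomposable, hence satisfies (CL1BC)) to get $\codim(\mathcal S_y\cap[b]\subseteq\mathcal S_y)=\length([b],[b_{y,\max}])$. The remaining piece $\codim(\mathcal S_y\subseteq\mathcal C)$ is bounded below by the length $m$ of any chain of Bruhat covers $y=y_0\lessdot y_1\lessdot\cdots\lessdot y_m$ in $\widetilde W^K$ ending at a Bruhat-maximal element of $A$. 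Crucially, (L1BC) applied to each cover $y_i\lessdot y_{i+1}$ (all generic classes along the chain are HN-indecomposable) gives $\length([b_{y_i,\max}],[b_{y_{i+1},\max}])\leq 1$, whence $m\geq\length([b_{y,\max}],[b_{\mi}])$. Summing the two contributions gives the bound. This use of (L1BC) along chains of covers in $\widetilde W^K$, rather than along a DL reduction sequence, is the idea your proposal is missing. Also note that to feed purity the basic stratum, the paper observes that the unique length-zero element $\tau\in A$ lies in every component $\mathcal C$; you instead appeal to $[b_{x,\min}]$, which is not what is needed here.
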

\begin{proof}
    \begin{enumerate}[(a)]
    \item This is a usual property of EKOR strata and does not require the assumption of (L1BC).
    \item First let $[b]\in B(G,\mu)$ be any $\sigma$-conjugacy class such that $\mathcal C\cap [b]\neq\emptyset$. We would like to prove that
    \begin{align}
    \codim(\mathcal C\cap[b]\subseteq \mathcal C)\geq \length([b], [b_{\mu,\indec}]).\label{eq:L1BC_ING_codimInequality}
    \end{align}
    For this, we prove that for each $x\in A$ whose corresponding EKOR-stratum $\mathcal S$ lies in $\mathcal C$, we have
    \begin{align*}
        \codim(\mathcal S\cap[b]\subseteq \mathcal C)\geq \length([b], [b_{\mu,\indec}]).
    \end{align*}
    Note that $[b_{x,\max}]$ is the generic $\sigma$-conjugacy class inside $\mathcal S$. By Lemma~\ref{lem:CL1BC_cordial}, we get
    \begin{align*}
        \codim(\mathcal S\cap[b]\subseteq \mathcal S) = \length([b], [b_{x,\max}]).
    \end{align*}
    If $x$ is Bruhat maximal inside $A$, then $[b_{x,\max}] = [b_{\mu,\indec}]$ by assumption and we are done. Otherwise, we find a sequence of Bruhat covers in $\widetilde W^K$
    \begin{align*}
    x = x_0\lessdot x_1\lessdot\cdots \lessdot x_m
    \end{align*}
    with $x_m\in A$ being Bruhat maximal. Let $\mathcal S_1,\dotsc,\mathcal S_m$ denote the corresponding EKOR-strata. Then $\codim(\mathcal S_i\subseteq \overline{\mathcal S_{i+1}})\geq 1$, proving that
    \begin{align*}
    \codim(\mathcal S\subseteq \mathcal C)\geq \codim(\mathcal S\subseteq \overline{S_m}) \geq m.
    \end{align*}
    Using the fact that all generic $\sigma$-conjugacy classes of the $x_0,\dotsc,x_m$ satisfy (L1BC), we get $\length([b_{x_i,\max}], [b_{x_{i+1},\max}])\leq 1$ for all $i$. From the condition (ING), we get $[b_{x_m,\max}] = [b_{\mu,\indec}]$. Thus
    \begin{align*}
    &m \geq \length([b_{x,\max}], [b_{x_m,\max}]) = \length([b_{x,\max}], [b_{\mu,\indec}]).
    \end{align*}
    We conclude
    \begin{align*}
        \codim(\mathcal C\cap[b]\subseteq \mathcal C)&=\codim(\mathcal S\cap [b]\subseteq \mathcal S) +\codim(\mathcal S\subseteq \mathcal C)
        \\&\geq \length([b], [b_{x,\max}]) + m
        \\&\geq\length([b], [b_{x,\max}]) + \length([b_{x,\max}], [b_{\mu,\indec}]) \\&=\length([b], [b_{\mu,\indec}]).
    \end{align*}
    We have proved \eqref{eq:L1BC_ING_codimInequality}.

By Theorem \ref{thm:purity} and Remark \ref{rem:viehmann2015lemma} we obtain that $\mathcal C\cap [b]$ is equidimensional of codimension equal to $\length([b], [b_{\mu,\indec}])$. 	Observe that $A$ contains a unique length zero element $\tau\in A$, and that the EKOR-stratum associated with $\tau$ lies in $\mathcal C$. Hence $\mathcal C$ intersects the unique basic $\sigma$-conjugacy class in $B(G,\mu)$. This implies the claim.

Moreover, we get that any $[b']\in B(G,\mu)$ with $[b]\leq [b']\leq [b_{\mu,\indec}]$ satisfies $\mathcal C\cap [b']\neq\emptyset$ as well as the second claimed equality in the last line of (b). The first and third follow from the fact that for every Hodge-Newton indecomposable $[b]$ every $[b']\leq [b]$ is again Hodge-Newton indecomposable and hence $K\Adm(\mu) K\cap [b']=\mathcal A\cap [b']$.

	\item We use Theorem \ref{thmdimfoliat} together with the above formula for codimensions of Newton strata and obtain
	\begin{align*}
	\dim X(\mu,b,K) =& 
    \dim \mathcal A - \length([b], [b_{\mu,\indec}]) - \langle \nu(b),2\rho\rangle.
	\end{align*}
Then $\dim\mathcal A = \max_{x\in A}\ell(x)$ yields the first formula. In the same way, if $\mathcal A$ is equidimensional, then so is $X(\mu,b,K)$. The second formula then follows from Chai's length formula \cite{Chai2000}
	\begin{align*}
	\length([b], [b_{\mu,\indec}]) =\frac 12\left( \langle \nu(b_{\mu,\indec})-\nu(b),2\rho\rangle + \defect(b)-\defect(b_{\mu,\indec})\right).
	\end{align*}
	
	For the third formula, we observe that
	\begin{align*}
	\langle \nu(b_{\mu,\indec}),2\rho\rangle - \defect(b_{\mu,\indec}) = \langle \lambda(b_{\mu,\indec}),2\rho\rangle = \langle \mu-\sum_{\alpha\in \Delta/\sigma} \alpha^\vee,2\rho\rangle.
	\end{align*}
	This finishes the proof.
    \qedhere\end{enumerate}
\end{proof}
In Section \ref{sec54}, we prove that (ING) holds in many cases of $\depth(G,\mu)<2$. More precisely, we will show the following.
\begin{theorem}\label{thm:INGoverview}
Let $G'$ be absolutely quasi-simple and of Cartan type $A_m$ and let $G$ be a Weil restriction of scalars of $G'$ as in Section~\ref{sec:weilRestrict}. Write $\mu = (\mu_1,\dotsc,\mu_d)$ as in that section and let again $\mu' = \mu_1+\cdots+\mu_d$. In the following cases, the pair $(G,\mu,I)$ satisfies the property (ING).
   \begin{center} 
    \begin{tabular}{c|l}
    $m$&Conditions on $\mu$\\
    \midrule
    $m\geq 1$ & $\mu' = 2\omega_1^\vee+\omega_m^\vee$,\\
    $m\geq 2$ & $d=1$ and $\mu = 2\omega_1^\vee$,\\
    $m\geq 2$ & $\mu' = \omega_2^\vee+\omega_m^\vee$,\\
    $m\geq 4$ & $\mu' = \omega_2^\vee$.
\end{tabular}
\end{center}
\end{theorem}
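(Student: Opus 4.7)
The plan is to apply the criterion in Lemma~\ref{lem:defing}(b): for $K=I$, the pair $(G,\mu,I)$ satisfies (ING) if and only if every Bruhat-maximal element of
\begin{align*}
A := \{y \in \Adm(\mu) \mid (G,\mu,[b_{y,\max}]) \text{ is Hodge-Newton indecomposable}\}
\end{align*}
has generic $\sigma$-conjugacy class equal to $[b_{\mi}]$. First I would reduce to the absolutely quasi-simple case $G=\PGL_{m+1}$ via a Weil-restriction compatibility argument analogous to Lemma~\ref{lem:L1BC_res}: the product decomposition $\widetilde W_G \cong \widetilde W_{G'}^d$, the factorization of admissible sets across factors, and the bijection $B(G,\mu)\xrightarrow\sim B(G',\mu')$ preserving Hodge-Newton indecomposability translate the problem into a statement inside $\widetilde W_{G'}$. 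The asymmetry in the table -- that $\mu=2\omega_1^\vee$ appears only for $d=1$ while the other three families allow arbitrary $d$ -- reflects precisely which Weil restrictions this reduction handles cleanly.

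Second, by Lemma~\ref{lem:max_indec_lambda} one has $\lambda(b_{\mi}) = \mu-\sum_{\alpha\in\Delta/\sigma}\alpha^\vee$. For each of the four families I would construct an explicit witness $y_{\mi}\in\Adm(\mu)$ with $[b_{y_{\mi},\max}]=[b_{\mi}]$, taking $(y_{\mi},v)$ as a positive Coxeter type pair (in the sense of Section~3) chosen so that $\supp_\sigma(v^{-1}\sigma(\cl(y_{\mi})v)) = \Delta$; Theorem~\ref{thm:ssyPositiveCoxeterType}(a) then computes $[b_{y_{\mi},\max}]$ directly in terms of the length-positive data and identifies it with $[b_{\mi}]$. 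The availability of such a witness is guaranteed by Theorem~\ref{thm:depth2PositiveCoxeterType} together with the explicit enumeration of elements in $\Adm(\mu)$ for the small coweights under consideration.

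The heart of the proof is then to show that every Bruhat-maximal $x\in A$ satisfies $[b_{x,\max}]\geq[b_{\mi}]$, which forces equality by maximality of $[b_{\mi}]$ among Hodge-Newton indecomposable classes. If $[b_{x,\max}]<[b_{\mi}]$, I would exhibit an upper Bruhat cover $x\lessdot y$ with $y\in\Adm(\mu)$ still in $A$, contradicting Bruhat-maximality of $x$. Since Theorem~\ref{thm:depth2L1BC} establishes (CL1BC) for every Hodge-Newton indecomposable class, Lemma~\ref{lem:CL1BC_cordial} together with topological strong purity of the Newton stratification (Theorem~\ref{thm:purity}) applied to $\overline{Iy_{\mi}I}$ controls how generic $\sigma$-conjugacy classes evolve along Bruhat chains inside $\Adm(\mu)$, and the length-one jumping supplied by (L1BC) is the mechanism producing the required upper cover.

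The main obstacle is precisely this upper-cover step. The Length 1 Bruhat Covers property is formulated for lower covers of fundamental elements, so its upper-cover analog must be extracted by case-specific combinatorial analysis of $\Adm(\mu)$ for each of the cocharacters $2\omega_1^\vee+\omega_m^\vee,\ \omega_2^\vee+\omega_m^\vee,\ \omega_2^\vee,$ and $2\omega_1^\vee$. In the $d\geq 2$ cases one must additionally track the compatible product decompositions $(\mu_1,\dotsc,\mu_d)$ of $\mu'$ and verify that the witness $y_{\mi}$ can be realized as an element of $\prod_k \Adm_{G'}(\mu_k)$ for every permitted decomposition; and for the sporadic $d=1,\ \mu=2\omega_1^\vee$ entry, the upper-cover construction rests on the particular structure of $\Adm(2\omega_1^\vee)$ in $\widetilde W_{\PGL_{m+1}}$.
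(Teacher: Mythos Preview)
Your proposal has two genuine gaps.

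First, the Weil-restriction reduction in your step~1 is not valid. Unlike (L1BC), the property (ING) does \emph{not} descend along $B(G)\xrightarrow\sim B(G')$: Example~\ref{ex:ing_failure} exhibits $G'=\PGL_{m+1}$, $d=2$, $\mu=(\omega_1^\vee,\omega_1^\vee)$ where (ING) fails for $(G,\mu,I)$ even though it holds for $(G',2\omega_1^\vee,I)$. The admissible set does factor as $\Adm(\mu)=\prod_k\Adm(\mu_k)$, but Bruhat-maximality in the set $A\subseteq\widetilde W_G$ is not compatible with this product structure, because the Hodge-Newton indecomposability constraint couples the factors through the Demazure product $x_{*,i}=x_{i-1}\ast\cdots\ast x_i$ (Lemma~\ref{lem:gnpWeilRestrict}), not through the individual $x_k$. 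The paper therefore handles the general-$d$ cases directly inside $\widetilde W_G$ (Proposition~\ref{prop:ing_2omega1pusomegam}), never reducing to $d=1$.

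Second, and more importantly, the upper-cover step is where the actual content lies, and your mechanism for producing it does not work. Purity applied to $\overline{Iy_{\mi}I}$ gives chains of Newton strata, not Bruhat chains in $\Adm(\mu)$, and (L1BC) bounds jumps along \emph{downward} covers of fundamental elements---it supplies no upper cover of a given $x$. The paper's substitute is the Hodge-polygon cutoff of Section~\ref{sec:npCutoff} (Proposition~\ref{prop:newtonCutoff}): given a Bruhat-maximal $x\in A$, one conjugates by a length-zero element so that $x_{*,i}$ lands in $\widetilde W_{K_i}\varepsilon^\lambda\widetilde W_{K_i}$ for a suitable hyperspecial $K_i$, then invokes the He--Yu uniqueness of Bruhat maxima in $\Adm(\mu_i)\cap x_i\widetilde W_{K_i}$ (Proposition~\ref{prop:HeYu}) to force $x_i$ to dominate a pure translation $z_i$, from which $\nu(b_{x,\max})=\lambda$ follows. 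This is the engine behind Proposition~\ref{prop:ing_2omega1pusomegam}. The case $\mu=2\omega_1^\vee$ (Proposition~\ref{prop:ing_2omega1}) requires instead a hands-on classification of $A\cap\Adm(\omega_2^\vee)$ (Lemma~\ref{lem:ing_2omega1}). Only after these two base cases are established does an (L1BC)-based bootstrap become available: Lemma~\ref{lem:L1BC_implies_ING} uses (ING) for the \emph{larger} cocharacter $\mu+\alpha_1^\vee$ together with (L1BC) to deduce (ING) for $\mu'\in\{\omega_2^\vee,\omega_2^\vee+\omega_m^\vee\}$. Your argument inverts this dependency---you try to use (L1BC) as the primary tool---and thereby omits the two ingredients (the cutoff and He--Yu) that actually do the work.
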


The starting point for the proof is the following result of to He-Yu.
\begin{proposition}[{\cite[Proposition~3]{He2024}}]\label{prop:HeYu}
    Let $\mu$ be a dominant cocharacter and $K\subseteq G(\breve F)$ a semi-standard parahoric subgroup with Weyl group $\widetilde W_K\leq \widetilde W$. For any $x\in\Adm(\mu)$, the intersection
    \begin{align*}
        \Adm(\mu) \cap x \widetilde W_K
    \end{align*}
    contains a unique Bruhat maximal element.\rightqed
\end{proposition}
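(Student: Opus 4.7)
The plan is as follows. Since $K$ is a semi-standard parahoric, its Weyl group $\widetilde W_K$ is finite and has a unique longest element $w_K$. Standard Coxeter theory then gives that each right coset $x\widetilde W_K$ has a unique minimal-length representative $x^K\in\widetilde W^K$ and a unique Bruhat-maximal element, namely $x^K w_K$, and that the map $w\mapsto x^K w$ is a Bruhat-order isomorphism $\widetilde W_K \xrightarrow{\sim} x\widetilde W_K$. Consequently the unique-Bruhat-maximum assertion for $\Adm(\mu)\cap x\widetilde W_K$ is equivalent to the single statement that $x^K w_K$ itself lies in $\Adm(\mu)$ whenever the intersection is non-empty.

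To establish this reduced statement I would argue by induction on $\ell(w_K)-\ell(w)$, ranging over $y=x^K w\in\Adm(\mu)\cap x\widetilde W_K$. The base case $w=w_K$ is trivial. For the inductive step, pick a simple reflection $s\in\widetilde W_K$ with $ws>w$ (possible since $w\neq w_K$) and show that $ys=x^K(ws)$ again lies in $\Adm(\mu)$; iterating then climbs up to $x^K w_K$. The key tool is the combinatorial admissibility criterion of \cite[Proposition~4.12]{Schremmer2024_bruhat}, which characterizes $\Adm(\mu)$-membership through the length functional $\ell(y,\cdot)$ and the set $\LP(y)$ of length-positive elements for $y$. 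Concretely, from an upper bound $y\leq\varepsilon^{u\mu}$ one uses the Coxeter-theoretic lifting property: if $\varepsilon^{u\mu}s<\varepsilon^{u\mu}$ one gets $ys\leq \varepsilon^{u\mu}$ directly, while otherwise one passes to an appropriate Weyl-conjugate bound $ys\leq\varepsilon^{u'\mu}$.

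The main obstacle is this inductive step in the case where $s$ is an \emph{affine} (as opposed to finite) simple reflection of $\widetilde W_K$. When $s\in W_0$ the argument reduces to standard manipulations parallel to \cite[Proposition~3.7]{He2017}; in the affine case, however, $s$ acts nontrivially on both the translation part of $y$ and on $\LP(y)$, so producing the alternative bound $\varepsilon^{u'\mu}$ requires the full flexibility of \cite[Proposition~4.12]{Schremmer2024_bruhat}, which allows one to verify admissibility via any length-positive element rather than a single canonical chamber. One must also verify that the $u'$ produced actually lies in $\LP(ys)$, which is a direct but slightly delicate check in terms of how $\ell(\cdot,\alpha)$ transforms under $y\mapsto ys$.
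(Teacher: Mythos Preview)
The paper does not supply a proof of this proposition at all: it is quoted from He--Yu \cite[Proposition~3]{He2024} and closed with a \texttt{\textbackslash rightqed} immediately after the statement. So there is no in-paper argument to compare against; I can only assess your proposal on its own merits.

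Your reduction in the first paragraph is incorrect, and this breaks the whole strategy. It is true that $w\mapsto x^K w$ is a Bruhat-order isomorphism $\widetilde W_K\xrightarrow{\sim}x\widetilde W_K$, and that $\Adm(\mu)$ is Bruhat-downward closed, so $\Adm(\mu)\cap x\widetilde W_K$ corresponds to a downward-closed subset of $\widetilde W_K$. But ``has a unique Bruhat-maximal element'' is \emph{not} equivalent to ``contains $x^K w_K$''; only the trivial implication $x^K w_K\in\Adm(\mu)\Rightarrow$ unique maximum holds. A downward-closed subset of a finite Coxeter group can perfectly well have a unique maximal element strictly below the longest element. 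Concretely, take $G=\GL_3$, $\mu=(1,0,0)$, and $K$ hyperspecial so that $\widetilde W_K=W_0\cong S_3$. Writing $\tau$ for the length-zero element in $\Adm(\mu)$, one has $x^K=\tau$ and the coset $\tau W_0$ contains two length-$2$ elements, $\tau s_2 s_1=\varepsilon^{\mu}$ and $\tau s_1 s_2$; only the first lies in $\Adm(\mu)$, so the unique maximal element of $\Adm(\mu)\cap\tau W_0$ is $\varepsilon^{\mu}$, while $\tau w_0$ has length $3>\langle\mu,2\rho\rangle$ and is not admissible.

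Your inductive step is aimed precisely at proving the false statement $x^K w_K\in\Adm(\mu)$: starting from any $y=x^K w\in\Adm(\mu)$ with $w<w_K$, you try to climb to $ys\in\Adm(\mu)$ for some $s\in\widetilde W_K$ with $ws>w$. In the example above this must fail at $y=\varepsilon^{\mu}$, since no upper neighbour of $\varepsilon^\mu$ in $\tau W_0$ is admissible. So the lifting argument via \cite[Proposition~4.12]{Schremmer2024_bruhat} cannot go through as written; there will necessarily be a case where neither bound $\varepsilon^{u\mu}$ nor $\varepsilon^{u'\mu}$ dominates $ys$. The actual He--Yu argument proceeds differently and does not attempt to show that the coset maximum is admissible.
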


\subsection{A Hodge polygon cutoff}\label{sec:npCutoff}
For this subsection, we focus on the split group $G = \GL_n$. Let $B$ be the Borel subgroup of upper triangular matrices and $T$ the split torus of diagonal matrices. We identify $X_\ast(T) \cong \mathbb Z^n$ as usual. We fix a dominant cocharacter $\mu = (\mu_1,\dotsc,\mu_n)\in X_\ast(T)$, i.e.~$\mu_1\geq\cdots \geq \mu_n$. Then the admissible set $\Adm(\mu)\subseteq \widetilde W$ is stable under conjugation by length zero elements of $\widetilde W$, and contained in the union of all double cosets $W_0 \varepsilon^{\mu'} W_0$ where $\mu'\in X_\ast(T)$ ranges over all dominant cocharacters such that $\mu'\leq \mu$. In this section, we study the Hodge-Newton indecomposable locus of $\Adm(\mu)$ and how to cover it, up to length zero conjugation, by \enquote{fewer} subsets $W_0\varepsilon^{\mu'} W_0$.

\begin{definition}
Let $s\in\mathbb Q$.
\begin{enumerate}[(a)]
\item We say that a (rational) dominant coweight $\mu' = (\mu_1',\dotsc,\mu_n')\leq \mu$ is \emph{$s$-similar to $\mu$} if 
\begin{align*}
\sum_{i=1}^n \abs{\mu_i-s} = \sum_{i=1}^n \abs{\mu_i'-s}.
\end{align*}
\item We say that $\mu$ is \emph{$s$-trivial} if either all entries satisfy $\mu_i\geq s$, or if all entries satisfy $\mu_i\leq s$.
\item For any (rational) cocharacter $\lambda = (\lambda_1,\dotsc,\lambda_n)\in \mathbb Q^n$, we define
\begin{align*}
D_{>s}(\lambda) &= \{j\in\{1,\dotsc,n\}\mid \lambda_j>s\},\quad S_{>s}(\lambda) = \sum_{j\in D_{>s}(\lambda)} (\lambda_j - s)\\
D_{<s}(\lambda) &= \{j\in\{1,\dotsc,n\}\mid \lambda_j<s\},\quad S_{<s}(\lambda) = \sum_{j\in D_{<s}(\lambda)} (s-\lambda_j).
\end{align*}
\end{enumerate}
\end{definition}
\begin{remark}
Observe that $D_{>s}(w\lambda) = w^{-1}D_{>s}(\lambda)$ and $S_{>s}(w\lambda) = S_{>s}(\lambda)$ for all $\lambda\in\mathbb Q^n$ and all $w\in W_0$. Moreover, we have 
\begin{align*}S_{>s}(\lambda) - S_{<s}(\lambda) = -ns+\sum_{j=1}^n \lambda_j.\end{align*}
So if $\lambda\leq \mu$, then $\lambda_{\dom}$ is $s$-similar to $\mu$ iff $S_{>s}(\lambda_{\dom}) + S_{<s}(\lambda_{\dom}) = S_{>s}(\mu) + S_{<s}(\mu)$.

By the two observations above, this is equivalent to $S_{>s}(\lambda)+S_{<s}(\lambda) = S_{>s}(\mu)+S_{<s}(\mu)$, which is moreover equivalent to $S_{>s}(\lambda)= S_{>s}(\mu)$.
\end{remark}
\begin{remark}
Draw $\mu$ as an upper convex Hodge polygon. Consider the unique tangent line with slope $s$. Then $\mu$ is $s$-trivial if and only if this tangent line touches an endpoint of the polygon. Now $\mu'\leq \mu$ is $s$-similar to $\mu$ if and only the Hodge polygon of $\mu'$ touches this tangent line.

Consider for example $\mu = (2,0,0,-1)$ and $s=0$. We draw the Hodge polygon of $\mu$ in black and the tangent line of slope $s$ in solid red. Then $\mu'\leq \mu$ is $s$-similar to $\mu$ if and only if the Hodge polygon of $\mu'$ touches the red line in the picture below.
\\[1em]
\begin{center}\begin{tikzpicture}
\draw (0,0) -- (1,2) -- (2,2) -- (3,2) -- (4,1);
\fill (0,0) circle[radius=0.05];
\fill (4,1) circle[radius=0.05];
\draw[red] (1,2) -- (3,2);
\end{tikzpicture}\end{center}
~
\end{remark}
The main result of this subsection is the following.
\begin{proposition}\label{prop:newtonCutoff}
Let $s\in\mathbb Z$ such that $\mu$ is not $s$-trivial, and $x\in\Adm(\mu)$ such $\nu(\dot x)\in X_\ast(T)_{\mathbb Q}$ is not $s$-similar to $\mu$. Then there exists a length zero element $\tau\in\Omega$ and a dominant cocharacter $\mu' \in X_\ast(T)$ such that  $x' = \tau^{-1} x \tau\in W_0\varepsilon^{\mu'} W_0$ and $\mu'\leq \mu $ is not $s$-similar to $\mu$.
\end{proposition}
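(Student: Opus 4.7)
The plan is to split on whether $\mu'(x)$ is already not $s$-similar to $\mu$. If it is not, I would take $\tau:=1$ and $\mu':=\mu'(x)$: since $x\in\Adm(\mu)$ we have $\mu'\leq\mu$, satisfying all requirements. So from now on I assume $\mu'(x)$ is $s$-similar to $\mu$. Writing $x=w\varepsilon^\lambda$ with $\lambda$ in the $W_0$-orbit of $\mu'(x)$, the Newton polygon $\nu(\dot x)$ is computed cycle-by-cycle from $w$: each cycle $C$ contributes the slope $|C|^{-1}\sum_{j\in C}\lambda_j$ with multiplicity $|C|$. A direct calculation yields
\begin{align*}
S_{>s}(\mu'(x))-S_{>s}(\nu(\dot x))=\sum_{C}\Bigl(\sum_{\substack{j\in C\\\lambda_j>s}}(\lambda_j-s)-\max\bigl(0,\textstyle\sum_{j\in C}(\lambda_j-s)\bigr)\Bigr),
\end{align*}
where each summand is nonnegative, and strictly positive exactly when $C$ is \emph{mixed}, meaning it meets both $\{j:\lambda_j>s\}$ and $\{j:\lambda_j\leq s\}$. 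Since the left side is positive by hypothesis, $w$ must possess at least one mixed cycle $C_0$.

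The next step is to exploit $C_0$ via the rotation action of $\Omega\cong\mathbb Z$. Let $\tau_1$ be the standard generator of $\Omega$ for $G=\GL_n$, so $\tau_1^{-1}\varepsilon^\eta\tau_1=\varepsilon^{(\eta_n,\eta_1,\dots,\eta_{n-1})}$. Iterated conjugation gives $\tau_1^{-k}x\tau_1^k=w^{(k)}\varepsilon^{\lambda^{(k)}}$, with $\lambda^{(k)}=\phi(k)+\mathrm{rot}^k(\lambda)$ and $\phi(k)\in\mathbb Z\Phi^\vee$ a coroot-valued cocycle coming from $\tau_1^{-k}w\tau_1^k$, computable in closed form from the cycles of $w$. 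Because $\Omega$ acts by length-preserving automorphisms permuting the set $\{\varepsilon^{u\mu}:u\in W_0\}$, each $\tau_1^{-k}x\tau_1^k$ remains in $\Adm(\mu)$, so $(\lambda^{(k)})_{\dom}\leq\mu$ for every $k$.

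The key combinatorial step, and the main obstacle, will be to exhibit $k\in\{1,\dots,n-1\}$ with $S_{>s}\bigl((\lambda^{(k)})_{\dom}\bigr)<S_{>s}(\mu)$. My plan is to choose $k$ so that $\mathrm{rot}^k$ places the above-$s$ peak of $\lambda|_{C_0}$ at an index where the cocycle $\phi(k)$ has coefficient $-1$; this happens precisely because $C_0$ wraps across the $s$-threshold as the cycle indices rotate. The net effect is that one entry of $\lambda^{(k)}$ drops by one across the $s$-line, strictly decreasing $S_{>s}$. Setting $\tau:=\tau_1^k$ and $\mu':=(\lambda^{(k)})_{\dom}$ will then yield the required dominant cocharacter, completing the proof.
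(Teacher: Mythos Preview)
Your reduction to the case where the dominant part of the translation of $x$ is already $s$-similar to $\mu$ is fine, and your cycle formula for $S_{>s}(\mu'(x))-S_{>s}(\nu(\dot x))$ together with the conclusion that $w$ has a mixed cycle is correct and nicely stated.

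The gap is in your ``key combinatorial step.'' You propose to pick $k$ so that the rotation places an above-$s$ entry of $\lambda$ at a position where $\phi(k)$ has coefficient $-1$, and assert that this strictly decreases $S_{>s}$. But $\phi(k)$ lies in the coroot lattice, so every $-1$ it contributes is balanced by a $+1$ somewhere else; nothing in your sketch rules out that this $+1$ pushes another entry from $\leq s$ to $>s$, cancelling your gain. Your sentence ``The net effect is that one entry of $\lambda^{(k)}$ drops by one across the $s$-line, strictly decreasing $S_{>s}$'' is exactly the claim that needs justification, not a justification itself.

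The paper sidesteps this bookkeeping entirely by arguing by contradiction. Writing $\mu^{(i)}=\mu_x+\omega_i-w_x^{-1}\omega_i$ for the translation parts of the $\Omega$-conjugates (so $(\mu^{(i)})_{\dom}\leq\mu$ by the permissibility description of $\Adm(\mu)$), one assumes $S_{>s}(\mu^{(i)})=S_{>s}(\mu)$ for \emph{all} $i$. The crucial observation is that for fixed $j$ the values $\mu^{(i)}_j$ differ from $(\mu_x)_j$ by at most $1$, so the sets $D_{>s}(x):=\bigcup_i D_{>s}(\mu^{(i)})$ and $D_{<s}(x):=\bigcup_i D_{<s}(\mu^{(i)})$ are disjoint. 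One then shows, using the step relation $\mu^{(j)}=\mu^{(j-1)}+e_j-e_{w_x^{-1}(j)}$ and the assumed equalities, that each of these sets is $w_x$-stable. But a $w_x$-stable partition of the indices into above-$s$ and below-$s$ parts forces $S_{>s}(\nu_x)=S_{>s}(\mu_x)=S_{>s}(\mu)$, contradicting the hypothesis on the Newton point. This contradiction argument is short and avoids precisely the compensation problem that your constructive approach runs into; I would recommend switching to it.
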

\begin{proof}
Write $x = w_x \varepsilon^{\mu_x}$ with $w_x\in W_0$ and $\mu_x\in\mathbb Z^n$ a cocharacter. For $i=1,\dotsc,n$, let $\omega_i = (1,\dotsc,1,0,\dotsc,0)$ be the fundamental coweight with the first $i$ entries given by $1$ and the others equal to $0$. Denote the corresponding length zero element by $\tau_i$, so that $\Omega \cap \varepsilon^{\omega_i} W_0 = \{\tau_i\}$.

Define $\mu^{(i)} = \mu_x + \omega_i - w_x^{-1}\omega_i$, such that $\tau_i^{-1} x \tau_i\in W_0 \varepsilon^{\mu^{(i)}} W_0$. The condition $x\in\Adm(\mu)$ is equivalent to the \emph{permissible set condition} $(\mu^{(i)})_{\dom}\leq \mu$ for all $i$. The equality of admissible and permissible sets for $\GL_n$ has been known for a while, and a concise proof can be found in \cite[Corollary~4.14]{Schremmer2024_bruhat}. We have to show $S_{>s}(\mu^{(i)}) < S_{>s}(\mu)$ (or equivalently $S_{<s}(\mu^{(i)}) < S_{<s}(\mu)$) for some $i$. Thus assume, aiming for a contradiction, that for all $1\leq i\leq n$ we have
\begin{align}
 S_{>s}(\mu^{(i)}) = S_{>s}(\mu)\text{ and }S_{<s}(\mu^{(i)}) = S_{<s}(\mu).\label{eq:newtonCutoffProof}
\end{align}

Define the sets
\begin{align*}
D_{>s}(x) := \bigcup_{i=1}^n D_{>s}(\mu^{(i)})\subseteq \{1,\dotsc,n\},\qquad 
D_{<s}(x) := \bigcup_{i=1}^n D_{<s}(\mu^{(i)})\subseteq \{1,\dotsc,n\}.
\end{align*}
For any $i,i',j\in\{1,\dotsc,n\}$, observe that $\abs{\mu^{(i)}_j - \mu^{(i')}_j}\leq 1$.
Hence $D_{>s}(x)\cap D_{<s}(x)=\emptyset$.

By assumption, $\mu$ is not $s$-trivial. This, together with \eqref{eq:newtonCutoffProof}, then implies that $D_{>s}(x)\neq\emptyset\neq D_{<s}(x)$.

Next, we will prove that both $D_{>s}(x)$ and $D_{<s}(x)$ are stable under the action of $w_x\in W$. We only do this for $D_{<s}(x)$, as the proof for $D_{>s}(x)$ is similar. So pick $j\in D_{<s}(x)$, and assume without loss of generality that $j\neq w_x(j)$. Observe that $\mu^{(j-1)}_j\leq \mu^{(i)}_j$ for all $i\in\{1,\dotsc,n\}$ (where we put $\mu^{(0)} := \mu_x$). Hence $j\in D_{<s}(\mu^{(j-1)})$. We get \begin{align*}\mu^{(j)} = \mu^{(j-1)} + e_j - e_{w_x^{-1}(j)},\end{align*} where $e_j$ denotes the $j$-th basis vector $e_j = (0,\dotsc,0,1,0,\dotsc,0)$. It suffices to prove that $w_x^{-1}(j)\in D_{<s}(\mu^{(j)})$, since then $w_x^{-1}(j)\in D_{<s}(x)$ follows. Now if instead we had $\mu^{(j)}_{w_x^{-1}(j)}\geq s$, then also $\mu^{(j-1)}_{w_x^{-1}(j)}\geq s$. Hence
\begin{align*}
S_{<s}(\mu^{(j)}) - S_{<s}(\mu^{(j-1)}) = (\mu^{(j)}_j-s) - (\mu^{(j-1)}_j-s) = 1,
\end{align*}
contradicting \eqref{eq:newtonCutoffProof}.

The contradiction shows that $D_{<s}(x)$ is stable under $w_x^{-1}$ (hence also stable under $w_x$), and we use the same result for $D_{>s}(x)$. Let $v\in W$ be any permutation sending $D_{>s}(x)$ to the subset $\{1,\dotsc,\# D_{>s}(x)\}$ and sending $D_{<s}(x)$ to the subset $\{n-\# D_{<s}(x)+1,\dotsc,n\}$.

Let $N>0$ with $w_x^N=1$, so
\begin{align*}
\nu_x = \frac 1N\sum_{i=1}^N w_x^i \mu_x\in\mathbb Q^n
\end{align*}
is the non-dominant Newton point of $x$. Then
\begin{align*}
S_{>s}(\nu_x) &= \sum_{i\in D_{>s}(x)} ((\nu_x)_i-s).
\intertext{
Since $D_{>s}(x)$ is $w_x$-stable, this is}
&=\sum_{i\in D_{>s}(x)} ((\mu_x)_i-s) = S_{>s}(\mu_x) \underset{\text{\eqref{eq:newtonCutoffProof}}}= S_{>s}(\mu).
\end{align*}
This contradicts the assumption that $\nu(\dot x)$ is not $s$-similar to $\mu$.
\end{proof}
As an immediate application of Proposition~\ref{prop:newtonCutoff}, we present the following boundedness result for ADLV. It is of independent interest and not used in the remainder of this article.

\begin{proposition}
	Let $\mu\in X_\ast(T)$ a dominant cocharacter such that $[1]\in B(G,\mu)$.
	Set $d = \depth(G,\mu)\in\mathbb Z_{\geq 0}$ and $K = I W_0 K = \GL_n(\mathcal O_{\breve F})$.
	Then
	\begin{align*}
		X_\mu(1) := X(\mu,1,K)\subseteq \bigcup_\lambda \mathbb J_1(F) K\varepsilon^\lambda K,
	\end{align*}
	where the union is taken over all cocharacters $\lambda\in X_\ast(T)$ satisfying
	$\langle \lambda,\alpha \rangle \leq d$ for all roots $\alpha\in\Phi$.
\end{proposition}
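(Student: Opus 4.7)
The plan is to use the lattice interpretation of $G(\breve F)/K$ together with a $\sigma$-descent argument.

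Via the usual dictionary $G(\breve F)/K\simeq \{\mathcal O_{\breve F}\text{-lattices in }\breve F^{n}\}$, the point $gK$ corresponds to $\Lambda := g\mathcal O_{\breve F}^{n}$, and the condition $gK\in X_\mu(1)$ becomes $\lambda(\Lambda,\sigma(\Lambda))\leq \mu$ in the dominance order, where $\lambda(\cdot,\cdot)\in X_\ast(T)^+$ denotes the relative-position invariant. The Cartan type $\lambda(g)$ of $g$ is $\lambda(\Lambda_0,\Lambda)$ for $\Lambda_0:=\mathcal O_{\breve F}^{n}$, and $\mathbb J_1(F)=\GL_n(F)$ acts by left multiplication. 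The goal is to find $h\in \mathbb J_1(F)$ such that $\langle\lambda(\Lambda_0,h\Lambda),\alpha\rangle\leq d$ for every root $\alpha\in\Phi$.

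First I would construct a $\sigma$-stable sublattice $\Lambda_F\subseteq \Lambda$ as the eventual intersection
\begin{align*}
\Lambda_F := \bigcap_{N\geq 0}\bigl(\Lambda\cap\sigma(\Lambda)\cap\dotsb\cap\sigma^{N-1}(\Lambda)\bigr).
\end{align*}
Because $g^{-1}\sigma(g)\in[1]$, the sequence $\sigma^i(\Lambda)$ differs from $\Lambda$ by elements of bounded Cartan size (each arising from a product of $i$ factors in $K\Adm(\mu)K$), so the intersection stabilizes after finitely many steps and $\Lambda_F$ is a genuine lattice. By Galois descent, $\Lambda_F = h\cdot \Lambda_0$ for some $h\in\GL_n(F)$, and $\lambda(\Lambda_0,h^{-1}\Lambda) = \lambda(\Lambda_F,\Lambda)$, so the problem reduces to bounding this last relative position.

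The key estimate to establish is
\begin{align*}
\langle \lambda(\Lambda,\Lambda_F),\omega\rangle\leq\langle\mu,\omega\rangle\qquad\text{for every fundamental weight }\omega,
\end{align*}
which is at most $d$ by definition of depth. The heuristic is that each application of $\sigma$ displaces $\Lambda$ by at most $\mu$, contributing at most $\langle\mu,\omega\rangle$ to the positive $\omega$-direction of the accumulating cokernel $\Lambda/\Lambda^{(N)}$, but triviality of the $\sigma$-class forces cancellation that keeps the net displacement bounded by the single-step bound rather than by $N$ times it.

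The main obstacle is rigorously establishing this cancellation, since a naive iteration yields only the much weaker $\langle\lambda(\Lambda,\Lambda^{(N)}),\omega\rangle\leq N\langle\mu,\omega\rangle$. I would approach this by choosing an Iwahori representative $x\in\Adm(\mu)$ of $g^{-1}\sigma(g)$, so that $[1]\in B(G)_x$ forces $\nu(\dot x)\geq 0$. Whenever the current representative of $g$ has a Cartan shift violating the fundamental-weight bound, Proposition~\ref{prop:newtonCutoff} produces a length-zero conjugate of $x$ lying in a strictly smaller cell $W_0\varepsilon^{\mu'}W_0$, and the lattice counterpart of this combinatorial reduction is exactly an element of $\mathbb J_1(F)$ shrinking $\lambda(g)$. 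Iterating this reduction until the procedure terminates yields the desired bound $\langle\lambda,\alpha\rangle\leq d$.
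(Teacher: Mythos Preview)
Your lattice-intersection step is circular. You assert that $\bigcap_{N\geq 0}\sigma^i(\Lambda)$ stabilizes because the $\sigma^i(\Lambda)$ have ``bounded Cartan size'' relative to $\Lambda$, but this boundedness is precisely the content of the proposition. A priori the relative position of $\sigma^N(\Lambda)$ and $\Lambda$ grows like $N\mu$, as you yourself note two paragraphs later. Without an independent reason for stabilization, $\Lambda_F$ may not be a lattice at all, and the descent step collapses.

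Your attempted repair via Proposition~\ref{prop:newtonCutoff} points in the right direction but misidentifies the mechanism. Conjugating $x$ by a length-zero element $\tau\in\Omega$ does not produce an element of $\mathbb J_1(F)=\GL_n(F)$; rather it yields the identity $X_x(1)=X_{x'}(1)\cdot\tau$ in the affine flag variety, whence $\pi(X_x(1))\subseteq X_{\mu'}(1)\cdot K\tau K$. The paper's proof uses exactly this: one runs an induction on $d=\depth(G,\mu)$, stratifies $X_\mu(1)$ by EO strata $\pi(X_x(1))$ with $x\in\Adm(\mu)^K$, discards those $x$ for which $X_x(1)=\emptyset$ via Hodge--Newton decomposition, and for the remaining $x$ invokes Proposition~\ref{prop:newtonCutoff} to land $x'=\tau^{-1}x\tau$ in $W_0\varepsilon^{\mu'}W_0$ with $\depth(G,\mu')\leq d-1$. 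Induction bounds $X_{\mu'}(1)$ by $\lambda$'s with $\langle\lambda,\alpha\rangle\leq d-1$, and right multiplication by the minuscule cocharacter in $\tau$ raises this bound by at most $1$. The bound $d$ thus appears as $(d-1)+1$, not via a single-step cancellation in a lattice tower.
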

\begin{proof}
	Induction on $d$. If $d=0$, we get $\mu=0$ and $X_\mu(1) = \mathbb J_1(F) K$. The claim follows from this.
	
	Let now $d>0$ and the claim be proved for $d-1$. Let $s=0$. Then
	\begin{align*}
		\depth(G,\mu) = \max_{\alpha\in\Delta}\langle \mu,\omega_\alpha\rangle = S_{>0}(\mu).
	\end{align*}
	
	We know that
	\begin{align*}
		X_\mu(1) = \bigsqcup_{x\in \Adm(\mu)^{\mathbb S_0}} \pi(X_x(1)),
	\end{align*}
	where $\pi : G(\breve F)/I\to G(\breve F)/K_0$ is the projection map. Hence it suffices to control the location of each $X_x(1)$ inside the affine flag variety.
	
	Let hence $x\in \Adm(\mu)^{\mathbb S_0}$. If $\mu$ and $[\dot x]$ are $(s=0)$-similar, then by the Hodge-Newton decomposition of \cite[Theorem~4.17]{Goertz2019}, or alternatively the nonemptiness criterion for the basic locus from \cite{Goertz2015_nonemptiness}, we get $X_x(1)=\emptyset$. Otherwise, Proposition~\ref{prop:newtonCutoff} gives us a length zero element $\tau\in \Omega$ such that $x' =\tau^{-1} x \tau$ is not $s$-similar to $\mu$, that is $x'\in W_0 \varepsilon^{\mu'} W_0$ for a dominant cocharacter $\mu'$ which is not $(s=0)$-similar to $\mu$. Note that
	\begin{align*}
		X_x(1) = X_{x'}(1)\tau,\text{ thus }\pi(X_x(1)) \subseteq \pi(X_{x'})(1) K_0 \tau K_0 \subseteq X_{\mu'}(1) K_0 \tau K_0.
	\end{align*}
	The condition that $\mu$ and $\mu'$ are not $0$-similar means precisely that
	\begin{align*}
		\depth(G,\mu') = S_{>0}(\mu')< S_{>0}(\mu) = d.
	\end{align*}
	By induction, we get 
	\begin{align*}
		X_{\mu'}(1)\subseteq \bigcup_{\substack{\lambda \in X_\ast(T)\\ \langle\lambda,\alpha\rangle \leq d-1\forall \alpha\in \Phi}} \mathbb J_1(F) K\varepsilon^\lambda K.
	\end{align*}
	If $\lambda\in X_\ast(T)$ satisfies $\langle \lambda,\alpha\rangle \leq d-1$ for all $\alpha\in \Phi$ and $\omega$ is in the $W_0$-orbit of a minuscule cocharacter, then $\langle \lambda+\omega,\alpha\rangle \leq d$ for all $\alpha\in \Phi$. This finishes the induction and the proof.
\end{proof}

\subsection{Some cases of (ING)}\label{sec54}
The goal of this section is to prove Theorem~\ref{thm:INGoverview}. We assume that $G'$ is the split group $G' = \GL_{m+1}$ over an extension $F^{(d)}$ as in Section~\ref{sec:weilRestrict}. Furthermore, let $T'$, $B'$, $\mu$ and $\mu'$ be as in section. As usual we denote the $i$-th fundamental coweight of $\GL_{m+1}$ by $\omega_i^{\vee} = (1,\dotsc,1,0,\dotsc,0)\in X_\ast(T')$ for $i=1,\dotsc,m$. The following lemma is helpful to determine the generic $\sigma$-conjugacy class $[b_{x,\max}]$ of Iwahori double cosets $IxI$ in this setting.
\begin{lemma}\label{lem:gnpWeilRestrict}
    Let $x = (x_1,\dotsc,x_d)\in\widetilde W$ and $x' = x_d\ast \cdots \ast x_1$, where $\ast$ denotes the Demazure product on $\widetilde W'$. Then under the canonical isomorphism $B(G)\xrightarrow\sim B(G')$, the generic $\sigma$-conjugacy class $[b_{x,\max}]$ gets mapped to $[b_{x',\max}]$.
\end{lemma}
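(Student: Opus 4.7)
The plan is to compute both $[b_{x,\max}]$ and $[b_{x',\max}]$ in terms of generic points of the corresponding Iwahori double cosets, and to match them through the explicit form of the bijection $B(G) \xrightarrow\sim B(G')$ recalled at the beginning of Section~\ref{sec:weilRestrict}. Under the identification $G_{F^{(d)}} \cong (G')^d$, that bijection sends $[(b_1,\dotsc,b_d)]$ to the $\sigma^d$-conjugacy class $[b_d b_{d-1}\cdots b_1]\in B(G')$, and the matching decomposition of $\widetilde W_G$ realizes the Iwahori double coset $IxI$ as the product $\prod_{k=1}^d I' x_k I'$ inside $(G')^d$, where $I'\subset G'(\breve F)$ denotes the Iwahori obtained from $I$ in the $k$-th factor.

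First I would pick a generic geometric point $b = (b_1,\dotsc,b_d)$ of $IxI$, so that each $b_k$ is a generic point of $I' x_k I'$ and the $b_k$ are mutually independent. By the preceding paragraph, its image under $B(G) \to B(G')$ is the $\sigma^d$-conjugacy class of the product $p(b) := b_d b_{d-1}\cdots b_1 \in G'(\breve F)$, so it suffices to show that $p(b)$ is a generic point of $I'x'I'$. This will rest on the standard compatibility of multiplication of Iwahori double cosets with the Demazure product: for any $y_1, y_2 \in \widetilde W_{G'}$, the multiplication map $I' y_1 I' \times I' y_2 I' \to G'(\breve F)$ has scheme-theoretic image contained in $\overline{I'(y_1 \ast y_2)I'}$ with $I'(y_1\ast y_2)I'$ as a dense open. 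Iterating this identity along $x_d, x_{d-1}, \dotsc, x_1$, the product map $\prod_k I' x_k I' \to G'(\breve F)$ is dominant onto $\overline{I' x' I'}$, and in particular sends the generic point of the source to the generic point of $I'x'I'$.

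Combining the two steps, the $\sigma^d$-conjugacy class assigned to $[b_{x,\max}]$ by the bijection is the class of a generic point of $I'x'I'$, which is by definition $[b_{x',\max}]$. The only real technical point is the iterated dominance statement for the multiplication map, which is classical; in the mixed characteristic setting one works throughout in the category of perfect ind-schemes, but the argument is formally identical. No further hypotheses on $G$ or on $x$ enter the proof.
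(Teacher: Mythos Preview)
Your argument is correct, but it proceeds along a genuinely different line than the paper's. The paper works entirely inside the Iwahori--Weyl group: it writes $x$ as the length-additive (hence Demazure) product $y_1\ast\sigma(y_2)\ast\cdots\ast\sigma^{d-1}(y_d)$ with $y_i=(x_i,1,\dotsc,1)$, and then repeatedly applies the cyclic-shift invariance $[b_{a\ast b,\max}]=[b_{\sigma^{-1}(b)\ast a,\max}]$ from \cite[Lemma~2.3]{He2024_demazure}, together with the commutativity of factors lying in distinct coordinates, to collapse everything into $[b_{(x',1,\dotsc,1),\max}]$. Your approach instead bypasses this Weyl-group manipulation and goes straight to geometry: you identify $IxI$ with $\prod_k I'x_kI'$, push a generic point through the product map $(b_1,\dotsc,b_d)\mapsto b_d\cdots b_1$, and use that multiplication of Iwahori double cosets realizes the Demazure product on the top stratum to land in the generic locus of $I'x'I'$. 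What the paper's route buys is that it stays purely combinatorial and cites a single black-box lemma; what your route buys is conceptual transparency and independence from that reference. To make your argument airtight, you might replace the informal phrase ``generic geometric point $\ldots$ mutually independent'' by the precise statement that the open dense locus $\{b\in IxI:[b]=[b_{x,\max}]\}$ meets the (nonempty open) preimage under $p$ of the open dense locus $\{c\in I'x'I':[c]_{\sigma^d}=[b_{x',\max}]\}$, which is immediate once you know the set-theoretic image of $p$ contains $I'x'I'$.
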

\begin{proof}
    For $i=1,\dotsc,d$, put $y_i := (x_i,1,\dotsc,1)\in\widetilde W$ such that $x$ is the length additive product
    \begin{align*}
        x = y_1 \sigma(y_2)\sigma^2(y_3)\cdots \sigma^{d-1}(y_d).
    \end{align*}
    Since this is length additive, it is also a Demazure product, i.e.
    \begin{align*}
        x = y_1 \ast \sigma(y_2)\ast \sigma^2(y_3)\cdots \ast \sigma^{d-1}(y_d).
    \end{align*}
    Using \cite[Lemma~2.3]{He2024_demazure}, we get
    \begin{align*}
        [b_{x,\max}] = [b_{y_2 \ast \sigma(y_3)\ast\cdots \ast \sigma^{d-2}(y_d)\ast y_1,\max}].
    \end{align*}
    Observing where the different terms of the Demazure product lie in the decomposition $\widetilde W = (\widetilde W')^d$, we see that $\sigma(y_3)\ast\cdots \ast \sigma^{d-2}(y_d)\ast y_1 = y_1\ast \sigma(y_3)\ast\cdots \ast \sigma^{d-2}(y_d)$. So we get
    \begin{align*}
        [b_{x,\max}] = [b_{y_2 \ast y_1 \ast \sigma(y_3)\ast\cdots \ast \sigma^{d-2}(y_d),\max}].
    \end{align*}
Inductively, we obtain
    \begin{align*}
    [b_{x,\max}] = [b_{y_d\ast \cdots \ast y_1,\max}] = [b_{(x',1,\dotsc,1),\max}].
    \end{align*}
    By the definition of generic $\sigma$-conjugacy classes and the natural map $B(G)\to B(G')$, it is easy to see that $[b_{(x',1,\dotsc,1),\max}]$ gets mapped to $[b_{x',\max}]$.
\end{proof}

\begin{proposition}\label{prop:ing_2omega1pusomegam}
Let $\mu\in X_*(T)$ be such that $\mu' \in\{ (2,0,\dotsc,0,-1),(1,1,0,\dotsc,0,-1)\}\subseteq X_\ast(T')$ and set $\lambda  = (1,0,\dotsc,0)\in X_\ast(T').$
    
    Define
    \begin{align*}
        A := \{x\in \Adm(\mu)\mid \text{the image of $[b_{x,\max}]$ under $B(G)\xrightarrow\sim B(G')$ is }\leq [\varepsilon^\lambda]\}.
    \end{align*}
    Let $x\in A$ be Bruhat maximal in $A$.
    Then
    \begin{enumerate}[(a)]
    \item There exists a pure translation element $z\in \Adm(\mu)$ with $z\leq x$. It satisfies $\nu(\dot z) = \lambda \in X_\ast(T')$ under the canonical identification $B(G)\xrightarrow\sim B(G')$.
    \item We have $\nu(b_{x,\max}) = \lambda$. In particular, if $\mu' = (2,0,\dotsc,0,-1)$, then $(G,\mu,I)$ satisfies property (ING).
    \item If $\mu$ is minuscule, then $x=z$ and $\ell(x) = \langle \mu,2\rho\rangle$.
    \end{enumerate}
\end{proposition}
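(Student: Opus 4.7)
\textit{Proof plan.} The plan is to construct the translation $z$ explicitly, and then derive (b) and (c) by comparing lengths and Newton invariants, reserving the Bruhat comparison $z\leq x$ as the main technical step.

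Using the identification $X_\ast(T)_{\Gamma_0}\cong X_\ast(T')^d$, I will build $\nu=(\nu_1,\dotsc,\nu_d)$ with each $(\nu_i)_{\dom}\leq \mu_i$ in the dominance order of $X_\ast(T')$, and with $\sum_i \nu_i\in W_0\lambda$. This is possible because the ``defect'' $\mu'-\lambda$ equals the coroot $e_1-e_{m+1}=\alpha_1^\vee+\dotsm+\alpha_m^\vee$, which can be absorbed by choosing Weyl conjugates of the $\mu_i$ with compatible signs on the first and last coordinates; in the minuscule case needed for (c) I will moreover arrange $\nu_i\in W_0\mu_i$. Setting $z:=\varepsilon^\nu$, admissibility $z\in\Adm(\mu)$ is immediate from the componentwise bound $(\nu_i)_{\dom}\leq\mu_i$, since admissibility in a Weil restriction decomposes componentwise. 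The element $\varepsilon^\nu$ is $\sigma$-straight, so its generic class $[b_{z,\max}]$ is $[\varepsilon^\nu]$, whose Newton point in $B(G)$ is the $\sigma$-average of $\nu$; under $B(G)\xrightarrow\sim B(G')$ this corresponds to the dominant form of $\sum_i\nu_i$, which equals $\lambda$. In particular $z\in A$.

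The main obstacle will be to show $z\leq x$ for the given Bruhat maximal $x\in A$. The strategy combines the Deligne-Lusztig reduction algorithm from Proposition~\ref{prop:DLreductionAlgo} with the length-zero conjugation reductions of Proposition~\ref{prop:newtonCutoff}: first one brings $x$ via length-zero $\sigma$-conjugation into a form whose dominant type is constrained by $[b_{x,\max}]\leq[\varepsilon^\lambda]$, then uses Viehmann's monotonicity $y\leq x\Rightarrow [b_{y,\max}]\leq[b_{x,\max}]$ (from \cite[Corollary~5.6]{Viehmann2014}) together with Bruhat-maximality of $x$ in $A$ to rule out translation descents with strictly larger Newton than $\lambda$. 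Since multiple Bruhat-maximal $x$'s can occur, the construction of $\nu$ in Step~1 must be carried out with enough flexibility in the choice of Weyl conjugates to adapt to each $x$. This analysis subdivides according to whether the coroot $\mu'-\lambda$ is absorbed in a single component $\mu_{i_0}$ (possible when some $\mu_{i_0}$ has ``enough mass'') or distributed across multiple components (mandatory in the minuscule case).

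Assuming (a), part (b) follows from the sandwich $[\varepsilon^\lambda]=[b_{z,\max}]\leq[b_{x,\max}]\leq[\varepsilon^\lambda]$, the last inequality being $x\in A$; equality forces $\nu(b_{x,\max})=\lambda$. For $\mu'=(2,0,\dotsc,0,-1)$, Lemma~\ref{lem:max_indec_lambda} gives $\lambda(b_{\mi})=\mu'-\sum_\alpha\alpha^\vee=\lambda$, and moreover $\mu'-\lambda=\sum_{i=1}^m\alpha_i^\vee$ involves every simple coroot with positive coefficient, so $(G,\mu,[\varepsilon^\lambda])$ is Hodge-Newton indecomposable; hence $[\varepsilon^\lambda]=[b_{\mi}]$, and property (ING) follows from Lemma~\ref{lem:defing} since Bruhat-maximality of $x$ in the HN-indecomposable version of $A$ then also yields $[b_{x,\max}]=[b_{\mi}]$. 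For (c), with $\nu_i\in W_0\mu_i$ we get $\ell(z)=\sum_i\langle\mu_i,2\rho\rangle=\langle\mu,2\rho\rangle$; but for minuscule $\mu$ the length of any element of $\Adm(\mu)$ is at most $\langle\mu,2\rho\rangle$ (attained only by the translations $\varepsilon^{u\mu}$), so $z\leq x$ combined with $\ell(z)\geq\ell(x)$ forces $x=z$ and $\ell(x)=\langle\mu,2\rho\rangle$.
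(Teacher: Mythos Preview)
Your plan for deriving (b) and (c) from (a) is fine and matches the paper. The genuine gap is in (a): you build $z=\varepsilon^{\nu}$ \emph{independently} of $x$ and then hope to prove $z\leq x$, but the sketch you give (``bring $x$ via length-zero $\sigma$-conjugation into a form whose dominant type is constrained, then use Viehmann's monotonicity and Bruhat-maximality'') does not yield a comparison between your fixed $z$ and $x$. Length-zero conjugation does not respect the Bruhat order, so once you conjugate $x$ you have lost the relation $z\leq x$ you are after; and Viehmann's inequality $y\leq x\Rightarrow [b_{y,\max}]\leq[b_{x,\max}]$ only gives Newton-point information, not Bruhat comparability. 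You are correct that the construction of $\nu$ would have to be adapted to each Bruhat-maximal $x$, but you have not said how.

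The paper inverts your strategy: instead of choosing $z$ first, it \emph{extracts} $z$ from $x$. For each coordinate $i$ it applies Proposition~\ref{prop:newtonCutoff} (with $s=0$) to the cyclically shifted Demazure product $x_{\ast,i}=x_{i-1}\ast\cdots\ast x_1\ast x_d\ast\cdots\ast x_i$, which picks out a hyperspecial parahoric $K_i$ with $x_{\ast,i}\in\widetilde W_{K_i}\varepsilon^{\lambda}\widetilde W_{K_i}$. It then invokes Proposition~\ref{prop:HeYu} (He--Yu): the intersection $\Adm(\mu_i)\cap x_i\widetilde W_{K_i}$ has a unique Bruhat maximum $y_i$, and replacing $x_i$ by $y_i$ keeps the element in $A$ (since the new Demazure product lies in the same $\widetilde W_{K_i}$-coset), so Bruhat maximality of $x$ forces $x_i=y_i$. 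Because $K_i$ is hyperspecial, the coset $x_i\widetilde W_{K_i}$ contains a unique pure translation $z_i$; it lies in $\Adm(\mu_i)$ and satisfies $z_i\leq y_i=x_i$ for free. Setting $z=(z_1,\dotsc,z_d)$ then gives $z\leq x$ by construction, and the Newton-point analysis finishes (a). The key ingredients you are missing are the Demazure-product reduction via Lemma~\ref{lem:gnpWeilRestrict} and the He--Yu coset-maximum result.
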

\begin{proof}
    Let us write $x = (x_1,\dotsc,x_d)$. For $i=1,\dotsc,d$, we define $x_{\ast,i} := x_{i-1} \ast x_{i-2}\ast \cdots \ast x_1 \ast x_d\ast\cdots \ast x_{i+1}\ast x_{i} \in \Adm_{G'}(\mu')$. It satisfies $[b_{x,\max}] = [b_{x_{\ast,i},\max}]$ under the identification $B(G,\mu)\cong B(G',\mu')$ by Lemma~\ref{lem:gnpWeilRestrict} and \cite[Lemma~2.3]{He2024_demazure}. We use Proposition~\ref{prop:newtonCutoff} where we choose $s=0$. We obtain a length zero element $\tau_i\in \Omega$ such that 
    \begin{align*}
        x_{\ast,i} \in \tau W_0 \varepsilon^\lambda W_0\tau^{-1} = \widetilde W_{K_i}\varepsilon^{\lambda} \widetilde W_{K_i},
    \end{align*}where $K_i = \tau \GL_{m+1}(\mathcal O_{\breve F}) \tau^{-1}$ is the hyperspecial subgroup associated to $\tau_i$. It follows for every $y\in x_{\ast,i} \widetilde W_{K_i}$ that $\nu(b_{y,\max})\leq \lambda$.
    
    By Proposition~\ref{prop:HeYu}, the intersection $\Adm(\mu_i)\cap x_i \widetilde W_{K_i}$ contains a unique Bruhat maximum, say $y_i$. We certainly get $x_i\leq y_i$. Hence $x\leq \tilde x := (x_1,\dotsc,x_{i-1}, y_i, x_{i+1},\dotsc,x_d)$. Since $\tilde x_{\ast,i} \in x_{\ast,i} \widetilde W_{K_i}$, we get $\tilde x\in A$. By Bruhat maximality, we get $x = \tilde x$, i.e.\ $x_i = y_i$.

    Since $K_i$ is hyperspecial, the coset $x_i \widetilde W_{K_i}$ contains a unique pure translation element, which we denote $z_i = \varepsilon^{\lambda_i}$. Since $\Adm(\mu_i)\ni x_i\in \widetilde W_{K_i} z_i \widetilde W_{K_i}$, we get $(\lambda_i)_{\dom}\leq \mu_i$ and hence $z_i\in \Adm(\mu_i)$. By choice of $y_i$, we get $z_i\leq y_i$, which agrees with $x_i$ as we saw before. Hence $z_i\leq x_i$ and $z := (z_1,\dotsc,z_d)\leq x$. In particular, $(G,\mu,[\dot z])$ is Hodge-Newton indecomposable and $[\dot z]$ is an unramified $\sigma$-conjugacy class. A quick inspection of the Newton polygons of $\mu'$ and $\lambda$ shows that this is only possible if $\nu(\dot z) = \lambda$. This proves (a).
    
    By \cite[Corollary~5.6]{Viehmann2014}, we get $\nu(b_{x,\max})\geq \lambda$. Inspecting Newton polygons as above, we see that $x\in A$ forces $\nu(b_{x,\max})\leq\lambda$, and we have proved (b).

    For (c), assume that $\mu$ is minuscule, i.e.\ each $\mu_i$ is either $(1,0,\dotsc,0)$ or $(0,\dotsc,0,-1)$. Then $(\lambda_i)_{\dom}\leq \mu_i$ implies already that $(\lambda_i)_{\dom} = \mu_i$, i.e.\ $z_i$ is Bruhat maximal inside $\Adm(\mu_i)$. Hence $x=z$ and we get the desired length formula.
\end{proof}

We next show that (ING) holds for $(\GL_{m+1},2\omega_1^\vee,I)$. Some preparation is needed for that.

\begin{lemma}\label{lem:ing_2omega1}
    Let $G = \GL_{m+1}$ and $\mu = (2,0,\dotsc,0)\in X_\ast(T)$. We also set $\omega := (1,1,0,\dotsc,0)\leq \mu$.
    \begin{enumerate}[(a)]
    \item We have $\lambda(b_{\mu,\indec}) = (1,0,\dotsc,0,1)$. For all $x\in \Adm(\mu)\setminus \Adm(\omega)$, the generic $\sigma$-conjugacy class satisfies $[b_{x,\max}]\geq [b_{\mu,\indec}]$.
    \item Every $x\in \Adm(\omega)$ can be uniquely written in the form $x = w_x\varepsilon^{v_x\omega}$ for $v_x\in W_0^{\{s_1,s_3,s_4,\dotsc,s_m\}}$ and $w_x\in W_0$. 
    \item Define $A := \{x\in \Adm(\omega)\mid [b_{x,\max}]\leq [b_{\mu,\indec}]\}$. Pick $v_x\in W_0^{\{s_1,s_3,s_4,\dotsc,s_m\}}$, seen as a permutation of $\{1,\dotsc,m+1\}$, and let $a=v_x(1)$. The set
    \begin{align*}
        \{w_x v_x\mid w_x\varepsilon^{v_x\omega} \in A\}
    \end{align*}
    is given by $\{w\in W_0\mid w\leq s_{a-1}\cdots s_1\}$ if $v_x(2) = m+1$, and empty otherwise. In the former case, we have
    \begin{align*}
        v_x = s_{a-1}\cdots s_1 s_m\cdots s_2 \in W_0.
    \end{align*}
    \end{enumerate}
\end{lemma}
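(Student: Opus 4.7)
\emph{Part (a).} Lemma~\ref{lem:max_indec_lambda}, applied to the split group $\GL_{m+1}$, gives
\[
\lambda(b_{\mu,\indec}) = \mu - \sum_{i=1}^m \alpha_i^\vee = (2,0,\dotsc,0) - (1,0,\dotsc,0,-1) = (1,0,\dotsc,0,1).
\]
The dominant coweights $\mu'\leq \mu$ of total degree $2$ are precisely $\mu$ and $\omega$, so $\Adm(\mu)\setminus\Adm(\omega)\subseteq W_0\varepsilon^\mu W_0$. Since $\Omega$-conjugation preserves $\Adm(\omega)$, every $\Omega$-conjugate of $x\in\Adm(\mu)\setminus\Adm(\omega)$ remains in $W_0\varepsilon^\mu W_0$. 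Applying the contrapositive of Proposition~\ref{prop:newtonCutoff} with $s=1$ — valid since $\mu_1=2>1$ and $\mu_{m+1}=0<1$, so $\mu$ is not $1$-trivial — I deduce that $\nu(\dot x)$ is $1$-similar to $\mu$, i.e.\ $S_{>1}(\nu(\dot x))=1$. Monotonicity of $S_{>1}$ under the Newton order together with $\nu(\dot x)\leq \nu(b_{x,\max})\leq \mu$ then forces $S_{>1}(\nu(b_{x,\max}))=1$, and an elementary case analysis of rational dominant polygons of sum $2$, Newton-bounded by $\mu$, with $S_{>1}=1$ shows that the only option is $\mu$ itself. Hence $[b_{x,\max}]=[\varepsilon^\mu]>[b_{\mu,\indec}]$.

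\emph{Part (b).} The coweight $\omega$ is minuscule in $\GL_{m+1}$ and its $W_0$-stabilizer is $W_{0,J}$ for $J=\{s_1,s_3,\dotsc,s_m\}$, so $v\mapsto v\omega$ induces a bijection $W_0^J\xrightarrow{\sim} W_0\omega$. For minuscule $\omega$ the admissible and permissible sets coincide (cf.\ \cite[Cor.~4.14]{Schremmer2024_bruhat}), hence every $x\in\Adm(\omega)$ has translation part in $W_0\omega$; this gives the claimed unique decomposition $x=w_x\varepsilon^{v_x\omega}$.

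\emph{Part (c).} From $v_x\in W_0^J$ it follows that $v_x(1)<v_x(2)$ and $v_x(3)<\dotsb<v_x(m+1)$, so $v_x$ is determined by the pair $(v_x(1),v_x(2))$ and $v_x\omega = e_{v_x(1)}+e_{v_x(2)}$. First assume $v_x(2)=m+1$ and write $a:=v_x(1)$; then $v_x(3),\dotsc,v_x(m+1)$ is the increasing listing of $\{1,\dotsc,m\}\setminus\{a\}$, and expanding into simple reflections yields the reduced expression $v_x = s_{a-1}\cdots s_1\cdot s_m\cdots s_2$. To identify the admissible $w_x$, I would invoke Lemma~\ref{lem:genericClassDistance}: the condition $[b_{x,\max}]\leq[b_{\mu,\indec}]$ translates (via the identity $\langle\lambda(b_{\mu,\indec}),2\rho\rangle=0$ together with Chai's defect formula) into an explicit inequality involving $\ell(x) - \min_{v\in\LP(x)} d(v\Rightarrow w_xv)$ in the quantum Bruhat graph of $S_{m+1}$; a direct computation of $\LP(x)$ and of these distances then identifies $\{w_xv_x\mid x\in A\}$ with the Bruhat interval below $s_{a-1}\cdots s_1$. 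Conversely, if $v_x(2)<m+1$, both indices lie in $\{1,\dotsc,m\}$ so $(v_x\omega)_{m+1}=0$; a Hodge-Newton reduction to the proper Levi $\GL_m\times\GL_1$ (or an $\Omega$-translate application of Proposition~\ref{prop:newtonCutoff}) then forces $\nu(b_{x,\max})_{m+1}=0<\tfrac1m=\nu(b_{\mu,\indec})_{m+1}$, so no such $x$ lies in $A$.

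The main obstacle is the quantum Bruhat graph computation in part~(c): identifying $\LP(x)$ and computing distances for elements of the specific shape $w_x\varepsilon^{e_a+e_{m+1}}$ requires a careful symmetric-group analysis. A secondary difficulty is the clean exclusion of the case $v_x(2)<m+1$, which demands a delicate comparison of generic Newton polygons across different Weyl translates of $\omega$.
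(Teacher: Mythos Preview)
Your argument for part (a) has a genuine gap. The inference ``$\Adm(\mu)\setminus\Adm(\omega)\subseteq W_0\varepsilon^\mu W_0$'' is false: there exist elements of $\Adm(\mu)$ whose translation part lies in the $W_0$-orbit of $\omega$ but which are nonetheless not $\omega$-admissible. For instance, in $\GL_3$ take $x=(1\,3)\,\varepsilon^{(1,0,1)}$; one checks directly from the permissible-set description that $x\in\Adm((2,0,0))$, yet $\mu^{(1)}=(2,0,0)\not\leq\omega$, so $x\notin\Adm(\omega)$, while clearly $x\in W_0\varepsilon^\omega W_0$. Consequently your subsequent claim that \emph{every} $\Omega$-conjugate of $x\in\Adm(\mu)\setminus\Adm(\omega)$ lies in $W_0\varepsilon^\mu W_0$ fails, and the contrapositive of Proposition~\ref{prop:newtonCutoff} cannot be applied. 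The equality of admissible and permissible sets only tells you that \emph{some} $\Omega$-conjugate lies in $W_0\varepsilon^\mu W_0$. The paper instead reduces to $x\in W_0\varepsilon^\mu W_0$ and bounds $\lambda(b_{x,\max})$ from below by $\mu-\wt(w_0\Rightarrow 1)$ using the explicit quantum Bruhat graph weight computed by Sadhukhan.

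For part (c), your proposed route through Lemma~\ref{lem:genericClassDistance} is not worked out, and the suggested translation is not quite right: that lemma controls $\langle\lambda(b_{x,\max}),2\rho\rangle$, whereas membership in $A$ is a dominance inequality on Newton points, which is strictly stronger than a comparison of pairings with $2\rho$. Your exclusion of the case $v_x(2)<m+1$ is also not justified: you assert $\nu(b_{x,\max})_{m+1}=0$ via ``Hodge--Newton reduction'', but no such statement about the \emph{generic} Newton point follows from the translation part alone. The paper's argument is more elementary: from $w_xv_x\leq v_x$ it deduces that $w_x$ fixes the index $m+1$, so the $(m+1)$-st coordinate of $\nu(\dot x)$ vanishes, and then compares $[\dot x]$ (not $[b_{x,\max}]$) to $[b_{\mu,\indec}]$. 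For the identification of $\{w_xv_x\}$ with a Bruhat interval when $v_x(2)=m+1$, the paper uses the tableau criterion for Bruhat order in $S_{m+1}$ combined with the fixed-point constraint, rather than any quantum Bruhat graph computation.
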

\begin{proof}
    \begin{enumerate}[(a)]
    \item It is clear that $\lambda(b_{x,\max}) = \mu -\sum_{\alpha\in\Delta}\alpha^\vee$ for split group, hence the first claim.
    
    For the group $\GL_{m+1}$, the notion of admissible and permissible sets coincide. Hence $\Adm(\mu)\setminus \Adm(\omega)$ consists of all elements $x\in \Adm(\mu)$ such that there is $\tau\in \Omega$ with $\tau^{-1} x \tau \in W_0\varepsilon^{\mu} W_0$. So it suffices to show $[b_{x,\max}]\geq [b_{\mu,\indec}]$ for all $x\in W_0\varepsilon^\mu W_0$.

    Write $x = w_x\varepsilon^{v_x \mu}$ with $v_x\in W_0^{\{s_2,\dotsc,s_m\}}$. We get $\lambda(b_{x,\max})\geq \mu - \wt(v_x\Rightarrow w_x v_x) \geq \mu - \wt(w_0\Rightarrow 1)$, where $\wt$ denotes the weight function of the quantum Bruhat graph and $w_0\in W_0$ is the longest element. The weight $\wt(w_0\Rightarrow 1)$ is computed explicitly in \cite[Section~5]{Sadhukhan2022_qbg} to be
    \begin{align*}
        \wt(w_0\Rightarrow 1) = (1, 3, 5,\dotsc, -5, -3, -1) \in X_\ast(T).
    \end{align*}
    Hence $\mu - \wt(w_0\Rightarrow 1) = (1, -3, -5,\dotsc,5, 3, 1)$, showing that
    \begin{align*}
        \nu(b_{x,\max}) \geq (1, -1/m,\dotsc,-1/m).
    \end{align*}
    The claim follows.
    \item Since $\omega$ is minuscule, we get $\Adm(\omega)\subseteq W_0\varepsilon^\omega W_0$. Now the claim is clear.
    \item The condition $v_x\in W_0^{\{s_1,s_3,s_4,\dotsc,s_m\}}$ means, seeing $v_x$ as a permutation, that
    \begin{align*}
        v_x(1)<v_x(2),\qquad v_x(3)<v_x(4)<\cdots <v_x(m+1).
    \end{align*}
    Hence $v_x(2)=m+1$ or $v_x(m+1) = m+1$. Given $x = w_x\varepsilon^{v_x\omega}$, we get $x\in \Adm(\mu)\iff w_x v_x\leq v_x$ in the Bruhat order by \cite[Proposition~4.12]{Schremmer2024_bruhat}. Thus, $v_x(m+1)=m+1$ would imply that $v_x$ is in the subgroup generated by $s_1,\dotsc,s_{m-1}$, and then also $w_x v_x(m+1)=m+1$. Hence $v_x(m+1)$ is a fixed point of the permutation $w_x$, showing that the $v_x(m+1)$-coordinate of
    \begin{align*}
        \sum_{n=1}^N w_x^n(v_x \mu_x)
    \end{align*}
    is zero for all $N\geq 1$. Hence one coordinate of $\nu(\dot x)$ is zero, meaning $x\notin A$. This is a contradiction.

    We conclude that $v_x(2)=m+1$. With $v_x(1)$ and $v_x(2)$ fixed, the remaining values $v_x(3)<\cdots <v_x(m+1)$ are uniquely determined through
    \begin{align*}
    \{v_x(3),\dotsc,v_x(m+1)\} = \{1,2,\dotsc,a-1,a+1,\dotsc,m\}.
    \end{align*}
    Explicitly, that means
    \begin{align*}
    v_x(i) = \begin{cases}a,&i=1,\\m+1,&i=2,\\i-2,&3\leq i\leq a+1,\\
    i-1,&a+2\leq i\leq m+1.\end{cases}
    \end{align*}
    Then for $x = w_x\varepsilon^{v_x}$ to lie in $A$, it is certainly necessary that $w_x v_x\leq v_x$ and $w_x v_x(i)\neq v_x(i)$ for $i=3,\dotsc,m+1$ (repeating the above argument). The Bruhat order condition $v_x\leq w_x v_x$ can be made explicit using \cite[Theorem~2.1.5]{Bjorner2005} applied to $w_0 v_x w_0$ and $w_0 w_x v_x w_0$. This means for all $i,j\in\{0,\dotsc,m+1\}$ that
    \begin{align*}
    \#\{k\geq i\mid w_x v_x(k)\leq j\} \leq \#\{k\geq i\mid v_x(k)\leq j\}.
    \end{align*}
    Applies to $i\geq 3$ and $j = v_x(i)-1$, the right-hand side of the inequality is zero. Hence we get $w_x v_x(k) \geq v_x(i)$ for all $k\geq i$. Since also $w_x v_x(i)\neq i$, we conclude $w_x v_x(i)\geq v_x(i)+1$. For $i\geq a+2$, this means $w_x v_x(i)\geq i$, and then equality must hold by $w_x v_x$ being a bijection. For $3\leq i\leq a+1$, the same argument shows that at least $w_x v_x(i)\geq i-1$. The same inequality is trivially satisfied for $i=1,2$.

    We verify $w_x v_x\leq s_{a}\cdots s_1$. Indeed, the above shows that $w_x v_x$ is in the subgroup generated by $s_1,\dotsc,s_a$. Now for $2\leq i\leq a+1$ and $1\leq j\leq a+1$, the simple observation
    \begin{align*}
        &\#\{k\geq i\mid k\leq a+1\text{ and }w_x v_x(k)\leq j\} \\&\leq \#\{k\geq i\mid k\leq a+1\text{ and }k-1\leq j\} \\&= \#\{k\geq i\mid k\leq a+1\text{ and }s_a\cdots s_1(k)\leq j\}
    \end{align*}
    shows the claim, appealing to \cite[Theorem~2.5.1]{Bjorner2005} again. The claimed reduced word for $v_x$ comes from the above explicit calculation of its values.
    \qedhere\end{enumerate}
\end{proof}

\begin{proposition}\label{prop:ing_2omega1}
    Let $G = \GL_{m+1}$ and $\mu = (2,0,\dotsc,0)\in X_\ast(T)$. Then $(G,\mu,I)$ satisfies (ING).
\end{proposition}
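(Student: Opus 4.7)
The plan is to verify condition (b) of Lemma~\ref{lem:defing}: that every Bruhat-maximal element $x$ of the set
$$A := \{y \in \Adm(\mu)^I \mid (G,\mu,[b_{y,\max}]) \text{ is HN-indecomposable}\}$$
satisfies $[b_{x,\max}] = [b_{\mi}]$. Since the dominant coweights $\mu' \leq \mu$ are exactly $0$, $\omega := \omega_2^\vee = (1,1,0,\dotsc,0)$, and $\mu$ itself, we get a decomposition $\Adm(\mu) = \Adm(\omega) \sqcup \bigl(W_0\varepsilon^\mu W_0 \cap \Adm(\mu)\bigr)$ and we split the analysis into two cases.

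\emph{Case A: $x \in A$ and $x \notin \Adm(\omega)$.} By Lemma~\ref{lem:ing_2omega1}(a), we have $[b_{x,\max}] \geq [b_{\mi}]$. Combined with $x \in A$ (which entails $[b_{x,\max}] \leq [b_{\mi}]$ since $[b_{\mi}]$ is the maximal HN-indecomposable class), this forces $[b_{x,\max}] = [b_{\mi}]$.

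\emph{Case B: $x \in A \cap \Adm(\omega)$.} The strategy is to show no such $x$ is Bruhat-maximal in $A$, by exhibiting an element $y \in A$ with $y > x$ in the Bruhat order. Using the explicit parameterization of Lemma~\ref{lem:ing_2omega1}(c), write $x = w_x \varepsilon^{v_x\omega}$ with $a := v_x(1) \in \{1, \dotsc, m\}$, $v_x\omega = e_a + e_{m+1}$, and $w_x v_x \leq s_{a-1}\cdots s_1$. The natural candidate is
\begin{align*}
    y := x \cdot r_{(e_a - e_{m+1},\,1)} = w_x \cdot s_{(a, m+1)} \cdot \varepsilon^{2 e_a},
\end{align*}
where $s_{(a,m+1)}$ denotes the transposition of coordinates $a$ and $m+1$. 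The translation part $2e_a$ dominantizes to $\mu$, so $y \in W_0 \varepsilon^\mu W_0$; admissibility of $y$ is automatic from the admissibility-equals-permissibility property for $\GL_n$. To verify $y > x$ in Bruhat order, we use the standard criterion that $xr_\beta > x$ iff $x\beta \in \Phi^+_{\af}$; here $\beta = (e_a - e_{m+1}, 1)$ is a positive affine root, and since $\langle v_x\omega, e_a - e_{m+1}\rangle = 0$, we compute $x\beta = (w_x(e_a - e_{m+1}), 1)$, which is positive regardless of the sign of the finite part.

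To conclude $y \in A$, we combine the lower bound $[b_{y,\max}] \geq [b_{\mi}]$ (from Lemma~\ref{lem:ing_2omega1}(a), since $y \in \Adm(\mu) \setminus \Adm(\omega)$) with an upper bound $[b_{y,\max}] \leq [b_{\mi}]$. This upper bound is the main technical obstacle: it can be obtained by exhibiting a length-positive element $v \in \LP(y)$ and applying the QBG-weight formula for the generic $\sigma$-conjugacy class from \cite[Section~4]{Schremmer2022_newton}, using the explicit structure of $w_y = w_xs_{(a,m+1)}$ together with the constraint $w_x v_x \leq s_{a-1}\cdots s_1$. Alternatively, one may appeal to (L1BC) for $[b_{\mi}]$ (Theorem~\ref{thm:depth2L1BC}) applied iteratively along an explicit chain of Bruhat covers from $x$ up to $y$, leveraging that $[b_{x,\max}]$ is HN-indecomposable by assumption. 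In either form, once $y \in A$ is established, $x$ fails to be Bruhat-maximal in $A$, completing Case B and hence the proof.
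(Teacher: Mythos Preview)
Your Case~A is fine and matches the paper. Case~B, however, has a genuine gap: the element $y = x\cdot r_{(e_a-e_{m+1},1)} = w_x s_{(a,m+1)}\varepsilon^{2e_a}$ that you construct is \emph{not} in $\Adm(\mu)$ in general. Having translation part $2e_a$ only gives one of the permissibility conditions; you must check all of them. Take $m=2$, $a=1$: then $v_x = s_2$, $w_x = s_2$, $x = s_2\varepsilon^{(1,0,1)}$, and your $y = s_2 s_{(1,3)}\varepsilon^{(2,0,0)} = s_1 s_2\varepsilon^{(2,0,0)}$. Computing the permissibility cocharacter $\mu^{(1)} = (2,0,0)+\omega_1 - (s_2s_1)\omega_1 = (3,0,-1)$, which does not satisfy $(3,0,-1)_{\dom}\leq (2,0,0)$. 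So $y\notin\Adm(\mu)$ already in the smallest example.

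Even setting admissibility aside, you correctly identify that proving $[b_{y,\max}]\leq[b_{\mi}]$ is the crux, but neither of your suggested approaches works as stated. The (L1BC) route is circular: Theorem~\ref{thm:depth2L1BC} only gives (L1BC) for Hodge--Newton \emph{indecomposable} classes, so to apply it along a chain $x\lessdot\cdots\lessdot y$ you would already need to know that each intermediate $[b_{x_i,\max}]$ is indecomposable, which is exactly what you are trying to prove. The paper avoids this entirely by a different choice of $y$: it keeps the \emph{same} translation part $v_x\omega$ and sets $y = s_a\cdots s_1\, v_x^{-1}\,\varepsilon^{v_x\omega}$. A short QBG weight estimate gives $y\in\Adm(\mu)\setminus\Adm(\omega)$, and the key calculation is that $\cl(y) = s_a\cdots s_1\cdot v_x^{-1} = s_1\cdots s_m$ is a full Coxeter element of $S_{m+1}$. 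Since an $(m{+}1)$-cycle is conjugate into no proper parabolic $W_J$, no $(J,v)$-alcove condition can hold for $y$, forcing $(G,\mu,[b_{y,\max}])$ to be Hodge--Newton indecomposable. This is the idea your plan is missing.
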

\begin{proof}
    Consider
    \begin{align*}
        A := \{x\in \Adm(\mu)\mid [b_{x,\max}]\leq [b_{\mu,\indec}]\}.
    \end{align*}
    Let $x$ be Bruhat maximal inside $A$, and define $\omega$ as in Lemma~\ref{lem:ing_2omega1}. If $x\notin \Adm(\omega)$, then part (a) of this lemma shows $[b_{x,\max}]\geq [b_{\mu,\indec}]$, so we are done.

    Assume now that $x\in A\cap \Adm(\omega)$. Write $x = w_x\varepsilon^{v_x\omega}$ with $v_x\in W_0^{\{s_1,s_3,s_4,\dotsc,s_{m-1},s_m\}}$, and set $a = v_x(1)$. We would like to consider the elements
    \begin{align*}
        y = s_a \cdots s_1 v_x^{-1}\varepsilon^{v_x\omega}.
    \end{align*}
    We certainly get $x\leq y$. To see that $y\in \Adm(\mu)\setminus \Adm(\omega)$, we compute
    \begin{align*}
    \wt(s_a\cdots s_1 \Rightarrow v_x) \leq \wt(s_a\cdots s_1 \Rightarrow s_a\cdots s_2) = \alpha_1^\vee.
    \end{align*}
    
    Next, we claim that $y\in A$. For this, we observe that
    \begin{align*}
        \cl(y) = s_a\cdots s_1 v_x^{-1} = (s_a\cdots s_2) (s_1\cdots s_m) (s_1\cdots s_{a-1}) = s_1\cdots s_m,
    \end{align*}
    which is a Coxeter element. Hence $[\dot y]$ is basic. By Bruhat maximality of $x$ in $A$, we get $x=y\notin \Adm(\omega)$. This contradiction finishes the proof.
\end{proof}

\begin{lemma}\label{lem:L1BC_implies_ING}
    Let $d, F^{(d)}, G', T', B', \mu, \mu'$ be as in Section~\ref{sec:weilRestrict}. We assume that $G'$ is the split group $G' = \PGL_{m+1}$ with $m\geq 2$, and denote the $i$-th fundamental coweight by $\omega_i\in X_\ast(T')$ for $i=1,\dotsc,m$ (according to the usual conventions for type $A$ Dynkin diagrams). We require that
    \begin{align*}
        \mu' \in \{\omega_2^\vee,\omega_2^\vee+\omega_m^\vee\}.
    \end{align*}
    Then $(G,I,\mu)$ satisfies (ING).
\end{lemma}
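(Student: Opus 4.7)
The plan is to deduce (ING) from (L1BC) by an explicit Weyl-group analysis of $\Adm(\mu)$, patterned on Propositions~\ref{prop:ing_2omega1pusomegam} and~\ref{prop:ing_2omega1}. Let $x \in A := \{y \in \Adm(\mu) : (G,\mu,[b_{y,\max}]) \text{ HN-indec}\}$ be Bruhat-maximal. First I would observe that HN-indec is downward-closed in $B(G,\mu)$: if $[b'] \leq [b]$ and $\mu-\nu(b) = \sum c_i \alpha_i^\vee$ with all $c_i>0$, then $\mu-\nu(b') = \sum(c_i+d_i)\alpha_i^\vee$ with $d_i \geq 0$ still has strictly positive coefficients. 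Hence $A = \{y : [b_{y,\max}] \leq [b_{\mi}]\}$, and since the HN-indec classes form a chain (remark after Theorem~\ref{prop:classification}), it suffices to exclude the strict inequality $[b_{x,\max}] < [b_{\mi}]$.

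Next I would carry out the Weil-restriction reductions: Lemma~\ref{lem:gnpWeilRestrict} lets me compute $[b_{x,\max}]$ via the Demazure products $x_{\ast,i}$, and by tracking through this identification together with the set-theoretic definition of $A$, I would reduce to the split case $G=\PGL_{m+1}$. The constraint $m\geq 4$ in the $\mu'=\omega_2^\vee$ family already places us there; in the $\omega_2^\vee+\omega_m^\vee$ family the same argument as in Proposition~\ref{prop:ing_2omega1pusomegam}(b) applied to each $x_i$ reduces to this case. Minuscule parametrization then lets me write every $y \in \Adm(\mu)$ in the form $w_y\varepsilon^{v_y\mu}$ with $v_y$ in a suitable parabolic quotient $W_0^J$, and Lemma~\ref{lem:genericClassDistance} expresses $[b_{y,\max}]$ in terms of the quantum-Bruhat-graph weight $\wt(v_y \Rightarrow \sigma(w_y v_y))$.

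The core of the argument follows the dichotomy of Lemma~\ref{lem:ing_2omega1}. Pick a suitable sub-cocharacter $\omega < \mu$ (namely the minuscule piece $\omega_1^\vee$, respectively $\omega_1^\vee+\omega_m^\vee$). If $x\notin\Adm(\omega)$, then I would bound $\nu(b_{x,\max})$ from below using the quantum Bruhat graph calculation analogous to Lemma~\ref{lem:ing_2omega1}(a), deducing $[b_{x,\max}]\geq [b_{\mi}]$ and hence equality. If on the other hand $x\in\Adm(\omega)$, I would construct, by left-multiplication of $x$ by a carefully chosen partial-Coxeter word (indexed by $a=v_x(2)$, and symmetrically by $v_x(m+1)$ in the $\omega_2^\vee+\omega_m^\vee$ case), an element $y > x$ in $\Adm(\mu)$ whose classical part $\cl(y)$ is a Coxeter element of $W_0$, making $[\dot y]$ basic and thus automatically in $A$---contradicting Bruhat-maximality of $x$.

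The main obstacle will be Step 3: establishing the concrete combinatorial construction for both families simultaneously. The second nontrivial entry of $\mu$ (compared with the $2\omega_1^\vee$ case of Lemma~\ref{lem:ing_2omega1}) forces a second set of constraints on $v_x$, so the explicit case distinction on $a=v_x(2)$ becomes significantly more intricate, and one must separately verify that the proposed $y$ lies in $\Adm(\mu)$ via the permissibility criterion \cite[Proposition~4.12]{Schremmer2024_bruhat}. The role of (CL1BC) from Theorem~\ref{thm:depth2L1BC} is precisely to guarantee that the Newton point along the Bruhat chain from $x$ to $y$ does not skip over the HN-indec region, so that the constructed $y$ indeed lands in $A$.
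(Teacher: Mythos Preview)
Your approach has a genuine gap in the central dichotomy (Step 3). For $\mu'=\omega_2^\vee$ the cocharacter is minuscule, so there is \emph{no} nonzero dominant $\omega<\mu$ in the same $\pi_1(G')$-component. Concretely, your proposed $\omega=\omega_1^\vee$ has Kottwitz point $1\bmod(m+1)$ while $\omega_2^\vee$ has Kottwitz point $2\bmod(m+1)$, so $\Adm(\omega_1^\vee)\cap\Adm(\omega_2^\vee)=\emptyset$ and the case distinction ``$x\in\Adm(\omega)$ vs.\ $x\notin\Adm(\omega)$'' is vacuous. The same obstruction kills $\omega=\omega_1^\vee+\omega_m^\vee$ in the second family. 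In the analogous Lemma~\ref{lem:ing_2omega1} the dichotomy worked only because $2\omega_1^\vee$ is \emph{not} minuscule and $\omega_2^\vee\leq 2\omega_1^\vee$; here that structure is simply absent.

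The paper takes the opposite direction: rather than passing to a smaller cocharacter, it embeds $\Adm(\mu)\subset\Adm(\mu+\alpha_1^\vee)$ (so $\mu'\mapsto 2\omega_1^\vee$ resp.\ $2\omega_1^\vee+\omega_m^\vee$), where (ING) is already established by Propositions~\ref{prop:ing_2omega1} and~\ref{prop:ing_2omega1pusomegam}. Given $x$ Bruhat-maximal in $A$, one finds a single Bruhat cover $x\lessdot y$ inside $\Adm(\mu+\alpha_1^\vee)$ with $(G,\mu+\alpha_1^\vee,[b_{y,\max}])$ HN-indecomposable. Now (L1BC) for $[b_{\mu+\alpha_1^\vee,\indec}]$ is used once, at this single cover: either $[b_{y,\max}]=[b_{\mu+\alpha_1^\vee,\indec}]$ and then $[b_{x,\max}]$ is its unique lower cover $[b_{\mu,\indec}]$, or $[b_{y,\max}]\leq[b_{\mu,\indec}]$ and one checks $y\in\Adm(\mu)$ (via Lemma~\ref{lem:ing_2omega1}(a) resp.\ Proposition~\ref{prop:ing_2omega1pusomegam}), contradicting maximality of $x$. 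Your description of the role of (CL1BC) as controlling a whole Bruhat chain is off the mark; it controls exactly one step.
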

\begin{proof}
     We find a simple root $\alpha\in X_\ast(T)$ such that $\mu+\alpha^\vee$ is dominant and the image of $\alpha$ in $X_\ast(T')$ is the simple root $\alpha_1$ (corresponding to the fundamental coweight $\omega_1^\vee$). We know that $(G,\mu+\alpha^\vee,I)$ satisfies (ING) by Proposition~\ref{prop:ing_2omega1} and Proposition~\ref{prop:ing_2omega1pusomegam}. Moreover, straightforward inspection of $B(G',\mu'+\alpha_1^\vee)$ shows that $[b_{\mu,\indec}]$ is the unique lower cover of $[b_{\mu+\alpha^\vee,\indec}]$ in $B(G,\mu+\alpha^\vee)$. By Theorem~\ref{prop:classification}, we see that $\depth(G,\mu+\alpha^\vee)<2$, so $[b_{\mu+\alpha^\vee,\indec}]$ satisfies (L1BC).

    Let $x\in \Adm_G(\mu)$ be Bruhat maximal such that $(G,\mu,[b_{x,\max}])$ is Hodge-Newton indecomposable. Then we find $y\in \Adm_G(\mu+\alpha^\vee)$ such that $x\lessdot y$ and $(G,\mu+\alpha^\vee,[b_{y,\max}])$ is Hodge-Newton indecomposable using $x\in \Adm_G(\mu+\alpha^\vee)$ and the property (ING).

    If $[b_{y,\max}] = [b_{\mu+\alpha^\vee,\indec}]$, then the property (L1BC) shows that $[b_{x,\max}]\lessdot [b_{y,\max}]$. As we saw above, this forces $[b_{x,\max}] = [b_{\mu,\indec}]$ and we are done.

    Let us assume now that $[b_{y,\max}] < [b_{\mu+\alpha^\vee,\indec}]$, so that $[b_{y,\max}]\leq [b_{\mu,\indec}]$ by the above discussion on lower covers. It suffices to show $y\in \Adm_G(\mu)$.
    \begin{itemize}
        \item Case $\mu' = \omega_2^\vee$: We are done by Lemma \ref{lem:ing_2omega1} (a).
        \item Case $\mu' = \omega_2^\vee+\omega_m^\vee$. The choice of $y$ as above might not work, but we can improve it. All that we have to show is that there exists $x\lessdot y\in \Adm(\mu)$ with $[b_{y,\max}]\leq [b_{\mu+\alpha^\vee,\indec}]$. This is immediate from Proposition~\ref{prop:ing_2omega1pusomegam}. Now we can argue as above; if $[b_{y,\max}] = [b_{\mu + \alpha^\vee,\indec}]$, then we get $[b_{x,\max}] = [b_{\mu,\indec}]$ by (L1BC), and otherwise $[b_{y,\max}]\leq [b_{\mu,\indec}]$ contradicting the choice of $x$.
    \qedhere\end{itemize}
\end{proof}
\begin{proof}[Proof of Theorem~\ref{thm:INGoverview}]
    The four cases follow from Proposition~\ref{prop:ing_2omega1pusomegam}, Proposition~\ref{prop:ing_2omega1} resp.\ Lemma~\ref{lem:L1BC_implies_ING}.
\end{proof}
In general, it is possible that $\depth(G,\mu)<2$ and the property (ING) is not satisfied for $(G,\mu,I)$. One such family is constructed in the example below. It shows that Theorem~\ref{thm:INGoverview} covers almost all cases of $\depth(G,\mu)<2$ where the property (ING) is satisfied.
\begin{example}\label{ex:ing_failure}
Suppose $m\geq 3$, $d=2$ and $\mu = (\omega_1^\vee,\omega_1^\vee)$, where we write $\omega_1^\vee = (1,0,\dotsc,0)$. We sketch how one can see that $(G,\mu,I)$ does not satisfy the property (ING).

Let $\tau = \varepsilon^{\omega_1^\vee} s_1 s_2 \cdots s_m\in \widetilde W_{G'}$ denote the length zero element in $\Adm(\omega_1^\vee)$. We define $w\in W_0'$ to be
\begin{align*}
    w = s_1 s_3 s_5 \cdots s_{m-\varepsilon},\qquad m' = \begin{cases} m-1,&m\text{ even},\\m,&m\text{ odd.}\end{cases}
\end{align*}
We see that $w$ is a product of $\lfloor \frac{m+1}2\rfloor$ pairwise commuting simple reflections.
Then certainly $x_2 := \tau w < \tau s_m s_{m-1}\cdots s_3 s_2 s_1 = \varepsilon^{\omega_1^\vee}\in \Adm(\omega_1^\vee)$. Similarly, $x_1 := \tau^{-1} x_2 \tau \in \Adm(\omega_1^\vee)$. Set $x = (x_1, x_2)\in \Adm(\mu)$. Then
\begin{align*}
    x' := x_2\ast x_1 = (\tau w) \ast (w\tau ) = \tau (w\ast w)\tau = \tau w \tau.
\end{align*}
For $i\in \{1,\dotsc,m-1\}$, we have $\tau s_i \tau^{-1} = s_{i+1}$. Using the Deligne-Lusztig reduction method, one can show $[b_{x',\max}] = [b_{\tau^2 s_{m'},\max}]$. Hence $\langle \nu(b_{x,\max}),2\rho\rangle = \langle \nu(b_{x',\max}),2\rho\rangle$ is $1$ for $m$ even and $0$ for $m$ odd. Using \cite[Theorem~2.23]{He2015}, one concludes $\dim X(\mu,b_{x,\max})\geq m+1$.

We compare this to the predicted dimension formula from Proposition \ref{prop:ING_L1BC} (c). Defining $A$ as in this proposition, we claim that $\max_{y\in A}\ell(y) \leq m+1$. Indeed, given $(y_1, y_2)\in A$, we get that $y' := y_2 y_1\in \Adm(\mu')$ satisfies that $(G,\mu',[\dot y'])$ is Hodge-Newton indecomposable. Then Lemma~\ref{lem:fixed_point_hndec} below shows $\ell_R(\cl(y'))\geq m-1$, hence $\ell_R(\cl(y))\geq m-1$, and we get $\ell(y)\leq m+1$ from Lemma~\ref{lem:adm_refl_length_ineq} below.

Note that $\langle \mu,\rho\rangle = m = \#(\Delta/\sigma)$. A quick inspection of Newton polygons shows that $\defect(b)\geq m-1$ for all $[b]\in B(G',\mu')\cong B(G,\mu)$ such that $(G',\mu',[b])$ is Hodge-Newton indecomposable. Hence we get
\begin{align*}
    &\max_{y\in A}\ell(y) + \#(\Delta/\sigma) +\frac 12\left(-\langle \mu,2\rho\rangle - \langle \nu(b_{x,\max}),2\rho\rangle - \defect(b_{x,\max})\right) \\\leq& (m+1) + m - \frac 12\left(-2m - (m-1)\right)
    = \frac {m+3}2.
\end{align*}
For $m\geq 3$, this is $<m+1 \leq \dim X(\mu,b_{x,\max})$. Hence not only does the property (ING) fail for $(G,\mu,I)$, this failure is inherent to the geometry of the Newton stratification in $I\Adm(\mu)I$.
\end{example}
\subsection{Applications}

We are, of course, most interested in those cases where both (ING) and, in suitable generality, (L1BC) are satisfied so that we can apply Proposition~\ref{prop:ING_L1BC}.

\begin{theorem}\label{thm:depth2GLn}
Let $m\geq 1$ and $G$ be the split group $G = \GL_{m+1}$. Pick a dominant cocharacter $\mu\in X_\ast(T)$, written according to $X_\ast(T)\cong \mathbb Z^{m+1}$. We also choose a standard parahoric subgroup $K = I \widetilde W_J I$ for a set $J\subsetneq \{\alpha_0,\alpha_1,\dotsc,\alpha_m\}$ of simple affine roots. We define the constant $D\in \frac 12 \mathbb Z$ in the following cases:
    \begin{enumerate}[(i)]
        \item If $\mu = (2,0,\dotsc,0)$, then $D:=m$.
        \item If $m\geq 3$, $\mu = (1,1,0,\dotsc,0)$ and $J\cap \{s_0,s_2\}=\emptyset$, then $D:=m$.
        \item If $m\geq 2$, $\mu = (2,0,\dotsc,0,-1)$ and $J\cap \{s_0, s_1\}=\emptyset$, then $D:=\frac{5m-2}2$.
        \item If $m\geq 3$, $\mu = (1,1,0,\dotsc,0,-1)$ and $J\cap \{s_0, s_m\}=\emptyset$, then $D:=\frac{5m-10}2$.
        \item If $m=4$, $\mu = (2,1,0,0,0)$ and $J=\emptyset$, then $D:=7/2$.
    \end{enumerate}
    Then in each of these five cases, the triple $(G,\mu,K)$ satisfies (ING). Moreover, every $[b]\in B(G,\mu)$ such that $(G,\mu,[b])$ is Hodge-Newton indecomposable satisfies (L1BC), and we have
    \begin{align*}
        \dim X(\mu,b,K) = D -\langle \nu(b),\rho\rangle - \frac 12\defect(b).
    \end{align*}
   In case (v), the ADLV $X(\mu,b,I)$ is equidimensional.
\end{theorem}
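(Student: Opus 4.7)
The strategy is to apply Proposition~\ref{prop:ING_L1BC}(c) in each of the five cases, which reduces the theorem to verifying the two hypotheses (ING) for $(G,\mu,K)$ and (CL1BC) for every Hodge-Newton indecomposable class $[b]\in B(G,\mu)$, and then computing $\max_{x\in A}\ell(x)$ where
\begin{align*}
A = \{x\in\Adm(\mu)^K\mid (G,\mu,[b_{x,\max}])\text{ is HN-indecomposable}\}.
\end{align*}
All five cases satisfy $\depth(G,\mu)<2$ by Theorem~\ref{prop:classification}, and under the central isogeny $\GL_{m+1}\to \PGL_{m+1}$ the relevant structure (affine Weyl group, Bruhat order, generic $\sigma$-conjugacy classes of Iwahori double cosets, and HN-indecomposable strata in $B(G,\mu)$) is preserved, so Theorem~\ref{thm:depth2L1BC} yields (CL1BC) for every HN-indecomposable $[b]$.

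For (ING) at Iwahori level $K=I$, cases (i) through (iv) are covered by Theorem~\ref{thm:INGoverview}. To pass from $K=I$ to the given standard parahoric $K=I\widetilde W_J I$, I invoke Lemma~\ref{lem:ING_level_change}. Case (v), namely $(\GL_5,(2,1,0,0,0),I)$, falls outside Theorem~\ref{thm:INGoverview} but can be handled by a direct enumeration of $\Adm(\mu)$ (a finite set of size manageable since $\ell(x)\leq \langle\mu,2\rho\rangle=10$), in the same spirit as the ad hoc treatment in Examples \ref{ex:39}--\ref{ex:310}. This same enumeration also shows that every Bruhat-maximal element of $A$ has the same length, which by the last sentence of Proposition~\ref{prop:ING_L1BC}(c) delivers the equidimensionality claim in case (v).

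Now Proposition~\ref{prop:ING_L1BC}(c) gives, with $\#(\Delta/\sigma)=m$,
\begin{align*}
\dim X(\mu,b,K) = \max_{x\in A}\ell(x) + m + \tfrac{1}{2}\bigl(-\langle\mu,2\rho\rangle-\langle\nu(b),2\rho\rangle-\defect(b)\bigr),
\end{align*}
so that $D = \max_{x\in A}\ell(x)+m-\tfrac{1}{2}\langle\mu,2\rho\rangle$. In each case one computes $\max_{x\in A}\ell(x)$ explicitly. For case (i), the extremal element produced in the proof of Proposition~\ref{prop:ing_2omega1} (of the form $y=s_a\cdots s_1 v_x^{-1}\varepsilon^{v_x\omega}$ with classical part a Coxeter element) has length $m$ and lies in $\widetilde W^K$ for every $J$. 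For cases (iii) and (iv), the pure-translation element $z=(z_1,\dotsc,z_d)$ produced in Proposition~\ref{prop:ing_2omega1pusomegam}(a) is combined with appropriate length-additive multiplications to identify the extremal element, and the given constraints on $J$ are precisely what is needed to keep this element in $\Adm(\mu)^K=\Adm(\mu)\cap \widetilde W^K$. Case (ii) is handled analogously using Lemma~\ref{lem:L1BC_implies_ING}, and case (v) uses the same direct enumeration as above.

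The main obstacle is case (v), which is not covered by Theorem~\ref{thm:INGoverview} and requires an explicit (computer-assisted) verification inside $\Adm((2,1,0,0,0))\subset\widetilde W(\GL_5)$: one must enumerate all $x\in\Adm(\mu)$, compute $[b_{x,\max}]$ via Deligne-Lusztig reduction (Proposition~\ref{prop:DLreduction}) and the length-positive/quantum Bruhat graph formula (Lemma~\ref{lem:genericClassDistance}), determine the subset $A$, verify that the unique $\leq_I$-maximal element of $A$ has generic class $[b_{\mi}]$ (this is (ING)), and check that every Bruhat-maximal element of $A$ has length $\tfrac{9}{2}+\tfrac{1}{2}$ computed from the target $D=7/2$. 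The secondary obstacle is a careful bookkeeping of how the role of $J$ enters $\max_{x\in A^K}\ell(x)$ in cases (ii)--(iv); the stated $J$-exclusions are exactly those which force the optimal elements constructed for $K=I$ to remain minimal-length coset representatives in $\widetilde W^K$.
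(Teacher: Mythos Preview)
Your overall strategy is correct and matches the paper: verify (L1BC) via Theorem~\ref{thm:depth2L1BC}, verify (ING) via Theorem~\ref{thm:INGoverview} plus Lemma~\ref{lem:ING_level_change} (and direct enumeration for (v)), then apply Proposition~\ref{prop:ING_L1BC}(c). The reduction to computing $\max_{x\in A}\ell(x)$ is exactly right.

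However, there is a genuine gap in the computation of $\max_{x\in A}\ell(x)$. You only sketch how to \emph{produce} elements of large length (lower bounds), but you give no argument for the matching \emph{upper} bound. The paper handles this with two lemmas proved specifically for this purpose: Lemma~\ref{lem:adm_refl_length_ineq}, which gives $\ell(x)\le\langle\mu,2\rho\rangle-\ell_R(\cl\,x)$ for any $x\in\Adm(\mu)$, and Lemma~\ref{lem:fixed_point_hndec}, which for $\GL_{m+1}$ bounds $\ell_R(\cl\,x)$ from below by the number of coordinates of $\mu_x$ equal to an extremal entry of $\mu$, whenever $(G,\mu,[b_{x,\max}])$ is HN-indecomposable. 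Combining these yields, e.g., $\ell(x)\le 3m-1$ in case~(iii) and $\ell(x)\le m-1$ in case~(ii). Without this upper-bound mechanism, your argument cannot conclude that the explicit elements you cite are actually maximal.

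Several of your references for the extremal elements are also off. In case~(i) the paper does not reuse the element $y$ from the proof of Proposition~\ref{prop:ing_2omega1}; it builds $x=\varepsilon^{(1,0,\dotsc,0)}\tau=\varepsilon^{\mu}s_1\cdots s_m$ directly, shows $\ell(x)=m$ and $[\dot x]$ basic, and observes it lies in $\widetilde W^K$ only when $s_0\notin J$ (the remaining $J$ are reached by length-zero conjugation), so ``lies in $\widetilde W^K$ for every $J$'' is incorrect. In cases~(iii) and~(iv) the paper exhibits the simple elements $\varepsilon^{\mu}s_m$ and $\varepsilon^{\mu}s_2s_1$, not combinations built from Proposition~\ref{prop:ing_2omega1pusomegam}. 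In case~(ii) the element is $\varepsilon^{(1,0,1,0,\dotsc,0)}s_1s_3s_4\cdots s_m$; Lemma~\ref{lem:L1BC_implies_ING} is irrelevant here since it concerns (ING), not the length computation. Finally, in case~(v) you should compute $\max_{x\in A}\ell(x)=7$ directly from the enumeration rather than reverse-engineer it from the target $D$.
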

For the proof, we need the following lemmata.
\begin{lemma}\label{lem:adm_refl_length_ineq}
    For any quasi-split group $G$ with a dominant cocharacter $\mu$ and $x\in\Adm(\mu)$, we have
    \begin{align*}
        \ell(x) \leq \langle \mu,2\rho\rangle - \ell_R(\cl\, x),
    \end{align*}
    where $\cl\, x\in W_0$ denotes the classical part of $x$ and $\ell_R$ is the reflection length of $W_0$.
\end{lemma}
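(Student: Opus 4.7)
The plan is to proceed by an induction on $\langle \mu, 2\rho\rangle - \ell(x)$ using a maximal chain in the Bruhat order. By definition of the admissible set, there exists $u \in W_0$ such that $x \leq \varepsilon^{u\mu}$, and since $\ell(\varepsilon^{u\mu}) = \langle \mu, 2\rho\rangle$, we may pick a maximal chain
\begin{align*}
x = x_0 \lessdot x_1 \lessdot \dotsm \lessdot x_m = \varepsilon^{u\mu}
\end{align*}
in $\widetilde W$, where $m = \langle \mu, 2\rho\rangle - \ell(x)$.

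The key observation is how $\cl(x_i)$ changes along such a chain. Each Bruhat cover $x_{i-1} \lessdot x_i$ means $x_i = t \, x_{i-1}$ for some affine reflection $t \in \widetilde W$. Writing $t$ as $s_{\alpha,k}$ for a root $\alpha \in \Phi$ and $k \in \mathbb Z$, its classical part is the reflection $s_\alpha \in W_0$. Hence $\cl(x_i) = s_\alpha \cl(x_{i-1})$, so that
\begin{align*}
\cl(x_i) \cdot \cl(x_{i-1})^{-1} = s_\alpha
\end{align*}
is a single reflection in $W_0$. By the standard triangle inequality for the reflection length $\ell_R$ on $W_0$, this gives
\begin{align*}
|\ell_R(\cl(x_i)) - \ell_R(\cl(x_{i-1}))| \leq 1.
\end{align*}

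Applying this estimate telescopically along the chain, and using that $\ell_R(\cl(x_m)) = \ell_R(1) = 0$, we obtain
\begin{align*}
\ell_R(\cl(x)) \;=\; \ell_R(\cl(x_0)) \;\leq\; \ell_R(\cl(x_m)) + m \;=\; m \;=\; \langle \mu, 2\rho\rangle - \ell(x),
\end{align*}
which is exactly the claimed inequality. There is no real obstacle here; the only subtle point is making sure that one uses Bruhat covers (as opposed to merely the Bruhat inequality $x \leq \varepsilon^{u\mu}$) so that one really walks along a chain of length $\langle\mu,2\rho\rangle - \ell(x)$ and controls the classical part one reflection at a time.
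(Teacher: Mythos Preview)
Your proof is correct and takes a genuinely different, more elementary route than the paper. The paper writes $x = w_x\varepsilon^{v_x\mu_x}$ with $\mu_x$ dominant and $v_x$ minimal, then uses the combinatorial admissibility criterion from \cite{Schremmer2024_bruhat} to bound $\mu_x \leq \mu - \wt(w_xv_x \Rightarrow v_x)$, converts this into the distance $d(w_xv_x \Rightarrow v_x)$ in the quantum Bruhat graph via \cite{Milicevic2020}, and finally invokes the inequality $d(w_xv_x\Rightarrow v_x) \geq \ell_R(w_x)$. By contrast, you bypass the quantum Bruhat graph entirely: you walk along a saturated Bruhat chain from $x$ up to a pure translation $\varepsilon^{u\mu}$, observe that $\cl$ is a group homomorphism so each cover multiplies the classical part by a single reflection, and apply the triangle inequality for $\ell_R$ telescopically. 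Your argument is self-contained and uses only the definition of $\Adm(\mu)$ and standard Coxeter combinatorics; the paper's approach, while less direct here, ties the inequality into the quantum Bruhat graph framework that is used elsewhere in the article (e.g.\ in Lemma~\ref{lem:genericClassDistance}).
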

\begin{proof}
    Write $x = w_x\varepsilon^{v_x\mu_x}$ with $\mu_x$ dominant and $v_x$ of minimal length. Using \cite[Proposition~4.12]{Schremmer2024_bruhat} and \cite[Equation~(4.3)]{Milicevic2020}, we get
    \begin{align*}
        \ell(x)&=\langle \mu_x,2\rho\rangle - \ell(v_x)+\ell(w_x v_x)
        \\&\leq \langle \mu-\wt(w_x v_x\Rightarrow v_x),2\rho\rangle - \ell(v_x)+\ell(w_x v_x)
        \\&=\langle\mu,2\rho\rangle - d(w_x v_x\Rightarrow v_x)\\
        &\leq \langle \mu,2\rho\rangle - \ell_R(w_x).\qedhere
    \end{align*}
\end{proof}
\begin{lemma}\label{lem:fixed_point_hndec}
Let $G = \GL_{m+1}$, let $\mu\in X_\ast(T)$ a dominant cocharacter and $x = w_x \varepsilon^{v_x \mu_x}\in \Adm(\mu)$, where $\mu_x\in X_\ast(T)$ is dominant and $w_x, v_x\in W_0$ such that $v_x$ has minimal length in its right $\mathrm{Stab}(\mu_x)$-coset. Write $\mu = (\mu_1,\dotsc,\mu_{m+1})$ and $\mu_x = (\mu_{x,1},\dotsc,\mu_{x,m+1})\in X_\ast(T)\cong\mathbb Z^{m+1}$. Assume that $(G,\mu,[b_{x,\max}])$ is Hodge-Newton indecomposable. Then for $j\in \{1,m+1\}$, we have
\begin{align*}
    \ell_R(w_x) \geq \#\{i\in \{1,\dotsc,m+1\}\mid \mu_{x,i}=\mu_j\}.
\end{align*}
\end{lemma}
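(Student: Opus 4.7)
The approach is to exploit the explicit description of the Newton point of $\dot x$ in terms of the cycle structure of $w_x$, and then apply Hodge--Newton indecomposability at the top and bottom of the Hodge polygon.

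First I will invoke the standard formula in the split $\GL_{m+1}$ setting: for $x = w_x\varepsilon^{v_x\mu_x}$, each cycle $C$ of $w_x$ contributes a slope $s_C = \frac{1}{|C|}\sum_{k\in C}\mu_{x,v_x^{-1}(k)}$ to $\nu(\dot x)$ with multiplicity $|C|$. Setting $k_1 := \#\{i:\mu_{x,i}=\mu_1\}$ and $B := v_x(\{1,\dots,k_1\})$, the dominance of $\mu_x$ gives $s_C\leq\mu_1$, with equality if and only if $C\subseteq B$.

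Next I will derive the key strict inequality $\nu(\dot x)_1<\mu_1$. Hodge--Newton indecomposability of $(G,\mu,[b_{x,\max}])$ means $\mu-\nu(b_{x,\max})=\sum c_i\alpha_i^\vee$ with $c_i>0$ for all $i$; in type $A$ this amounts to strict inequalities on the partial sums of $\mu$ and $\nu(b_{x,\max})$, so in particular $\nu(b_{x,\max})_1<\mu_1$. Since $[\dot x]\leq[b_{x,\max}]$ in $B(G)$, the dominance order on Newton polygons gives $\nu(\dot x)_1\leq\nu(b_{x,\max})_1$, hence the claimed inequality. Consequently no cycle of $w_x$ lies entirely in $B$, so every cycle meets the complement of $B$, which has cardinality $m+1-k_1$; this bounds the number of cycles by $c(w_x)\leq m+1-k_1$. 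Applying $\ell_R(w)=(m+1)-c(w)$ in $S_{m+1}$ yields $\ell_R(w_x)\geq k_1$.

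The case $j=m+1$ is dual. The indecomposability condition at $i=m$, combined with the equality of total sums of the two polygons, yields $\nu(b_{x,\max})_{m+1}>\mu_{m+1}$ and hence $\nu(\dot x)_{m+1}>\mu_{m+1}$. Setting $B':=v_x(\{m+2-k_{m+1},\dots,m+1\})$, any cycle achieving the minimal slope $\mu_{m+1}$ would be contained in $B'$, so no such cycle exists, and the same counting gives $\ell_R(w_x)\geq k_{m+1}$. The only mild subtlety, that $B$ and $B'$ depend only on the coset $v_x\mathrm{Stab}(\mu_x)$, is immediate as $\mathrm{Stab}(\mu_x)$ stabilises $\{1,\dots,k_1\}$ and $\{m+2-k_{m+1},\dots,m+1\}$ setwise. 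I do not foresee a serious obstacle beyond cleanly assembling these ingredients.
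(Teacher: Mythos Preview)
Your proof is correct and follows essentially the same strategy as the paper: show via Hodge--Newton indecomposability that no $w_x$-cycle is contained in the set $B$ of positions where $v_x\mu_x$ attains the extreme value $\mu_j$, then convert this into a lower bound on $\ell_R(w_x)$. Your final step via cycle counting (each cycle meets $B^c$, hence $c(w_x)\leq |B^c|$, hence $\ell_R(w_x)=(m+1)-c(w_x)\geq |B|$) is in fact cleaner than the paper's phrasing in terms of reflection factorizations, and you make the passage through $[\dot x]\leq[b_{x,\max}]$ explicit where the paper leaves it implicit.
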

\begin{proof}
    Let
    \begin{align*}
        E &:= \{i\in \{1,\dotsc,m+1\}\mid \mu_{x,i}=\mu_j\}
    \end{align*}
    We also let $N>1$ such that the action of $w_x^N$ on $\{1,\dotsc,m+1\}$ is trivial, such that the Newton point of $[\dot x]\in B(G)$ is equal to the dominant representative of the $W_0$-orbit of
    \begin{align*}
    \nu := \frac 1N\sum_{k=1}^N w_x^k v_x \mu_x\in X_\ast(T)\otimes\mathbb Q.
    \end{align*}
    
    Suppose that $v_x^{-1}(E)\subseteq \{1,\dotsc,m+1\}$ contains an orbit $o$ of the $w_x$-action. Then for all $i\in o$, the $i$-th coefficient of $v_x \mu_x$ is equal to $\mu_j$ and moreover $w_x(i)\in o$. Hence $\nu_i = \mu_j$.

    If $j=1$, then $\mu_{i}\leq \mu_1$ and hence $\mu_{x,i}\leq \mu_1$ for all $i\in\{1,\dotsc,m+1\}$. We get $\nu_i\leq \mu_1$ for all $i$, and (by the above considerations) equality holds for some indices. Hence the dominant representative of $\nu$ has entries equal to $\mu_1$, contradicting Hodge-Newton indecomposability.

    Similarly, if $j=m+1$, then $\mu_{m+1} = \min\{\mu_i, \mu_{x,i}, \nu_i\mid i=1,\dotsc,m+1\}$ and one gets again a contradiction.

    The contradiction shows that $v_x^{-1}(E)$ does not contain any $w_x$-orbit. So if we write $w_x$ as a product of reflections $w_x = r_1\cdots r_{\ell}$, there must be for every $i\in E$ some reflection $r_k$ mapping $i$ to an element in $\{1,\dotsc,m+1\}\setminus E$. Hence $\ell \geq \# E$ as required.
\end{proof}
\begin{proof}[Proof of Theorem~\ref{thm:depth2GLn}]
    The properties (ING) and (L1BC) are about combinatorics of the Iwahori-Weyl group and hence invariant under group isogenies. We see that (L1BC) is satisfied in all cases of this theorem by Theorem~\ref{thm:depth2L1BC}. For $K = I W_J I$, we define
    \begin{align*}
        A := \{x\in \Adm_G(\mu)^K \mid (G,\mu,[b_{x,\max}])\text{ is HN-indecomposable}\}.
    \end{align*}

    If $\depth(G,\mu)\leq 1$, then the property (ING) is trivially satisfied as there is only one Hodge-Newton indecomposable $\sigma$-conjuagcy class in $B(G,\mu)$. The remaining cases are detailed by Theorem~\ref{prop:classification}. For the four infinite families, we obtain (ING) from Theorem~\ref{thm:INGoverview}. The last remaining case is $m=5$ and the image of $\mu$ in the adjoint quotient is $\omega_1^\vee+\omega_2^\vee$, e.g.\ the case $\mu = (2,1,0,0,0)$. This case is happily checked with a computer, we obtain $15$ Bruhat maximal elements in $A$ for $K=I$, all of them having length $7$.

    By Proposition~\ref{prop:ING_L1BC}, the constant
    \begin{align*}
        \tilde D := \max_{x\in A}\ell(x) +\#(\Delta/\sigma) - \langle \mu,\rho\rangle\in\frac 12\mathbb Z
    \end{align*}
    satisfies
        \begin{align*}
        \dim X(\mu,b,K) = \tilde D -\langle \nu(b),\rho\rangle - \frac 12\defect(b).
    \end{align*}
    for all Hodge-Newton indecomposable $[b]\in B(G,\mu)$.
    
    It remains to compute the constant $\tilde D$ in the above cases. By Lemmas \ref{lem:adm_refl_length_ineq} and \ref{lem:fixed_point_hndec}, we get for any $x = w_x\varepsilon^{\mu_x}\in A$ that
    \begin{align}\label{eqp528}
        \ell(x)\leq \langle \mu,2\rho\rangle - \# \{i\in\{1,\dotsc,m+1\}\mid \mu_{x,i} = \mu_{j}\}\quad\text{for } j=1\text{ and }m+1.
    \end{align}
    \begin{enumerate}[(i)]
        \item Let $\mu = (2,0,\dotsc,0)$. By Proposition~\ref{prop:ING_L1BC}, we get $\tilde D = \max_{x\in A}\ell(x)$. We first prove that $\ell(x)\leq m$ for all $x\in A$. If $\mu_x = (2,0,\dotsc,0)$, then this follows from \eqref{eqp528}. Similarly, if there is a length zero conjugate $\tau^{-1} x\tau$ of $x$ lying in $W_0 \varepsilon^{(2,0,\dotsc,0)} W_0$, then we get the claim as $A$ is closed under length zero conjugation.

        The only alternative to $(\mu_x)_{\dom} = (2,0,\dotsc,0)$ is $(\mu_x)_{\dom} = (1,1,0,\dotsc,0)$. So assume that $\tau^{-1} x \tau \in W_0 \varepsilon^{(1,1,0,\dotsc,0)} W_0$ for all $\tau\in \Omega$. This means that $x$ lies in the permissible set for $(1,1,0,\dotsc,0)$ by definition, which agrees with the admissible set for the general linear group. Thus, we get
        \begin{eqnarray*}
            \ell(x)&\underset{\text{L\ref{lem:adm_refl_length_ineq}}}\leq& \langle (1,1,0,\dotsc,0),2\rho\rangle - \ell_R(w_x)
            \\&=&\langle \mu,2\rho\rangle - 2-\ell_R(w_x)
            \\&\underset{\text{L\ref{lem:adm_refl_length_ineq}}}\leq &\langle \mu,2\rho\rangle -m-1.
        \end{eqnarray*}

        So we get the desired upper bound for $\ell(x)$ as $x\in A$. In order to construct a lower bound, we note that
        \begin{align*}
            \tau = \varepsilon^{(1,0,\dotsc,0)} s_1\cdots s_m\in\widetilde W
        \end{align*}
        has length zero. Therefore, 
        \begin{align*}
            x = \varepsilon^{(1,0,\dotsc,0)}\tau = \varepsilon^{\mu} s_1\cdots s_m\in\widetilde W
        \end{align*}
        has length $\langle (1,0,\dotsc,0),2\rho\rangle=m = \langle \mu,2\rho\rangle - m$. In particular, $x < \varepsilon^\mu\in \Adm(\mu)$. It is clear that $\cl\, x$ acts fixed point free on $\{1,\dotsc,m+1\}$, so $[\dot x]$ is basic. Lastly, $\varepsilon^{\mu}$ has minimal length in its right $W_{\{s_0, s_2,s_3,\dotsc,s_m\}}$-coset, so $x$ has minimal length in its right $W_0$-coset. Hence $x\in A$ whenever $s_0\notin J$. Up to length zero conjugation, this covers all possibilities for $J$.
        \item Let $m\geq 3, \mu = (1,1,0,\dotsc,0)$ and $J\cap \{s_0,s_2\}=\emptyset$. By Proposition~\ref{prop:ING_L1BC}, we get $\tilde D = 1+\max_{x\in A}\ell(x)$. By \eqref{eqp528}, we get $\max_{x\in A}\ell(x)\leq \langle \mu,2\rho\rangle -(m-1) = m-1$ as desired.

        Now for the converse, we consider the element
        \begin{align*}
            x = \varepsilon^{(1,0,1,0,\dotsc,0)} s_1 s_3 s_4\cdots s_m\in\widetilde W.
        \end{align*}
        As in the previous case, one can prove that $x\in W^{\{s_1, s_3,\dotsc,s_m\}}$. In particular, $x<\varepsilon^{(1,0,1,0,\dotsc,0)}\in \Adm(\mu)$. By assumption on $J$, we get $x\in A$.
        \item Let $m\geq 2, \mu = (2,0,\dotsc,0,-1)$ and $J\cap \{s_0, s_1\}=\emptyset$.  By Proposition~\ref{prop:ING_L1BC}, we get $\tilde D = -m/2 + \max_{x\in A}\ell(x)$. By \eqref{eqp528}, we get $\max_{x\in A}\ell(x)\leq \langle \mu,2\rho\rangle-1 = 3m-1$, and we wish to show equality. For this, we can simply choose $x = \varepsilon^{\mu} s_m$, which lies in $W^{\{s_2,\dotsc,s_m\}}\cap \Adm(\mu)$. We are done as above.
        \item Let $m\geq 2, \mu = (1,1,0,\dotsc,0,-1)$ and $J\cap \{s_0, s_m\}=\emptyset$. By Proposition~\ref{prop:ING_L1BC}, we get $\tilde D = -m/2-1 + \max_{x\in A}\ell(x)$. By \eqref{eqp528}, we get $\max_{x\in A}\ell(x)\leq \langle \mu,2\rho\rangle-2 = 3m-4$, and we wish to show equality. For this, we can choose $x = \varepsilon^\mu s_2 s_1$, which lies in $W^{\{s_1,s_2,\dotsc,s_{m-1}\}}$. We are done as above.
        \item Let $m=4, \mu = (2,1,0,0,0)$ and $J=\emptyset$. An explicit calculation verifies (ING) and $\max_{x\in A}\ell(x)=7$. The property (L1BC) is checked similarly. By Proposition~\ref{prop:ING_L1BC}, we get $\tilde D = -7/2 + \max_{x\in A}\ell(x) = 7/2$. We are done.
    \qedhere\end{enumerate}
\end{proof}
\begin{proposition}
    Let $F'/F$ be unramified of degree $3$, let $G'= \PGL_{m+1, F'}$ and let $\mu,\mu'$ be as in Section~\ref{sec:weilRestrict} such that $\mu' = 2\omega_1^\vee+\omega_m^\vee$ and $\mu$ is minuscule. Then for every $[b]\in B(G,\mu)$ such that $(G,\mu,[b])$ is Hodge-Newton indecomposable, the ADLV $X(\mu,b)$ is equidimensional of dimension
    \begin{align*}
    \dim X(\mu,b) = \frac{5m}2 - \langle \nu(b),\rho\rangle - \frac 12\defect(b).
    \end{align*}
\end{proposition}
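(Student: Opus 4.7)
The plan is to assemble the proposition directly from the machinery already developed, namely the combination of (ING) and (CL1BC) via Proposition~\ref{prop:ING_L1BC}, together with the Bruhat-maximal length computation from Proposition~\ref{prop:ing_2omega1pusomegam}.

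First, I would set the stage: since $\mu$ is minuscule with $\mu' = \mu_1+\mu_2+\mu_3 = 2\omega_1^\vee + \omega_m^\vee$, each $\mu_i$ must lie in $\{\omega_1^\vee, \omega_m^\vee\}$, so $\langle\mu_i,2\rho\rangle = m$ and hence $\langle\mu,2\rho\rangle = 3m$. The Dynkin diagram of $G$ consists of three copies of the $A_m$ diagram which $\sigma$ permutes cyclically, giving $\#(\Delta/\sigma) = m$. Theorem~\ref{thm:INGoverview} (applicable since $\mu' = 2\omega_1^\vee + \omega_m^\vee$) tells us that $(G,\mu,I)$ satisfies (ING), and Theorem~\ref{thm:depth2L1BC} shows that every Hodge-Newton indecomposable class $[b]\in B(G,\mu)$ satisfies (CL1BC). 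Thus the hypotheses of Proposition~\ref{prop:ING_L1BC} are all met.

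Next, I would invoke Proposition~\ref{prop:ing_2omega1pusomegam}(c): since $\mu$ is minuscule, every Bruhat-maximal $x\in A$ coincides with the corresponding pure translation element $z$, so $\ell(x) = \langle\mu,2\rho\rangle = 3m$. In particular, \emph{all} Bruhat-maximal elements of $A$ share this common length. Plugging into the third dimension formula in Proposition~\ref{prop:ING_L1BC}(c) gives
\begin{align*}
\dim X(\mu,b,I) &= 3m + m + \tfrac12\bigl(-3m - \langle\nu(b),2\rho\rangle - \defect(b)\bigr)\\
&= \tfrac{5m}{2} - \langle\nu(b),\rho\rangle - \tfrac12\defect(b),
\end{align*}
as claimed. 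Since all Bruhat-maximal elements of $A$ have the same length, the final sentence of Proposition~\ref{prop:ING_L1BC}(c) yields equidimensionality of $X(\mu,b,I)$.

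There is no real obstacle here: the statement is essentially a specialization of the general framework, and the only non-trivial verifications (namely, (ING) and the length of maximal elements in $A$) are furnished verbatim by the referenced results. The only point requiring mild care is ensuring that the constants $\langle\mu,2\rho\rangle$ and $\#(\Delta/\sigma)$ are computed using the correct conventions for the restriction of scalars, but both follow at once from the description of $\Phi_G$ as three $\sigma$-cyclically permuted copies of $\Phi_{G'}$.
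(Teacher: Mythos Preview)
Your proposal is correct and follows essentially the same approach as the paper: verify (ING) via Proposition~\ref{prop:ing_2omega1pusomegam} (or equivalently Theorem~\ref{thm:INGoverview}), verify (L1BC) via Theorem~\ref{thm:depth2L1BC}, use part (c) of Proposition~\ref{prop:ing_2omega1pusomegam} to get $\max_{x\in A}\ell(x)=\langle\mu,2\rho\rangle=3m$ with all maxima of equal length, and plug into Proposition~\ref{prop:ING_L1BC}(c). The paper's proof is just a terser version of exactly this argument.
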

\begin{proof}
    Similarly to the proof of Theorem~\ref{thm:depth2GLn}, we get property (ING) from Proposition~\ref{prop:ing_2omega1pusomegam}. Part (c) of it moreover shows that $\ell(x)=\langle \mu,2\rho\rangle$ if $x$ is Bruhat maximal in the Hodge-Newton indecomposable locus $A$. The property (L1BC) follows from Theorem~\ref{thm:depth2L1BC}. Now one can apply Proposition~\ref{prop:ING_L1BC} as above, computing
    \begin{align*}
        &\max_{x\in A}\ell(x) + \#(\Delta/\sigma) - \langle \mu,\rho\rangle = 3m + m - \frac{3m}2 = \frac {5m}2.\qedhere
    \end{align*}
\end{proof}

\begin{CJK*}{UTF8}{gbsn}
\printbibliography
\end{CJK*}
\appendix
\section{Computations}
\begin{refsection}
At a handful of occasions, our work relies on a combination of general arguments and exhaustive computations for finite numbers of exceptional cases. While all these calculations can be certainly done manually, it is granted to use computer algebra software to help with some of the longer or more repetitive calculations. We used Sagemath \cite{sagemath, sage-combinat} for this purpose. Part of the Sagemath software itself are calculations in extended affine Weyl groups of split groups, root and coroot spaces as well as fundamental Coxeter theoretic algorithms (e.g.\ for Bruhat order, descent sets etc.). We rely on these basic functions to implement the more specific algorithms for our subject, as explained below. We have prepared example implementations of the algorithms described below under \url{https://github.com/65537/beyond-minute}. Our example implementations are aimed to provide short, conceptual code with only minor optimizations. Hence, their practical use is tied to small examples (which covers the cases needed for our paper).
\subsection{Enumeration of cocharacters by depth}
The first computational task is, given an adjoint and relatively quasi-simple group $G$ as well as a real number $u\in \mathbb R$, to enumerate all dominant cocharacters $\mu\in X_\ast(T)_{\Gamma_0}$ such that $\depth(G,\mu)<u$. For the proof of Theorem~\ref{prop:classification}, we would set $u=2$ and then filter out all cocharacters of depth $\leq 1$ in a subsequent and straightforward step.

The algorithm is based on a queue (first in first out), which initially contains only one element, namely the cocharacter $0\in X_\ast(T)$. In an iterative fashion, we retrieve and remove the first element $\mu$ of the queue and compute its depth. If the depth is $\geq u$, we discard this cocharacter and continue the iteration. Otherwise, we output $\mu$. Moreover, we compute for each fundamental coweight $\omega$ the cocharacter $\mu+\omega\in X_\ast(T)$, and add it to the queue if it is not already part of it. Once this is done for all fundamental coweights, we continue the iteration. The iteration finishes when the queue is empty.
\subsection{Enumeration of admissible loci}
Given a set $K$ of simple affine roots and a dominant cocharacter $\mu$ for the (say, split) group $G$, we wish to enumerate the set $\Adm(\mu)^K$. For this, we first enumerate the set $\{\varepsilon^{w\mu}\mid w\in W_0\}\cap \widetilde W^K$. Then, we iteratively compute Bruhat lower covers for every element obtained so far, and check if they belong to $\widetilde W^K$. If so, this is an element of the admissible set, and we iteratively compute Bruhat lower covers of those elements again. This process eventually exhausts the full admissible set.
\subsection{Verification of positive Coxeter type and geometric Coxeter type}
We verify positive Coxeter type following the definition. We enumerate the length positive sets as outlined in \cite[Lemma~2.14]{Schremmer2022_newton}. Then, the definition of positive Coxeter pairs \cite[Definition~3.4]{Schremmer2023_coxeter} is straightforward to check.

For geometric Coxeter type, we follow the definition again. More precisely, we have a procedure, which, given any element $x\in \widetilde W$, either produces the symbol \textbf{False} to indicate that $x$ does \emph{not} have geometric Coxeter type, or the set $\{\nu(b)\mid [b]\in B(G)_x\}$ of all Newton points of non-empty ADLV if $x$ does have geometric Coxeter type. This procedure operates recursively, but recursive calls are only made to elements of smaller length in $\widetilde W$ (this guarantees that the recursion terminates).

The procedure operates based on a growing list of elements of $\widetilde W$. It is an invariant that at any point of the algorithm, in the notation of \cite[Section~2]{Nie2025}, every element $y$ of that list satisfies $y\approx_\sigma x$. In particular, $x$ has geometric Coxeter type if and only if $y$ has.

The list is initialized with the sole element $x$. We then iterate through this list, in a way where each iteration can add elements to the end of this list (which are then processed in order). 

At each iteration, we get an element $y$. We compute $\ell(y)-\ell(sy\sigma(s))$ for each simple affine reflection $s$. If this difference is $2$ at some point, we proceed as follows:
\begin{itemize}
	\item Recursively call the procedure on the element $y_2 = sy\sigma(s)$ of length $\ell(y)-2$. If this returns \textbf{False}, then we conclude that $y$ (and hence $x$) cannot have geometric Coxeter type. We also return \textbf{False}. Otherwise, store the result $N_2 := \{\nu(b)\mid [b]\in B(G)_{y_2}\}$.
	\item Recursively call the procedure on the element $y_1 = sy$ of length $\ell(y)-1$. If this returns \textbf{False}, then we conclude that $y$ (and hence $x$) cannot have geometric Coxeter type. We also return \textbf{False}. Otherwise, store the result $N_1 := \{\nu(b)\mid [b]\in B(G)_{y_1}\}$.
	\item If both $y_1$ and $y_2$ are of geometric Coxeter type and $N_1\cap N_2\neq\emptyset$, then $y$ (and hence $x$) cannot have geometric Coxeter type. We return \textbf{False} in this case.
	\item In the remaining case, we get that $y_1$ and $y_2$ have geometric Coxeter type and $N_1\cap N_2=\emptyset$. We conclude that $y$ (and hence $x$) has geometric Coxeter type, and return $N_1\cup N_2$.
\end{itemize}
This concludes the computation in case some simple affine reflection $s$ satisfies $\ell(sy\sigma(s)) = \ell(y)-2$. We may therefore assume from now on that this is not the case. Then, we enumerate all elements of the form $y' = sy\sigma(s)$ for simple affine reflections $s$ with $\ell(y') = \ell(y)$. For each such element $y'$, we add it to the list if it is not already contained therein. Once all $y'$ are processed, we resume the iteration of our list.

Once the list is full enumerated and the computation has not been concluded at any intermediate point, we know that $x$ must have minimal length in its $\sigma$-conjugacy class by \cite{He2014b}. Then, it remains to check whether or not $x$ is of \emph{minimal Coxeter type} according to \cite[Definition~5.5]{Nie2025}. This condition is most easily checked using \cite[Proposition~6.6]{Nie2025}. We compute $\nu(\dot x)$ by constructing a suitable $\sigma$-twisted power $x\sigma(x)\sigma^2(x)\cdots \sigma^N(x)$, and then compute the remaining notions from that proposition using the standard definitions. According to this criterion, we either return \textbf{False} or $\{\nu(\dot x)\}$. This finally completes the procedure.
\subsection{Verification of (L1BC) and (ING)}\label{app:a4}
Given an admissible set $\Adm(\mu)^K$, which we enumerate as outlined above, we moreover obtain the graph of all Bruhat covers $x\lessdot x'$ in $\Adm(\mu)^K$ from the former algorithm. We compute $\nu(\dot x)$ for each $x$ in this graph as outlined above, and note which elements satisfy $\ell(x) = \langle \nu(\dot x),2\rho\rangle$. These are the straight (or fundamental) elements. We moreover note which of these fundamental elements satisfy $\nu(\dot x) \in \nu(\varepsilon^\mu) + \sum_{\alpha\in \Delta} \mathbb Q_{<0} \alpha^\vee$, i.e.\ whether or not $(G,\mu,[\dot x])$ is Hodge-Newton indecomposable. By \cite[Corollary~5.6]{Viehmann2014} and He's partial conjugation method \cite{He2007}, we note for each $x\in \Adm(\mu)^K$ that
\begin{align*}
	\nu(b_{x,\max}) = \max\{\nu(\dot y)\mid y\in \Adm(\mu)^K\text{ fundamental and }y\leq x\}.
\end{align*}
We compute this for each $x\in \Adm(\mu)^K$.

The verification of (ING) is not straightforward. We can easily filter out all elements $x\in\Adm(\mu)^K$ such that $(G,\mu,[\dot x])$ is Hodge-Newton decomposable. Among the remaining elements, we compute the Bruhat maximal ones and check if they all share the same generic Newton point.

In order to check if a certain class $[b]\in B(G)$ satisfies (L1BC), we may assume that $K=I$ and choose $\mu$ such that every fundamental element $x\in\widetilde W$ with $[\dot x]= [b]$ lies in $\Adm(\mu)$. Then, the above gives us all the information we need to check (L1BC) according to the definition. Namely, for each fundamental element $x\in\Adm(\mu)$ with $[\dot x] = [b]$ and each $x'\lessdot x$, we already know $[b_{x',\max}]$ from the above computation. It remains to check if $\ell(b_{x',\max}, [b])$ is always equal to $1$.
\printbibliography
\end{refsection}
\end{document}